\newtheorem{theorem}{Theorem}
\newtheorem{lemma}{Lemma}
\newtheorem{proposition}{Proposition}
\newtheorem{corollary}{Corollary}
\theoremstyle{definition}
\newtheorem{definition}{Definition}
\newtheorem{example}{Example}
\theoremstyle{remark}
\newtheorem{remark}{Remark}
\numberwithin{equation}{section}
\numberwithin{theorem}{section}
\numberwithin{lemma}{section}
\numberwithin{proposition}{section}
\numberwithin{corollary}{section}
\numberwithin{definition}{section}
\numberwithin{example}{section}
\numberwithin{remark}{section}
\newcommand{\thref}[1]{Theorem \ref{#1}}    
\newcommand{\prref}[1]{Proposition \ref{#1}}
\newcommand{\leref}[1]{Lemma \ref{#1}}
\newcommand{\coref}[1]{Corollary \ref{#1}}
\newcommand{\deref}[1]{Definition \ref{#1}}
\newcommand{\exref}[1]{Example \ref{#1}}
\newcommand{\reref}[1]{Remark \ref{#1}}
\newcommand{\seref}[1]{Section \ref{#1}}
\def\as{associative}
\def\psalg{pseudo\-algebra}
\def\psalgs{pseudo\-algebras}
\def\tp{\otimes}                               
\def\bt{\boxtimes}
\def\<{\langle}
\def\>{\rangle}
\def\ce{{e^{-\chi}}}                    
\def\ti{\widetilde}
\def\wti{\widetilde}
\def\what{\widehat}
\def\d{\partial}
\def\bd{\bar{\partial}}
\def\surjto{\twoheadrightarrow}                
\def\injto{\hookrightarrow}                    
\def\st{\; | \;}                               
\def\symm{S}                                   
\def\ext{{\mathrm{ext}}}
\def\mmod{\;\mathrm{mod}\;}
\def\adsp{\ad^\sp}
\def\di{{\mathrm{d}}}
\def\diz{\di_0}
\def\diru{\di^{\mathrm{R}}}
\newcommand{\kk}{\mathbf{k}}       
\newcommand{\ZZ}{\mathbb{Z}}       
\newcommand{\fd}{{\mathfrak d}}
\newcommand{\fg}{\mathfrak{g}}
\def\al{\alpha}                         
\def\be{\beta}
\def\ga{\gamma}
\def\de{\delta}
\def\De{\Delta}
\def\ep{\varepsilon}
\def\io{\iota}
\def\om{\omega}
\def\Om{\Omega}
\def\ph{\varphi}
\def\si{\sigma}
\def\th{\theta}
\def\g{{\mathfrak{g}}}      
\def\h{{\mathfrak{h}}}      
\def\p{{\mathfrak{p}}}
\def\dd{{\mathfrak{d}}}
\def\gl{{\mathfrak{gl}}}
\def\gld{\gl\,\dd}
\def\sp{{\mathfrak{sp}}}
\def\spd{\sp\,\dd}
\def\csp{{\mathfrak{csp}}}
\def\cspd{\csp\,\dd}
\def\Wd{W(\dd)}    
\def\Sd{S(\dd,\chi)}
\def\Hd{H(\dd,\chi,\om)}
\def\Hdzero{H(\dd,0,\om)}
\def\Hdzeta{H(\dd,0, \di \zeta)}
\def\Kd{K(\dd,\th)}
\def\Kdp{K(\dd',\th)}
\def\A{{\mathcal{A}}}
\def\L{{\mathcal{L}}}
\def\V{{\mathcal{V}}}
\def\W{{\mathcal{W}}}
\def\H{{\mathcal{H}}}
\def\P{{\mathcal{P}}}
\def\K{{\mathcal{K}}}
\def\N{\mathcal{N}}
\def\E{\mathcal{E}}
\def\O{\mathcal{O}}
\def\T{\mathcal{T}}           
\def\V{\mathcal{V}}           %
\def\ue{U}                 
\def\bla{{\zeta}}
\def\ddbla{\dd^{\bla}}
\DeclareMathOperator{\sgn}{sgn}
\DeclareMathOperator{\gr}{gr}
\DeclareMathOperator{\Span}{span}
\DeclareMathOperator{\Ind}{Ind}
\DeclareMathOperator{\ad}{ad}
\DeclareMathOperator{\tr}{tr}
\DeclareMathOperator{\sd}{\ltimes}
\DeclareMathOperator{\id}{id}
\def\symp{S}
\DeclareMathOperator{\fil}{F}      
\DeclareMathOperator{\Der}{Der}
\DeclareMathOperator{\Hom}{Hom}
\DeclareMathOperator{\End}{End}
\DeclareMathOperator{\Cur}{Cur}
\renewcommand\Im{{\mathrm{Im}}}
\DeclareMathOperator{\Vir}{Vir}
\DeclareMathOperator{\sing}{sing}
\DeclareMathOperator{\coef}{coeff}
\DeclareMathOperator{\im}{Im}
\newcommand{\ptl}{\partial}
\def\dizerostar
\def\VPizero
\def\VPione
\def\imdizerostar{\im\dizerostar}
\def\din
\def\prevdin
\def\nextdin
\def\dinstar
\def\dinminusonestar
\def\dindinminusonestar
\def\dinplusone
\def\VPin
\def\VPinminusone
\def\VPinplusone
\def\VPichin
\def\VPichinminustwo
\def\imdin{\im \din}
\def\imdinstar{\im \dinstar}
\def\imdindinminusonestar{\im\dindinminusonestar}
\def\diN
\def\DiPi
\def\nextDiPi
\def\diNminusonestar
\def\diNdiNminusonestar
\def\VPiN
\def\VPiNminusone
\def\VPichiN
\def\imdiN{\im\diN}
\def\imDiPi{\im\DiPi}
\def\imdiNdiNminusonestar{\im\diNdiNminusonestar}
\def\dinmochizerostar
\def\dinchizero
\def\dinchizerostar
\def\dindinmochizerostar
\def\VPizerochizero
\def\DiPiminuschi
\def\nextDiPidiNalt
\def\dinminusonestarminuschi
\def\dinplusonestar
\def\dinplusoneminuschi
\def\dinplusonestarminuschi
\def\diNminusone
\def\diNminusonestarbla
\def\diNminustwostar
\def\VPiNNminusone
\def\VPichiNminusone
\def\VPiNminusoneminusone
\def\VPiNminusoneplusone
\def\diNchihalf
\def\diNminustwostarchihalf
\def\DIn
\def\DIN
\def\DIR
\def\DInplusone
\def\DInstar
\def\DInplusonestar
\def\DImostar
\def\DInminusonestar
\def\DINminusonestar
\def\DINminustwostar%
\def\DIzero
\def\DIone%
\def\DItwo%
\def\DIzerostar
\def\VPiprimezero
\def\VPiprimen
\def\VPiprimeN
\def\VPiprimenminusone
\def\VPiprimenplusone
\def\VPiprimeNminusone
\begin{document}

\title[Irreducible Modules over Finite Simple Lie Pseudoalgebras III]
{Irreducible Modules over Finite Simple Lie Pseudoalgebras III. \\
Primitive Pseudoalgebras of Type $H$}

\author[B.~Bakalov]{Bojko Bakalov}
\address{Department of Mathematics,
North Carolina State University,
Raleigh, NC 27695, USA}
\email{bojko\_bakalov@ncsu.edu}
\thanks{The first author was supported in part by a Simons Foundation grant 584741}

\author[A.~D'Andrea]{Alessandro D'Andrea}
\address{Dipartimento di Matematica,
Istituto ``Guido Castelnuovo'',
Universit\`a di Roma ``La Sapienza'',
00185 Rome, Italy}
\email{dandrea@mat.uniroma1.it}
\thanks{The second author was supported in part by Ateneo fundings from Sapienza University in Rome.}

\author[V.~G.~Kac]{Victor G.~Kac}
\address{Department of Mathematics, MIT, Cambridge, MA 02139, USA}
\email{kac@math.mit.edu}
\thanks{The first and third authors were supported in part by the Bert and Ann Kostant fund}

\date\today

\begin{abstract}
A Lie conformal algebra is an algebraic structure that encodes the singular part of the operator product expansion of chiral fields in conformal field theory. A \emph{Lie pseudoalgebra} is a generalization of this structure, for which the algebra of polynomials $\kk[\partial] $ in the indeterminate $ \partial $ is replaced by the universal enveloping algebra ${U} (\mathfrak{d}) $ of a finite-dimensional Lie algebra $ \fd $ over the base field $ \kk. $ The finite (i.e., finitely generated over ${U}(\mathfrak{d}) $) simple Lie pseudoalgebras were classified in our 2001 paper \cite{BDK}. The complete list consists of primitive Lie pseudoalgebras of type $ W, S, H, $  and $ K, $ and of current Lie pseudoalgebras over them or over simple finite-dimensional Lie algebras. The present paper is the third in our series on representation theory of simple Lie pseudoalgebras. In the first paper, we showed that any finite irreducible module over a primitive Lie pseudoalgebra of type $ W $ or $ S $ is either an irreducible tensor module or the image of the differential in a member of the \emph{pseudo de Rham complex}. In the second paper, we established a similar result for primitive Lie pseudoalgebras of type $K,$ with the pseudo de Rham complex replaced by a certain reduction, called the \emph{contact pseudo de Rham complex}. This reduction in the context of contact geometry was discovered by M.\ Rumin \cite{Ru}. In the present paper, we show that for primitive Lie pseudoalgebras of type $H$, a similar to type $K$ result holds with the contact pseudo de Rham complex replaced by a suitable complex. However, the type $H$ case in more involved, since the annihilation algebra is not the corresponding Lie-Cartan algebra, as in other cases, but an irreducible central extension. When the action of the center of the annihilation algebra is trivial, this complex is related to work by M.\ Eastwood \cite{E} on conformally symplectic geometry, and we call it \emph{conformally symplectic pseudo de Rham complex}.
\end{abstract}

\maketitle
\tableofcontents


\section{Introduction}\label{sintro}

The present paper is the third in our series of papers on representation theory of simple Lie pseudoalgebras, the first two of which are \cite{BDK1} and \cite{BDK2}.
As in these papers, we will work over an algebraically closed field $\kk$ of
characteristic $0$. Unless otherwise specified, all vector spaces,
linear maps and tensor products will be considered over $\kk$.

Recall that a \emph{Lie pseudoalgebra} is a (left) module $ L $ over a cocommutative  Hopf algebra $ H $, endowed with a pseudobracket
\[ L \otimes L \rightarrow (H \otimes H) \otimes_H L, \qquad a \otimes b \mapsto [a \ast b], \]
which is an $ H $-bilinear map of $ H $-modules, satisfying some analogs of the skew-symmetry and Jacobi identity of a Lie algebra bracket (see \cite{BD}, \cite{BDK}, or \eqref{psss} and \eqref{psjac} in Section \ref{slieps} of the present paper).

In the case when $ H = \kk$, this notion coincides with that of a Lie algebra. Furthermore, any Lie algebra $ \fg $ gives rises to a Lie pseudoalgebra $ \operatorname{Cur} \fg = H \otimes \fg  $ over $ H  $ with  pseudobracket
\[ [(1 \otimes a) \ast (1 \otimes b)] = (1 \otimes 1) \otimes_H [a,b], \]
extended to the whole $ \operatorname{Cur} \fg $ by $H$-bilinearity.  

In the case where $ H = \kk [\partial], $ the algebra of polynomials in an indeterminate $ \ptl $ with the comultiplication $ \Delta (\ptl) = \ptl \otimes 1  + 1  \otimes \ptl, $ the notion of a Lie pseudoalgebra coincides with that of a \emph{Lie conformal algebra} \cite{K}. The main result of \cite{DK} states that in this case any finite (i.e., finitely generated over $ H = \kk [\ptl] $) simple Lie pseudoalgebra is isomorphic to either $ \operatorname{Cur} \fg $ with simple finite-dimensional  $ \fg $, or to the Virasoro pseudoalgebra $ \Vir = \kk [\ptl ] \ell, $ where
\[ [\ell \ast \ell] = (1 \otimes \ptl - \ptl \otimes 1 ) \otimes_{\kk [\ptl]} \ell. \]

In \cite{BDK}, we generalized this result to the case where $ H = U (\fd), $ where $ \fd $ is a finite-dimensional Lie algebra. The generalization of the Virasoro pseudoalgebra is the Lie pseudoalgebra $ W (\fd) = H \otimes \fd$ with the pseudobracket
\begin{align*}
[ (1 \otimes a) \ast (1 \otimes b)]  
&= (1 \otimes 1) \otimes_H (1 \otimes [a,b]) 
\\
&\quad+ (b \otimes 1) \otimes_H (1 \otimes a)
 - (1 \otimes a) \otimes_H (1 \otimes b). 
\end{align*}

The main result of \cite{BDK} is that all non-zero subalgebras of the Lie pseudoalgebra $ W (\fd) $ are simple and non-isomorphic, and along with $ \Cur \fg, $ where $ \fg $ is a simple finite-dimensional Lie algebra, they provide a complete list of finitely generated over $ H $ simple Lie pseudoalgebras.
Furthermore, in \cite{BDK} we gave a description of all subalgebras of $ W (\fd). $ Namely, a complete list consists of the ``primitive'' series: the special Lie pseudoalgebras $ S(\fd, \chi), $ the Hamiltonian Lie pseudoalgebras $ H(\fd, \chi, \omega) $, the contact Lie pseudoalgebras $ K(\fd, \theta), $ and their ``current'' extensions. 

The geometric meaning of the data $ \chi, \omega, $ and $ \theta $ is as follows: $  \chi \in \mathfrak{d}^\ast $ is a closed $1$-form (= trace-form), i.e., $ (d \chi )(a\wedge b) := \chi ([a,b]) =0$; $\omega \in \bigwedge^2 \mathfrak{d}^* $ is a conformally symplectic form, i.e., it is non-degenerate 
and 
\[ d\omega + \chi \wedge \omega = 0; \]
finally, $ \theta \in \mathfrak{d}^* $ is a contact 1-form (cf.\ \cite{BDK2}). This explains why representation theory of the $ K $ and $ H  $ type Lie pseudoalgebras is intimately related to the constructions in contact and conformally symplectic geometry of \cite{Ru} and \cite{E}. 

For every Lie pseudoalgebra $ L $, we have a functor \cite{BDK}
\[ Y \mapsto \mathcal{A}_Y L : = Y \otimes_H L \]
that assigns a Lie algebra $  \mathcal{A}_Y L $ to any commutative associative algebra $ Y $ equipped with compatible left and right actions of the Hopf algebra $ H. $ The Lie algebra bracket on $  \mathcal{A}_Y L $ is given by
\[ [x \otimes_H a, y \otimes_H b] = \sum_{i} (xf_i)(yg_i) \otimes_H c_i, \quad \text{if}\quad
 [a \ast b] = \sum_i (f_i \otimes g_i) \otimes_H c_i. \]
The main tool in the study of Lie pseudoalgebras and their representations is the \emph{annihilation algebra $ \mathcal{A}_X L $}, where $ X = H^\ast $ is the commutative associative algebra dual to the coalgebra $ H. $ In particular, a module over a Lie pseudoalgebra $ L $ is the same as a ``conformal'' module over the extended annihilation Lie algebra $ \fd \sd \mathcal{A}_X L $ (see \cite{BDK1} and Proposition \ref{preplal2} below). 

Note that $ X \simeq \mathcal{O}_M := \kk [[t^1, \dots, t^M  ]]$ where $ M = \dim \fd. $ We define a topology on $ \mathcal{O}_M $ with a fundamental system of neighborhoods of 0 given by the powers of the maximal ideal $ (t^1, \ldots, t^M) $. Then the annihilation algebra of the Lie pseudoalgebra $ W(\fd) $ is isomorphic to the \emph{Lie--Cartan algebra $ W_M $} of continuous derivations of $ \mathcal{O}_M $ (see \cite{BDK1} and Section \ref{subwd} below). 
Similar isomorphisms hold for $S$- and $K$-type Lie pseudoalgebras \cite{BDK1}, \cite{BDK2}. However the annihilation algebra of $ H(\fd, \chi, \omega) $ is isomorphic to the Lie algebra structure $ P_M $ on $ \mathcal{O}_M, \ M = 2N, $ given by 
\[ [f,g] = \sum_{i = 1}^{N} \Bigl( \frac{\ptl f}{\ptl t^i} \frac{\ptl g}{\ptl t^{N+i}} 
- \frac{\ptl f}{\ptl t^{N+i}} \frac{\ptl g}{\ptl t^i} \Bigr),   \]
so that $ P_M / \kk 1 $ is isomorphic to the Hamiltonian Lie--Cartan algebra $ H_{2N} $ of vector fields annihilating the standard symplectic form $ \sum_{i = 1}^{N} dt^i \wedge dt^{N+i} $. This explains the name and notation $ H $ of the corresponding Lie pseudoalgebra. We hope that the reader will not confuse this $ H $ with the Hopf algebra $ H = U (\mathfrak{d}) $.

In \cite{BDK1}, we constructed all \emph{finite} (i.e., finitely generated over $ H = U(\fd) $) irreducible modules over the Lie pseudoalgebras $ W(\fd) $ and $ S(\fd, \chi) $. The simplest
non-zero module over $ W(\fd) $ is $ \Omega^0 (\fd) = H $, with the action given by
\begin{equation}\label{e1.1}
(f \otimes a) \ast g = -(f \otimes ga) \otimes_H 1, \qquad f,g \in H,\; a \in \fd.
\end{equation}
A generalization of this construction, called a tensor $ W(\fd) $-module, is as follows \cite{BDK1}. First, given a Lie algebra $ \fg $, define the semidirect sum $ W (\fd) \ltimes \Cur \fg $ as a direct sum of $ H $-modules, for which $ W(\fd) $ is a subalgebra and $ \Cur \fg $ is an ideal, with the following pseudobracket between them:
\[ \left[(f \otimes a) \ast (g \otimes b) \right] = -(f \otimes ga) \otimes_H (1 \otimes b) , 
\qquad f, g \in H, \; a \in \fd, \; b \in \fg . \]
Given a finite-dimensional $ \fg $-module $ V_0 $, we construct a representation of the Lie pseudoalgebra
$ W(\fd) \sd \Cur \fg $ in $ V = H \otimes V_0 $ by (cf.\ \eqref{e1.1}):
\begin{equation}\label{e1.2}
\bigl( (f \otimes a) \oplus (g \otimes b)   \bigr) \ast (h \otimes v) = -(f \otimes ha) \otimes_H (1 \otimes v) + (g \otimes h) \otimes_H (1 \otimes bv),
\end{equation}
where $ f, g, h \in H$, $a \in \fd$, $b \in \fg$, $v \in V_0 $.
Next, we define an embedding of $ W(\fd) $ in $ W(\fd) \ltimes \Cur (\fd \oplus \gld) $ by
\begin{equation}\label{e1.3}
1 \otimes \ptl_i \mapsto (1 \otimes \ptl_i) \oplus \Bigl(   (1 \otimes \ptl_i ) \oplus (1 \otimes \operatorname{ad} \ptl_i + \sum_j \ptl_j \otimes e^j_i)    \Bigr),
\end{equation}
where $ \{\ptl_i\} $ is a basis of $ \fd $ and $ \{e^j_i\} $ is a basis of $\gld$, defined by $ e^j_i (\ptl_k) = \delta^j_k \ptl_i $. Composing this embedding with the action \eqref{e1.2} of $ W(\fd)\ltimes \Cur (\fd \oplus \gld) $, we obtain a $ W(\fd) $-module $ V = H \otimes V_0 $ for each $ (\fd \oplus \gld) $-module $ V_0 $. This module is called a \emph{tensor $ W(\fd) $-module} and is denoted $ \mathcal{T} (V_0) $.

The main result of \cite{BDK1} states that any finite irreducible $ W(\fd) $-module is a unique quotient of a tensor module $ \mathcal{T} (V_0) $ for some finite-dimensional irreducible $ (\fd \oplus \operatorname{gl} \fd) $-module $ V_0 $. Furthermore, it describes all cases where $ \mathcal{T} (V_0) $ are not irreducible, and provides an explicit construction of their irreducible quotients, called the \emph{degenerate $ W(\fd) $-modules}. Namely, we prove in \cite{BDK1} that all degenerate $ W(\fd) $-modules occur as images of the differential $\di $ in the \emph{$ \Pi $-twisted pseudo de Rham complex} of $ W(\fd) $-modules
\begin{equation}\label{e1.4}
0 \rightarrow \Omega^0_\Pi (\fd) \overset{\di}{\rightarrow} \Omega^1_\Pi \overset{\di}{\rightarrow} \cdots \overset{\di}{\rightarrow} \Omega^{\dim \fd}_\Pi (\fd).
\end{equation}
Here $ \Pi $ is a finite-dimensional irreducible $ \fd $-module and $ \Omega^n_\Pi (\fd) = \mathcal{T} (\Pi \otimes \bigwedge^n \fd^\ast) $ is the space of pseudo $ n $-forms.

In the present paper, we construct all finite irreducible modules over the Hamiltonian Lie pseudoalgebra $ H(\fd, \chi, \omega). $
This Lie pseudoalgebra is constructed as follows. Choose a basis $ \{  \ptl_i  \}^{2N}_{i =1} $ of $ \mathfrak{d}, $ and the dual basis $ \{  \ptl^i  \}^{2N}_{i =1} $ with respect to the (non-degenerate) bilinear form $ \omega $ on $ \mathfrak{d}, $ so that $ \omega (\ptl^i \wedge \ptl_j) = \delta^i_j . $
Let
\begin{equation}\label{e1.5}
r = \sum_{i = 1}^{2N} \ptl_i \otimes \ptl^i= - \sum_{i=1}^{2N}\ptl^i  \otimes \ptl_i = \sum_{i,j = 1}^{2N} r^{ij} \ptl_i \otimes \ptl_j ,
\end{equation} 
and define $ s \in \mathfrak{d} $ by
\[ \chi (a) = \omega (s \wedge a), \qquad a \in \mathfrak{d}. \]
Let, as before, $ H = U (\mathfrak{d}) $ and consider the free $ H $-module rank $1$, $ He $, equipped with the pseudobracket given by 
\begin{equation}\label{e1.6}
[e \ast e] = (r + s \otimes 1 - 1 \otimes s) \otimes_H e,
\end{equation}
and extended to $ He $ by bilinearity. This is a simple Lie pseudoalgebra, denoted by $ H(\mathfrak{d}, \chi, \omega). $
There is a unique pseudoalgebra embedding of $ H (\fd, \chi, \omega) $ in $ W (\fd) = H \otimes \fd, $ defined by \cite{BDK}
\[ e \mapsto -r + 1 \otimes s. \]
We will denote again by $ e $ its image in $ W(\fd). $

Let $\spd$ be the symplectic subalgebra of the Lie algebra $\gld$, defined by the skewsymmetric bilinear form $ \omega. $ Let $ \{e^i_j\} \subset \End \fd $ be the basis of matrix units in the basis
$ \{\ptl_i\} $ of $ \fd,  $ and let $ e^{ij} = \sum_k r^{ik} e^j_k $ be another basis of $ \End \fd $, where the $ r^{ik} $ are defined by \eqref{e1.5}.
Then the elements 
\[ f^{ij} = -\frac{1}{2} (e^{ij} + e^{ji}), \qquad 1 \leq i \leq j \leq 2N, \]
form a basis of $\spd.$

It is important to highlight that repeating the same strategy as in primitive Lie pseudoalgebras of type $W, S, K$ is not straightforward, as in type $H$ there are several issues that force novel and more irregular behaviour.
\begin{itemize}
\item The annihilation algebra $\P$ of $\Hd$ is a graded Lie algebra with a nontrivial one-dimensional center. The above embedding of $\Hd$ inside $\Wd$ does not induce a corresponding embedding of annihilation algebras, since central elements from $\P$ lie in its kernel; as a consequence, a complete family of tensor modules for $\Hd$ cannot be obtained by restriction from tensor modules for $\Wd$, as one will only obtain modules where central elements act trivially.
\item Constructing tensor modules as induced modules does not yield, as with primitive pseudoalgebras of other types, a corresponding grading, but only a filtration. Indeed, even though the annihilation algebra is graded , central elements lie in degree $-2$, whereas Schur's Lemma forces them to act via scalar multiplication which, if homogeneous, should have degree $0$.
\item The proof of existence of a unique maximal submodule in degenerate tensor modules fails, as it relies on a grading. In principle, nonconstant singular vectors (i.e., of positive degree in the filtration) may generate elements of lower degree and they indeed do so in explicit examples.
\item Finally, nonconstant singular vectors in degenerate $\Hd$-tensor modules split in multiple isotypical $\spd$-components. This hints towards a more complicated structure of the lattice of their submodules and the possibility, which indeed occurs, that an irreducible quotient of each tensor module contains more non-isomorphic $\spd$-summands of singular vectors.
\end{itemize}

All issues, but the last one, disappear if one only focuses on $\Hd$-modules with a trivial action of the center, i.e., when the action of $\P \simeq P_{2N}$ factors through $\H \simeq H_{2N}$, and the usual strategy may be employed.
In such case, writing down the action of $ e $ in a tensor $ W (\fd) $-module $ \mathcal{T} (V_0), $ where $ V_0 $ is a $ \fd \oplus \gld$-module, and suitably twisting by $ \chi, $ we obtain a formula for the $ H(\fd, \chi, \omega) $-module $ \mathcal{T} (V_0) = H \otimes V_0, $ where now $ V_0 $ is a $ \dd \oplus \spd$-module. Explicitly, we have for $ v \in V_0 = \kk \otimes V_0 \subset \mathcal{T} (V_0)$:
\begin{equation}\label{e1.7}
\begin{aligned}
e \ast v = \sum_{k=1}^{2N} (\bar{\ptl}_k \otimes \ptl^k) \otimes_H v & - \sum_{k=1}^{2N} (\bar{\ptl}_k \otimes 1) \otimes_H (\ptl^k + \adsp \ptl^k ) v \\
& + \sum_{i,j = 1}^{2N} (\bar{\ptl}_i \bar{\ptl}_j \otimes 1) \otimes_H f^{ij} v
\\
\end{aligned}
\end{equation}
where $ \bar{\ptl}_i = \ptl_i - \chi (\ptl_i), $ and $ \adsp \ptl^k $ is the image of $ \operatorname{ad} \ptl^k + \ptl^k \otimes \chi $ under the projection $\gld \rightarrow \spd $ defined by $ e^{ij} \mapsto - f^{ij}. $

Theorem \ref{irrfacttens} of the present paper, analogous to those in \cite{BDK1} and \cite{BDK2}, implies that every finite irreducible $H(\fd, \chi, \omega) $-module, with a trivial action of the center of $\P$, is a quotient of the tensor module $ \mathcal{T} (V_0), $  where $ V_0 $ is a finite-dimensional irreducible $ \fd \oplus \spd$-module. 
We describe all cases where the $ H (\fd, \chi, \omega) $-modules $ \mathcal{T} (V_0) $ are not irreducible and give an explicit construction of their irreducible quotients, called {\em degenerate} $H(\fd, \chi, \omega)$-modules. 

It turns out that, in analogy with the contact case \cite{BDK2}, all of the above non-trivial degenerate $\Hd $-modules appear as images of composition of two maps (not one as in the contact case) in a certain complex of $\Hd$-modules, which we call the \emph{twisted conformally symplectic pseudo de Rham complex}. This complex is constructed in Section \ref{stdrm} by a certain reduction of the pseudo de Rham complex \eqref{e1.4} (see Theorem \ref{tmodhd} and \thref{exactderhamnontrivial}). The idea of this construction is similar to Eastwood's reduction of the de Rham complex on a conformally symplectic manifold \cite{E}. The structure of the lattice of submodules of members of the twisted conformally symplectic pseudo de Rham complex requires a detailed study of singular vectors. This is carried out in Section \ref{sksing} and \ref{ssubmtm}.

Let us now proceed to the case of $\Hd$-modules with a nontrivial action of central elements in the annihilation algebra $\P$. Since this is a 
central extension of the corresponding simple Lie-Cartan algebra $H_{2N}$, an 
important role in representation theory of the Lie pseudoalgebra 
$\Hd$ is played by the extension $\dd'=\dd+\kk c$ of the Lie algebra 
$\dd$ by a $1$-dimensional abelian ideal $\kk c$, with brackets:
$$[\d,c]'=\chi(\d) c, \qquad [\d_1,\d_2]'=[\d_1,\d_2]+\omega(\d_1 \wedge \d_2)c, \qquad \mbox{where } \d,\d_1,\d_2\in \dd.$$

One may easily check that the central elements in $\P$ may only act by a nontrivial scalar when $\chi = 0$ and $\omega = \di\zeta$ is exact. Lie algebras with a non-degenerate exact $2$-form are known as {\em Frobenius Lie algebras} (the simplest example being the non-abelian $2$-dimensional Lie algebra). The central extension $\dd'$ then splits as a direct sum $\dd^\zeta \oplus \kk c$, where $\dd^\zeta$ is a Lie subalgebra isomorphic to $\dd$ which is a complement to the central ideal $\kk c$.

In this case, the definition of tensor modules $\T(V_0)$ must be modified as follows:
\begin{equation}\label{e1.7bis}
\begin{aligned}
e \ast v = \sum_{k=1}^{2N} (\bar{\ptl}_k \otimes \ptl^k) \otimes_H v & - \sum_{k=1}^{2N} (\bar{\ptl}_k \otimes 1) \otimes_H (\ptl^k + \adsp \ptl^k ) v \\
& + \sum_{i,j = 1}^{2N} (\bar{\ptl}_i \bar{\ptl}_j \otimes 1) \otimes_H f^{ij} v + (1 \otimes 1) \otimes_H cv,
\\
\end{aligned}
\end{equation}
where $c \in \kk$ provides the scalar action of the center of $\P$. Once again Theorem \ref{irrfacttens} shows that every finite irreducible non-trivial $\Hdzeta$-module is a quotient of $\T(V_0)$, where $V_0$ is a finite-dimensional $\dd' \oplus \spd$-module. The complex of degenerate $\Hdzeta$-modules is now constructed by hand, by using the canonical maps between tensor modules whose existence is due to the presence of nonconstant singular vectors. The resulting complex is \eqref{splitexactcomplex}.
The pathological behaviour we anticipated indeed occurs. Namely:
\begin{itemize}
\item each $\VPiprimen, 1 \leq n \leq N,$ contains {\bf two} maximal submodules;
\item each $\im\DIn, 0 \leq n \leq N,$ contains {\bf two} distinct non-isomorphic $\dd'\oplus\spd$-summands of singular vectors, hence it is a quotient of two distinct degenerate tensor modules.
\end{itemize}
Somewhat surprisingly, one may show that the whole complex is split exact, and that each member of the complex decomposes into the direct sum of its maximal submodules, which are also irreducible: this is a special behaviour of tensor modules when $c \neq 0$, which completely falls apart when $c = 0$. Thus, every finite irreducible degenerate $\Hdzeta$-module, which has a nontrivial action as $c \neq 0$, arises as image of one differential in this complex. This complex still lacks a differential geometrical construction, so that its geometrical meaning is as yet unclear.

Irreducibility of tensor modules, not appearing in the above complexes, is proved in Section \ref{sirtm} (see Theorem \ref{irrcriterion}). The resulting complete non-redundant list of finite irreducible $ H(\fd, \chi, \omega) $-modules is given in Theorem \ref{tclass}. 

As a corollary of our results, we obtain a classification and description of all degenerate irreducible modules over the Hamiltonian Lie--Cartan algebra $P_{2N}, $ along with the description of the singular vectors. This result when $c = 0$ was obtained long ago in \cite{Ru2}.


\section{Preliminaries on Lie pseudoalgebras}\label{sprel}

In this section, we review some facts and notation that will be used
throughout the paper. For a more detailed treatment, we refer to our 
previous works \cite{BDK, BDK1, BDK2}.

\subsection{Bases and filtrations of $H$ and $H^*$}\label{sbasfil}
We will denote by $H$ the universal enveloping algebra $\ue(\dd)$
of the Lie algebra $\dd$. Then $H$ is a Hopf algebra with a coproduct 
$\De$, antipode $S$, and counit $\ep$ given by:
\begin{equation}\label{des}
\De(\d) = \d\tp1 +1\tp\d \,, \quad S(\d)=-\d \,, \quad \ep(\d)=0 \,,
\qquad \d\in\dd \,.
\end{equation}
We will use the following notation (cf.\ \cite{Sw}):
\begin{align}
\label{de1}
\De(h) &= h_{(1)} \tp h_{(2)} = h_{(2)} \tp h_{(1)} \,, 
\\
\label{de2}
(\De\tp\id)\De(h) &= (\id\tp\De)\De(h) = h_{(1)} \tp h_{(2)} \tp h_{(3)} \,,
\\
\label{de3}
(S\tp\id)\De(h) &= h_{(-1)} \tp h_{(2)} \,, \qquad\quad h\in H \,.
\end{align}
Then the axioms of antipode and counit can be written
as follows:
\begin{align}
\label{antip}
h_{(-1)} h_{(2)} &= h_{(1)} h_{(-2)} = \ep(h),
\\
\label{cou}
\ep(h_{(1)}) h_{(2)} &= h_{(1)} \ep(h_{(2)}) = h,
\end{align}
while the fact that $\De$ is a homomorphism of algebras
translates as:
\begin{equation}
\label{deprod}
(fg)_{(1)} \tp (fg)_{(2)} = f_{(1)} g_{(1)} \tp f_{(2)} g_{(2)},
\qquad f,g\in H.
\end{equation}
Eqs.\ \eqref{antip}, \eqref{cou} imply the following
useful relations:
\begin{equation}
\label{cou2}
h_{(-1)} h_{(2)} \tp h_{(3)} = 1\tp h
= h_{(1)} h_{(-2)} \tp h_{(3)}.
\end{equation}
%

%

Below it will be convenient to work with a basis $\{\d_1, \dots,
\d_{2N}\}$ of $\dd$ and the dual basis $\{x^1,\dots,x^{2N}\}$ of $\dd^*$.
Denote by $c_{ij}^k$ the structure constants of $\dd$, so that 
\begin{equation}\label{cijk}
[\d_i,\d_j]=\sum_{k=1}^{2N} c_{ij}^k\d_k
\,, \qquad i,j=1,\dots,2N \,.
\end{equation}
Note that $H$ has a basis
\begin{equation}\label{dpbw}
\d^{(I)} = \d_1^{i_1} \dotsm \d_{2N}^{i_{2N}} / i_1! \dotsm
i_{2N}! \,, \qquad I = (i_1,\dots,i_{2N}) \in\ZZ_+^{2N} \,.
\end{equation}
The canonical increasing filtration of $H=\ue(\dd)$ is given by
\begin{equation}\label{filued1}
\fil^n H = \Span_\kk\{ \d^{(I)} \st |I| \le n \} \,, \qquad
\text{where} \quad |I|=i_1+\cdots+i_{2N} \,.
\end{equation}
This filtration does not depend on the choice of basis of $\dd$, and 
is compatible with the Hopf algebra structure of $H$ (see, e.g.,
\cite[Section~2.2]{BDK} for more details). We have: $\fil^{-1} H =
\{0\}$, $\fil^0 H = \kk$, and $\fil^1 H = \kk\oplus\dd$.

The dual $X=H^* := \Hom_\kk(H,\kk)$ is a commutative associative algebra.
We will identify $\dd^*$ as a subspace of $X$ by letting 
$\langle x^i, \d_i \rangle = 1$ and $\langle x^i, \d^{(I)} \rangle = 0$
for all other basis vectors \eqref{dpbw}.
This gives rise to an isomorphism from $X$ 
to the algebra $\O_{2N} = \kk[[t^1,\dots,t^{2N}]]$ 
of formal power series in $2N$ indeterminates, which sends $x^i$ to $t^i$.
The Lie algebra $\dd$ has left and right actions on $X$ by derivations, given by
\begin{align}
\label{dx1}
\langle \d x, h\rangle &= -\langle x, \d h\rangle \,,
\\
\label{dx2}
\langle x \d, h\rangle &= -\langle x, h\d\rangle \,,
\qquad \d\in\dd \,, \; x \in X \,, \; h \in H \,,
\end{align}
where $\d h$ and $h\d$ are the products in $H$. These two actions
coincide only when $\dd$ is abelian. The difference
$\d x - x \d$ gives the coadjoint action of $\d\in\dd$ on $x\in X$.

Throughout the paper, we will be given a 
\emph{trace-form} $\chi\in\dd^*$, so that
$\chi([\d,\d'])=0$ for all $\d,\d' \in\dd$.
Then the assignment 
\begin{equation}\label{dbar}
\d \mapsto \bd = \d - \chi(\d), \qquad \d\in\dd,
\end{equation}
extends to an associative algebra automorphism $h \mapsto \bar h$ of $H$.
Note that its inverse is given in the same way but with $-\chi$ in place of $\chi$.

It is convenient to define another basis of $H$ by applying the bar 
automorphism, thus obtaining elements $\bd^{(I)}$. It is
easily verified that the corresponding bar filtration on $H$ coincides
with $\{\fil^n H\}$.
The following results will be useful in the rest of the paper.

\begin{lemma}\label{checkbar}
For any trace-form\/ $\chi\in\dd^*$ and\/ $\d\in\dd$, we have\/
$\d\chi = \chi\d = -\chi(\d)$.
In particular,
\begin{align}\label{echi}
(x e^{-\chi}) \bd &= (x\d) e^{-\chi},
\\ \label{echi2}
\bd (x e^{-\chi}) &= (\d x) e^{-\chi}, \qquad x\in X, \;  \d\in\dd .
\end{align}
\end{lemma}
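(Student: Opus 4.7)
The plan is to reduce both claims of the lemma to a single key identity: that the trace-form hypothesis is exactly what forces $\chi$, viewed as a linear functional on $H$, to obey the $\ep$-derivation rule
\[
\chi(hh')=\chi(h)\ep(h')+\ep(h)\chi(h'), \qquad h,h'\in H.
\]
Writing $H=\kk\oplus H_+$ with $H_+=\ker\ep$, the rule is automatic unless both arguments lie in $H_+$, so it suffices to show that $\chi$ annihilates $H_+^2$. Since $\chi\in\dd^*$ already kills every PBW basis element $\d^{(I)}$ with $|I|\ne 1$, this further reduces to the claim $\chi(\d_{i_1}\cdots\d_{i_m})=0$ for all $m\ge 2$, which I would prove by induction on $m$. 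For $m=2$, sorting $\d_i\d_j$ into PBW order via $\d_a\d_b=\d_b\d_a+[\d_a,\d_b]$ produces a degree-two basis element (killed by $\chi$) plus at most one commutator correction in $[\dd,\dd]$, killed by the trace-form property. For the inductive step, sorting a length-$(m+1)$ product into PBW form by adjacent swaps yields the sorted PBW basis element (of degree $m+1$, hence killed) together with correction terms of lengths $1\le\ell\le m$; the corrections with $\ell\ge 2$ are killed by the inductive hypothesis, while the length-$1$ corrections are iterated brackets of $\d_i$'s, hence in $[\dd,\dd]$, and are killed again by the trace-form property.

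Once the $\ep$-derivation property is established, the equalities $\d\chi=\chi\d=-\chi(\d)$ in $X$ are immediate from the definitions \eqref{dx1}--\eqref{dx2}: for every $h\in H$,
\[
\langle\d\chi,h\rangle=-\chi(\d h)=-\chi(\d)\ep(h)-\ep(\d)\chi(h)=-\chi(\d)\ep(h),
\]
and symmetrically $\langle\chi\d,h\rangle=-\chi(h\d)=-\chi(\d)\ep(h)$, using $\ep(\d)=0$. Both sides therefore coincide with the scalar $-\chi(\d)$ viewed as the element $-\chi(\d)\cdot 1_X\in X$.

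For the displayed identities \eqref{echi} and \eqref{echi2}, I would use that both the left and the right action of $\d\in\dd$ on $X$ are derivations; this follows from the primitivity $\De(\d)=\d\tp 1+1\tp\d$ together with the definitions \eqref{dx1}--\eqref{dx2}. Since $\chi(1)=0$, the series $e^{-\chi}=\sum_n(-\chi)^n/n!$ is a well-defined element of $X$. Applying the derivation termwise and using $\d\chi=\chi\d=-\chi(\d)$ gives
\[
\d e^{-\chi}=e^{-\chi}\d=\chi(\d)\,e^{-\chi}.
\]
The Leibniz rule for the right action then yields $(xe^{-\chi})\d=(x\d)e^{-\chi}+x\chi(\d)e^{-\chi}$, and replacing $\d$ by $\bd=\d-\chi(\d)$ cancels the second summand, proving \eqref{echi}. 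The identity \eqref{echi2} is established analogously using the left Leibniz rule.

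The only step that requires honest work is the inductive verification of the $\ep$-derivation property of $\chi$, since this is where the trace-form hypothesis is essentially used and where the bookkeeping of commutator corrections across multiple filtration strata must be done carefully. The remaining parts are then short manipulations with derivations of $X$ and formal power series.
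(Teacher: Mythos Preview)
Your proof is correct and follows the same route as the paper, which tersely asserts that the first claim follows from \eqref{dx1}--\eqref{dx2} and then derives \eqref{echi}--\eqref{echi2} from the fact that the left and right actions of $\d$ are derivations of $X$. You have simply spelled out the content the paper leaves implicit: the $\ep$-derivation identity for $\chi$ (equivalently, $\chi(H_+^2)=0$, which is where the trace-form hypothesis enters) is exactly what one must check to make ``follows from \eqref{dx1} and \eqref{dx2}'' honest, and the Leibniz-rule computation with $e^{-\chi}$ is the intended derivation argument.
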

\begin{proof}
The first claim follows from \eqref{dx1} and \eqref{dx2}. Then \eqref{echi} and \eqref{echi2} are derived using that the right and left actions of $\d$ are derivations of $X$.
\end{proof}

\begin{lemma}\label{dualtwisted}
{\rm (i)}
The map\/ $h \mapsto S(\overline{S(h)})$ is the inverse to the automorphism\/ $h\mapsto \overline{h}$ of\/ $H$.

{\rm (ii)}
The basis\/ $\{x_I e^{-\chi}\}$ of\/ $X$ is dual to the basis\/ $\{S(\overline{S(\d^{(I)})})\}$ of\/ $H$.
\end{lemma}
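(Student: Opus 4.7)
For (i), the map $h\mapsto S(\overline{S(h)})$ is a composition of the anti-algebra involution $S$, the algebra automorphism $h\mapsto\bar h$, and $S$ again, so it is an algebra endomorphism of $H$. On a generator $\d\in\dd$ one computes
\[
S(\overline{S(\d)}) = S(\overline{-\d}) = S\bigl(-\d + \chi(\d)\bigr) = \d + \chi(\d),
\]
which is inverse on $\dd$ to the bar map $\d\mapsto\bd=\d-\chi(\d)$. Since two algebra homomorphisms of $H$ that agree on the generating subspace $\dd$ must agree on all of $H$, this composition is the inverse of the bar automorphism, proving (i).

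For (ii), my strategy is to reduce to the general identity
\begin{equation*}
\langle x\,e^{-\chi},\, h\rangle = \langle x,\,\bar h\rangle, \qquad x\in X,\ h\in H. \tag{$\ast$}
\end{equation*}
Granting $(\ast)$, part (i) supplies $\overline{S(\overline{S(\d^{(J)})})} = \d^{(J)}$, and substituting $x=x_I$ and $h=S(\overline{S(\d^{(J)})})$ into $(\ast)$ yields
\[
\bigl\langle x_I\,e^{-\chi},\; S(\overline{S(\d^{(J)})})\bigr\rangle = \langle x_I,\,\d^{(J)}\rangle = \delta_{I,J},
\]
which is precisely the duality claimed in (ii).

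I would prove $(\ast)$ by induction on the smallest $n$ for which $h\in\fil^n H$. The base case $h\in\fil^0 H=\kk$ reduces, via the product rule $\langle xy,h\rangle = \langle x, h_{(1)}\rangle\langle y, h_{(2)}\rangle$, to $\langle e^{-\chi},1\rangle=1$. For the inductive step, by linearity it suffices to consider $h=\d\,h_1$ with $\d\in\dd$ and $h_1\in\fil^n H$. Combining \eqref{dx1} with the derivation property of the left $\dd$-action on $X$ gives
\[
\langle x\,e^{-\chi},\,\d h_1\rangle = -\langle \d(x\,e^{-\chi}),\,h_1\rangle = -\langle (\d x)\,e^{-\chi},\,h_1\rangle - \chi(\d)\,\langle x\,e^{-\chi},\,h_1\rangle,
\]
the second equality using the eigenvalue relation $\d\,e^{-\chi}=\chi(\d)\,e^{-\chi}$, which follows from \leref{checkbar} by applying $\d$ termwise to the exponential series expansion of $e^{-\chi}$. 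Applying the inductive hypothesis to each of the two resulting terms and then invoking \eqref{dx1} once more, one obtains
\[
\langle x,\,\d\,\overline{h_1}\rangle - \chi(\d)\,\langle x,\,\overline{h_1}\rangle = \langle x,\,\bd\,\overline{h_1}\rangle = \langle x,\,\overline{\d h_1}\rangle,
\]
where the last equality uses that bar is an algebra homomorphism. This closes the induction.

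The only delicate point is verifying the eigenvalue relation $\d\,e^{-\chi} = \chi(\d)\,e^{-\chi}$, which reduces immediately to \leref{checkbar}'s assertion $\d\chi = -\chi(\d)$ combined with the fact that $\d$ acts on $X$ as a derivation; the remainder is straightforward sign-bookkeeping for the antipode–bar compositions.
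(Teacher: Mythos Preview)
Your proof is correct. Part (i) is identical to the paper's. For part (ii), you establish the general identity $\langle x\,e^{-\chi},h\rangle=\langle x,\bar h\rangle$ by induction on the filtration degree of $h$, then specialize. The paper reaches the same conclusion via a slightly different route: it writes $\langle x_I,\d^{(J)}\rangle=\langle S(\d^{(J)})x_I,1\rangle$, observes that pairing against $1$ extracts the constant term so multiplying by $e^{-\chi}$ is harmless, and then invokes the already-proved identity \eqref{echi2} (namely $\bd(xe^{-\chi})=(\d x)e^{-\chi}$) to convert $(S(\d^{(J)})x_I)e^{-\chi}$ into $\overline{S(\d^{(J)})}(x_Ie^{-\chi})$, before unwinding with \eqref{dx1} once more. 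Your induction essentially reproves \eqref{echi2} inside the pairing rather than citing it; the paper's argument is shorter because it leverages that lemma, but your intermediate identity $(\ast)$ is a clean standalone statement. Both arguments are really the same computation, differing only in whether one passes through the constant-term trick or inducts directly.
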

\begin{proof}
(i) The map $h \mapsto S(\overline{S(h)})$ is indeed an automorphism, because $h\mapsto \overline{h}$ is an automorphism and $S$ is an anti-automorphism. It is easy to see that for $\d\in\dd$, we have $S(\overline{S(\d)}) = \d+\chi(\d)$, which is the inverse to the map \eqref{dbar}.

To prove (ii), first observe that $\langle x, 1 \rangle = \langle x e^{-\chi}, 1\rangle$ for $x \in X$, since
$\langle x, 1\rangle$ is the constant term of $x$. Then using \eqref{dx1} and \eqref{echi2}, we find
\begin{equation*}
\begin{split}
\delta_I^{J} & = \langle x_I, \d^{(J)}\rangle = \langle S(\d^{(J)})x_I, 1\rangle = \langle (S(\d^{(J)})x_I)e^{-\chi}, 1\rangle\\
& = \langle \overline{\,S(\d^{(J)})}\,(x_I e^{-\chi}), 1\rangle =\langle x_I e^{-\chi}, S(\overline{\,S(\d^{(J)})})\rangle,
\end{split}
\end{equation*}
as claimed.
\end{proof}

We introduce a decreasing filtration of $X$ by letting
$\fil_n X = (\fil^n H)^\perp$ be the set of elements from
$X$ that vanish on $\fil^n H$. Then $\fil_{-1} X = X$,
$X/\fil_0 X \simeq\kk$, and $\fil_0 X/\fil_1 X \simeq\dd^*$.
We define a topology of $X$ by considering $\{\fil_n X\}$ as a
fundamental system of neighborhoods of $0$. We will always consider
$X$ with this topology, while $H$ and $\dd$ are endowed with the
discrete topology. Then $X$ is linearly compact (see
\cite[Chapter~6]{BDK}), and both the multiplication in $X$ and the left
and right actions of $\dd$ on it are continuous.

\subsection{Lie pseudoalgebras and their modules}\label{slieps}
Recall from \cite[Chapter~3]{BDK} that a {\em pseudobracket\/} on a left $H$-module $L$
is an $H$-bilinear map
\begin{equation}\label{psprod}
  L \tp L \to (H \tp H) \tp_H L \,, \quad
  a \tp b \mapsto [a * b] \,,
\end{equation}
where we use the
comultiplication $\Delta\colon H \to H \tp H$ to define
$(H\tp H) \tp_H L$.
We extend the pseudobracket \eqref{psprod} to maps
$(H^{\tp 2} \tp_H L) \tp L \to H^{\tp 3}
\tp_H L$ and $L \tp (H^{\tp 2}
\tp_H L) \to H^{\tp 3}\tp_H L$ by letting:
\begin{align}
\label{psprod1}
[(h\tp_{H}a)*b] &= \sum_i \, (h \tp 1)\, (\Delta\tp\id)(g_i)
 \tp_H c_i \,,
\\
\label{psprod2}
[a*(h\tp_{H}b)] &= \sum_i \, (1 \tp h)\, (\id\tp\Delta)(g_i)
 \tp_H c_i \,,
\intertext{where $h\in H^{\tp 2}$, $a,b\in L$, and}
\label{psprod3}
[a*b] &= \sum_i \, g_i \tp_H c_i
\qquad\text{with \; $g_i\in H^{\tp2}$, $c_i \in L$.}
\end{align}

A {\em Lie \psalg\/} is a left $H$-module equipped with
a pseudobracket satisfying the
following skew-commutativity and Jacobi identity axioms:
\begin{align}
  \label{psss}
  [b*a] &= -(\sigma \tp_H \id)\, [a*b] \,, \\
  \label{psjac}
  [[a*b]*c] &= [a*[b*c]] - ((\sigma \tp \id)\tp_H\id)\,
  [b*[a*c]] \,.
\end{align}
Here, $\sigma\colon H \tp H \to H \tp H$ is the permutation
of factors, and the compositions $[[a*b]*c]$, $[a*[b*c]]$
are defined using \eqref{psprod1}, \eqref{psprod2}.

\begin{example}\label{ecur}
For a Lie algebra $\g$, the \emph{current} Lie pseudoalgebra $\Cur\g=H\tp\g$
has an action of $H$ by left multiplication on the first tensor factor
and a pseudobracket
\begin{equation}\label{curbr*}
[(f\tp a)*(g\tp b)]
= (f\tp g)\tp_H(1\tp [a,b]) \,,
\end{equation}
for $f,g\in H$ and $a,b\in\g$.
\end{example}

A {\em module\/} over a Lie \psalg\ $L$
is a left $H$-module $V$ together with an $H$-bilinear map
\begin{equation}\label{psprod4}
  L \tp V \to (H \tp H) \tp_H V \,, \quad
  a \tp v \mapsto a * v
\end{equation}
that satisfies ($a,b\in L$, $v\in V$):
\begin{equation}\label{psrep}
[a*b]*v = a*(b*v) - ((\si\tp\id)\tp_H\id) \, (b*(a*v)) \,.
\end{equation}
An $L$-module $V$ will be called {\em finite\/}
if it is finitely generated as an $H$-module, and is called {\em trivial} if $a*v = 0$ for all $a \in L, v \in V$, i.e., when the pseudoaction of $L$ on $V$ is trivial.
The {\em zero\/} $L$-module is the set $\{0\}$.

\begin{example}\label{ecur2}
For any module $V_0$ over a Lie algebra $\fg$, we have the $\Cur \fg$-module $V=H \otimes V_0$, with the action given by
\begin{equation}
  \label{curact}
(g \otimes b) * (h \otimes v) = (g \otimes h) \otimes_H (1\otimes  bv)\, ,
\end{equation}
for $g,h \in H$, $b \in \fg$ and $v \in V_0$.
\end{example}

Let $U$ and $V$ be two $L$-modules. A map
$\be\colon U\to V$ is a {\em homomorphism\/} of $L$-modules
if $\be$ is $H$-linear and satisfies
\begin{equation}\label{psprod6}
\bigl( (\id\tp\id)\tp_H \be \bigr) (a*u)
= a * \be(u) \,, \qquad a\in L \,, \; u\in U \,.
\end{equation}
A subspace $W\subset V$ is an {\em $L$-submodule\/} if
it is an $H$-submodule and
$L*W \subset (H\tp H)\tp_H W$,
where $L*W$ is the linear span of all elements $a*w$
with $a\in L$ and $w\in W$.
A submodule $W\subset V$ is called {\em proper\/} if $W\ne V$.
An $L$-module $V$ is {\em irreducible\/} (or {\em simple})
if it does not contain any non-zero proper $L$-submodules and
$L*V \ne \{0\}$.

\begin{remark}\label{rlmod}
{\rm(i)}
Let $V$ be a module over a Lie \psalg\ $L$ and
$W$ be an $H$-submodule of $V$. By  \cite[Lemma 2.3]{BDK2},
for each $a\in L$, $v\in V$, we can write
\begin{equation*}
a*v = \sum_{I\in\ZZ_+^{2N}} (\d^{(I)} \tp 1) \tp_H v'_I \,, \qquad
v'_I \in V \,,
\end{equation*}
where the elements $v'_I$ are uniquely determined by $a$ and $v$.
Then $W\subset V$ is an $L$-submodule iff
it has the property that all $v'_I \in W$ whenever $v \in W$.

{\rm(ii)}
Similarly, for each $a\in L$, $v\in V$, we can write uniquely
\begin{equation*}
a*v = \sum_{I\in\ZZ_+^{2N}} (1 \tp \d^{(I)}) \tp_H v''_I \,,
\qquad v''_I \in V \,,
\end{equation*}
and $W$ is an $L$-submodule iff $v''_I \in W$ whenever $v \in W$.
\end{remark}

\subsection{Twisting of representations}\label{stwrep}
Let $L$ be a Lie \psalg, and $\Pi$ be a finite-dimensional $\dd$-module.
In \cite[Section 4.2]{BDK1}, we introduced a covariant functor $T_\Pi$
from the category of finite $L$-modules to itself. 
We review it in the case when all the $L$-modules are free
as $H$-modules, which will be sufficient for our purposes.

For a finite $L$-module $V=H\tp V_0$, which is free over $H$,
we choose a $\kk$-basis $\{ v_i \}$ of $V_0$, and write the action of $L$
on $V$ in the form
\begin{equation}\label{twrep1}
a*(1\tp v_i) = \sum_j\, (f_{ij} \tp g_{ij}) \tp_H (1 \tp v_j)
\end{equation}
where $a \in L$, $f_{ij}, g_{ij} \in H$.
Then the \emph{twisting of\/ $V$ by\/ $\Pi$} is the $L$-module
$T_\Pi(V) = H\tp \Pi\tp V_0$, where $H$ acts by a left multiplication on
the first factor and
\begin{equation}\label{twrep3}
a*(1\tp u\tp v_i) =
\sum_j\, \bigl( f_{ij} \tp {g_{ij}}_{(1)} \bigr)
\tp_H \bigl( 1 \tp {g_{ij}}_{(-2)} u \tp v_j \bigr)
\end{equation}
for $a\in L$, $u\in\Pi$.
By \cite[Proposition 4.2]{BDK1}, $T_\Pi(V)$ is an $L$-module and
the action of $L$ on it is independent of the choice of basis of $V_0$.

Given another $L$-module $V'=H\tp V'_0$ and a homomorphism $\be\colon V\to V'$, 
we can write
\begin{equation}\label{twrep6}
\be(1\tp v_i) = \sum_j\, h_{ij} \tp v'_j \,,
\qquad h_{ij} \in H \,,
\end{equation}
where $\{ v'_j \}$ is a fixed $\kk$-basis of $V'_0$. 
Then we have a homomorphism of $L$-modules
$T_\Pi(\be)\colon T_\Pi(V)\to T_\Pi(V')$, defined by
\begin{equation}\label{twrep8}
T_\Pi(\be)(1\tp u\tp v_i) =
\sum_j\, {h_{ij}}_{(1)} \tp {h_{ij}}_{(-2)} u \tp v'_j \,.
\end{equation}
Moreover, $T_\Pi(\be)$ is independent of the choice of bases
\cite[Proposition 4.2]{BDK1}.

We showed in \cite[Proposition 3.1]{BDK2}
that the functor $T_\Pi$ is exact on free $H$-modules, i.e., if
$V \xrightarrow{\be} V' \xrightarrow{\be'} V''$
is a short exact sequence of finite free $H$-modules,
then the sequence
$T_\Pi(V) \xrightarrow{T_\Pi(\be)} T_\Pi(V')
\xrightarrow{T_\Pi(\be')} T_\Pi(V'')$
is exact. Another useful property of $T_\Pi$ is that
the image of $T_\Pi(\be)$ has a finite codimension in $T_\Pi(V')$,
whenever the image of $\be\colon V\to V'$ has a finite codimension.

\subsection{Annihilation algebras of Lie pseudoalgebras}\label{spsanih}
For a Lie \psalg\ $L$, we let $\L=\A(L)=X\tp_H L$,
where as before $X=H^*$.
We define a Lie bracket on $\L$ by the formula
(cf.\ \cite[Eq.~(7.2)]{BDK}):
\begin{equation}\label{alliebr}
[x\tp_H a, y\tp_H b] = \sum_i\, (x f_i)(y g_i) \tp_H c_i \,,
\quad\text{if}\quad
[a*b] = \sum_i\, (f_i\tp g_i) \tp_H c_i \,.
\end{equation}
Then $\L$ is a Lie algebra, called the {\em annihilation algebra\/}
of $L$ (see \cite[Section~7.1]{BDK}).
There is an obvious left action of $H$ on $\L$ given by
\begin{equation}\label{hactsonl}
h(x\tp_H a) = hx\tp_H a \,, \qquad h\in H\,, \; x\in X\,, \; a\in L \,;
\end{equation}
in particular, the Lie algebra $\dd$ acts on $\L$ by
derivations. The semidirect sum $\ti\L = \dd\ltimes\L$ is called
the {\em extended annihilation algebra}.

When $L$ is finite, we can define a filtration on
$\L$ as follows (see \cite[Section~7.4]{BDK} for more details).
We fix a finite-dimensional vector
subspace $L_0$ of $L$ such that $L = HL_0$, and set
\begin{equation}\label{fill}
\fil_n\L = \{ x \tp_H a \in\L \st x\in\fil_n X \,, \; a\in L_0 \}
\,, \qquad n\ge -1 \,.
\end{equation}
The subspaces $\fil_n \L$ constitute a decreasing
filtration of $\L$, satisfying
\begin{equation}\label{filbr}
[\fil_m \L, \fil_n \L] \subset \fil_{m+n-\ell} \L \,,
\qquad \dd (\fil_n \L) \subset \fil_{n-1} \L \,,
\end{equation}
where $\ell$ is an integer depending only on the choice of $L_0$.
Notice that this filtration of $\L$ depends on the choice of
$L_0$, but the induced topology does not \cite[Lemma~7.2]{BDK}.
With this topology, $\L$ becomes a linearly-compact Lie algebra (see \cite{BDK}).
We set $\L_n = \fil_{n+\ell} \L$, so that $[\L_m, \L_n] \subset \L_{m+n}$.
In particular, $\L_0$ is a Lie algebra.

We also define a decreasing filtration of $\ti\L$ by letting
$\fil_{-1}\ti\L=\ti\L$, $\fil_n\ti\L = \fil_n\L$ for $n\ge0$,
and we set $\ti\L_n = \fil_{n+\ell} \ti\L$.
An $\ti\L$-module $V$ is called {\em conformal\/}
if every $v\in V$ is annihilated by some $\L_n$; in other words, if $V$ is
a topological $\ti\L$-module when endowed with the discrete topology.
%
%
The next two results from \cite{BDK} play a crucial role in our
study of representations (see \cite{BDK}, Propositions 9.1 and 14.2,
and Lemma 14.4).

\begin{proposition}\label{preplal2}
Any module $V$ over the Lie \psalg\ $L$ has a natural structure of a
conformal $\ti\L$-module, given by the action of\/ $\dd$ on $V$ and by
\begin{equation}\label{axm2}
(x\tp_H a) \cdot v
= \sum_i\, \< x, S(f_i {g_i}_{(-1)}) \> \, {g_i}_{(2)} v_i \,,
\quad\text{if}\quad
a*v = \sum_i\, (f_i\tp g_i) \tp_H v_i
\end{equation}
for $a\in L$, $x\in X$, $v\in V$.
Conversely, any conformal $\ti\L$-module $V$
has a natural structure of an $L$-module, given by
\begin{equation}\label{prpl2}
a*v = \sum_{I\in\ZZ_+^N} \bigl( S(\d^{(I)}) \tp1 \bigr)\tp_H
\bigl( (x_I\tp_H a) \cdot v \bigr) \,.
\end{equation}
%
Moreover, $V$ is irreducible
as an $L$-module iff it is irreducible as an $\ti\L$-module.
\end{proposition}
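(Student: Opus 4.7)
The plan is to establish the correspondence in both directions and then handle the irreducibility claim separately. For the forward direction, I would first verify that the prescription in \eqref{axm2} is well defined on $X \tp_H L$: the $H$-bilinearity of the pseudoproduct, combined with the Sweedler identities \eqref{cou2} and \eqref{deprod}, ensures that replacing $a$ by $ha$ and $x$ by $xh$ yields the same output. Conformality is then immediate, since for fixed $a \in L$ and $v \in V$ the expansion $a*v = \sum_i (f_i \tp g_i) \tp_H v_i$ is finite, so $\langle x, S(f_i {g_i}_{(-1)}) \rangle$ vanishes for $x$ in $\fil_n X$ with $n$ sufficiently large. Compatibility of the $\L$-action with the $\dd$-action from \eqref{hactsonl} reduces, again by $H$-bilinearity, to the elementary identity $\d \cdot (x \tp_H a) = (\d x) \tp_H a$ together with the fact that $V$ is already an $H$-module.

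The central verification is that \eqref{axm2} intertwines the bracket \eqref{alliebr} on $\L$ with the commutator of actions on $V$. I would expand $[(x \tp_H a), (y \tp_H b)] \cdot v$ using \eqref{alliebr} followed by the definition, then apply the Jacobi identity \eqref{psrep} to rewrite $[a*b] * v$ as $a*(b*v) - ((\sigma \tp \id) \tp_H \id)(b*(a*v))$, and pair with $x \tp y$ via the antipode/coproduct identities. Careful use of \eqref{psprod1}--\eqref{psprod2} and the Sweedler relations converts this into the two terms on the right-hand side of the Lie bracket commutator. The bookkeeping in this step, especially in tracking how the components ${g_i}_{(-1)}$ and ${g_i}_{(2)}$ interact across the composed operation $[[a*b]*v]$, is the principal technical obstacle.

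For the converse, given a conformal $\ti\L$-module $V$, I would define $a*v$ by \eqref{prpl2}, with conformality guaranteeing finiteness of the sum. That the resulting pseudoproduct is $H$-bilinear and satisfies the representation axiom \eqref{psrep} reduces, via the duality between the PBW basis $\{\d^{(I)}\}$ of $H$ and the topological basis $\{x_I\}$ of $X$, to the Lie algebra relations in $\ti\L$. The two assignments are mutually inverse by virtue of the completeness identity $h = \sum_I \langle x_I, h \rangle \d^{(I)}$ in $H$. Finally, for the irreducibility assertion, I would invoke Remark \ref{rlmod}(i): an $H$-submodule $W \subset V$ is $L$-invariant iff the coefficients $v'_I$ appearing in $a*v = \sum_I (\d^{(I)} \tp 1) \tp_H v'_I$ lie in $W$ whenever $v \in W$. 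Pairing with $x_J$ and comparing with \eqref{axm2} identifies these coefficients with the values $(x_I \tp_H a) \cdot v$ of the $\L$-action, so that $L$-submodules and $\ti\L$-submodules of $V$ coincide; combined with the automatic $\dd$-invariance of $H$-submodules, this yields the equivalence of irreducibility.
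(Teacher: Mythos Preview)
The paper does not actually prove this proposition: immediately before the statement it says that the result is taken from \cite{BDK} (Propositions~9.1 and~14.2 and Lemma~14.4), and no proof is supplied here. So there is no in-paper argument to compare against, and your task was in effect to reconstruct a proof from scratch.

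That said, your outline is a faithful sketch of how such a proof goes. The well-definedness and conformality checks are routine, and you correctly flag the bracket-compatibility verification as the main technical work; in the original \cite{BDK} this is indeed the substantive computation. For the converse, invoking the duality between $\{\d^{(I)}\}$ and $\{x_I\}$ is exactly the right mechanism (note that \eqref{prpl2} uses the basis $\{S(\d^{(I)})\}$, so the coefficient identification you describe is literally with a reindexed basis, but this is harmless). For irreducibility, your observation that $\ti\L$-submodules are automatically $H$-submodules (since $\dd$ generates $H$) and that \reref{rlmod} then matches $L$-stability with $\L$-stability is the standard argument. One small point you glossed over: the paper's definition of an irreducible $L$-module also requires $L*V\neq\{0\}$, so strictly speaking you should remark that this non-triviality condition corresponds on the $\ti\L$ side to the $\L$-action being nonzero.
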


\begin{remark}\label{pseudoactioncheck}
In the proof of \eqref{prpl2}, one only uses that $\{\d^{(I)}\}$ and $\{x_I\}$ are dual bases of $H$ and $H^*$. Then using \leref{dualtwisted}(ii) also gives
\begin{equation}
a*v = \sum_{I\in\ZZ_+^N} \bigl( \overline{S(\d^{(I)})} \tp1 \bigr)\tp_H
\bigl( (x_I e^{-\chi}\tp_H a) \cdot v \bigr) \,.
\end{equation}
\end{remark}

\begin{lemma}\label{lkey2}
Let $L$ be a finite Lie \psalg\
and $V$ be a finite $L$-module. For $n\ge -1-\ell$, let
\begin{equation*}\label{kernv}
\ker_n V
= \{ v \in V \st \L_n \, v = 0 \},
\end{equation*}
so that, for example, $\ker_{-1-\ell} V = \ker V$
and\/ $V = \bigcup \ker_n V$.
Then all vector spaces\/
$\ker_n V / \ker V$ are finite dimensional.
In particular, if\/ $\ker V=\{0\}$, then
every vector $v\in V$ is contained in a finite-dimensional subspace
invariant under~$\L_0$.
\end{lemma}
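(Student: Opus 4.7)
The plan is to exploit the compatibility between the $\mathfrak d$-action on $\mathcal L$ and its filtration, in order to build a finite-dimensional $H$-filtration of $V$ that bounds $\ker_n V$ modulo $\ker V$.

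First, I would use finiteness of $V$ as an $H$-module together with conformality to choose a finite-dimensional subspace $V_0\subset V$ such that $HV_0=V$ and $V_0\subset \ker_{n_0}V$ for some $n_0$ (each of finitely many generators is annihilated by some $\mathcal L_{n_i}$, and I take $n_0=\max_i n_i$). I would then prove $\mathfrak d\cdot\ker_m V\subset\ker_{m+1}V$ for every $m$: from the $\tilde{\mathcal L}$-module identity $y(\partial v)=\partial(yv)-(\partial y)v$ together with $\mathfrak d\cdot\mathcal L_{m+1}\subset\mathcal L_m$ (see \eqref{filbr}), any $y\in\mathcal L_{m+1}\subset\mathcal L_m$ kills $v$, while $\partial y\in\mathcal L_m$ also kills $v$, so $y(\partial v)=0$. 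Iterating produces $\fil^k H\cdot V_0\subset\ker_{n_0+k}V$, hence a filtration $F^k V:=\fil^k H\cdot V_0$ by finite-dimensional subspaces that exhaust $V$ and satisfy $F^k V\subset\ker_{n_0+k}V$.

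The heart of the argument is establishing the reverse inclusion $\ker_{n_0+k}V\subset F^k V+\ker V$. Passing to $\bar V:=V/\ker V$, this becomes: any element of the image of $\ker_{n_0+k}V$ in $\bar V$ already lies in the finite-dimensional $\bar F^k\bar V:=(F^k V+\ker V)/\ker V$. I would use the pseudoaction-to-annihilation-algebra dictionary of \prref{preplal2} and \reref{pseudoactioncheck}, together with the uniqueness of the coefficients $v'_I$ in \reref{rlmod}(i), to argue that a vector whose $H$-degree relative to the image of $V_0$ exceeds $k$ must produce, under the pseudoaction $a\ast v$ for some generator $a\in L_0$, a nonzero coefficient $v'_I$ with $|I|$ large; this translates, via the formula of \reref{pseudoactioncheck}, into a nontrivial action by some element of $\mathcal L\smallsetminus\mathcal L_{n_0+k}$, contradicting $v\in\ker_{n_0+k}\bar V$. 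Once this inclusion is in hand, $\dim(\ker_n V/\ker V)\le \dim F^{n-n_0}V<\infty$.

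The \emph{in particular} statement is then immediate: when $\ker V=0$, every $v\in V$ lies in some finite-dimensional $\ker_n V$, and this subspace is $\mathcal L_0$-stable because $[\mathcal L_0,\mathcal L_n]\subset\mathcal L_n$ gives $z(yv)=y(zv)+[z,y]v=0$ for $y\in\mathcal L_0$, $z\in\mathcal L_n$, $v\in\ker_n V$. The main obstacle is the third step: since $F^\bullet V$ is only an $H$-filtration and not an $L$-filtration, one has to show via the uniqueness in \reref{rlmod} and Noetherianity of $V$ as an $H$-module that no ``high-order'' pseudoaction coefficient can accidentally fall into $\ker V$ without the original vector already having $H$-degree at most $k$ relative to the image of $V_0$.
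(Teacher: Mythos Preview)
The paper does not give its own proof of this lemma: it is quoted from \cite{BDK} (see the sentence just before \prref{preplal2}, which cites Propositions~9.1 and~14.2 and Lemma~14.4 of \cite{BDK}). So there is no in-paper argument to compare against; I can only assess your attempt on its own merits.

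Your steps 1--3 and the ``in particular'' paragraph are correct and standard. The inclusion $\fil^k H\cdot V_0\subset\ker_{n_0+k}V$ follows exactly as you say from $\dd\cdot\fil_m\L\subset\fil_{m-1}\L$, and $\L_0$-stability of $\ker_n V$ is immediate from $[\L_0,\L_n]\subset\L_n$.

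The genuine gap is your step 4, and you have correctly flagged it as the obstacle. The claimed reverse inclusion $\ker_{n_0+k}V\subset F^kV+\ker V$ is equivalent to saying that the intrinsic filtration $\{\ker_n V\}$ and the $H$-filtration $\{F^kV\}$ agree up to a fixed shift modulo $\ker V$; this is a substantive statement, not a formal consequence of the dictionary in \prref{preplal2} or of the uniqueness of the coefficients in \reref{rlmod}. Your sketch (``high $H$-degree forces a nonzero $v'_I$ with large $|I|$'') does not go through as written: when $v=\sum_J \d^{(J)}v_J$ with $v_J\in V_0$, expanding $a*v=\sum_J(1\tp\d^{(J)})(a*v_J)$ and then left-straightening produces \emph{sums} of terms, and there is nothing preventing the top-degree contributions from cancelling. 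Such a cancellation does not force any $v_J$ into $\ker V$ individually; it only says that a particular linear combination of Fourier coefficients vanishes. Moreover, when $V$ is not $H$-free the ``$H$-degree relative to $V_0$'' is only defined as an infimum over presentations, so the notion you are appealing to is not canonical. Noetherianity of $V$ alone does not close this gap. To make the argument work you would need an honest comparison of the two filtrations on the associated graded level (showing that $\gr^{\ker}(V/\ker V)$ is finitely generated over $\gr H=S(\dd)$, hence has finite-dimensional graded pieces), which is essentially the content of the result you are trying to prove and is what is carried out in \cite{BDK}.
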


\subsection{$\Wd$ and its annihilation algebra}\label{subwd}

One of the most important Lie \psalgs\ is $\Wd=H\tp\dd$, 
with the Lie pseudobracket (see \cite[Section 8.1]{BDK}):
\begin{equation}\label{wdbr*}
\begin{split}
[(f\tp a)*(g\tp b)]
&= (f\tp g)\tp_H(1\tp [a,b])
\\
&- (f\tp ga)\tp_H(1\tp b) + (fb\tp g)\tp_H(1\tp a) \,,
\end{split}
\end{equation}
for $f,g\in H$, $a,b\in\dd$.
The formula
\begin{equation}\label{wdac*}
(f\tp a)*h = -(f\tp ha)\tp_H 1
\end{equation}
defines the structure of a $\Wd$-module on $H$.

We denote the annihilation algebra of $\Wd$ by 
\begin{equation}\label{annihw}
\W = \A(\Wd) = X \tp_H (H \tp \dd) \simeq X \tp \dd \,.
\end{equation}
The Lie bracket in $\W$ is given by ($x,y\in X$, $a,b\in\dd$):
\begin{equation}\label{Wbra}
[x \tp a, y \tp b] = xy \tp [a, b] - x(ya) \tp b + (xb)y \tp a \,.
\end{equation}
The extended annihilation algebra of $\Wd$ is $\ti\W=\dd\ltimes\W$, where
\begin{equation}\label{dactw}
[\d, x \tp a] = \d x \tp a \,, \qquad \d,a\in\dd, \; x\in X \,.
\end{equation}
The Lie algebra $\W$ has a decreasing filtration
\begin{equation}\label{wp}
\W_n = \fil_n \W = \fil_n X\tp\dd \,,
\qquad n\geq-1 \,,
\end{equation}
satisfying $\W_{-1}=\W$ and $[\W_i, \W_j] \subset \W_{i+j}$. 
Note that $\W/\W_0 \simeq \kk\tp\dd \simeq \dd$ and 
$\W_0/\W_1 \simeq \dd^*\tp\dd$.

\begin{lemma}[\cite{BDK1}]\label{cwbra}
The map from\/ $\W_0/\W_1$ to\/ $\dd\tp\dd^*\simeq\gld$, defined by
\begin{equation*}
x\tp a \mod \W_1 \mapsto -a \tp (x \mmod \fil_1 X),
\qquad x\in \fil_0 X, \;\; a\in\dd,
\end{equation*}
is a Lie algebra isomorphism.
Under this isomorphism, the adjoint action of\/ $\W_0/\W_1$ on
$\W/\W_0$ coincides with the standard action of\/
$\gld$ on~$\dd$.
\end{lemma}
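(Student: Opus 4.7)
The plan is to break the proof into the vector-space identification, the bracket check, and the adjoint-action check. Since $\W_n=\fil_n X\tp\dd$, one has $\W_0/\W_1\simeq(\fil_0 X/\fil_1 X)\tp\dd$ as a vector space. Under the identification $X\simeq\O_{2N}$ the subspace $\fil_n X$ corresponds to the $(n+1)$-st power of the maximal ideal, so $\fil_0 X/\fil_1 X\simeq\dd^*$ with $x\mod\fil_1 X$ equal to the linear part of $x$. Composing with the transposition $x\tp a\mapsto-a\tp x$ yields the claimed bijection onto $\dd\tp\dd^*\simeq\gld$.

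For the bracket, I would take $x,y\in\fil_0 X$ and $a,b\in\dd$ and reduce \eqref{Wbra} modulo $\W_1$. Because the product on $X$ is dual to the comultiplication on $H$, one checks that $\fil_0 X\cdot\fil_0 X\subset\fil_1 X$, so the term $xy\tp[a,b]$ drops out. For the remaining two terms, the key observation is that for any $z\in X$ and $x\in\fil_0 X$ one has $xz\equiv\langle z,1\rangle\,x\pmod{\fil_1 X}$, obtained by splitting $z$ into its constant term and an element of $\fil_0 X$. Applied to $z=ya$ this gives $\langle ya,1\rangle=-\langle y,a\rangle$ by \eqref{dx2}, whence
\begin{equation*}
x(ya)\equiv-\langle y,a\rangle\,x,\qquad (xb)y\equiv-\langle x,b\rangle\,y\pmod{\fil_1 X}.
\end{equation*}
Substituting into \eqref{Wbra} and applying the transposition, the result matches the standard $\gld$-bracket $[a\tp x,b\tp y]=\langle x,b\rangle\,a\tp y-\langle y,a\rangle\,b\tp x$ on rank-one elements.

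For the adjoint action, the identification $\W/\W_0\simeq\dd$ sends $y\tp b\mod\W_0$ to $\langle y,1\rangle b$. I would apply \eqref{Wbra} to $x\tp a\in\W_0$ and $y\tp b\in\W$ and pair each coefficient with $1\in H$. Since $\langle x,1\rangle=0$, multiplicativity of the counit kills the first two terms, while the third yields $\langle(xb)y,1\rangle=\langle xb,1\rangle\langle y,1\rangle=-\langle x,b\rangle\langle y,1\rangle$. Hence $[x\tp a,y\tp b]\mod\W_0$ corresponds to $-\langle x,b\rangle\langle y,1\rangle\,a\in\dd$, which is precisely the standard action of the image $-a\tp x\in\gld$ of $x\tp a$ on the image $\langle y,1\rangle b\in\dd$ of $y\tp b$.

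The only real obstacle is bookkeeping: one must keep the right action $x\d$ distinct from the left action $\d x$, track the minus signs coming from \eqref{dx1}--\eqref{dx2}, and reconcile them with the sign in the transposition $x\tp a\mapsto-a\tp x$. Once these conventions are pinned down, both checks reduce to routine computations in $X$.
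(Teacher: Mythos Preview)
Your proof is correct. The paper itself does not prove this lemma; it is stated with a citation to \cite{BDK1} and used without proof, so there is nothing in the present paper to compare against. Your argument supplies exactly the kind of direct verification one would expect: identify $\W_0/\W_1\simeq(\fil_0 X/\fil_1 X)\tp\dd\simeq\dd^*\tp\dd$, reduce the bracket \eqref{Wbra} modulo $\W_1$ using $\fil_0 X\cdot\fil_0 X\subset\fil_1 X$ and the constant-term trick $xz\equiv\langle z,1\rangle x$, and then check the adjoint action on $\W/\W_0$ by pairing with $1$. The sign bookkeeping you flag (distinguishing the right action $x\d$ from the left, and the minus in the transposition) is the only subtlety, and you have handled it correctly.
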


The action \eqref{wdac*} of $\Wd$ on $H$ induces a corresponding action of the
annihilation algebra $\W$ on $X$ given by
\begin{equation}\label{xay}
(x\tp a) y = -x (ya), \qquad x,y \in X, \; a\in \dd.
\end{equation}
Since $\dd$ acts on $X$ by continuous derivations, the Lie algebra
$\W$ acts on $X$ by continuous derivations.  The isomorphism $X
\simeq \O_{2N}$ induces a Lie algebra
homomorphism $\ph$ from $\W$ to $W_{2N} = \Der\O_{2N}$, the Lie algebra
of continuous derivations of the algebra $\O_{2N} = \kk[[t^1,\dots,t^{2N}]]$.
In fact, $\ph$ is an isomorphism
compatible with the filtrations (see \cite[Proposition~3.1]{BDK1}).
We recall that the canonical filtration of the Lie--Cartan algebra $W_{2N}$ is given explicitly by
\begin{equation}\label{filpwn2}
\fil_p W_{2N} = \Bigl\{ \sum_{i=1}^{2N} f_i \frac\d{\d t^i}
\; \Big| \; f_i \in \fil_p \O_{2N} \Bigr\} \,,
\qquad p\ge -1\,,
\end{equation}
where $\fil_p \O_{2N}$ is the $(p+1)$-st power of the maximal
ideal $(t^1,\dots,t^{2N})$ of $\O_{2N}$.

It is well known that all continuous derivations of the Lie algebra $W_{2N}$ are inner (see e.g.\ \cite[Proposition 6.4(i)]{BDK}).
Hence, the same is true for $\W$. Since every non-zero $\d\in\dd$ acts as a non-zero continuous derivation of $\W$ by \eqref{dactw},
we obtain an injective Lie algebra homomorphism $\ga\colon\dd\injto\W$ such that the elements
\begin{equation}\label{d-tilde}
\ti\d := \d-\ga(\d) \in \ti\W \qquad (\d\in\dd)
\end{equation}
centralize $\W$. The set $\ti\dd$ of all $\ti\d$ is a subalgebra of $\ti\W$, which is isomorphic to $\dd$ under the map $\d\mapsto\ti\d$
(see \cite[Proposition 3.2]{BDK1}). Moreover, by \cite[Lemma 3.3]{BDK1},
\begin{equation}\label{tilded}
\widetilde \d = \d + 1 \tp \d - \ad \d \mod \W_1, \qquad \d \in\dd \,,
\end{equation}
where $\ad \d$ is understood as an element of $\gl\,\dd \simeq
\W_0/\W_1$ via \leref{cwbra}.

\section{Primitive Lie pseudoalgebras of type $H$}
\label{sprim}
In this section, we introduce the main objects of our study: the Lie
pseudoalgebra $\Hd$ and its annihilation Lie algebra (see
\cite[Chapter~8]{BDK}). We also review the unique embedding of
$\Hd$ into the Lie pseudoalgebra $\Wd$ and the induced homomorphism 
of annihilation algebras.

\subsection{Symplectic Lie algebra}\label{ssymlie}
Let $\om\in\dd^*\wedge\dd^*$ be a nondegenerate skew-symmetric $2$-form, 
so that 
\begin{equation}\label{cont1}
\underbrace{\om\wedge\dots\wedge\om}_{N} \ne 0 \,, \qquad
\dim\dd=2N \,.
\end{equation}
We set $\om_{ij} = \om(\d_i \wedge \d_j)$
and denote by  $(r^{ij})$ the inverse matrix to $(\om_{ij})$:
\begin{equation}\label{cont5}
\sum_{k=1}^{2N} r^{ik} \om_{kj} = \de^i_j \,, \qquad i,j=1,\dots,2N \,.
\end{equation}
We identify $\End\dd$ with $\dd \tp \dd^*$ so that the elementary 
matrix $e_i^j \in\End\dd$ is identified with the element 
$\d_i \tp x^j \in \dd \tp \dd^*$, where $e_i^j (\d_k) = \de^j_k \d_i$. 
Notice that $(\d \tp x) (\d') = \langle x, \d'\rangle \d$, 
and the composition $(\d \tp
x) \circ (\d' \tp x')$ equals $\langle x, \d'\rangle \d \tp x'$. 
We will raise indices using the matrix $(r^{ij})$ and lower them
using $(\om_{ij})$. In particular,
\begin{equation}\label{eij}
e^{ij} = \d^i \tp x^j = \sum_{k=1}^{2N} r^{ik} e_k^j \,,
\end{equation}
where
\begin{equation}\label{cont7}
\d^i = \sum_{k=1}^{2N} r^{ik} \d_k \,, \qquad
\om(\d^i \wedge \d_j) = \de^i_j \,.
\end{equation}
Conversely, we have
\begin{equation}\label{cont8}
\d_k = \sum_{j=1}^{2N} \om_{kj} \d^j \,.
\end{equation}

Denote by $\spd=\sp(\dd,\om)$ the Lie algebra of all $A\in\gld$
such that $A\cdot\om=0$, i.e.,
\begin{equation}\label{aom0}
\om(A\d_i \wedge \d_j) + \om(\d_i \wedge A\d_j) = 0 \,, \qquad
i,j=1,\dots,2N \,.
\end{equation}
The Lie algebra $\spd$ is isomorphic to $\sp_{2N}$
and, in particular, is simple.
It is easy to see that the elements
\begin{equation}\label{fij}
f^{ij} = - \frac{1}{2} (e^{ij} + e^{ji}) = f^{ji} \,,
\qquad 1\le i\le j\le 2N
\end{equation}
form a basis of $\spd$. 
We will denote by $R(\lambda)$ the irreducible $\spd$-module
with highest weight $\lambda$, and by $\pi_n$ the fundamental weights
of $\spd$. For example, $R(\pi_1)\simeq\dd$ is the vector representation.
We set
$R(\pi_0) = \kk$ and $R(\pi_n) = \{0\}$ if $n<0$ or $n>N$.

\begin{example}\label{symbas}
Let us choose the basis of $\dd$ to be \emph{symplectic}, i.e.,
\begin{equation}\label{cont9}
\om(\d_i \wedge \d_{i+N}) = 1 = - \om(\d_{i+N} \wedge \d_i) \,,
\quad \om(\d_i \wedge \d_j) = 0 \quad\text{for}\quad |i-j| \ne N \,.
\end{equation}
Then we have:
\begin{equation}\label{cont10}
\d^i = -\d_{i+N} \,, \quad \d^{i+N} = \d_i 
\,, \qquad 1\le i \le N \,,
\end{equation}
which implies
\begin{equation}\label{cont11}
e^{ij} = -e_{i+N}^j  \,, \quad e^{i+N,j} = e_i^j 
\,, \qquad 1\le i \le N \,, \; 1\le j \le 2N \,.
\end{equation}
\end{example}

\subsection{Definition of $\Hd$}\label{subprim}

Let again $\chi\in\dd^*$ be a trace form and 
$\om\in\dd^*\wedge\dd^*$ be a nondegenerate skew-symmetric
$2$-form on $\dd$. From now on we will assume that $\om$ and $\chi$ satisfy the equation
\begin{equation}\label{omchi3}
\bigl( \om([a_1,a_2] \wedge a_3) - \chi(a_1)\,\om(a_2 \wedge a_3) \bigr)
+\text{cyclic} = 0 \,,
\qquad a_1,a_2,a_3 \in\dd \,,
\end{equation}
where ``cyclic'' here and further means applying the two non-trivial
cyclic permutations to the indices $1,2,3$.
We fix a basis $\{\d_1,\dots,\d_{2N}\}$ of $\dd$ as in \seref{sbasfil},
and let
\begin{equation}\label{cont6}
r = \sum_{i,j=1}^{2N} r^{ij} \d_i\tp\d_j
= \sum_{i=1}^{2N} \d_i\tp\d^i
= -\sum_{i=1}^{2N} \d^i\tp\d_i
\end{equation}
(cf.\ \eqref{cont7}).
Notice that $r$ is skew-symmetric and independent of the choice of basis.
We define $s\in\dd$ by the property 
\begin{equation}\label{chiom}
\chi(a)=(\iota_s\om)(a)=\om(s\wedge a) \,, \qquad a\in\dd \,.
\end{equation}

Then \eqref{omchi3} and $\chi([\dd,\dd])=0$ are
equivalent to the following system of equations for $r$ and $s$
(see \cite[Lemma~8.5]{BDK}):
\begin{align}
\label{cybe3}
[r,\De(s)] &= 0 \,,
\\
\label{cybe4}
([r_{12}, r_{13}] + r_{12} s_3) + \text{{\rm{cyclic}}} &= 0 \,,
\end{align}
where we use the standard notation $r_{12}=r\tp1$, $s_3=1\tp1\tp s$, etc.
It follows from \cite[Lemma 8.7]{BDK} that
\begin{equation}\label{hdps1}
[e * e] = (r + s \tp 1 - 1 \tp s)\tp_H e
\end{equation}
extends to a Lie pseudoalgebra bracket on $He$. 
The obtained Lie \psalg\ is denoted $\Hd$.
There is an injective homomorphism of Lie \psalgs\ 
\begin{equation}\label{kd2}
\io\colon \Hd\to\Wd \,, \qquad e \mapsto -r + 1 \tp s \,,
\end{equation}
where $\Wd=H\tp\dd$ is from \seref{subwd} (see \cite[Lemma 8.3]{BDK}). 
Moreover, this is the unique non-trivial homomorphism from $\Hd$ to $\Wd$,
by \cite[Theorem 13.7]{BDK}. 
{}From now on, we will often identify $\Hd$ with its image in $\Wd$. 

Note that, by \eqref{cont7} and \eqref{chiom}, we have
\begin{equation}\label{schi}
s = \sum_{i=1}^{2N} \chi(\d_i) \d^i = -\sum_{i=1}^{2N} \chi(\d^i) \d_i \,.
\end{equation}
This allows us to rewrite \eqref{hdps1} in the form
\begin{equation}\label{bracket}
[e * e] = \sum_{i,j=1}^{2N} r^{ij} (\bd_i \tp \bd_j) \tp_H e 
= \sum_{i=1}^{2N} (\bd_i \tp \bd^i) \tp_H e \,,
\end{equation}
where $\bd$ is given by \eqref{dbar}.
Thus,
\begin{equation}\label{iota}
\io(e) = -r + 1 \tp s = - \sum_{i=1}^{2N} \bd_i \tp \d^i = \sum_{i=1}^{2N} \bd^i \tp \d_i \,.
\end{equation}
Observe also that $\chi(s)=\om(s\wedge s)=0$ and so $\bar s=s$.

Let us consider the element
\begin{equation}\label{delx}
\rho = \frac{1}{2} \sum_{i,j=1}^{2N} r^{ij} [\d_i, \d_j]
= \frac{1}{2} \sum_{i=1}^{2N} \, [\d_i, \d^i]
\in\dd
\end{equation}
and the linear function
\begin{equation}\label{phirho}
\phi = -\chi + \io_\rho\om = \io_{\rho-s}\om \in\dd^* \,.
\end{equation}
Here $\io_\rho\om \in\dd^*$ is defined by $(\io_\rho\om)(a)=\om(\rho\wedge a)$ for $a\in\dd$,
and we used that $\chi=\io_s\om$. 

\begin{remark}\label{rhins} 
By \cite[Remark 8.5]{BDK},
the above embedding \eqref{kd2} realizes $\Hd$ as a subalgebra of the Lie pseudoalgebra
$S(\dd, \phi) \subset\Wd$.
Note that for $N=1$, we have $\Hd=S(\dd, \phi)=S(\dd,-\chi + \tr\ad)$;
see \cite[Example 8.1]{BDK}.
\end{remark}

In order to explain why $\phi$ is a trace form on $\dd$, we will find another expression for it, which will be useful in the sequel.

\begin{lemma}\label{lemphi}
With the above notation \eqref{delx}, \eqref{phirho}, we have\/
$\phi = -N\chi+\tr\ad$. In particular, $\phi$ is a trace form on\/ $\dd$.
\end{lemma}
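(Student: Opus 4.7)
The plan is to extract the desired formula directly from the cocycle identity \eqref{omchi3} by contracting in a clever way with the dual bases $\{\d_i\}$ and $\{\d^i\}$, exploiting that $r$ is the ``inverse'' of $\om$ in the sense of \eqref{cont5}.

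First I would apply \eqref{omchi3} with $a_1=\d_i$, $a_2=\d^i$, $a_3=a$ for arbitrary $a\in\dd$, obtaining
\[
\om([\d_i,\d^i]\wedge a) + \om([\d^i,a]\wedge\d_i) + \om([a,\d_i]\wedge\d^i)
= \chi(\d_i)\,\om(\d^i\wedge a) + \chi(\d^i)\,\om(a\wedge\d_i) + \chi(a)\,\om(\d_i\wedge\d^i),
\]
and then sum over $i=1,\dots,2N$. Using $\om(\d^i\wedge\d_j)=\de^i_j$ together with $\chi=\sum_i\chi(\d_i)x^i$ (which follows from \eqref{schi} and \eqref{chiom}), the first two sums on the right-hand side each collapse to $\chi(a)$, while $\sum_i\om(\d_i\wedge\d^i)=-2N$, so the right-hand side equals $2\chi(a)-2N\chi(a)$.

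For the left-hand side, the key identity I would establish from \eqref{cont5} (using skew-symmetry of both $r$ and $\om$) is
\[
\sum_{i=1}^{2N} \om_{ki}\, r^{ij} = \de^j_k = \sum_{i=1}^{2N} r^{ij}\om_{ki},
\]
so that expanding $[\d^i,a]=\sum_{j}r^{ij}[\d_j,a]$ and writing the adjoint action in the basis $\{\d_k\}$ turns each of the two sums $\sum_i\om([\d^i,a]\wedge\d_i)$ and $\sum_i\om([a,\d_i]\wedge\d^i)$ into $-\tr(\ad a)$ after a short calculation. Combining everything yields
\[
\sum_{i=1}^{2N}\om([\d_i,\d^i]\wedge a) = -2(N-1)\chi(a) + 2\tr(\ad a),
\]
and dividing by $2$ gives $(\io_\rho\om)(a)=\om(\rho\wedge a)=-(N-1)\chi(a)+\tr(\ad a)$. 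Plugging this into \eqref{phirho} produces $\phi=-N\chi+\tr\ad$.

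Finally, $\phi$ is a trace form on $\dd$ because $\chi$ is a trace form by hypothesis and $\tr\ad$ annihilates $[\dd,\dd]$ for every Lie algebra (as $\tr[\ad\d_1,\ad\d_2]=0$). The only subtle step is the combinatorial identity $\sum_i\om_{ki}r^{ij}=\de^j_k$ and the careful bookkeeping when converting the two bracket sums into $-\tr(\ad a)$; everything else is a straightforward contraction of \eqref{omchi3} with the duality $\om(\d^i,\d_j)=\de^i_j$.
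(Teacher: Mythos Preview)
Your argument is correct and is essentially the same as the paper's: both proofs contract the cocycle identity \eqref{omchi3} against the dual bases $\{\d_i\},\{\d^i\}$, identify the two bracket sums with $-\tr(\ad a)$ via $\om(\d^i\wedge\d_j)=\de^i_j$, and collapse the $\chi$-terms using \eqref{schi} and $\sum_i\om(\d_i\wedge\d^i)=-2N$. The only cosmetic difference is presentation order (the paper states the trace formula $\tr A=\sum_i\om(A\d^i\wedge\d_i)$ at the outset and expands $2\om(\rho\wedge a)$, whereas you write out \eqref{omchi3} first and then recognize the traces), but the computations are identical in content.
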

\begin{proof}
First observe that, by \eqref{cont7}, for any linear operator $A$ on $\dd$, we have
\begin{equation*}
\tr A = \sum_{i=1}^{2N} \om(A\d^i \wedge \d_i) = -\sum_{i=1}^{2N} \om(A\d_i \wedge \d^i) \,.
\end{equation*}
Then, using \eqref{cont7}, \eqref{omchi3}, \eqref{chiom} and \eqref{schi}, we find for $a\in\dd$:
\begin{align*}
2 \om(\rho\wedge a) &= \sum_{i=1}^{2N} \om([\d_i,\d^i] \wedge a) \\
&= -\sum_{i=1}^{2N} \Bigl( \om([\d^i,a] \wedge\d_i) + \om([a,\d_i] \wedge\d^i) \Bigr) \\
&\quad+ \sum_{i=1}^{2N} \Bigl( \chi(\d_i) \om(\d^i \wedge a) 
+ \chi(\d^i) \om(a \wedge \d_i) + \chi(a) \om(\d_i \wedge \d^i) \Bigr) \\
&= 2 \tr\ad a + 2 \om(s\wedge a) - 2N \chi(a) \\
&= 2 \tr\ad a + (2 - 2N) \chi(a) \,.
\end{align*}
Therefore, $\io_\rho\om = (1-N)\chi + \tr \ad$, as claimed.
\end{proof}

\subsection{Annihilation algebra of $\Hd$}\label{subans}

We will denote by $\P$ the annihilation algebra 
of the Lie \psalg\ $\Hd$ (see \seref{spsanih} and \cite[Section~7.1]{BDK}).
By definition, we have 
\begin{equation}\label{pahd}
\P = \A(\Hd) = X \tp_H \Hd = X \tp_H He \,, 
\end{equation}
which sometimes will be identified with $X$ via the map $x \tp_H he \mapsto xh$. 
The Lie bracket on $\P$ is given by (cf.\ \eqref{alliebr}, \eqref{bracket}):
\begin{equation}\label{brap}
[x,y] = \sum_{i,j=1}^{2N} r^{ij} (x\bd_i)(y\bd_j)
= \sum_{i=1}^{2N} (x\bd_i)(y\bd^i) 
\,, \qquad x,y \in X \,.
\end{equation}

We define a decreasing filtration on $\P$ by
\begin{equation}\label{Pfiltration}
\P_n = \fil_n \P = \fil_{n+1} X \tp_H e \simeq \fil_{n+1} X \,, \qquad n\ge -2 \,,
\end{equation}
which is obtained as in \seref{spsanih} by choosing $L_0 = \kk e$. 
The canonical
injection $\iota$ of the subalgebra $\Hd$ in $\Wd$ induces a Lie
algebra homomorphism $\iota_*\colon \P \to \W$, which is 
given explicitly by (cf.\ \eqref{iota}):
\begin{equation}\label{iota2}
\io_*(x) = - \sum_{i=1}^{2N} x\bd_i \tp \d^i = \sum_{i=1}^{2N} x\bd^i \tp \d_i \,,
\qquad x\in X \simeq \P \,.
\end{equation}
We will also denote by $\iota_*$ the corresponding Lie algebra homomorphism of extended annihilation algebras $\widetilde \P \to \widetilde \W$, defined by \eqref{iota2} and by $\iota_*(\d)=\d$ for $\d\in\dd$.

The map $\io_*$ is not
injective, contrary to what happens with primitive Lie
pseudoalgebras of all other types.

\begin{lemma}\label{lkerio}
The map\/
$\iota_*\colon \P \to \W$ has a\/ $1$-dimensional kernel, which is spanned over\/ $\kk$ by\/
$\ce \equiv e^{-\chi} \tp_H e$ and is contained in the center of\/ $\P$.
\end{lemma}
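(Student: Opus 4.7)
The plan is to work under the identification $\P \simeq X$ sending $x \tp_H he \mapsto xh$. Under this identification, $\ce$ corresponds to $e^{-\chi} \in X$, the Lie bracket is given by \eqref{brap}, and the homomorphism $\iota_*$ is given explicitly by \eqref{iota2}. Since $\{\d_i\}_{i=1}^{2N}$ is a basis of $\dd$, an element $x \in X \simeq \P$ lies in $\ker \iota_*$ if and only if $x\bd^i = 0$ for every $i = 1, \dots, 2N$.

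Next I would use that $e^{-\chi} \in X \simeq \O_{2N}$ has constant term $1$, hence is a unit, so every $x \in X$ can be uniquely factored as $x = y\,e^{-\chi}$ with $y \in X$. Equation \eqref{echi} of \leref{checkbar} then gives $x\bd^i = (y\d^i)\,e^{-\chi}$, so the kernel condition translates into $y\d^i = 0$ for all $i$, equivalently $y\d = 0$ for every $\d \in \dd$. By the duality formula \eqref{dx2}, this means that $y$ annihilates the right ideal $H\dd$, which by PBW coincides with the augmentation ideal $\ker\varepsilon$. Hence $y$ is determined by $\langle y, 1\rangle$ and must be a scalar multiple of the multiplicative identity $1 \in X \simeq \O_{2N}$. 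Therefore $y \in \kk$ and $x = c\,e^{-\chi}$ for some $c \in \kk$, which shows that $\ker \iota_* = \kk\,\ce$.

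For centrality, formula \eqref{brap} gives $[\ce, y] = \sum_i (e^{-\chi}\bd_i)(y\bd^i)$ for $y \in \P \simeq X$. Applying \leref{checkbar} once more to $e^{-\chi} = 1 \cdot e^{-\chi}$ yields $e^{-\chi}\bd_i = (1\cdot \d_i)\,e^{-\chi} = 0$, since $\langle 1\cdot \d_i, h\rangle = -\langle 1, h\d_i\rangle = -\varepsilon(h\d_i) = 0$ for all $h \in H$. Hence $[\ce, y] = 0$ for every $y \in \P$, establishing centrality. The only real subtlety is the bookkeeping around the bar twist and the translation between the right $\dd$-action on $X$ and the duality pairing with $H$; once that is under control, the whole statement reduces to the standard fact that the only element of $X = H^*$ annihilated by the right action of all of $\dd$ is a scalar multiple of the counit.
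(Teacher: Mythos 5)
Your proof is correct and follows the same route as the paper's: use \eqref{iota2} to characterize the kernel by the vanishing of $x\bd^i$, factor out the unit $e^{-\chi}$ via \eqref{echi} to reduce to $y\d^i = 0$, and conclude $y\in\kk$; you merely spell out the final step (duality with the augmentation ideal via \eqref{dx2}) and the centrality computation in more detail than the paper does.
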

\begin{proof}
Using \eqref{iota2} and \eqref{echi},
we obtain $\iota_* ( x e^{-\chi} ) = \sum_i
(x\d^i) e^{-\chi} \tp \d_i$. This is zero if
and only if $x\d^i = 0$ for all $i$, which only happens when $x$
lies in $\kk \subset X$. The fact that $e^{-\chi}$ is central
in $\P$ follows immediately from \eqref{brap} and \eqref{echi}.
\end{proof}

From now on, we will denote by $\H:= \iota_* (\P)$ the image of $\P$ in $\W$.
It has two filtrations, induced by the filtrations of $\P$ and of $\W$, which coincide due to the next lemma.

\begin{lemma}\label{lfilp}
The filtrations of\/ $\P$ and\/ $\W$ are compatible, i.e., 
$$
\H_n:=\iota_*(\P_n) = \iota_*(\P) \cap \W_n \,, \qquad n\ge-2 \,.
$$
Moreover, $[\P_m, \P_n] \subset \P_{m+n}$
for all\/ $m,n\in\ZZ$.
\end{lemma}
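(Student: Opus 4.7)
The plan is to establish the set-theoretic equality by checking the two inclusions separately, and then to deduce the bracket estimate from the explicit Hamiltonian formula \eqref{brap}. The forward inclusion $\iota_*(\P_n) \subset \W_n$ is immediate from \eqref{iota2}: for $x \in \P_n = \fil_{n+1}X$, each coefficient $x\bd^i$ lies in $\fil_n X$, since the right action of $\dd$ on $X$ strictly lowers the filtration by one and subtracting the scalar $\chi(\d^i)\,x$ (which remains in $\fil_{n+1}X \subset \fil_n X$) does not affect this. Therefore $\iota_*(x) = \sum_i x\bd^i \tp \d_i \in \fil_n X \tp \dd = \W_n$, and $\iota_*(\P_n) \subset \iota_*(\P) \cap \W_n$.

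For the reverse inclusion, the key idea is to perform the substitution $x = u\ce$ with $u := x\,e^\chi \in X$, which, via \eqref{echi}, rewrites
\[
\iota_*(x) = \sum_{i=1}^{2N} (u\d^i)\,\ce \tp \d_i .
\]
Since $\fil_n X$ is an ideal of $X$ and $\ce$ is a unit, the condition $\iota_*(x) \in \W_n$ becomes equivalent to $u\d^i \in \fil_n X$ for every $i = 1, \dots, 2N$. The problem then reduces to the purely algebraic claim that any such $u$ lies in $\fil_{n+1}X + \kk$. To prove this, I would pass to the associated graded $\gr X \simeq S\dd^*$, where the right action of $\d^i$ descends to contraction by $\d^i$ (a derivation of symmetric degree $-1$): if $u \notin \fil_{n+1}X + \kk$, then after subtracting its constant term its symbol $\bar u \in \fil_{p-1}X/\fil_pX \simeq S^p\dd^*$ is non-zero for some $1 \leq p \leq n+1$, while $u\d^i \in \fil_n X \subset \fil_{p-1}X$ forces every contraction of $\bar u$ to vanish, which is impossible for a non-zero symmetric tensor of positive degree. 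Writing then $u = u_0 + c$ with $u_0 \in \fil_{n+1}X$ and $c \in \kk$, the element $y := u_0\,\ce$ lies in $\P_n$ (again because $\fil_{n+1}X$ is an ideal stable under multiplication by $\ce$) while $x - y = c\,\ce \in \ker\iota_*$ by \leref{lkerio}, so $\iota_*(x) = \iota_*(y) \in \iota_*(\P_n)$.

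Finally, the bracket estimate $[\P_m, \P_n] \subset \P_{m+n}$ is a direct filtration bookkeeping from \eqref{brap}: for $x \in \fil_{m+1}X$ and $y \in \fil_{n+1}X$, each summand $(x\bd_i)(y\bd^i)$ has $x\bd_i \in \fil_m X$ and $y\bd^i \in \fil_n X$, so their product (in the algebra $X \simeq \O_{2N}$) lies in the ideal product $(t)^{m+1}(t)^{n+1} = (t)^{m+n+2} = \fil_{m+n+1}X = \P_{m+n}$. The heart of the argument, and its principal obstacle, is the algebraic claim in the middle paragraph: it is precisely where one must reconcile the codimension-one kernel $\kk\,\ce$ of $\iota_*$ with the shift between the filtrations of $\P$ and $\W$, and it is what ultimately yields the compatibility $\H_n = \iota_*(\P) \cap \W_n$. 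The remaining two steps amount to routine manipulations with the ideals $(t)^k$ in $\O_{2N}$.
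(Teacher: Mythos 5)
Your proof is correct, and it supplies precisely the content that the paper's one-line justification leaves implicit: the reverse inclusion $\iota_*(\P)\cap\W_n\subset\iota_*(\P_n)$, which is the only nontrivial direction. The $\ce$-substitution is the right device; it trades the $\chi$-twisted right action $x\mapsto x\bd^i$ for the untwisted one $u\mapsto u\d^i$ (legitimate because $e^{\pm\chi}$ is a unit of $X$, so multiplication by it preserves each ideal $\fil_k X$), after which passing to $\gr X\simeq S\dd^*$ reduces everything to the elementary fact that a nonzero symmetric tensor of positive degree has a nonvanishing contraction with some vector of $\dd$ --- and $\{\d^i\}$ is a basis of $\dd$ because $r$ is nondegenerate. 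The residual scalar discrepancy is then absorbed into $\ker\iota_*=\kk\ce$ via \leref{lkerio}. The forward inclusion and the bracket estimate are the routine consequences of $(\fil_{n+1}X)\,\dd\subset\fil_n X$ and $(\fil_m X)(\fil_n X)\subset\fil_{m+n+1}X$ that the paper records, and you handle them correctly. An equivalent way to organize your middle paragraph, closer in spirit to the paper's terseness, is to observe that the graded symbol map $\P_n/\P_{n+1}\to\W_n/\W_{n+1}$ is injective for $n\ge-1$ and argue by induction on $n$; the underlying computation is the same contraction argument in different packaging.
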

\begin{proof}
This follows from \eqref{wp}, \eqref{brap}--\eqref{iota2} and the facts that
$(\fil_{n+1} X) \dd \subset \fil_{n} X$ and
$(\fil_{m} X) (\fil_{n} X) \subset \fil_{m+n+1} X$.
\end{proof}

By \leref{lkerio}, there is a central extension of Lie algebras
\begin{equation}
0 \to \kk\ce \to \P \xrightarrow{\iota_*} \H \to 0 \,.
\end{equation}
Composing the isomorphism $\ph\colon\W \to W_{2N}$ with the homomorphism
$\iota_*\colon \P \to \W$, one obtains a Lie algebra homomorphism $\P\to W_{2N}$
with kernel $\kk\ce$.
Its image does not necessarily coincide with the Lie--Cartan algebra $H_{2N} \subset W_{2N}$
of Hamiltonian vector fields, but it does up to a change of variables, as we will see below.

Let us review the definition of $H_{2N}$ and its irreducible central extension $P_{2N}$.
We have $P_{2N}=\O_{2N}$ with the Lie bracket
\begin{equation}\label{brap2}
[f,g] = \sum_{i=1}^{N} 
\frac{\partial f}{\partial t^i} \frac{\partial g}{\partial t^{N+i}} 
-\frac{\partial f}{\partial t^{N+i}} \frac{\partial g}{\partial t^i} \,,
\qquad f,g\in\O_{2N}
\end{equation}
(which is known as the Poisson bracket).
There is a homomorphism $P_{2N}\to W_{2N}$ given by
\begin{equation}\label{brap3}
f \mapsto \sum^N_{i=1} 
\frac{\partial f}{\partial t^i} \frac{\partial}{\partial t^{N+i}} 
- \frac{\partial f}{\partial t^{N+i}} \frac{\partial}{\partial t^i} \,,
\end{equation}
whose image is $H_{2N}$ and whose kernel is $\kk$.
Note that $H_{2N}$ consists of all vector fields from $W_{2N}$ annihilating the standard symplectic form
$\sum^N_{i=1} d t^i \wedge d t^{N+i}$.

\begin{proposition}\label{pioh}
There exists a ring automorphism\/ $\psi$ of\/ $\O_{2N}$, which induces a
 Lie algebra automorphism\/ $\psi$ of\/ $W_{2N}$, such that the image of\/ $\H$ in\/ $W_{2N}$
 under the isomorphism\/ $\ph\colon\W \to W_{2N}$
coincides with\/ $\psi(H_{2N})$. Furthermore, $\psi$ is compatible with the filtrations, and it induces the identity map on the associated graded algebras, i.e.,
$$
(\psi-\id) \fil_p W_{2N} \subset \fil_{p+1} W_{2N} \,, \qquad p\ge-1 \,.
$$
\end{proposition}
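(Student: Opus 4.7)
The plan is to realize $\varphi(\H) \subset W_{2N}$ as the stabilizer of a formal symplectic $2$-form $\omega_1$ on the disk $\Spf \O_{2N}$, and then invoke formal Darboux to conjugate $\omega_1$ into the standard form $\omega_0 = \sum_{i=1}^{N} dt^i \wedge dt^{N+i}$ via a ring automorphism $\psi$ of $\O_{2N}$. For Step 1 (geometric identification), the right action of $\dd$ on $X$ from \seref{sbasfil} together with the isomorphism $X \simeq \O_{2N}$ allows us to extend $\omega \in \dd^*\wedge\dd^*$ canonically to a non-degenerate formal $2$-form $\hat\omega$ on $\Spf\O_{2N}$ whose value at the origin is $\omega$, and similarly to extend $\chi \in \dd^*$ to a closed formal $1$-form $\hat\chi$. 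The cyclic relation \eqref{omchi3} (equivalently \eqref{cybe4}) asserts exactly that $\hat\omega$ is conformally closed: $d\hat\omega + \hat\chi \wedge \hat\omega = 0$. A direct calculation starting from the formula \eqref{iota2} for $\iota_*$ and the pseudobracket \eqref{bracket}, with the aid of \leref{checkbar}, shows that for every $x \in X$ the vector field $D = \varphi(\iota_*(x))$ satisfies the conformally symplectic preservation $L_D \hat\omega = -\hat\chi(D)\,\hat\omega$.

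For Step 2 (symplectic reduction): since $\O_{2N}$ has trivial positive-degree formal de Rham cohomology, $\hat\chi = d\lambda$ for some $\lambda \in \O_{2N}$ with $\lambda(0) = 0$. Setting $\omega_1 := e^{\lambda}\, \hat\omega$ gives $d\omega_1 = e^{\lambda}(d\lambda \wedge \hat\omega + d\hat\omega) = 0$, so $\omega_1$ is a formal symplectic form. The conformal-preservation property of Step 1 upgrades to $L_D \omega_1 = 0$ for every $D \in \varphi(\H)$, and a dimension count using \leref{lkerio} shows that $\varphi(\H)$ is the full Lie algebra of continuous formal vector fields preserving $\omega_1$.

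For Step 3 (formal Darboux): after a preliminary linear change of coordinates making $\omega_1$ agree with $\omega_0$ at the origin, apply Moser's homotopy method order by order in the $t$-adic filtration of $\O_{2N}$. At each stage one inverts the non-degenerate $\omega_0$ on an exact primitive of the correction $\omega_0 - \omega_1 \in \fil_p \Omega^2 \O_{2N}$, producing a correction vector field in $\fil_p W_{2N}$; composing the resulting time-$1$ flows yields a ring automorphism $\psi$ of $\O_{2N}$ with $\psi^* \omega_0 = \omega_1$ and $(\psi - \id)\fil_p \O_{2N} \subset \fil_{p+1} \O_{2N}$. Transferring this to $W_{2N} = \Der\O_{2N}$ gives $(\psi - \id)\fil_p W_{2N} \subset \fil_{p+1} W_{2N}$, and pushforward by $\psi$ sends the stabilizer $H_{2N}$ of $\omega_0$ onto the stabilizer $\varphi(\H)$ of $\omega_1$, proving $\varphi(\H) = \psi(H_{2N})$.

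The main obstacle will be Step 1: choosing the canonical extension $\hat\omega$ and verifying the identity $L_D \hat\omega = -\hat\chi(D)\hat\omega$ for all $D \in \varphi(\H)$. The computation should reduce, after passing to the bar-basis via \leref{checkbar} and using $\omega(\d^i \wedge \d_j) = \de^i_j$, to the cyclic relation \eqref{omchi3} between $\omega$ and $\chi$. Once this geometric identification is secured, Steps 2 and 3 are standard formal Darboux--Moser arguments.
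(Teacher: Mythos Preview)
Your approach is sound and is essentially the one implicit in the references \cite{BDK1} and \cite{BDK2} to which the paper defers: identify $\varphi(\H)$ geometrically as the stabilizer of a nondegenerate closed $2$-form obtained by right-invariant extension of $\om$ (after multiplying by $e^{\lambda}$ to absorb the conformal factor $\hat\chi$), then normalize that form by formal Darboux. Step~1 is indeed the content of \cite[Chapter~8]{BDK}, where $\Hd$ is defined precisely so that its annihilation algebra corresponds to the conformally symplectic vector fields for the invariant extension of $(\om,\chi)$; the reduction of \eqref{omchi3} to $d\hat\om+\hat\chi\wedge\hat\om=0$ is exactly \eqref{omchi4}.

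There is one genuine slip in Step~3. Your ``preliminary linear change of coordinates making $\omega_1$ agree with $\omega_0$ at the origin'' is in general nontrivial: the value $\omega_1(0)=\om$ is the form $\sum_{i<j}\om_{ij}\,dt^i\wedge dt^j$ in the coordinates fixed by the basis $\{\d_i\}$ of \seref{sbasfil}, and this equals $\omega_0=\sum_i dt^i\wedge dt^{N+i}$ only when that basis is symplectic in the sense of \exref{symbas}. A nontrivial linear change contributes a nontrivial degree-$0$ part to $\psi$, contradicting $(\psi-\id)\fil_p W_{2N}\subset\fil_{p+1}W_{2N}$. The fix is simply to choose the basis $\{\d_i\}$ symplectic from the outset, which is always possible and is the choice implicitly made in \cite{BDK1,BDK2}; then $\omega_1(0)=\omega_0$ and the Moser iteration alone produces a $\psi$ that is the identity on the associated graded. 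You should state this explicitly, since the basis in \seref{sbasfil} was introduced as arbitrary.
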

\begin{proof}
The same as that of \cite[Proposition 3.6]{BDK1} and \cite[Proposition 4.1]{BDK2}.
\end{proof}

As a consequence of \prref{pioh}, $\H\simeq H_{2N}$ and $\P\simeq P_{2N}$. 
In the following, we will need certain explicit elements of $\H$.

\begin{lemma}\label{lptow1}
The Lie algebra homomorphism\/ $\iota_*\colon \P \to \W$ identifies the following elements{\rm:}

{\rm(i)} 
$e^{-\chi} \tp_H e \mapsto 0$,

{\rm(ii)} 
$x^k e^{-\chi} \tp_H e 
\mapsto e^{-\chi} \tp \d^k - \displaystyle\sum_{1\le i<j\le 2N} c_{ij}^k x^j e^{-\chi} \tp\d^i \mod \W_1$

{\rm(iii)} 
$x^i x^j e^{-\chi} \tp_H e \mapsto 2 f^{ij} \mod \W_1$.
\end{lemma}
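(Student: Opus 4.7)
The approach is a direct computation using the formula \eqref{iota2} for $\iota_*$, together with the identity \eqref{echi}, which lets me pull the factor $e^{-\chi}$ through the right $\dd$-action via $(xe^{-\chi})\bd_i = (x\d_i)e^{-\chi}$. This reduces the three statements to computing the right action $y\d_i \in X$ for $y \in \{1, x^k, x^ix^j\}$, modulo $\fil_1 X$ in the latter two cases, and then substituting back into \eqref{iota2}.

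Part (i) is either an immediate consequence of \leref{lkerio}, or may be checked directly from the fact that $1 \cdot \d_i = 0$ in $X$ (since $\langle 1\cdot\d_i, h\rangle = -\ep(h\d_i) = 0$), which gives $(e^{-\chi})\bd_i = 0$ and hence $\iota_*(e^{-\chi}) = 0$.

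For (ii), the plan is to compute $x^k\d_i$ by evaluating $\langle x^k\d_i, h\rangle = -\langle x^k, h\d_i\rangle$ on the PBW basis. Since $x^k$ vanishes on $\fil^0 H$ and on PBW monomials of degree $\ge 2$, only the values at $h=1$ and $h=\d_j$ matter modulo $\fil_1 X$. Evaluation at $h=1$ contributes $-\delta^k_i$. For $h=\d_j$, the product $\d_j\d_i$ must be expanded in the PBW basis: when $j \le i$ it equals an ordered monomial of degree $2$ and contributes $0$; when $j > i$ the identity $\d_j\d_i = \d_i\d_j - \sum_m c^m_{ij}\d_m$ contributes $c^k_{ij}$. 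This yields $x^k\d_i \equiv -\delta^k_i + \sum_{j > i} c^k_{ij}\, x^j \pmod{\fil_1 X}$. Substituting into \eqref{iota2}, and using that $e^{-\chi}$ is a unit in $X$ (so $\fil_1 X$ is stable under multiplication by it) while $\fil_1 X \tp \dd \subset \W_1$, produces the formula in (ii).

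For (iii), since the right action of $\dd$ on $X$ is a derivation of the commutative product, $(x^ix^j)\d_k = (x^i\d_k)x^j + x^i(x^j\d_k)$. Substituting the expansion of $x^k\d_i$ obtained above and noting that $x^lx^m \in \fil_1 X$ for any $l,m$, I would obtain $(x^ix^j)\d_k \equiv -\delta^i_k x^j - \delta^j_k x^i \pmod{\fil_1 X}$; plugging into \eqref{iota2} gives $\iota_*(x^ix^je^{-\chi}) \equiv x^j \tp \d^i + x^i \tp \d^j \pmod{\W_1}$. It remains to match this with $2f^{ij}$ through the identification of \leref{cwbra}: that isomorphism sends $y\tp b \mod \W_1$ to $-b\tp y \in \dd\tp\dd^* \simeq \gld$, so the preimage of $2f^{ij} = -(e^{ij}+e^{ji}) = -(\d^i\tp x^j + \d^j\tp x^i)$ is precisely $x^j\tp\d^i + x^i\tp\d^j \mod \W_1$. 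The only nontrivial point in the whole argument is careful bookkeeping of signs and of left versus right $\dd$-actions on $X$ (and of the covariant versus contravariant behaviour of the \leref{cwbra} identification); no conceptual obstacle is expected.
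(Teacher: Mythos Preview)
Your proposal is correct and follows precisely the approach the paper intends: the paper's own proof simply says the computation is ``straightforward, using \eqref{iota2},'' and your write-up is a careful expansion of exactly that computation, including the bookkeeping with \eqref{echi}, the PBW expansion of $\d_j\d_i$, and the identification via \leref{cwbra}.
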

\begin{proof}
The proof is straightforward, using \eqref{iota2}. Note that
$f^{ij} \in \gld$ are understood via the identification $\gld \simeq
\W_0/\W_1$ (see \leref{cwbra}).
\end{proof}

\begin{corollary}
We have a Lie algebra isomorphism\/ $\H_0/\H_1 \simeq \spd$.
\end{corollary}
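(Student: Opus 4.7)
The plan is to identify $\H_0/\H_1$ with $\P_0/\P_1$ and then pin down explicit representatives in $\W_0/\W_1 \simeq \gld$ using \leref{lptow1}(iii); a dimension count will finish the argument.

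First I would verify that $\iota_*$ induces an isomorphism $\P_0/\P_1 \xrightarrow{\sim} \H_0/\H_1$. Surjectivity is immediate from the definition $\H_n = \iota_*(\P_n)$. For injectivity on the quotients, \leref{lkerio} gives $\ker\iota_* = \kk\ce$ with $\ce = e^{-\chi}\tp_H e$; since $e^{-\chi}$ has constant term $1$, it does not lie in $\fil_1 X$, so $\ce \notin \P_0 = \fil_1 X \tp_H e$. Thus $\iota_*$ is injective on $\P_0$, and together with $\iota_*(\P_1) = \H_1$ from \leref{lfilp} we obtain the claimed bijection of quotients.

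Next, \leref{lfilp} yields $\H_0 = \H \cap \W_0$ and $\H_1 = \H \cap \W_1$, so the inclusion $\H \subset \W$ descends to an injection $\H_0/\H_1 \hookrightarrow \W_0/\W_1$, which via \leref{cwbra} is an injection into $\gld$. Composing with the previous isomorphism gives an injection $\P_0/\P_1 \hookrightarrow \gld$. By \leref{lptow1}(iii), the elements $x^i x^j e^{-\chi} \tp_H e \in \P_0$ are sent to $2f^{ij} \mod \W_1$ for $1 \le i \le j \le 2N$, and by \eqref{fij} these form a basis of $\spd$. Hence the image of the composition contains $\spd$.

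To finish, I would compute $\dim \P_0/\P_1 = \dim(\fil_1 X/\fil_2 X) = \dim(\fil^2 H/\fil^1 H) = \binom{2N+1}{2} = N(2N+1) = \dim\spd$, so the injection $\P_0/\P_1 \hookrightarrow \gld$ whose image contains $\spd$ must have image exactly $\spd$. Tracing the chain back gives $\H_0/\H_1 \simeq \spd$. No serious obstacle arises, as every ingredient is prepared by \leref{lkerio}, \leref{lfilp}, and \leref{lptow1}; the only delicate point is the injectivity of $\iota_*$ on $\P_0$, which hinges on observing that the central element $\ce$ sits in filtration degree $-2$ and therefore does not meet $\P_0$.
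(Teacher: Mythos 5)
Your proof is correct and follows essentially the same route as the paper: you read off from Lemma \ref{lptow1}(iii) that the images of the quadratic elements $x^i x^j e^{-\chi} \tp_H e$ span $\spd$ inside $\W_0/\W_1 \simeq \gld$, and conclude that $\H_0/\H_1 \simeq \spd$. The paper's proof is terser and leaves the dimension comparison implicit, whereas you make it explicit via $\dim \P_0/\P_1 = \dim(\fil^2 H/\fil^1 H) = N(2N+1) = \dim\spd$; this is a welcome clarification but not a different method.
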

\begin{proof}
The composition $\H_0 \to \W_0 \to \W_0/\W_1$ is injective
on the linear span of elements $x^i x^j \tp_H e$, which generate a
subalgebra isomorphic to $\spd$.
\end{proof}

For future use, we introduce the following linear operators on $\dd$ $(1\le k\le 2N)$:
\begin{equation}\label{adsp1}
\adsp\d^k := \ad\d^k + \d^k \tp \chi + \frac12 \sum_{i,j=1}^{2N} c_{ij}^k e^{ij} - \frac12 \chi(\d^k) I \,,
\end{equation}
where $I$ denotes the identity operator and as before we identify $\dd\otimes\dd^*\simeq\End\dd$.
Recall also that $e^{ij}$ are given by \eqref{eij}, $f^{ij}$ by \eqref{fij},
and $c_{ij}^k$ are the structure constants of $\dd$ given by \eqref{cijk}.
The notation $\adsp$ is explained by the next lemma (note that it differs from the notation used in \cite{BDK2}).

\begin{lemma}\label{lptow2}
For every\/ $k=1,\dots,2N$, we have\/ $\adsp\d^k \in\spd$. Moreover, $\adsp\d^k$ is the image of\/
$\ad\d^k+ \d^k \tp \chi$ under the projection\/ $\pi\colon\gld\to\spd$ defined by\/ $\pi(e^{ij})=-f^{ij}$.
\end{lemma}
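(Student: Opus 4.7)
My plan is to prove both claims together. I will show: (A) the difference $\adsp\d^k - (\ad\d^k + \d^k\tp\chi)$ lies in $\ker\pi$, and (B) $\adsp\d^k$ lies in $\spd$. Once (A) and (B) are established, since $\pi$ restricts to the identity on $\spd$, I can conclude $\pi(\ad\d^k+\d^k\tp\chi) = \pi(\adsp\d^k) = \adsp\d^k$, which gives the second assertion; the first assertion then follows from (B) directly.

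For (A), the definition \eqref{adsp1} yields
\[
\adsp\d^k - (\ad\d^k + \d^k\tp\chi) = \tfrac12 \sum_{i,j=1}^{2N} c_{ij}^k e^{ij} - \tfrac12 \chi(\d^k)\, I.
\]
Inverting the change-of-basis relation \eqref{eij} gives $e^j_l = \sum_i \omega_{il}\, e^{ij}$, hence $I = \sum_{i,j}\omega_{ij}\, e^{ij}$. Both summands on the right are therefore linear combinations of $e^{ij}$ whose coefficients are antisymmetric in $(i,j)$. Since $\pi(e^{ij}) = -f^{ij} = \tfrac12 (e^{ij}+e^{ji})$ symmetrizes over $(i,j)$, every such combination is annihilated by $\pi$.

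For (B), I will use non-degeneracy of $\omega$ to reduce to checking $\omega(\adsp\d^k\cdot a, b) + \omega(a, \adsp\d^k\cdot b) = 0$ for all $a,b \in \dd$. Evaluating the four terms of \eqref{adsp1} on $(a,b)$ via the elementary identities $\omega(e^{ij}a,b) = x^j(a)\,x^i(b)$ and $\sum_{i,j} c_{ij}^k\, x^j(a)\,x^i(b) = \omega([a,b],\d^k)$, the verification reduces to applying the conformally symplectic condition \eqref{omchi3} with $(a_1,a_2,a_3) = (\d^k, a, b)$, which rewrites the $\ad\d^k$ contribution $\omega([\d^k,a],b) - \omega([\d^k,b],a)$ in terms of $\chi$-pairings and $\omega([a,b],\d^k)$. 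All contributions then telescope to zero.

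The main obstacle will be the bookkeeping in (B): one must line up the four terms coming from $\adsp\d^k$ with the six terms produced by \eqref{omchi3} so that each $\chi$-dependent and each commutator-dependent contribution cancels. Once the decomposition strategy (A)--(B) is adopted, however, this careful index calculation is the only essential content of the proof.
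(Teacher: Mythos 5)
Your proof takes essentially the same approach as the paper: establishing $\adsp\d^k\in\spd$ by verifying $\omega$-skew-adjointness via \eqref{omchi3}, and showing $\pi$ kills the remaining terms because their coefficients are antisymmetric in $(i,j)$ while $\pi(e^{ij})$ is symmetric (you just present the two halves in the opposite order). One small slip: inverting \eqref{eij} gives $e^j_l = \sum_i\omega_{li}\,e^{ij}$, hence $I=\sum_{i,j}\omega_{ji}\,e^{ij}$, not $\sum_{i,j}\omega_{ij}\,e^{ij}$; the sign doesn't matter for your argument since only the antisymmetry of the coefficient is used.
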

\begin{proof}
Denoting the operator \eqref{adsp1} by $A$, we see that
\begin{equation*}
A\d_j = [\d^k,\d_j] + \chi(\d_j) \d^k - \frac12 \chi(\d^k) \d_j + \frac12 \sum_{i=1}^{2N} c_{ij}^k \d^i \,.
\end{equation*}
Then, using \eqref{cont7} and \eqref{omchi3}, we get:
\begin{align*}
\om(A &\d_j \wedge \d_\ell) + \om(\d_j \wedge A\d_\ell) \\
&= \om( [\d^k,\d_j] \wedge \d_\ell) + \chi(\d_j) \om(\d^k \wedge \d_\ell) - \frac12 \chi(\d^k) \om(\d_j \wedge \d_\ell) + \frac12 c_{\ell j}^k \\
&+ \om(\d_j \wedge [\d^k,\d_\ell]) + \chi(\d_\ell) \om(\d_j \wedge \d^k) - \frac12 \chi(\d^k) \om(\d_j \wedge \d_\ell) - \frac12 c_{j \ell}^k \\
&= \om( [\d_\ell,\d_j] \wedge \d^k) + c_{\ell j}^k \\
&= \sum_{i=1}^{2N} c_{\ell j}^i \om( \d_i \wedge \d^k) + c_{\ell j}^k \\
&= 0 \,.
\end{align*}
Therefore, $A\in\spd$. Note that $\pi$ is indeed a projection, since $\pi(f^{ij})=f^{ij}$.
To finish the proof, we need to show that $\pi(A-\ad\d^k-\d^k \tp \chi) = 0$.
By \eqref{cont8}, we have
\begin{equation*}
I = \sum_{i=1}^{2N} \d_i \tp x^i = \sum_{i,j=1}^{2N} \om_{ij} e^{ji} \,,
\end{equation*}
which implies $\pi(I)=0$, because $\pi(e^{ij})=\pi(e^{ji})$ while $\om_{ij} = -\om_{ji}$.
For the same reason,
\begin{equation*}
\pi\Bigl( \sum_{i,j=1}^{2N} c_{ij}^k e^{ij} \Bigr) = 0 \,,
\end{equation*}
thus completing the proof.
\end{proof}

Extending \eqref{adsp1} by linearity, we obtain a linear map $\adsp\colon\dd\to\spd$,
which does not depend on the choice of basis.

\subsection{The normalizers $\N_{\H}$ and $\N_{\P}$}\label{snhp}

Recall the filtration \eqref{Pfiltration} of the annihilation algebra $\P$ of the Lie pseudoalgebra $\Hd$. Our next goal is to compute the normalizer of $\P_0$ inside the extended annihilation algebra $\widetilde \P$. The first step will be to find the normalizer of $\H_0 =\iota_* (\P_0)$ in $\widetilde\H = \dd\ltimes\H \subset \ti\W$.

It is well known that every continuous derivation of the Lie algebra $H_{2N}$ 
is the sum of an inner derivation and a scalar multiple of $\ad E$, where
\begin{equation}\label{euler}
E := \sum_{i=1}^{2N} t^i \frac{\d}{\d t^i}\in \fil_0 W_{2N}
\end{equation}
is the Euler vector field
(see e.g.\ \cite[Proposition 6.4(i)]{BDK}).
Due to \prref{pioh}, the normalizer of $\H$ in $\W$ coincides with $\H + \kk \E$, where
\begin{equation}\label{derhe}
\E = \ph^{-1}\psi(E) \in\W_0 \,.
\end{equation}
Moreover, $\psi(E) = E \mod \fil_1 W_{2N}$, and by \cite[(3.16)]{BDK1},
\begin{equation}\label{derhe2}
\ph^{-1}(E) = - \sum_{i=1}^{2N} x^i \tp \d_i \mod \W_1 \,.
\end{equation}
Therefore, the image of $\E$ in $\W_0/\W_1 \simeq \gld$ is the identity operator $I$ (see \leref{cwbra}).

Every non-zero $\d\in\dd$ acts as a non-zero continuous derivation of $\H$; hence, we get
an injective Lie algebra homomorphism $\ga\colon\dd\injto\H + \kk \E$ such that 
$\ti\d = \d-\ga(\d) \in\ti\W$ centralizes $\H$. Since the centralizer of $\H$ in $\W$ is trivial, 
the elements $\ti\d$ coincide with those constructed from the Lie algebra $\W$ (see \eqref{d-tilde}).
As before, the map $\d\mapsto\ti\d$ is a Lie algebra isomorphism from $\dd$ onto a subalgebra
$\ti\dd$ of $\ti\W$. If we add a suitable scalar multiple of $\E$ to $\ti\d$, we will obtain a unique $\widehat \d \in \widetilde \H$ 
such that $\widehat \d - \d \in \H$ and the adjoint action of $\widehat\d$ on $\H$ is a scalar multiple of $\E$. 
The next lemma will allow us to find $\widehat \d$ explicitly.

\begin{lemma}\label{ldtilde}
For all\/ $k=1,\dots,2N$, we have{\rm:}
\begin{equation*}
\widetilde\d^k = \d^k - \frac{1}{2}\chi(\d^k) \E + \iota_*(x^k e^{-\chi} \otimes_H e) 
- \adsp \d^k + \sum_{1 \leq i<j\leq 2N} c_{ij}^k f^{ij} \mod \W_1 \,,
\end{equation*}
where we use the notation \eqref{cijk}, \eqref{fij} and \eqref{adsp1}.
\end{lemma}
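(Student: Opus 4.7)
The plan is to verify the stated identity by comparing it with the general formula \eqref{tilded}, which for $\d=\d^k$ reads
$$\widetilde\d^k \equiv \d^k + 1\tp\d^k - \ad\d^k \pmod{\W_1},$$
where $-\ad\d^k$ is interpreted as a lift to $\W_0$ of an element of $\W_0/\W_1 \simeq \gld$ via \leref{cwbra}. The task therefore reduces to checking that the proposed right-hand side also represents $\d^k + 1\tp\d^k - \ad\d^k$ modulo $\W_1$.

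First I would expand $\iota_*(x^k e^{-\chi}\tp_H e)$ using \leref{lptow1}(ii). Writing $e^{-\chi}\tp\d^k = 1\tp\d^k + (e^{-\chi}-1)\tp\d^k$ and noting that $e^{-\chi}-1 \equiv -\chi \pmod{\fil_1 X}$ (since $\fil_0 X \cdot \fil_0 X \subset \fil_1 X$), together with $x^j e^{-\chi} \equiv x^j \pmod{\fil_1 X}$, one obtains
$$\iota_*(x^k e^{-\chi}\tp_H e) \equiv 1\tp\d^k - \chi\tp\d^k - \sum_{i<j} c_{ij}^k\, x^j\tp\d^i \pmod{\W_1}.$$
The piece $1\tp\d^k$ accounts precisely for the corresponding term in \eqref{tilded}; the rest lies in $\W_0$. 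I would then transport it into $\gld$ via \leref{cwbra}, which sends $\chi\tp\d^k \mapsto -\d^k\tp\chi$ and $x^j\tp\d^i \mapsto -\d^i\tp x^j = -e^{ij}$ (by \eqref{eij}), while \eqref{derhe}--\eqref{derhe2} give $\E\mapsto I$. After these substitutions, the displayed formula reduces to the $\gld$-identity
$$-\ad\d^k = -\tfrac12\chi(\d^k)\,I + \d^k\tp\chi + \sum_{i<j} c_{ij}^k\, e^{ij} - \adsp\d^k + \sum_{i<j} c_{ij}^k\, f^{ij}.$$

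Finally, I would substitute the definition \eqref{adsp1} of $\adsp\d^k$. The antisymmetry $c_{ij}^k=-c_{ji}^k$ together with $e^{ji}=-2f^{ij}-e^{ij}$ yields
$$\tfrac12\sum_{i,j=1}^{2N} c_{ij}^k\, e^{ij} = \sum_{i<j} c_{ij}^k\bigl(e^{ij}+f^{ij}\bigr),$$
after which the $\gld$-identity above becomes a routine cancellation. The main obstacle is the bookkeeping: one must align three independent $\gld$-corrections---the $-\frac12\chi(\d^k)I$ built into $\adsp\d^k$, the $\d^k\tp\chi$ coming from $(e^{-\chi}-1)\tp\d^k$, and the $\sum_{i<j} c_{ij}^k f^{ij}$ produced when symmetrizing the $e^{ij}$-sum over all pairs $(i,j)$---but once these are identified, no input beyond \leref{lptow1}, \leref{cwbra}, and \eqref{adsp1} is required.
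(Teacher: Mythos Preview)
Your proposal is correct and follows essentially the same approach as the paper: both start from \eqref{tilded}, expand $\iota_*(x^k e^{-\chi}\tp_H e)$ via \leref{lptow1}(ii) using $e^{-\chi}\equiv 1-\chi$ modulo $\fil_1 X$, pass to $\gld$ via \leref{cwbra} and the identification $\E\equiv I$, and then invoke the definition \eqref{adsp1} together with the antisymmetry $c_{ij}^k=-c_{ji}^k$ to reduce everything to a straightforward cancellation. The paper organizes the final step by writing $\sum_{i,j} c_{ij}^k e^{ij}=\sum_{i<j} c_{ij}^k(e^{ij}-e^{ji})$ and then using the definition of $f^{ij}$, which is exactly equivalent to your identity $\tfrac12\sum_{i,j} c_{ij}^k e^{ij}=\sum_{i<j} c_{ij}^k(e^{ij}+f^{ij})$.
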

\begin{proof}
Start with \eqref{tilded}:
$$
\widetilde \d^k = \d^k + 1 \tp \d^k - \ad \d^k \mod \W_1 \,.
$$
From \leref{lptow1}, we know that
\begin{equation*}
\begin{split}
\iota_*(x^k e^{-\chi} \otimes_H e) & = e^{-\chi} \otimes \d^k - \sum_{1 \leq i < j \leq 2N} c_{ij}^k x^j e^{-\chi} \otimes \d^i \mod \W_1\\
& = 1 \otimes \d^k - \chi \otimes \d^k - \sum_{1 \leq i < j \leq 2N} c_{ij}^k x^j \otimes \d^i \mod \W_1 \,.
\end{split}
\end{equation*}
By \leref{cwbra}, $\chi \otimes \d^k \mod \W_1$ is identified with $-\d^k \otimes \chi \in \dd\tp\dd^* \simeq \gld$,
and
$x^j \otimes \d^i \mod \W_1$ with $-\d^i \tp x^j = -e^{ij} \in\gld$. 
Hence, 
$$
1 \otimes \d^k = \iota_*(x^k e^{-\chi} \otimes_H e) - \d^k \otimes \chi - \sum_{1 \leq i<j \leq 2N} c_{ij}^k e^{ij}  \mod \W_1\,,
$$
which gives
$$\widetilde\d^k = \d^k  - \ad \d^k
+ \iota_*(x^k e^{-\chi} \otimes_H e) - \d^k \otimes \chi  - \sum_{1 \leq i<j \leq 2N} c_{ij}^k e^{ij} \mod \W_1 \,.$$
Using the definition of $\adsp$ in \eqref{adsp1}, we rewrite the right-hand side as
$$\d^k + \iota_*(x^k e^{-\chi} \otimes_H e) - \adsp \d^k - \sum_{1\leq i<j \leq 2N} c_{ij}^k e^{ij} + \frac{1}{2} \sum_{i,j = 1}^{2N} c_{ij}^k e^{ij} - \frac{1}{2} \chi(\d^k)I\mod \W_1 \,.$$
The proof then follows from the identity
$$\sum_{i,j = 1}^{2N} c_{ij}^k e^{ij} = \sum_{1 \leq i<j\leq 2N} c_{ij}^k (e^{ij} - e^{ji}),$$
the definition \eqref{fij} of $f^{ij}$, and the fact that $\E\mod\W_1$ identifies with $I \in \gld$.
\end{proof}

\begin{corollary}\label{cdtilde}
The above Lie algebra homomorphism\/ $\ga\colon\dd\injto \H + \kk \E$ satisfies
$$
\ga(\d) - \frac{1}{2}\chi(\d) \E \in \H
\,, \qquad \d\in\dd \,.
$$
\end{corollary}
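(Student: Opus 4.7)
The plan is to compute $\ga(\d)$ modulo $\W_1$ directly from \leref{ldtilde} and then read off the scalar coefficient of $\E$. Since $\ga$ is linear and $\{\d^k\}_{k=1}^{2N}$ is a basis of $\dd$, it suffices to prove the claim for $\d=\d^k$. From the defining relation $\widetilde\d^k=\d^k-\ga(\d^k)$ combined with \leref{ldtilde}, I obtain
$$\ga(\d^k) \equiv \frac{1}{2}\chi(\d^k)\,\E - \iota_*(x^k e^{-\chi}\tp_H e) + \adsp\d^k - \sum_{1\leq i<j\leq 2N} c_{ij}^k f^{ij} \pmod{\W_1}.$$

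Since $\ga$ takes values in $\H+\kk\E$, I write $\ga(\d^k)=a_k+c_k\E$ with $a_k\in\H$ and $c_k\in\kk$; the goal is to show $c_k=\frac{1}{2}\chi(\d^k)$. Rearranging the previous congruence gives
$$\Bigl(c_k - \frac{1}{2}\chi(\d^k)\Bigr)\E \equiv -a_k - \iota_*(x^k e^{-\chi}\tp_H e) + \adsp\d^k - \sum_{1\leq i<j\leq 2N} c_{ij}^k f^{ij} \pmod{\W_1},$$
and the right-hand side lies in the image of $\H$ inside $\W/\W_1$: the first two terms are manifestly in $\H$, while $\adsp\d^k - \sum c_{ij}^k f^{ij}\in\spd$ lifts to an element of $\H_0$ via the isomorphism $\H_0/\H_1\simeq\spd$ from the corollary immediately preceding \leref{ldtilde}.

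Finally, I project this equality to $\W_0/\W_1\simeq\gld$: the left-hand side becomes $\bigl(c_k - \frac{1}{2}\chi(\d^k)\bigr)I$, while the image of $\H$ inside $\W_0/\W_1$ equals $\spd$, thanks to the identifications $\H_0=\H\cap\W_0$ and $\H_1=\H\cap\W_1$ supplied by \leref{lfilp}. Since $\om$ is nondegenerate, $I\notin\spd$, which forces $c_k=\frac{1}{2}\chi(\d^k)$ and hence $\ga(\d^k) - \frac{1}{2}\chi(\d^k)\E = a_k \in \H$. The main subtlety is maintaining compatibility between the three filtrations (on $\P$, on $\H$ inherited from $\W$, and on $\W$ itself) so that the projection-and-comparison step is unambiguous; once \leref{lfilp} is invoked, the conclusion reduces to the linear-algebraic observation that the identity operator is not symplectic.
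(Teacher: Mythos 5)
Your proposal is correct and follows essentially the same route as the paper: both proofs feed $\ga(\d)=\d-\ti\d$ into \leref{ldtilde} to see that $\ga(\d)-\frac12\chi(\d)\E\in\H+\W_1$, then use the filtration compatibility of \leref{lfilp} and the identification $\H_0/\H_1\simeq\spd$ to project to $\W_0/\W_1\simeq\gld$, where the fact that $I\notin\spd$ pins down the coefficient of $\E$. The paper packages the last step as the observation $(\H+\W_1)\cap(\H+\kk\E)=\H$, whereas you write out the scalar $c_k$ explicitly, but the content is identical.
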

\begin{proof}
Using $\ga(\d)=\d-\ti\d$ and $\spd\simeq \H_0/\H_1\simeq (\H_0+\W_1)/\W_1$,
it follows from \leref{ldtilde} that
$$
\ga(\d) - \frac{1}{2}\chi(\d) \E \in \H+\W_1 \,.
$$
As the projection of $\E$ to $\W_0/\W_1 \simeq\gld$ does not belong to $\spd$,
we see that $(\H+\W_1) \cap (\H + \kk \E) = \H$.
\end{proof}

As a consequence of \coref{cdtilde},
\begin{equation}\label{dhat}
\widehat \d = \widetilde \d + \frac{1}{2}\chi(\d)\E \in \d+\H \subset \widetilde \H \,,
\qquad \d\in \dd \,.
\end{equation}
Since $\d\mapsto\ti\d$ is Lie algebra isomorphism and $\chi$ is a trace-form,
it follows that $\d \mapsto \widehat \d$ is a Lie algebra isomorphism from $\dd$
onto a subalgebra $\widehat \dd$ of $\ti\H$.
The Lie algebra $\widehat \dd$ normalizes every $\H_n$, as so does $\E$ and $[\ti\dd,\H]=\{0\}$.

\begin{remark}\label{dhatsphi}
Recall that $\Hd\subset S(\dd, \phi)$, where $\phi = -N\chi+\tr\ad$
(see \reref{rhins} and \leref{lemphi}). Our elements $\widehat\d$ coincide with those for $S(\dd, \phi)$
defined in \cite[(3.29)]{BDK1}, if we replace there $\chi$ by $\phi$ and $N$ by $2N=\dim\dd$.
\end{remark}

Now we can determine the normalizers of $\H_0$ in $\ti\H$ and of $\P_0$ in $\ti\P$.

\begin{proposition}
For\/ $n\geq 0$, the normalizer\/ $\N_\H$ of\/ $\H_n$ inside\/ $\widetilde \H$ coincides
with\/ $\widehat \dd \sd \H_0$ and is independent of\/ $n$. There
is a decomposition as a direct sum of vector spaces\/ $\wti \H = \dd \oplus \N_\H$.
\end{proposition}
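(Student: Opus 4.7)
The plan is to prove both inclusions of the normalizer in the two-step fashion used for types $W, S, K$ in \cite{BDK1, BDK2}, and then extract the direct sum decomposition from a degree analysis via \prref{pioh}.

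First, I would verify the easy inclusion $\widehat{\dd} \sd \H_0 \subseteq \N_\H$ independent of $n$: the text preceding the proposition already observes that $\widehat{\dd}$ normalizes every $\H_n$ (since $\widetilde{\dd}$ centralizes $\H$ and $\E$ preserves the filtration), while $\H_0$ normalizes $\H_n$ via $[\H_0, \H_n] \subseteq \H_n$ from \leref{lfilp}. Combined with $[\widehat{\dd}, \H_0] \subseteq \H_0$, this confirms that $\widehat{\dd} \sd \H_0$ is both a Lie subalgebra and contained in every such normalizer.

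For the reverse inclusion, I would take $x \in \widetilde{\H}$ normalizing $\H_n$ and exploit $\widetilde{\H} = \dd \oplus \H$ to write $x = \d + y$ with $y \in \H$. By \coref{cdtilde} we have $\widehat{\d} - \d \in \H$, so $x = \widehat{\d} + y'$ with $y' \in \H$. Since $\widehat{\d}$ normalizes $\H_n$, so does $y'$, reducing the problem to showing that the normalizer of $\H_n$ inside $\H$ is $\H_0$. This is the main technical step: via \prref{pioh}, $\H \cong \psi(H_{2N})$ with filtrations compatible, so it suffices to verify it for the Lie--Cartan algebra $H_{2N}$ in its standard grading. The negative part of $H_{2N}$ is concentrated in degree $-1$, spanned by Hamiltonian vector fields $X_\ell$ of nonzero linear functions $\ell$; for any such $\ell$ and any $n \geq 0$, one can choose a homogeneous polynomial $f$ of degree $n+2$ with $\{\ell, f\} \neq 0$, and then $[X_\ell, X_f] = X_{\{\ell, f\}}$ is a nonzero element of degree $n-1$, hence outside $\fil_n H_{2N}$. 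Thus no element with nonzero degree-$(-1)$ component can normalize $\H_n$, in direct analogy with \cite[Proposition~3.7]{BDK1} and \cite[Proposition~4.3]{BDK2}.

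For the direct sum decomposition, I would first check that $\dd + \widehat{\dd} + \H_0 = \widetilde{\H}$: since $\widetilde{\H} = \dd + \H$, it suffices that $\widehat{\dd} + \H_0$ surjects onto $\H$, for which we use \leref{ldtilde} together with \leref{lptow1} to see that the image of $\widehat{\d}^k - \d^k$ in $\H/\H_0$ is precisely the class of $\iota_*(x^k e^{-\chi} \otimes_H e)$, whose degree-$(-1)$ parts span the $2N$-dimensional quotient $\H/\H_0 \cong \dd$. For directness, suppose $\d = \widehat{\d}' + h_0$ with $\d, \d' \in \dd$ and $h_0 \in \H_0$; projecting onto the $\dd$-summand of $\widetilde{\H} = \dd \oplus \H$ gives $\d = \d'$ and $\widehat{\d}' - \d' = -h_0 \in \H_0$, but the previous computation shows that the class of $\widehat{\d}' - \d'$ in $\H/\H_0$ coincides with that of $\d'$, forcing $\d' = 0$ and hence $h_0 = 0$.

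The main obstacle is the normalizer computation inside $\H$, which rests on the compatibility of $\psi$ with the filtration from \prref{pioh} and the graded analysis of $H_{2N}$; once that is established, the remaining pieces are essentially bookkeeping built upon \leref{ldtilde} and \leref{lptow1}.
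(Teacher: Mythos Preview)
Your proposal is correct, but the paper's proof is more economical and inverts the order of your two main steps. The paper first establishes the direct sum $\widetilde{\H} = \dd \oplus \N_\H$ (implicitly, from $\widehat\d - \d \in \H$ and the fact that the classes $[\widehat\d - \d]$ span $\H/\H_0$), and then dispatches the reverse inclusion in one line: for $0 \neq \d \in \dd$ one has $\H_n \subsetneqq \d(\H_n) \subset \H_{n-1}$, so no element with nonzero $\dd$-component can normalize $\H_n$. This uses only the left action of $\dd$ on the filtration of $X$, which is immediate from \eqref{dx1}.

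Your route instead reduces to computing the normalizer of $\H_n$ inside $\H$ and invokes \prref{pioh} to transport the question to $H_{2N}$, where you argue via the Poisson bracket. This works, but it imports more structure than needed: the isomorphism $\psi$ and the explicit graded analysis of $H_{2N}$ are overkill when the paper's observation about $\d(\H_n)$ already settles the matter at the level of the filtration on $X$. Your more detailed verification of the direct sum (surjectivity via \leref{ldtilde} and \leref{lptow1}, directness via projection to $\dd$) is perfectly sound and makes explicit what the paper takes for granted, so in that respect your argument is more self-contained.
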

\begin{proof}
Both $\H_0$ and $\widehat \dd$
normalize $\H_n$. If we denote by $\N_\H$ the subspace $\widehat \dd
+ \H_0 = \widehat\dd \sd \H_0$, then we obtain a direct sum of
vector spaces $\widetilde \H = \dd \oplus \N_\H$.
Since $\H_n \subsetneqq \d(\H_n) \subset \H_{n-1}$ for all $0 \neq \d
\in \dd$, we see that no element outside $\N_\H$ is
contained in the normalizer of $\H_n$ in $\widetilde \H$, thus
proving the statement.
\end{proof}


Recall that the Lie algebra homomorphism $\iota_*\colon\ti\P\to\ti\H$ has a kernel $\kk\ce$, which is the center of $\P$. Hence, $\iota_*^{-1}(\widehat \dd)$ is a subalgebra of $\ti\P$, as $\widehat \dd$ is a subalgebra of $\ti\H$. Since $\P_0$ is stabilized by $\iota_*^{-1}(\widehat \dd)$, 
we have the semidirect sum 
\begin{equation}\label{np}
\N_\P := \iota_*^{-1}(\N_\H) = \iota_*^{-1}(\widehat \dd) \sd \P_0 \,.
\end{equation}

\begin{proposition}
For\/ $n \geq 0$, the normalizer of\/ $\P_n$ inside\/ $\widetilde \P$ coincides with\/ $\N_\P$
and is independent of\/ $n$. There is a decomposition as a direct sum
of vector spaces\/ $\widetilde \P = \dd \oplus \N_\P$.
\end{proposition}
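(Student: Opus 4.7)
The plan is to derive this from the preceding proposition for $\widetilde\H$, transported through the central short exact sequence $0 \to \kk\ce \to \widetilde\P \xrightarrow{\iota_*} \widetilde\H \to 0$ of \leref{lkerio}, while keeping track of the filtration via \leref{lfilp} and using $\N_\P = \iota_*^{-1}(\N_\H)$ from \eqref{np}.

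The easy containment (normalizer of $\P_n$ in $\widetilde\P$ is contained in $\N_\P$) is immediate: if $x \in \widetilde\P$ normalizes $\P_n$, then for any $h \in \H_n$, writing $h = \iota_*(p)$ with $p \in \P_n$ gives $[\iota_*(x), h] = \iota_*([x, p]) \in \iota_*(\P_n) = \H_n$, so $\iota_*(x)$ lies in $\N_\H$ by the previous proposition, whence $x \in \iota_*^{-1}(\N_\H) = \N_\P$. This also immediately yields the independence of $n$.

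For the reverse containment, I use $\N_\P = \iota_*^{-1}(\widehat\dd) \sd \P_0$. Elements of $\P_0$ normalize $\P_n$ since $[\P_0, \P_n] \subset \P_n$ by \leref{lfilp}. For lifts of $\widehat\dd$: because $\H = \iota_*(\P) = \iota_*(\P_{-1}) = \H_{-1}$ (using $\ker\iota_* = \kk\ce$ and $\P = \P_{-1} + \kk\ce$), and $\widehat\d - \d \in \H = \H_{-1}$, one may choose a lift $\widehat\d^* = \d + p^*$ with $p^* \in \P_{-1}$ and $\iota_*(p^*) = \widehat\d - \d$. Any other lift differs from this by a multiple of $\ce$, which is central in $\P$ by \leref{lkerio} and so commutes with $\P_n$; it therefore suffices to verify that $\widehat\d^*$ normalizes $\P_n$. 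For $p \in \P_n$ the verification proceeds via two independent containments. First, applying $\iota_*$ gives $\iota_*([\widehat\d^*, p]) = [\widehat\d, \iota_*(p)] \in \H_n$, so $[\widehat\d^*, p] \in \iota_*^{-1}(\H_n) \cap \P = \P_n + \kk\ce$. Second, writing $\widehat\d^* = \d + p^*$, we obtain $[\widehat\d^*, p] = [\d, p] + [p^*, p]$, where $[\d, p] \in \P_{n-1}$ because the action of $\d \in \dd$ on $\P \simeq X$ sends $\fil_{n+1} X$ into $\fil_n X$, and $[p^*, p] \in [\P_{-1}, \P_n] \subset \P_{n-1}$ by \leref{lfilp}. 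Combining, $[\widehat\d^*, p] \in \P_{n-1}$. For $n \ge 0$ we have $n - 1 \ge -1$, hence $\P_{n-1} \subset \P_{-1}$; since $\ce$ has constant term $1$ it does not lie in $\P_{-1}$, so $\kk\ce \cap \P_{n-1} = \{0\}$, and intersecting the two containments forces $[\widehat\d^*, p] \in \P_n$.

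Finally, for the decomposition $\widetilde\P = \dd \oplus \N_\P$, given $x \in \widetilde\P$ one uses $\widetilde\H = \dd \oplus \N_\H$ to choose $d \in \dd$ with $\iota_*(x) - d \in \N_\H$, and then $x - d \in \iota_*^{-1}(\N_\H) = \N_\P$. For disjointness, if $d \in \dd \cap \N_\P$ then $\iota_*(d) \in \dd \cap \N_\H = \{0\}$, so $d \in \ker\iota_* \cap \dd = \kk\ce \cap \dd = \{0\}$. The main obstacle throughout is the spurious $\kk\ce$-component produced when inverting $\iota_*$; this is exactly what the filtration argument above eliminates, via the careful choice of lift $\widehat\d^* \in \d + \P_{-1}$ together with the hypothesis $n \ge 0$.
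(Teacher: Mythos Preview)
Your proof is correct and follows essentially the same approach as the paper: both argue the easy containment via $\iota_*$, and for the reverse containment both combine the two constraints $[\iota_*^{-1}(\widehat\dd),\P_n]\subset \P_{n-1}$ and $[\iota_*^{-1}(\widehat\dd),\P_n]\subset \P_n+\kk\ce$, using $\ce\notin\P_m$ for $m\ge -1$ to conclude. You are in fact more careful than the paper on one point: you explicitly justify that lifts of $\widehat\d$ can be taken in $\d+\P_{-1}$ (via $\H=\H_{-1}$, since $\P=\P_{-1}+\kk\ce$), which is what makes the bound $\P_{n-1}$ (rather than $\P_{n-2}$) available and hence makes the intersection argument work down to $n=0$; the paper's phrasing ``$[\P,\P_n]\subset\P_{n-1}$'' glosses over this. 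Your explicit verification of the direct sum $\widetilde\P=\dd\oplus\N_\P$ is also fine, though the final step can be shortened: once $\iota_*(d)\in\dd\cap\N_\H=\{0\}$ and $\iota_*|_\dd=\id$, you get $d=0$ directly.
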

\begin{proof}
Since it is a Lie algebra homomorphism,
$\iota_*$ sends elements of $\widetilde \P$
normalizing $\P_n$ to elements of $\widetilde \H$ normalizing
$\H_n$. Therefore, the normalizer of $\P_n$ inside $\widetilde \P$
is contained in $\iota_*^{-1}(\N_\H) = \N_\P$.

We already know that $[\P_0, \P_n] \subset \P_n$.
Moreover, from $[\dd, \P_n] \subset \P_{n-1}$ and $[\P, \P_n]
\subset \P_{n-1}$ we deduce $[\iota_*^{-1}(\widehat \dd), \P_n] \subset \P_{n-1}$.
Since $[\widehat \dd, \H_n] \subset \H_n$, we also obtain $[\iota_*^{-1}(\widehat
\dd), \P_n] \subset \P_n + \kk\ce$. However, 
 $\ce\not\in\P_m$ for $m \geq -1$,
which implies $[\iota_*^{-1}(\widehat \dd), \P_n] \subset \P_n$ if $n\geq 0$.
Finally, 
the fact that 
$\widetilde \P = \dd \oplus \N_\P$ is clear.
\end{proof}

\begin{lemma}\label{ldprime}
The Lie algebra\/ $\iota_*^{-1}(\widehat\dd)$
is an abelian extension of\/ $\widehat\dd \simeq \dd$ by the ideal\/ $\kk\ce$ of character\/ $\chi$ with cocycle\/
$\omega$. This means that in\/ $\iota_*^{-1}(\widehat\dd){:}$
\begin{equation}\label{abext}
[\widehat \d,\ce] = \chi(\d)\ce \,, \qquad 
[\widehat \d,\widehat \d'] = \widehat{[\d,\d']} + \om(\d\wedge\d')\ce \,,
\qquad \d,\d'\in\dd \,.
\end{equation}
\end{lemma}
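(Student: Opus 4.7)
The plan is to work in $\widetilde\P = \dd\ltimes\P$ using lifts $\widehat\d_\P = \d + h_\d$ of $\widehat\d\in\widetilde\H$, where $h_\d\in\P$ satisfies $\iota_*(h_\d) = \widehat\d - \d$ and is defined modulo $\kk\ce$ (by \leref{lkerio}). Both identities will then follow from bracket computations in $\widetilde\P$ with this decomposition.

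For the first identity, write $[\widehat\d_\P,\ce] = [\d,\ce]_{\widetilde\P} + [h_\d,\ce]_\P$. The second summand vanishes since $\ce$ is central in $\P$ by \leref{lkerio}. The first is the left action $\d\cdot\ce = (\d\cdot e^{-\chi})\otimes_H e$. Since $\d\cdot 1 = 0$ in $X$, \leref{checkbar} gives $\bar\d\cdot e^{-\chi} = 0$, so $\d\cdot e^{-\chi} = \chi(\d)e^{-\chi}$, yielding $[\widehat\d,\ce] = \chi(\d)\ce$.

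For the second identity, observe that $\iota_*$ is a Lie algebra homomorphism and $\d\mapsto\widehat\d$ is an isomorphism onto $\widehat\dd$, so $\iota_*([\widehat\d_\P,\widehat\d'_\P] - \widehat{[\d,\d']}_\P) = 0$; hence the difference lies in $\ker\iota_* = \kk\ce$ and equals $\alpha(\d,\d')\ce$ for a skew-bilinear $\alpha\colon \dd\wedge\dd \to \kk$. To identify $\alpha$ with $\omega$, I would pick linear lifts via \leref{ldtilde} so that, under $\P\simeq X$, $h_{\d^k}\equiv x^k e^{-\chi}\pmod{\fil_1 X}$; in particular every $h_\d$ lies in $\fil_0 X$. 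The $\P$-component of the identity $[\widehat\d_\P,\widehat\d'_\P] = \widehat{[\d,\d']}_\P + \alpha(\d,\d')\ce$ then reads
\[
\d\cdot h_{\d'} - \d'\cdot h_\d + [h_\d,h_{\d'}]_\P = h_{[\d,\d']} + \alpha(\d,\d')\ce,
\]
and projecting onto $X/\fil_0 X \simeq \kk$ via $x\mapsto\langle x,1\rangle$ kills $h_{[\d,\d']}\in\fil_0 X$ and sends $\ce$ to $1$, reducing $\alpha(\d,\d')$ to a sum of three constant terms.

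The computation is then routine: for $\d = \d^k$, $\d'=\d^l$, the coproduct $\De(\d^k) = \d^k\otimes 1 + 1\otimes\d^k$ together with $\langle x^l,\d^k\rangle = r^{kl}$ and $\langle x^l,1\rangle = 0$ yields $\langle\d^k\cdot h_{\d^l},1\rangle = -r^{kl}$, while \leref{checkbar} and \eqref{brap} give $\langle [h_{\d^k},h_{\d^l}]_\P,1\rangle = \sum_{i,j} r^{ij}\delta_i^k\delta_j^l = r^{kl}$. Summing the three contributions produces $\alpha(\d^k,\d^l) = -r^{kl}$, and a short calculation from \eqref{cont7} together with skew-symmetry of $\omega$ gives $\omega(\d^k\wedge\d^l) = -r^{kl}$, so $\alpha = \omega$ on basis elements and hence on all of $\dd\wedge\dd$. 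The main subtlety I expect is sign bookkeeping across the conventions in \eqref{brap}, \eqref{dx1}, and \eqref{cont7}; once these are pinned down, the result follows.
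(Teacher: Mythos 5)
Your proof is correct and follows essentially the same route as the paper: lift $\widehat\d$ to its unique representative $\widehat\d_\P = \d + h_\d$ with $h_\d\in\P_{-1}=\fil_0 X$, reduce the first identity to $[\dd,\ce]$ via centrality of $\ce$ and \leref{checkbar}, and identify the cocycle by computing the three bracket contributions $\d\cdot h_{\d'}$, $-\d'\cdot h_\d$, $[h_\d,h_{\d'}]_\P$ modulo $\fil_0 X$ — exactly the three terms the paper singles out. The only cosmetic differences are that you package the $\ce$-coefficient as an abstract skew-bilinear $\alpha$ before evaluating it on the basis $\{\d^k\}$, and that you verify $\alpha=\omega$ on upper-index basis vectors rather than lowering indices at the end; the computations are identical.
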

\begin{proof}
Let us identify elements $\widehat \d \in \widehat\dd \subset\widetilde \H$ with their unique liftings to 
$\dd \sd \P_{-1} \subset\ti\P$.
By \eqref{echi2}, we have in $\ti\P$:
\begin{equation*}
[\d, \ce] = \d(e^{-\chi}) = \chi(\d) \ce \,.
\end{equation*}
Since $\widehat \d-\d\in\H$ and $[\H,\ce]=0$, we get $[\widehat \d,\ce] = \chi(\d)\ce$.
By \eqref{dhat} and \leref{ldtilde},
\begin{equation*}
[\widehat \d^i, \widehat \d^j] \in [\d^i + x^i e^{-\chi} + \P_0, \d^j + x^j e^{-\chi} + \P_0] \,.
\end{equation*}
We are only interested in the coefficient of $\ce$ in this bracket, 
as we already know the result in $\H=\iota_* \P$. The only terms
giving contributions in $\kk\ce$ are:
\begin{equation*}
[\d^i, x^j e^{-\chi}] \,, \quad
[\d^j, x^i e^{-\chi}] \,, \quad
[x^i e^{-\chi}, x^j e^{-\chi}] \,,
\end{equation*}
and the coefficients of $\ce$ can be found by taking images in $X/\fil_0 X \simeq \kk$.
All three are easy to compute. 
For instance, using again \eqref{echi2}, we obtain
\begin{align*}
[\d^i, x^j e^{-\chi}] &= \d^i(x^j e^{-\chi}) 
= \bigl( \d^i x^j + \chi(\d^i) x^j \bigr) e^{-\chi}
\\
& = \d^i x^j \mod \fil_0 X
= -r^{ij} \mod \fil_0 X \,,
\end{align*}
by \eqref{cont7} and \eqref{dx1}.
Similarly, using \eqref{brap}, \eqref{echi} and \eqref{dx2},
\begin{align*}
[x^i e^{-\chi}, x^j e^{-\chi}] 
&= \sum_{k=1}^{2N} \bigl( (x^i e^{-\chi}) \bd_k) \bigr) \bigl( (x^j e^{-\chi})  \bd^k \bigr)
\\
&= \sum_{k=1}^{2N} (x^i \d_k) (x^j \d^k) e^{-2\chi}
\\
&= r^{ij} \mod \fil_0 X \,.
\end{align*}
Adding all contributions, we obtain
\begin{equation*}
[\widehat \d^i, \widehat \d^j] = - r^{ij} \ce
\mod \widehat \dd \,,
\end{equation*}
which gives
\begin{equation*}
[\widehat \d_i, \widehat \d_j] = \omega_{ij} \ce \mod \widehat \dd
\end{equation*}
after lowering indices.
\end{proof}

\subsection{Finite dimensional irreducible representations of $\iota_*^{-1}(\widehat\dd)$}\label{sfdird}

We will later parametrize irreducible representations of $\Hd$ (also) in terms of irreducible representations of the abelian extension $\dd' := \iota_*^{-1}(\widehat \dd)$. In this section we briefly recall what finite-dimensional irreducible representations of $\dd'$ look like.

\begin{lemma}
Let $V$ be a finite-dimensional irreducible representation of $\dd'$. Then $\ce$ acts trivially on $V$ unless $\chi = 0$ and $\omega$ is a $2$-coboundary.
\end{lemma}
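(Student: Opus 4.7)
My plan is to split the argument into two cases based on whether $\chi = 0$, using the explicit commutation relations \eqref{abext} from \leref{ldprime}. Write $C$ for the action of $\ce$ on $V$ and $A_\d$ for the action of $\widehat\d$.

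\medskip

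\textbf{Case 1: $\chi \neq 0$.} The relation $[\widehat\d, \ce] = \chi(\d)\ce$ becomes $[A_\d, C] = \chi(\d) C$ on $V$. A straightforward induction on $n$ then yields $[A_\d, C^n] = n\chi(\d) C^n$ for every $n \geq 1$. Since $V$ is finite dimensional, the trace of a commutator vanishes, so $n\chi(\d) \tr(C^n) = 0$; choosing $\d$ with $\chi(\d) \neq 0$ forces $\tr(C^n) = 0$ for all $n \geq 1$, which in characteristic zero implies (via Newton's identities) that $C$ is nilpotent. In particular $\ker C \neq 0$. The relation $[A_\d, C] = \chi(\d)C$ also gives $C(A_\d v) = A_\d(Cv) - \chi(\d)Cv$, so $\ker C$ is stable under every $A_\d$ (and trivially under $C$ itself), hence is a nonzero $\dd'$-submodule of the irreducible module $V$. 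Therefore $\ker C = V$, i.e., $\ce$ acts as zero.

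\medskip

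\textbf{Case 2: $\chi = 0$.} Now $\ce$ is central in $\dd'$, so by Schur's lemma (using that $\kk$ is algebraically closed) $\ce$ acts on $V$ as a scalar $c \in \kk$. If $c = 0$ there is nothing to prove, so assume $c \neq 0$. Taking trace on both sides of
\[
[A_\d, A_{\d'}] = A_{[\d,\d']} + c\,\omega(\d\wedge\d')\,I
\]
gives $0 = \tr(A_{[\d,\d']}) + c\,\omega(\d\wedge\d')\dim V$. Define $\eta \in \dd^*$ by
\[
\eta(\d) := -\frac{1}{c\dim V}\,\tr(A_\d).
\]
Then $\omega(\d\wedge\d') = \eta([\d,\d'])$ for all $\d, \d' \in \dd$, which says exactly that $\omega$ is the Chevalley--Eilenberg coboundary of $\eta$ (with trivial coefficients). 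Thus $\omega$ is a $2$-coboundary, as required.

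\medskip

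The argument is essentially formal once one exploits that trace kills commutators on a finite-dimensional space and that $[A_\d, C] = \chi(\d)C$ propagates to all powers of $C$. I do not expect a serious obstacle; the only subtle point is the use of Schur's lemma in Case 2, which relies on the standing assumption that $\kk$ is algebraically closed of characteristic zero, and the appeal to Newton's identities in Case 1, which also requires characteristic zero.
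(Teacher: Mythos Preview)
Your proof is correct, but it follows a different and more hands-on route than the paper's. The paper argues structurally: since $\kk\ce$ is an abelian ideal, $\ce$ lies in the radical of $\dd'$, and the Cartan--Jacobson theorem forces it to act by a scalar $\lambda$ on any finite-dimensional irreducible $V$; one then observes that whenever $\chi\neq 0$ (via $[\widehat\d,\ce]=\chi(\d)\ce$) or $\chi=0$ but $\omega$ is not a coboundary (so the central extension is nonsplit), the element $\ce$ lies in $[\dd',\dd']$, and an element of the radical that is also in the derived algebra must act nilpotently, hence $\lambda=0$. Your argument replaces this appeal to Cartan--Jacobson by direct trace computations: in Case~1 you never establish scalar action at all, instead deducing nilpotency of $C$ from $\tr(C^n)=0$ and then using the invariant kernel; in Case~2 your trace identity actually manufactures an explicit primitive $\eta$ with $\di_0\eta=\omega$. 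The paper's approach is shorter once the classical theorem is granted (indeed, with scalar action in hand, Case~1 is immediate from $0=[A_\d,\lambda I]=\chi(\d)\lambda I$), whereas yours is entirely self-contained and has the pleasant feature of exhibiting $\eta$ concretely.
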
 
\begin{proof}
First of all, $\ce$ spans an abelian ideal of $\dd'$, hence it lies in the radical. By the Cartan-Jacobson theorem, $\ce$ must then act via scalar multiplication by some element $\lambda \in \kk$ on each finite-dimensional irreducible representation of $\dd'$.


In conclusion, the only possibility for $\ce$ to act nontrivially on $V$ happens when $\chi = 0$ and $\omega$ is a $2$-coboundary.
\end{proof}

When $\omega$ is a $2$-coboundary, we may find a unique $1$-form $\bla$ of $\dd$ such that $\di \bla = \omega$. Then elements $\widehat \d + \bla(\d)\ce, \d \in \dd,$ span a Lie subalgebra $\ddbla \subset \dd'$ isomorphic to $\dd$ which provides a splitting $\dd' = \kk \ce \oplus \ddbla$.

\begin{proposition}
Let $V$ be a finite-dimensional irreducible representation of $\dd'$. Then:
\begin{itemize}
\item either $V$ is obtained by lifting to $\dd'$ an irreducible representation of $\dd \simeq \dd'/\kk\ce$, in which case $\ce$ acts trivially on $V$;
\item or $\chi = 0, \omega = \di \bla$, so that $\dd'$ splits as a direct sum of Lie algebras $\dd' = \kk 1 \oplus \ddbla$, and $V$ is obtained from an irreducible representation of $\ddbla \simeq \dd$ by supplementing it with a (possibly nontrivial) scalar action of the center $\kk 1$.
\end{itemize}
\end{proposition}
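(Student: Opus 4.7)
The plan is to invoke the preceding lemma, which gives the dichotomy: either $\ce$ acts trivially on $V$, or we are in the exceptional situation $\chi = 0$ with $\omega$ a $2$-coboundary. These two cases correspond exactly to the two bullet points of the proposition, so the argument splits accordingly.

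For the first case, when $\ce$ annihilates $V$, the action of $\dd'$ factors through the quotient $\dd'/\kk\ce \simeq \dd$, and irreducibility of $V$ as a $\dd'$-module coincides with irreducibility as a $\dd$-module. Hence every such $V$ arises as the lift of an irreducible $\dd$-module, which is the first alternative.

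For the second case, where $\chi = 0$ and $\omega = \di\bla$, the main task is to verify the splitting $\dd' = \kk\ce \oplus \ddbla$ as Lie algebras announced in the paragraph preceding the statement. I would define $\d^\bla := \widehat\d + \bla(\d)\ce$ and compute the bracket $[\d^\bla, {\d'}^\bla]$ directly using \leref{ldprime}: because $\chi = 0$, the term $[\widehat\d, \ce] = \chi(\d)\ce$ vanishes, so all cross terms drop out and we are left with $[\widehat\d, \widehat{\d'}] = \widehat{[\d,\d']} + \omega(\d\wedge\d')\ce$. Applying the identity $\omega(\d\wedge\d') = (\di\bla)(\d\wedge\d') = \bla([\d,\d'])$ (using the convention $(\di\bla)(a\wedge b) = \bla([a,b])$ fixed in the introduction), this rewrites as $[\d,\d']^\bla$. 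Thus $\d \mapsto \d^\bla$ is a Lie algebra embedding $\dd \injto \dd'$ whose image $\ddbla$ is a complement to the central ideal $\kk\ce$.

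With the direct sum decomposition in hand, the representation-theoretic claim follows by standard arguments: for any finite-dimensional irreducible representation $V$ of $\dd'$, Schur's lemma forces the central element $\ce$ to act by a scalar, and then $V$ is irreducible over $\dd'$ iff its restriction to $\ddbla$ is irreducible. Conversely, any finite-dimensional irreducible $\ddbla$-module paired with an arbitrary scalar action of $\ce$ extends to an irreducible $\dd'$-module via the direct sum structure. I do not expect any substantial obstacle here: essentially all the content is already packaged in the preceding lemma and \leref{ldprime}, and the proof reduces to the cocycle verification that $\bla$ trivialises $\omega$ and the subsequent application of Schur.
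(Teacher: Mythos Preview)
Your proposal is correct and follows exactly the line the paper intends: the paper states this proposition without proof, treating it as an immediate consequence of the preceding lemma (for the dichotomy) and the paragraph just before it (which already asserts the splitting $\dd' = \kk\ce \oplus \ddbla$ via $\d \mapsto \widehat\d + \bla(\d)\ce$). You have simply spelled out the bracket verification and the Schur's-lemma step that the paper leaves implicit.
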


\section{Tensor modules for $\Hd$}\label{stmhd}

In our classification of irreducible finite $\Wd$-modules \cite{BDK1}, a crucial role was played by the so-called tensor modules. In this section, by restricting them to the subalgebra $\Hd$ of $\Wd$, we obtain the notion of tensor
$\Hd$-modules.

\subsection{Tensor modules for $\Wd$}\label{stwd}

Let us review the construction of tensor modules for $\Wd$ from \cite{BDK1}.
First, given a Lie algebra $\fg$, we define the semidirect sum $W (\dd) \sd \Cur \fg$ as
a direct sum  as $H$-modules, for which $W (\dd)$ is a subalgebra
and $\Cur \fg$ is an ideal, with the following pseudobracket
between them:
\begin{equation*}
    [(f \otimes a) * (g \otimes b)]=-(f \otimes ga)\otimes_H
       (1 \otimes b) \, ,
\end{equation*}
for $f,g \in H$, $a \in \dd$, $b \in \fg$ (see \exref{ecur} and \seref{subwd}).
For any finite-dimensional $\fg$-module $V_0$, we construct a
representation of $W (\dd) \sd \Cur \fg$ in $V=H \otimes V_0$
by
%
%
\begin{equation}\label{eq:1.6}
\bigl( (f \otimes a) \oplus (g \otimes b) \bigr) * v =- (f \otimes a)
    \otimes_H v + (g \otimes 1) \otimes_H bv \,,
\end{equation}
where $f,g \in H$, $a \in \dd$, $b \in \fg$, $v \in V_0$.
For convenience, in \eqref{eq:1.6}, we identify $v\in V_0$ with its image $1\tp v \in \kk\tp V_0 \subset V$.
The action on $V_0$ is then extended to the whole $V$ by $H$-linearity.
Note that  \eqref{eq:1.6} combines the usual action of $\Cur\g$ on $V$ from \exref{ecur2} with the $\Wd$-action
on $H$ given by \eqref{wdac*}.

Next, we define an embedding of $W (\dd)$ in $W (\dd) \sd
\Cur (\dd \oplus \gl\, \dd)$ by
\begin{equation}\label{wdgld2}
    1 \otimes \partial_i \mapsto (1 \otimes \partial_i) \oplus
    \Bigl( (1 \otimes \partial_i) \oplus \bigl(1 \otimes \ad \partial_i+
      \sum_j \partial_j \otimes e^j_i \bigr)\Bigr)\,.
\end{equation}
%
%
Composing this embedding with the action \eqref{eq:1.6} of $W (\dd)
\sd \Cur \fg$ for $\fg = \dd \oplus \gl\, \dd$, we
obtain a $W (\dd)$-module $V=H \otimes V_0$ for each $(\dd \oplus
\gl\, \dd)$-module $V_0$.  This module is called a {\em tensor
$W (\dd)$-module} and is denoted $\T (V_0)$.
Explicitly, the action of $\Wd$ on $\T(V_0)$ is given by \cite[Eq. (4.30)]{BDK1}:
\begin{equation}\label{wdgcd3}
\begin{split}
(1\tp \d_i)*v &= (1 \tp 1) \tp_H (\ad\d_i)v
+ \sum_{j}\, (\d_j \tp 1) \tp_H e_i^j v
\\
&- (1 \tp \d_i) \tp_H v
+ (1 \tp 1) \tp_H (\d_i \cdot v) \,,
\end{split}
\end{equation}
for $v \in V_0$. Here we denoted by $\d_i \cdot v \in V_0$ the action of $\d_i\in\dd$ on $v\in V_0$, which should be distinguished from the element $\d_i\tp v_0\in V$.
%

If $\Pi$ is a finite-dimensional $\dd$-module
and $U$ is a finite-dimensional $\gld$-module, 
their exterior tensor product $\Pi \bt U$ is defined as
the $(\dd\oplus\gld)$-module $\Pi \tp U$, where $\dd$
acts on the first factor and $\gld$ acts on the second one.
In this case, the tensor module $\T(\Pi \bt U)$ will also be denoted as $\T(\Pi,U)$.
Notice that
\begin{equation}\label{tpiv0}
\T(\Pi,U) = T_\Pi(\T(\kk,U)) \,,
\end{equation}
where $T_\Pi$ is the twisting functor from \seref{stwrep}.
More generally, given two $\dd$-modules $\Pi$, $\overline{\Pi}$ and a $\gld$-module $U$, we have
\begin{equation}\label{tpiv1}
T_\Pi(\T(\overline{\Pi},U)) = \T(\Pi\tp\overline{\Pi},U) \,.
\end{equation}

\subsection{Restriction of tensor modules from $\Wd$ to $\Hd$}\label{subtres}
We will identify $\Hd$ as a subalgebra of $\Wd$ via the
embedding \eqref{kd2}. Then $\Hd=He$ where $e\in\Wd$ is given by
\eqref{iota}. In view of \eqref{wdgld2}, we introduce the $H$-linear map (but not a homomorphism of Lie \psalgs) 
$\tau\colon\Wd\to\Cur\gld$ given by
\begin{equation}\label{wdgld3}
\tau(h\tp\d_i) = h\tp\ad\d_i + \sum_{j=1}^{2N} h\d_j \tp e_i^j \,,
\qquad h\in H \,.
\end{equation}
Then the image of $e$ under the map \eqref{wdgld2} is
$e \oplus (e \oplus \tau(e))$, where the first $e$ is considered an element of $\Wd=H\tp\dd$, while the second $e$ is in $\Cur\dd=H\tp\dd$.

Recall also the notation $\bd=\d-\chi(\d)$ and the
linear map $\adsp\colon\dd\to\spd$ defined by \eqref{adsp1}.
Then we have the following analog of \cite[Lemma 5.1]{BDK2}.

\begin{lemma}\label{leee}
We have
\begin{equation}\label{eee1}
\begin{split}
\tau(e) &= (\id\tp\adsp)(e) + \frac12 s \tp I + \sum_{i,j=1}^{2N} \bd_i\bd_j \tp f^{ij} 
\\
&= -\sum_{k=1}^{2N} \bd_k \tp \Bigl( \adsp\d^k +\frac12 \chi(\d^k) I \Bigr) + \sum_{i,j=1}^{2N} \bd_i\bd_j \tp f^{ij} 
\,.
\end{split}
\end{equation}
\end{lemma}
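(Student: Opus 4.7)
The proof is a direct computation. The plan is to expand $\tau(e)$ from its definition using $e = -\sum_k \bar\partial_k \otimes \partial^k$, then regroup terms so that $\text{ad}^{\text{sp}}\partial^k$ and $f^{ij}$ appear naturally.

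First I would apply \eqref{wdgld3} to $e$. Since $\tau$ is $H$-linear and $\partial^k = \sum_i r^{ki}\partial_i$, a short calculation (using $e^{kj} = \sum_i r^{ki} e_i^j$) shows that
\[
\tau(h\tp\partial^k) = h\tp\ad\partial^k + \sum_{j=1}^{2N} h\partial_j \tp e^{kj},
\]
so
\[
\tau(e) = -\sum_{k=1}^{2N} \bar\partial_k \tp \ad\partial^k - \sum_{k,j=1}^{2N} \bar\partial_k\partial_j \tp e^{kj}.
\]

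Next I would split $\partial_j = \bar\partial_j + \chi(\partial_j)$ in the second sum. The $\chi(\partial_j)$ contribution becomes $-\sum_k \bar\partial_k \tp \bigl(\sum_j \chi(\partial_j)\,e^{kj}\bigr)$, and a direct check (unraveling $e^{kj}$ via the matrix units $e_i^j$) gives $\sum_j \chi(\partial_j)\,e^{kj} = \partial^k\tp\chi$ as an element of $\End\dd$. The remaining sum $-\sum_{k,j}\bar\partial_k\bar\partial_j\tp e^{kj}$ I would decompose into symmetric and antisymmetric parts in $k,j$. The symmetric piece, since $\tfrac12(e^{kj}+e^{jk})=-f^{kj}$ and $f^{kj}$ is symmetric, yields $+\sum_{k,j}\bar\partial_k\bar\partial_j \tp f^{kj}$. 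The antisymmetric piece produces $-\tfrac12\sum_{k,j}[\bar\partial_k,\bar\partial_j]\tp e^{kj} = -\tfrac12\sum_{k,j,\ell} c_{kj}^\ell\,\partial_\ell \tp e^{kj}$; using $\chi([\partial_k,\partial_j])=0$ (trace-form property \eqref{omchi3}, which forces $\chi$ to vanish on $[\dd,\dd]$), the bar and unbarred versions coincide, so this equals $-\tfrac12\sum_\ell \bar\partial_\ell \tp \bigl(\sum_{k,j} c_{kj}^\ell e^{kj}\bigr)$.

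Finally I would collect the three contributions to the coefficient of $\bar\partial_k\tp(\cdot)$ and recognize
\[
\ad\partial^k + \partial^k\tp\chi + \tfrac12\sum_{i,j} c_{ij}^k e^{ij} = \adsp\partial^k + \tfrac12\chi(\partial^k)I
\]
by the very definition \eqref{adsp1} of $\adsp$. This gives the second displayed formula in \eqref{eee1}. Passage to the first form is then a rewriting: $(\id\tp\adsp)(e) = -\sum_k \bar\partial_k\tp\adsp\partial^k$, and the identity $-\sum_k\chi(\partial^k)\bar\partial_k = s$ (which follows from \eqref{schi} together with $\chi(s)=\omega(s\wedge s)=0$, hence $\bar s = s$) turns $-\tfrac12\sum_k\chi(\partial^k)\bar\partial_k\tp I$ into $\tfrac12 s\tp I$.

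The only mildly delicate step is the handling of the antisymmetric part of $\bar\partial_k\bar\partial_j$: one has to notice that the structure-constant contribution is precisely what is needed to complete $\ad\partial^k + \partial^k\tp\chi$ into $\adsp\partial^k$, which is exactly where the vanishing of $\chi$ on $[\dd,\dd]$ plays a role. Everything else is straightforward index manipulation using the raising/lowering conventions and the identities $e^{kj}+e^{jk}=-2f^{kj}$ and $\sum_j\chi(\partial_j)e^{kj}=\partial^k\tp\chi$.
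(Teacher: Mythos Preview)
Your proof is correct and follows essentially the same route as the paper's: compute $\tau(e)$ from \eqref{wdgld3} applied to $e=-\sum_k\bd_k\tp\d^k$, split $\d_j=\bd_j+\chi(\d_j)$, symmetrize $-\sum_{k,j}\bd_k\bd_j\tp e^{kj}$ to extract the $f^{kj}$ part, and use $[\bd_i,\bd_j]=\sum_k c_{ij}^k\bd_k$ (equivalently, your trace-form argument) to identify the commutator contribution with the missing piece of $\adsp\d^k$. The only cosmetic difference is that the paper invokes ``$\bar{\,\cdot\,}$ is a Lie algebra automorphism'' where you explicitly use $\chi([\dd,\dd])=0$; these are the same observation.
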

\begin{proof}
First note that, by \eqref{eij}, \eqref{cont7} and \eqref{wdgld3},
\begin{equation*}
\tau(h\tp\d^i) = h\tp\ad\d^i + \sum_{j=1}^{2N} h\d_j \tp e^{ij} \,,
\qquad h\in H \,.
\end{equation*}
Applying this to \eqref{iota}, we find
\begin{align*}
\tau(e)
&= -\sum_{i=1}^{2N} \bd_i\tp\ad\d^i - \sum_{i,j=1}^{2N} \bd_i\d_j \tp e^{ij}
\\
&= -\sum_{i=1}^{2N} \bd_i\tp \Bigl( \ad\d^i + \sum_{j=1}^{2N} \chi(\d_j) e^{ij} \Bigr) 
- \sum_{i,j=1}^{2N} \bd_i\bd_j \tp e^{ij}
\\
&= -\sum_{k=1}^{2N} \bd_k\tp ( \ad\d^k + \d^k \tp \chi ) 
- \sum_{i,j=1}^{2N} \bd_i\bd_j \tp e^{ij}
\,.
\end{align*}
Now we write
\begin{align*}
-\sum_{i,j=1}^{2N} &\, \bd_i\bd_j \tp e^{ij}
= -\frac12 \sum_{i,j=1}^{2N} \bd_i\bd_j \tp e^{ij} - \frac12 \sum_{i,j=1}^{2N} \bd_j\bd_i \tp e^{ji}
\\
&= -\frac12 \sum_{i,j=1}^{2N} \bd_i\bd_j \tp e^{ij} - \frac12 \sum_{i,j=1}^{2N} \bd_i\bd_j \tp e^{ji}
+\frac12 \sum_{i,j=1}^{2N} [\bd_i,\bd_j] \tp e^{ji}
\\
&=  \sum_{i,j=1}^{2N} \bd_i\bd_j \tp f^{ij} - \frac12 \sum_{i,j=1}^{2N} [\bd_i,\bd_j] \tp e^{ij}
\,.
\end{align*}
Since the map \eqref{dbar} is a Lie algebra homomorphism, we have
\begin{equation*}
[\bd_i,\bd_j] = \sum_{k=1}^{2N} c_{ij}^k \bd_k \,.
\end{equation*}
Finally, from \eqref{schi}, we obtain $(\id\tp\chi)(e) = \bar s = s$.
Plugging all of these in the above formula for $\tau(e)$, we derive \eqref{eee1}.
\end{proof}

As a consequence, we obtain the image of $e$ under the map \eqref{wdgld2}.
Then, by \eqref{eq:1.6}, the action of $e$ on a tensor $W (\dd)$-module $\T (V_0)$ is given explicitly by:
\begin{equation}\label{eee2}
\begin{split}
e * v &= \sum_{k=1}^{2N} (\bd_k \tp \d^k) \otimes_H v 
-\sum_{k=1}^{2N} (\bd_k \tp 1) \otimes_H \Bigl( \d^k + \adsp\d^k + \frac12 \chi(\d^k) I \Bigr) \cdot v
\\
&+ \sum_{i,j=1}^{2N} (\bd_i\bd_j \tp 1) \otimes_H f^{ij} v
\,,
\end{split}
\end{equation}
for $v\in V_0 \equiv \kk\tp V_0 \subset\T (V_0)$.

\begin{remark}\label{nontrivialc}
We already know that the unique inclusion $\Hd \hookrightarrow \Wd$ induces a non-injective homomorphism of the corresponding annihilation algebras, its kernel being spanned over $\kk$ by the central element $\ce \in \P$. Consequently, every module obtained from a $\Wd$-tensor module by restricting it to $\Hd$ necessarily exhibits a trivial action of $\ce$. One may provide an alternate construction which allows for nonzero scalar actions of the centre of $\P$ as follows.

Assume that $\chi = 0, \omega = \di \bla$. Set $H' = \ue(\dd')$, $X' = (H')^*$, where $\dd'=\iota_*^{-1}(\widehat \dd)$ as before. Recall that we may construct, as in \cite[Remark 8.23]{BDK}, a Lie pseudoalgebra $\Kdp = H' e'$, whose annihilation algebra we denote by $\K$. Then the canonical projection of Lie algebras $\pi: \dd' \to \dd'/\kk c\simeq \dd$ induces corresponding algebra homomorphisms $\pi_*: H' \to H$ and $\pi^*: X \to X'$ so that the map $j: \P \to \K$ defined as
$$\P \ni x \otimes_H e \mapsto \pi^*(x) \otimes_{H'} e' \in \K$$
is an injective Lie algebra homomorphism. As both the left and the right action of $\d_0 = c = \ce$ are trivial on $\pi^*(X)$, one obtains that all Fourier coefficients $\pi^*(x) \otimes_{H'} \d_0 e'$ vanish, and that the semidirect product $\dd' \sd \P$ induced from the extended annihilation algebra $\dd' \sd \K$ projects to $\dd \sd \P$.

Due to the correspondence between representations of a Lie pseudoalgebra and conformal representations of the corresponding (extended) annihilation algebra, one may consider tensor modules for $\Kdp$, introduced in \cite{BDK2}, and restrict the action of $\K$ to the subalgebra $\P$. One thus obtains a representation $T$ of the semidirect product $\dd' \sd \P$, and it is not difficult to check that $T/\d_0 T$ then admits an action of $\dd \sd \P$, thus yielding a representation of $\Hd$ with a possibly nontrivial action of $\ce \in \P$.
\end{remark}

\subsection{Tensor modules for $\Hd$ }\label{subthd}

For any trace form $\ph$ on $\dd$, the corresponding $1$-dimensional $\dd$-module $\kk_\ph$ is spanned by an element $1_\ph$ such that $\d\cdot 1_\ph = \ph(\d)1_\ph$ for $\d\in\dd$. Similarly, for any $\dd$-module $\Pi$, we may form the tensor product $\dd$-module $\Pi_\ph := \Pi \otimes \kk_\ph$, which is isomorphic to $\Pi$ as a vector space, but is endowed with the action
\begin{equation}\label{tpiv2}
\d\cdot (u\tp 1_\ph) = \bigl( (\d+\ph(\d))\cdot u \bigr) \tp 1_\ph \,,
\qquad \d\in\dd \,, \;\; u\in\Pi \,.
\end{equation}
This observation implies that, by changing the action of $\dd$ on a $(\dd \oplus\gl\,\dd)$-module $V_0$, we can eliminate the term $\frac12 \chi(\d^k) I$ in \eqref{eee2}.
Furthermore, note that both $\adsp\d^k$ and $f^{ij}$ lie in the subalgebra $\sp\,\dd\subset\gl\,\dd$.
This motivates the following definition.

\begin{definition}\label{dtmodh}
Let $V_0$ be a finite-dimensional representation of $\dd'\oplus\spd$.
Then the \emph{tensor module} $\T(V_0,\lambda)$ over $\Hd$ is defined as
$\T(V_0,\lambda)=H\tp V_0$ with the action:
\begin{equation}\label{eee3}
\begin{split}
e * v &= \sum_{k=1}^{2N} (\bd_k \tp \d^k) \otimes_H v 
-\sum_{k=1}^{2N} (\bd_k \tp 1) \otimes_H ( \d^k + \adsp\d^k ) \cdot v + (1 \otimes 1) \otimes_H c. v
\\
&+ \sum_{i,j=1}^{2N} (\bd_i\bd_j \tp 1) \otimes_H f^{ij} v
\,, \qquad v\in V_0 \equiv \kk\tp V_0 \subset\T (V_0) \,.
\end{split}
\end{equation}
For $V_0 = \Pi'\boxtimes U$, where $\Pi'$ is a
finite-dimensional $\dd'$-module and $U$ is a finite-dimensional
$\spd$-module, we will write $\T(V_0)=\T(\Pi',U)$. When $V_0 = \Pi \boxtimes U$ is a $\dd\oplus \spd$-module, we will abuse the notation and set $\T(V_0) = \T(\Pi, U):= \T(\pi^* \Pi, U)$, where $\pi^* \Pi$ is viewed as a $\dd'$-module via composition with the canonical projection $\pi: \dd' \to \dd'/\kk c \simeq \dd$.
\end{definition}

As usual, the action of $\Hd=He$ on $\T(V_0)$ is determined uniquely from \eqref{eee3} and $H$-linearity. Notice that when the action of $c \in \dd'$ is trivial and $\Pi'$ gets identified with $\pi^* \Pi$ as above, \eqref{eee3} coincides with the expression from \eqref{eee2}, thus showing that $\T(V_0)$ is indeed an $\Hd$-module. One may in principle use \reref{nontrivialc} to generalize this fact to nontrivial actions of $c$; we will instead postpone this to \seref{skten}, where it will be derived in terms of singular vectors.
Using \eqref{eee2}--\eqref{eee3}, we obtain the next proposition.

\begin{proposition}\label{phtm}
Let\/ $\Pi$ be a finite-dimensional\/ $\dd$-module, and\/ $U$ be a finite-dimensional\/ $\gld$-module, which is also an\/ $\spd$-module by restriction. Suppose that\/ $I\in\gld$ acts as\/ $k\id$ on\/ $U$ for some\/ $k\in\kk$. Then the restriction of the tensor\/ $\Wd$-module\/ $\T(\Pi,U)$ to the subalgebra\/ $\Hd$ is isomorphic to the tensor\/ $\Hd$-module\/ $\T(\Pi_{k\chi/2},U)$.
\end{proposition}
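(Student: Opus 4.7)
The plan is to establish the isomorphism by writing down both actions of $e$ explicitly and comparing them term by term under the natural identification of the underlying $H$-modules.

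First, I would unpack the left-hand side. The $\Wd$-module $\T(\Pi,U) = H \tp \Pi \tp U$ has, by restriction to $\Hd\hookrightarrow\Wd$ via $e \mapsto -r + 1\tp s$, precisely the action given by formula \eqref{eee2}. The derivation of \eqref{eee2} goes through \leref{leee} (to compute $\tau(e)$) and the standard $\Wd$-tensor formula \eqref{eq:1.6}; at this step the crucial term $\frac12 s \tp I$ from \eqref{eee1} becomes the summand $-\sum_k (\bd_k \tp 1)\tp_H \frac12 \chi(\d^k) I \cdot v$.

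Next, I would write down the right-hand side. By \deref{dtmodh}, with $V_0 = \Pi_{k\chi/2} \boxtimes U$ viewed as a $\dd'\oplus\spd$-module via the pullback along $\pi\colon\dd'\to\dd$, formula \eqref{eee3} applies. Since $c\in\dd'$ acts as zero on any $\pi^*$-pullback, the term $(1\tp 1)\tp_H c\cdot v$ vanishes identically. On $\Pi_{k\chi/2}$, the $\dd$-action on a pure tensor $u\tp 1_{k\chi/2}$ is given by \eqref{tpiv2} as $(\d + \tfrac{k}{2}\chi(\d))\cdot u$. Therefore the term $-\sum_k (\bd_k \tp 1) \tp_H (\d^k + \adsp\d^k)\cdot v$ in \eqref{eee3} equals $-\sum_k (\bd_k \tp 1) \tp_H \bigl( (\d^k + \tfrac{k}{2}\chi(\d^k)) + \adsp\d^k \bigr)\cdot v$, where $\d^k$ now denotes the original (untwisted) action on $\Pi$ and $\adsp\d^k$ acts on $U$.

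Comparing with \eqref{eee2}, the only discrepancy is the $\frac12 \chi(\d^k) I$ summand on the left. By the hypothesis that $I\in\gld$ acts as $k\,\id$ on $U$ (and trivially on $\Pi$), this summand acts as $\frac{k}{2}\chi(\d^k)\,\id$ on $V_0$, which is exactly the shift produced by the twist $\Pi\rightsquigarrow\Pi_{k\chi/2}$. All other summands (the $(\bd_k \tp \d^k) \tp_H v$ piece and the $\sum_{i,j}(\bd_i \bd_j \tp 1) \tp_H f^{ij} v$ piece) are formally identical in \eqref{eee2} and \eqref{eee3}. Hence the identity map $H \tp \Pi \tp U \to H \tp \Pi_{k\chi/2} \tp U$ intertwines the two $\Hd$-actions.

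The calculation is essentially bookkeeping, and there is no substantive obstacle: the only thing to watch is that the $\frac12\chi(\d^k) I$ contribution in \eqref{eee2} comes out of the $\frac12 s\tp I$ term of \leref{leee}, which in turn reflects the difference between the full $\gld$-action needed to describe $\tau(e)$ and the purely $\spd$-valued action retained in \deref{dtmodh}; absorbing the $\chi$-twist into the $\dd$-factor (rather than the $\gld$-factor) is precisely what the definition of $\Pi_{k\chi/2}$ accomplishes.
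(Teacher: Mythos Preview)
Your proposal is correct and follows exactly the approach the paper indicates: it simply says ``Using \eqref{eee2}--\eqref{eee3}, we obtain the next proposition,'' and the surrounding discussion already notes that the $\frac12\chi(\d^k)I$ term in \eqref{eee2} can be absorbed by twisting the $\dd$-action on $\Pi$. You have faithfully expanded that one-line argument into an explicit term-by-term comparison.
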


\section{Tensor modules of de Rham type}\label{stdrm}
The main goal of this section is to define an important complex of
$\Hd$-modules, called the {\em conformally symplectic pseudo de Rham complex}. 
We start by reviewing the pseudo de Rham complex of $\Wd$-modules from \cite{BDK,BDK1}.

\subsection{Forms with constant coefficients}\label{sfcc}
%
Consider the cohomology complex of the Lie algebra $\dd$
with trivial coefficients:
\begin{equation}\label{domcc}
0 \to \Om^0 \xrightarrow{\diz}\Om^1\xrightarrow{\diz} \cdots \xrightarrow{\diz} \Om^{2N} \to 0
\,, \qquad \dim\dd=2N \,,
\end{equation}
where
$\Om^n = \textstyle\bigwedge^n\dd^*$.
Set
$\Om = \textstyle\bigwedge^\bullet\dd^*
= \textstyle\bigoplus_{n=0}^{2N} \Om^n$
and $\Om^n = \{0\}$ if $n<0$ or $n>2N$.
We will think of the elements of $\Om^n$ as skew-symmetric
\emph{$n$-forms}, i.e., linear maps from $\bigwedge^n \dd$ to $\kk$.
The \emph{differential} $\diz$ is given by the formula
($\al\in\Om^n$, $a_i\in\dd$):
\begin{equation}\label{d0al}
\begin{split}
(&\diz\al)(a_1 \wedge \dots \wedge a_{n+1})
\\
&= \sum_{i<j} (-1)^{i+j} \al([a_i,a_j] \wedge a_1 \wedge \dots \wedge
\what a_i \wedge \dots \wedge \what a_j \wedge \dots \wedge a_{n+1})
\,, \qquad n\ge1 \,,
\end{split}
\end{equation}
and $\diz\al = 0$ if $\al\in\Om^0=\kk$.
Here, as usual, a hat over a term means that it is omitted in the wedge
product.
Recall also that the \emph{wedge product} of two forms
$\al\in\Om^n$ and $\be\in\Om^p$
is defined by:
\begin{equation}\label{wedge}
\begin{split}
(\al&\wedge\be)(a_1 \wedge \dots \wedge a_{n+p})
\\
&= \frac1{n!p!} \sum_{\pi\in\symm_{n+p}} (\sgn\pi) \,
\al(a_{\pi(1)} \wedge \dots \wedge a_{\pi(n)}) \,
\be(a_{\pi(n+1)} \wedge \dots \wedge a_{\pi(n+p)}) \,,
\end{split}
\end{equation}
where $\symm_{n+p}$ denotes the symmetric group on $n+p$ letters
and $\sgn\pi$ is the sign of the permutation $\pi$.

The wedge product \eqref{wedge} makes $\Om$
a $\ZZ_+$-graded \as\ commutative superalgebra with parity $p(\al)=n \mod 2\ZZ$
for $\al\in\Om^n$:
\begin{equation}\label{omasgc}
\al\wedge\be = (-1)^{p(\al)p(\be)} \be\wedge\al \,, \qquad
(\al\wedge\be)\wedge\ga = \al\wedge(\be\wedge\ga) \,.
\end{equation}
The differential $\diz$ is an odd derivation of $\Om$\,:
\begin{equation}\label{dizder}
\diz(\al\wedge\be) = \diz\al\wedge\be + (-1)^{p(\al)} \al\wedge\diz\be \,.
\end{equation}
For $a\in\dd$, we have operators $\io_a\colon\Om^n\to\Om^{n-1}$ defined by
\begin{equation}\label{ioal}
(\io_a\al)(a_1 \wedge \dots \wedge a_{n-1})
=\al(a \wedge a_1 \wedge \dots \wedge a_{n-1}) \,,
\qquad\quad a_i\in\dd \,,
\end{equation}
which are also odd derivations of $\Om$.
For $A\in\gld$, denote by $A\cdot$ its action on $\Om$\,:
\begin{equation}\label{acdotal}
(A\cdot\al)(a_1 \wedge \dots \wedge a_n)
= \sum_{i=1}^n\, (-1)^i \al(A a_i \wedge a_1 \wedge \dots \wedge \what a_i
\wedge \dots \wedge a_n) \,.
\end{equation}
Then $A\cdot$  is an even derivation of $\Om$\,:
\begin{equation}\label{acder}
A\cdot(\al\wedge\be) = (A\cdot\al)\wedge\be + \al\wedge(A\cdot\be) \,.
\end{equation}

As before, let us fix a $1$-form $\chi\in\Om^1=\dd^*$ and a nondegenerate $2$-form $\om\in\Om^2=\dd^*\wedge\dd^*$.
Then equation \eqref{omchi3} and the condition that $\chi$ is a trace form are equivalent to:
\begin{equation}\label{omchi4}
\diz\chi=0 \,, \qquad \diz\om+\chi\wedge\om=0 \,.
\end{equation}
Consider the operator $\Psi$ of wedge multiplication with $\om$, i.e., $\Psi(\al) = \om\wedge\al$ for $\al\in\Om$.
Denote by $I^n\subset\Om^n$ the image of $\Psi|_{\Om^{n-2}}$
and by $J^n\subset\Om^n$ the kernel of $\Psi|_{\Om^n}$.
Since $\om$ is nondegenerate, we have:
\begin{equation}\label{injn}
I^k=\Om^k  \;\;\text{for}\;\; k\ge N+1 \,, \qquad  J^n=\{0\} \;\;\text{for}\;\; n\le N-1 \,.
\end{equation}
In particular, $\Psi$ gives an isomorphism $\Om^{N-1}\to\Om^{N+1}$.
More generally, $\Psi^m$ gives an isomorphism
$\Om^{N-m}\to\Om^{N+m}$ for $0\le m\le N$.
By \cite[Lemma 6.1]{BDK2}, the composition 
\begin{equation}\label{injn2}
J^{N+m} \injto \Om^{N+m} 
\xrightarrow{\,\simeq\,} \Om^{N-m}
\surjto \Om^{N-m}/I^{N-m}
\end{equation}
is an isomorphism for $0\le m\le N$.

By definition, $A\cdot\om=0$ for $A\in\spd$. Hence, by \eqref{acder}, $A\cdot$ commutes with $\Psi$.
This implies that $I^n$ and $J^n$ are $\spd$-modules and \eqref{injn2} is an isomorphism of $\spd$-modules.
In fact, one has isomorphisms of $\spd$-modules
\begin{equation}\label{injn3}
\Om^{n}/I^{n} \simeq J^{2N-n} \simeq R(\pi_n) \,, \qquad
0\leq n \leq N \,,
\end{equation}
where $R(\pi_n)$ denotes the $n$-th fundamental
representation of $\spd$ and $R(\pi_0) = \kk$
(see, e.g., \cite[Lecture 17]{FH}).

\subsection{Pseudo de Rham complex}\label{sdrm}
Recall from \cite{BDK} the spaces of \emph{pseudoforms}
\begin{equation}\label{omndd}
\Om^n(\dd) = H\tp\Om^n \,, \qquad
\Om(\dd) = H\tp\Om = \bigoplus_{n=0}^{2N} \Om^n(\dd) \,.
\end{equation}
These are free $H$-modules, where $H$ acts by left multiplication on the first factor.
We will often identify $n$-pseudoforms $\al\in\Om^n(\dd)$ with $\kk$-linear maps from
$\bigwedge^n \dd^*$ to $H$.
The \emph{differential} $\di\colon\Om^n(\dd)\to\Om^{n+1}(\dd)$ is given by
($\al\in\Om^n(\dd)$, $a_i\in\dd$):
\begin{equation}\label{dw}
\begin{split}
&\begin{split} (\di \al)&(a_1 \wedge \dots \wedge a_{n+1})
\\
&= \sum_{i<j} (-1)^{i+j} \al([a_i,a_j] \wedge a_1 \wedge \dots
\wedge \what a_i \wedge \dots \wedge \what a_j \wedge \dots \wedge
a_{n+1})
\\
&+ \sum_i (-1)^i \al(a_1 \wedge \dots \wedge \what a_i \wedge
\dots \wedge a_{n+1}) \, a_i \in H
\qquad\text{if}\;\; n\ge1,
\end{split}
\\
&(\di \al)(a_1) = -\al \, a_1 \in H \qquad\text{if}\;\; \al\in\Om^0(\dd)=H.
\end{split}
\end{equation}
Notice that $\di$ is $H$-linear and $\di^2=0$. 
The sequence
\begin{equation}\label{domd}
0 \to \Om^0(\dd) \xrightarrow{\di} \Om^1(\dd) \xrightarrow{\di}
\cdots \xrightarrow{\di} \Om^{2N-1}(\dd) \xrightarrow{\di} \Om^{2N}(\dd)
\end{equation}
is called the \emph{pseudo de Rham complex}.
By \cite[Remark 8.1]{BDK}, the
$n$-th cohomology of the complex $(\Om(\dd), \di)$ is trivial
for $n\ne 2N=\dim\dd$ and is $1$-dimensional for $n=2N$. 
Hence, the sequence \eqref{domd} is exact. 
We extend the wedge product in $\Om$ to a product in $\Om(\dd)$ by setting
\begin{equation}\label{wedge*}
(f\tp\al)\wedge(g\tp\be) = (fg) \tp (\al\wedge\be) \,, \qquad
\al,\be\in\Om \,, \;\; f,g\in H \,.
\end{equation}
Then, by \cite[Lemma 6.3]{BDK2}, we have:
\begin{align}
\label{dider}
\di(\al\wedge\be) = \diz\al\wedge\be + (-1)^n
\al\wedge\di\be \,,
\qquad \al\in\Om^n \,, \;\; \be\in\Om \,.
\end{align}

In \cite{BDK,BDK1}, we introduced a $\Wd$-module structure on each $\Om^n(\dd)$, so that the map $\di$
is a module homomorphism. Furthermore, $\Om^n(\dd)$ is a tensor $\Wd$-module,
namely $\Om^n(\dd)=\T(\kk,\Om^n)$ (see \seref{stwd}).
Explicitly, if we identify the elements of $H^{\tp2} \tp_H \Om^n(\dd) \simeq H^{\tp2} \tp\Om^n$ with $\kk$-linear maps
from $\bigwedge^n \dd^*$ to $H^{\tp2}$, the action of $\Wd$ on $\Om^n(\dd)$ is given by:
\begin{equation}\label{fa*om}
\begin{split}
\bigl((f\tp a)&*\al\bigr)(a_1 \wedge \dots \wedge a_n) 
= - \al(a_1\wedge \dots \wedge a_n) \, (f \tp a)
\\
&+ \sum_{i=1}^n (-1)^i  
\al(a \wedge a_1 \wedge\dots \wedge \what a_i \wedge \dots \wedge a_n) \, (f a_i \tp 1)
\\
&+ \sum_{i=1}^n (-1)^i  
\al([a,a_i] \wedge a_1 \wedge\dots \wedge \what a_i \wedge \dots \wedge a_n) \, (f \tp 1)
\in H^{\tp2} \,,
\end{split}
\end{equation}
for $n\ge1$, $f\tp a\in\Wd$ and $\al\in\Om^n\equiv\kk\tp\Om^n\subset\Om^n(\dd)$.
For $n=0$, the action of $\Wd$ on $\Om^0(\dd)=H$ coincides with the action \eqref{wdac*}.

\subsection{Conformally symplectic pseudo de Rham complex for $\chi=0$}\label{sdrmh0}
In this subsection, we will assume that $\chi=0$. The general case will be considered in the next subsection.
Let us extend $\Psi$ by $H$-linearity to a homomorphism of $H$-modules $\Psi\colon\Om^n(\dd)\to\Om^{n+2}(\dd)$.
Then the image of $\Psi |_{\Om^{n-2}(\dd)}$ is $I^n(\dd) = H\tp I^n \subset \Om^n(\dd)$, 
and the kernel of $\Psi |_{\Om^{n}(\dd)}$ is $J^n(\dd)  = H\tp J^n \subset \Om^n(\dd)$. 
By \eqref{injn2}, we have isomorphisms
$J^{N+m}(\dd) \simeq \Om^{N-m}(\dd) / I^{N-m}(\dd)$
for all $0\le m\le N$.

It follows from \eqref{dider} and \eqref{omchi4} with $\chi=0$ that
$\di\Psi = \Psi\di$.
Therefore, the complex \eqref{domd}
induces complexes
\begin{align}
\label{domd1}
0 \to \Om^0(\dd) / I^0(\dd) &\xrightarrow{\di} 
\Om^1(\dd) / I^1(\dd) 
\xrightarrow{\di} \cdots \xrightarrow{\di} \Om^{N}(\dd) / I^{N}(\dd)
\intertext{and}
\label{domd2}
J^{N}(\dd) &\xrightarrow{\di} J^{N+1}(\dd) 
\xrightarrow{\di} \cdots \xrightarrow{\di} J^{2N}(\dd) \to 0 \,.
\end{align}

\begin{lemma}\label{ldomd2}
The sequences \eqref{domd1} and 
\begin{equation*}
J^{N}(\dd) \xrightarrow{\di} J^{N+1}(\dd) 
\xrightarrow{\di} \cdots \xrightarrow{\di} J^{2N-2}(\dd) \xrightarrow{\di} J^{2N-1}(\dd)
\,, \qquad N\ge 3\,,
\end{equation*}
are exact. The cohomology of the complex \eqref{domd2} is finite dimensional.
\end{lemma}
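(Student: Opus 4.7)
The plan is to derive the claimed cohomological vanishings via long exact sequences associated with two short exact sequences of chain complexes. Under the standing assumption $\chi = 0$, equation \eqref{omchi4} gives $\di\omega = 0$, so the $H$-linear map $\Psi = \omega \wedge (\cdot)$ commutes with $\di$, making $I^\bullet(\dd)$ and $J^\bullet(\dd)$ subcomplexes of the pseudo de Rham complex $\Omega^\bullet(\dd)$. The two short exact sequences I will exploit are
\begin{align*}
0 \to I^\bullet(\dd) &\to \Omega^\bullet(\dd) \to \Omega^\bullet(\dd)/I^\bullet(\dd) \to 0, \\
0 \to J^{\bullet-2}(\dd) &\to \Omega^{\bullet-2}(\dd) \xrightarrow{\,\Psi\,} I^\bullet(\dd) \to 0,
\end{align*}
the second using that $\Psi\colon \Omega^{n-2}(\dd) \to \Omega^n(\dd)$ has image $I^n(\dd)$ and kernel $J^{n-2}(\dd)$.

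Taking the associated long exact sequences in cohomology and plugging in the known cohomology of the pseudo de Rham complex (acyclic except in top degree, where $H^{2N}(\Omega^\bullet(\dd)) \simeq \kk$), I will derive the chain of isomorphisms
$H^n(\Omega^\bullet(\dd)/I^\bullet(\dd)) \simeq H^{n+1}(I^\bullet(\dd)) \simeq H^n(J^\bullet(\dd))$
for $n \leq 2N - 2$, together with $H^{2N-1}(J^\bullet(\dd)) \simeq H^{2N}(J^\bullet(\dd)) \simeq \kk$; the latter two at once give the finite-dimensionality of the cohomology of \eqref{domd2}. Since $J^k(\dd) = 0$ for $k \leq N-1$, the vanishings $H^n(J^\bullet(\dd)) = 0$ for $n \leq N-1$ are automatic, and transferring via the isomorphism supplies exactness of \eqref{domd1} in the corresponding degrees.

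What remains is to establish $H^n(J^\bullet(\dd)) = 0$ in the middle range $N+1 \leq n \leq 2N-2$, which is the place where the hypothesis $N \geq 3$ enters. For this I plan a direct argument. Let $\alpha \in J^n(\dd)$ be a cocycle. By exactness of the pseudo de Rham complex at position $n < 2N$, there exists $\beta_0 \in \Omega^{n-1}(\dd)$ with $\di\beta_0 = \alpha$. Then $\Psi\beta_0 \in \Omega^{n+1}(\dd)$ is itself a cocycle because $\di\Psi\beta_0 = \Psi\di\beta_0 = \Psi\alpha = 0$, and since $n+1 < 2N$, exactness of the pseudo de Rham complex yields $\eta \in \Omega^n(\dd)$ with $\di\eta = \Psi\beta_0$. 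The decisive point is that $n > N$ forces $\Omega^n(\dd) = I^n(\dd)$, so I may write $\eta = \Psi\gamma$ for some $\gamma \in \Omega^{n-2}(\dd)$; then $\Psi(\beta_0 - \di\gamma) = \Psi\beta_0 - \di\Psi\gamma = 0$, which shows $\beta := \beta_0 - \di\gamma$ lies in $J^{n-1}(\dd)$ and satisfies $\di\beta = \alpha$, exhibiting $\alpha$ as a coboundary in $J^\bullet(\dd)$.

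The principal obstacle I anticipate is to carefully set up and track the two long exact sequences (together with the degree shift in the second), ensuring that the connecting homomorphisms, shifts, and signs combine correctly, and to verify that one indeed recovers the stated isomorphisms in all required degrees; once the LES framework is in place, the direct cocycle argument above is straightforward and uses only the Lefschetz-type identification $\Omega^n(\dd) = I^n(\dd)$ for $n > N$ recorded in \eqref{injn}.
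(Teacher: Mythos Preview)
Your proof is correct. For the exactness of the $J^\bullet(\dd)$ sequence in the range $N+1 \le n \le 2N-2$, your direct cocycle argument coincides essentially verbatim with the paper's. The difference lies in how you handle the quotient sequence \eqref{domd1}: the paper carries out a parallel hands-on diagram chase (take $\alpha \in \Omega^n(\dd)$ with $\di\alpha \in I^{n+1}(\dd)$, write $\di\alpha = \Psi\beta$, use injectivity of $\Psi$ on $\Omega^n(\dd)$ for $n \le N-1$ to force $\di\beta = 0$, then peel off $\Psi\gamma$ and invoke exactness of \eqref{domd}), whereas you set up the two long exact sequences and reduce exactness of \eqref{domd1} to the trivial vanishing $H^n(J^\bullet(\dd)) = 0$ for $n \le N-1$. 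Your long exact sequence framework is more structural and, as a bonus, yields the precise values $H^{2N-1}(J^\bullet(\dd)) \simeq H^{2N}(J^\bullet(\dd)) \simeq \kk$, sharper than the paper's terse appeal to $\dim \Omega^{2N}(\dd)/\di\Omega^{2N-1}(\dd) = 1$ for finite-dimensionality. The paper's approach, on the other hand, is entirely elementary and avoids the bookkeeping of connecting maps and degree shifts that you flag as your main anticipated obstacle.
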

\begin{proof}
To show exactness at the term $\Om^n(\dd) / I^n(\dd)$ in \eqref{domd1}
for $0\le n\le N-1$, take $\al\in\Om^n(\dd)$ such that
$\di\al\in I^{n+1}(\dd)$. This means $\di\al = \Psi\be$
for some $\be\in\Om^{n-1}(\dd)$. Then 
$0 = \di^2\al = \di\Psi\be = \Psi\di\be$\,; hence,
$\di\be=0$, because $\Psi$ is injective on $\Om^n(\dd)$ for $n\le N-1$.
By the exactness of \eqref{domd}, $\be=\di\ga$ for some $\ga\in\Om^{n-2}(\dd)$.
Then 
$\di(\al-\Psi\ga) = \Psi\be - \Psi\di\ga = 0$,
which implies $\al-\Psi\ga = \di\rho$
for some $\rho\in\Om^{n-1}(\dd)$. Therefore,
$\al+I^{n}(\dd) = \di(\rho+I^{n-1}(\dd))$ is a coboundary.

To prove exactness at the term $J^n(\dd)$ in \eqref{domd2}
for $N+1\le n\le 2N-2$, take $\al\in J^n(\dd)$ such that $\di\al=0$.
Then $\al=\di\be$ for some $\be\in\Om^{n-1}(\dd)$, by the exactness of \eqref{domd}.
We have $\di\Psi\be = \Psi\di\be = \Psi\al = 0$.
Hence, $\Psi\be = \di\ga$ for some $\ga\in\Om^{n}(\dd)$, since $\Psi\be\in\Om^{n+1}(\dd)$
with $n+1\le 2N-1$ and \eqref{domd} is exact.
Since $\Psi$ maps $\Om^{n-2}(\dd)$ onto $\Om^{n}(\dd)$ for $n\ge N+1$, 
we can find $\rho\in\Om^{n-2}(\dd)$ such that
$\Psi\rho = \ga$. Then
$\Psi(\be-\di\rho) = \Psi\be - \di\Psi\rho
= \Psi\be - \di\ga = 0$. 
We have found an element $\be-\di\rho \in J^{n-1}(\dd)$ such that
$\di(\be-\di\rho) = \al$.
This completes the proof of exactness.

The finite-dimensionality of the cohomology of \eqref{domd2} follows from the fact that
$\dim\Om^{2N}(\dd) / \di\Om^{2N-1}(\dd) = 1$.
\end{proof}

Now we will construct a map
$\diru\colon \Om^{N}(\dd) / I^{N}(\dd) \to J^{N}(\dd)$,
analogous to Rumin's map from \cite{Ru} and \cite{BDK2}.
Take any $\al\in\Om^{N}(\dd)$. Since the restriction of
$\Psi$ gives an isomorphism $\Om^{N-1}(\dd) \to \Om^{N+1}(\dd)$, we can write $\di\al = \Psi\be$
for a unique $\be\in\Om^{N-1}(\dd)$.
Then 
we have
$0 = \di^2\al = \Psi\di\be$\,; hence, $\di\be \in J^{N}(\dd)$.
We let $\diru\al=\di\be$. To show that $\diru$ is well defined,
we need to check that $\diru\al=0$ when $\al\in I^{N}(\dd)$.
Indeed, if $\al=\Psi\ga$ for some $\ga\in\Om^{N-2}(\dd)$, then
$\di\al = \Psi\di\ga$, which implies $\be=\di\ga$ and $\di\be=0$.
Connecting \eqref{domd1} and \eqref{domd2}, we obtain a sequence
\begin{equation}\label{domd6}
0 \to \Om^0(\dd) / I^0(\dd) \xrightarrow{\di} 
\cdots \xrightarrow{\di} \Om^{N}(\dd) / I^{N}(\dd) 
\xrightarrow{\diru}
J^{N}(\dd) \xrightarrow{\di} \cdots \xrightarrow{\di} J^{2N}(\dd) \,,
\end{equation}
which we will call the \emph{conformally symplectic pseudo de Rham complex} for $\chi=0$.
(Since for $\chi=0$ the form $\om$ is symplectic, one could also call this the symplectic pseudo de Rham complex.)

\begin{proposition}\label{pdomd5}
The sequence \eqref{domd6} is a complex, which is exact at all terms except\/ $J^{2N-1}(\dd)$ and\/ $J^{2N}(\dd)$. 
The map\/ $\diru$ is defined as follows{\rm:}
take any representative\/ $\al\in\Om^{N}(\dd)$ and write\/
$\di\al = \Psi\be;$ then\/ $\diru\al = \di\be$.
\end{proposition}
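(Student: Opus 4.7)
The plan is to verify four claims in sequence: the formula for $\diru$ is well defined on the quotient, the resulting sequence is a complex at the two new joining terms (exactness at the older segments being already \leref{ldomd2}), and exactness holds at $\Om^N(\dd)/I^N(\dd)$ and at $J^N(\dd)$ by short diagram chases.

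First I will establish the construction. Given $\al\in\Om^N(\dd)$, since $\Psi$ restricts to an isomorphism $\Om^{N-1}(\dd)\to\Om^{N+1}(\dd)$ by \eqref{injn}, the equation $\di\al=\Psi\be$ has a unique solution $\be\in\Om^{N-1}(\dd)$. From $\di^2\al=0$ and injectivity of $\Psi$ I will read off $\Psi\di\be=0$, so that $\di\be\in J^N(\dd)$. If $\al=\Psi\ga\in I^N(\dd)$, then $\di\al=\Psi\di\ga$ forces $\be=\di\ga$, hence $\diru\al=\di\be=0$; thus $\diru$ descends to $\Om^N(\dd)/I^N(\dd)$. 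The complex property at the two joining terms is then immediate: on the left, $\diru\di\al$ for $\al\in\Om^{N-1}(\dd)$ is governed by $\Psi\be=\di^2\al=0$, which by injectivity of $\Psi$ gives $\be=0$ and $\diru\di\al=0$; on the right, $\di\diru\al=\di^2\be=0$ by construction.

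For exactness at $\Om^N(\dd)/I^N(\dd)$, I will assume $\diru\al=0$, so that $\di\be=0$, and invoke exactness of the pseudo de Rham complex \eqref{domd} to write $\be=\di\ga$ with $\ga\in\Om^{N-2}(\dd)$. Then $\di\al=\Psi\di\ga=\di\Psi\ga$, and a second application of \eqref{domd} yields $\al-\Psi\ga=\di\rho$ for some $\rho\in\Om^{N-1}(\dd)$, so $\al\equiv\di\rho$ modulo $I^N(\dd)$. For exactness at $J^N(\dd)$, given a closed $\eta\in J^N(\dd)$, exactness of \eqref{domd} gives $\eta=\di\be$ for some $\be\in\Om^{N-1}(\dd)$; then $\di\Psi\be=\Psi\di\be=\Psi\eta=0$, and exactness of \eqref{domd} once more produces $\al\in\Om^N(\dd)$ with $\di\al=\Psi\be$, whence $\diru\al=\di\be=\eta$.

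The main obstacle is purely organizational: one must keep track of which direction of $\Psi$ (injectivity on $\Om^{N-1}(\dd)$, or surjectivity from low-degree forms onto $I^n(\dd)$ for $n\ge N+1$) is being used in each step, and invoke the correct range of \leref{ldomd2}. No input beyond those lemmas, the identifications \eqref{injn}, and the commutation $\di\Psi=\Psi\di$ (which follows from \eqref{dider} and \eqref{omchi4} with $\chi=0$) is needed. The argument is entirely parallel to Rumin's construction \cite{Ru} and its pseudoalgebra adaptation in \cite{BDK2}, with the symplectic form $\om$ playing the role there played by the contact form.
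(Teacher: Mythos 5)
Your argument is correct and follows the same route as the paper's proof: well-definedness is read off from $\Psi$ being an isomorphism $\Om^{N-1}(\dd)\to\Om^{N+1}(\dd)$, the complex property is immediate, and exactness at the two joining terms comes from two applications of the exactness of the pseudo de Rham complex \eqref{domd} together with $\di\Psi=\Psi\di$. One small point the paper handles explicitly: your exactness chase at $J^N(\dd)$ invokes exactness of \eqref{domd} at $\Om^{N+1}(\dd)$, which requires $N+1\le 2N-1$, i.e.\ $N\ge 2$; for $N=1$ the term $J^N(\dd)$ coincides with $J^{2N-1}(\dd)$, where the proposition makes no exactness claim, so there is nothing to check — but it is worth stating this so the reader does not worry you have silently used exactness at the penultimate degree of \eqref{domd}.
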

\begin{proof}
We have already shown that $\diru$ is well defined.
Next, it is clear by construction that 
$\diru\di=0$ and $\di\diru=0$.
Due to \leref{ldomd2}, it remains only to check exactness at the terms
$\Om^{N}(\dd) / I^{N}(\dd)$ and $J^{N}(\dd)$, the latter only for $N\ge 2$.

Let $\al\in\Om^{N}(\dd)$ be such that $\diru\al = 0$.
Then $\di\be = \diru\al = 0$\,; hence, $\be=\di\ga$
for some $\ga\in\Om^{N-2}(\dd)$ by the exactness of \eqref{domd}.
Then $\di\al=\Psi\be=\di\Psi\ga$
implies that $\al-\Psi\ga = \di\rho$ for some $\rho\in\Om^{N-1}(\dd)$.
Therefore, $\al+I^{N}(\dd) = \di(\rho+I^{N-1}(\dd))$.

Now suppose that $N\ge2$ and $\al\in J^{N}(\dd)$ satisfies $\di\al = 0$.
Then $\al=\di\be$ for some $\be\in\Om^{N-1}(\dd)$, again by the exactness of \eqref{domd}.
We have $\di\Psi\be=\Psi\al=0$. Since $\Psi\be\in\Om^{N+1}(\dd)$ and $N+1\le 2N-1$, there is
$\ga\in\Om^{N}(\dd)$ such that $\Psi\be=\di\ga$. 
Then $\diru\ga=\di\be=\al$, as desired.
\end{proof}

Recall that, by \cite{BDK1}, $\Om^n(\dd)=\T(\kk,\Om^n)$ is a tensor $\Wd$-module (see \seref{stwd}). Since $I\in\gld$ acts on $\Om^n$ as $-n\id$, \prref{phtm} tells us that, considered as an $\Hd$-module,
$\Om^n(\dd)$ is isomorphic to the tensor module $\T(\kk_{-n\chi/2},\Om^n)$.
Then its quotient $\Om^{n}(\dd) / I^{n}(\dd)$ and submodule $J^n(\dd)$ are also tensor $\Hd$-modules, because
$\Om^{n}(\dd) / I^{n}(\dd) = H \tp (\Om^n/I^n)$ and $J^n(\dd) = H \tp J^n$.
{}From \eqref{injn3}, we obtain isomorphisms of $\Hd$-modules:
\begin{align}\label{injn4}
\Om^{n}(\dd) / I^{n}(\dd) &\simeq \T(\kk_{-n\chi/2},R(\pi_n))
\,, 
\\ \label{injn4j}
J^{2N-n}(\dd) &\simeq \T(\kk_{-(2N-n)\chi/2},R(\pi_n)) 
\,, \qquad 0\leq n \leq N \,.
\end{align}
Furthermore, the differential $\di$ in \eqref{domd} is a homomorphism of $\Wd$-modules.
Hence, $\di$ in \eqref{domd6} is a homomorphism of $\Hd$-modules. We will prove in the next subsection that $\diru$ is a homomorphism too, and will generalize all these results to the case $\chi\ne0$.

\subsection{Twisted conformally symplectic pseudo de Rham complex}\label{sdrmh}

Now let us consider the general case, i.e., $\chi$ is not necessarily $0$.
When $\chi\ne0$, the maps $\di$ and $\Psi$ no longer commute. 
Accordingly, we will modify the definition of $\Psi$ using the twisting functors from \seref{stwrep}.

Fix a finite-dimensional $\dd$-module $\Pi$. 
Applying the twisting functor $T_\Pi$ to the pseudo de Rham complex \eqref{domd},
we obtain the exact complex
\begin{equation}\label{domd7}
0 \to \Om^0_\Pi(\dd) \xrightarrow{\di_\Pi} \Om^1_\Pi(\dd) \xrightarrow{\di_\Pi}
\cdots \xrightarrow{\di_\Pi} \Om^{2N}_\Pi(\dd) \,,
\end{equation}
where
\begin{equation}\label{dipi}
\di_\Pi = T_\Pi(\di) \,, \qquad  \Om^n_\Pi(\dd) = T_\Pi(\Om^n(\dd)) = H\tp\Pi\tp\Om^n
\,.
\end{equation}
As before, $H$ acts on $\Om^n_\Pi(\dd)$ by left multiplication on the first factor. As a $\Wd$-module,
$\Om^n_\Pi(\dd)=\T(\Pi,\Om^n)$ is a tensor module.

Recall that $\kk_\chi = \kk\,1_\chi$ where $\d\cdot 1_\chi = \chi(\d)1_\chi$ for $\d\in\dd$, and $\Pi_\chi$ denotes the $\dd$-module $\Pi \otimes \kk_\chi$.
We introduce $H$-linear maps
\begin{equation}\label{psichi}
\Psi_\chi\colon \Om^n_\Pi(\dd) \to \Om^{n+2}_{\Pi_\chi}(\dd)
\,, \qquad
h\tp u\tp\al \mapsto h\tp u\tp 1_\chi \tp (\om\wedge\al) \,,
\end{equation}
where $h\in H$, $u\in\Pi$, $\al\in\Om^n$.
We claim that these maps are compatible with the differentials.

\begin{lemma}\label{ldomd3}
With the above notation \eqref{psichi}, we have\/ $\Psi_\chi \di_\Pi = \di_{\Pi_\chi} \Psi_\chi$.
\end{lemma}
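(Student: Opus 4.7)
The plan is to verify the identity on generators $1 \tp u \tp \alpha$ (for $u \in \Pi$ and $\alpha \in \Om^n$), since both $\di_\Pi$ and $\Psi_\chi$ are $H$-linear, so equality on generators extends to all of $\Om^n_\Pi(\dd)$.

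First I would derive an explicit formula for $\di$ on $\Om^n(\dd) = H \tp \Om^n$ from \eqref{dw}. A direct check (on $1 \in \Om^0$ and on a monomial $x^{j_1} \wedge \cdots \wedge x^{j_n}$) gives
\begin{equation*}
\di(1 \tp \alpha) = 1 \tp \diz \alpha - \sum_{k=1}^{2N} \d_k \tp (x^k \wedge \alpha).
\end{equation*}
Applying the twisting formula \eqref{twrep8} with $\De(\d_k) = \d_k \tp 1 + 1 \tp \d_k$ and $S(\d_k) = -\d_k$, this yields
\begin{equation*}
\di_\Pi(1 \tp u \tp \alpha) = 1 \tp u \tp \diz\alpha - \sum_k \d_k \tp u \tp (x^k \wedge \alpha) + \sum_k 1 \tp \d_k u \tp (x^k \wedge \alpha),
\end{equation*}
and the analogous formula holds for $\di_{\Pi_\chi}$ with $\Pi$ replaced by $\Pi_\chi$.

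I would then apply $\Psi_\chi$ termwise to $\di_\Pi(1 \tp u \tp \alpha)$, and separately compute $\di_{\Pi_\chi} \Psi_\chi(1 \tp u \tp \alpha) = \di_{\Pi_\chi}\bigl(1 \tp (u \tp 1_\chi) \tp (\om \wedge \alpha)\bigr)$ by substituting $\om \wedge \alpha$ for $\alpha$ in the formula above. Two extra terms arise on the right hand side that are absent from the left: (i) since $\diz$ is an odd derivation and $\diz \om = -\chi \wedge \om$ by \eqref{omchi4}, one gets $\diz(\om \wedge \alpha) = -\chi \wedge \om \wedge \alpha + \om \wedge \diz\alpha$, contributing $-1 \tp (u \tp 1_\chi) \tp (\chi \wedge \om \wedge \alpha)$; and (ii) the Hopf action on $\Pi_\chi$ satisfies $\d_k(u \tp 1_\chi) = (\d_k u) \tp 1_\chi + \chi(\d_k)(u \tp 1_\chi)$, whose $\chi(\d_k)$-part produces an extra $\sum_k \chi(\d_k)\, 1 \tp (u \tp 1_\chi) \tp (\om \wedge x^k \wedge \alpha)$.

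The crucial observation is that these two contributions cancel each other: using $\sum_k \chi(\d_k) x^k = \chi$ together with the commutativity $\om \wedge \chi = \chi \wedge \om$ (which follows from $\om$ being of even degree), the (ii)-term collects to $+1 \tp (u \tp 1_\chi) \tp (\chi \wedge \om \wedge \alpha)$, precisely cancelling (i). After this cancellation the remaining terms on both sides match termwise, yielding $\Psi_\chi \di_\Pi = \di_{\Pi_\chi} \Psi_\chi$ on generators, and hence everywhere by $H$-linearity. The only real work here is bookkeeping; the structural point is that the twisted conformally symplectic condition $\diz\om + \chi \wedge \om = 0$ is exactly what is needed to effect the cancellation, which also explains why the target of $\Psi$ must be twisted by $\kk_\chi$ when $\chi \neq 0$.
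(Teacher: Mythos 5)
Your proof is correct, and it follows the same basic strategy as the paper's: reduce by $H$-linearity to generators $1 \tp u \tp \alpha$, compute both sides using the explicit form \eqref{twrep8} of the twisting functor, and observe that the two discrepant terms cancel precisely because of the conformally symplectic condition $\diz\om + \chi\wedge\om = 0$. The difference is in execution. The paper restricts to generators $\alpha = \io_{\d_i}\be$ (which span $\Om^n$), cites \cite[Lemma 5.1]{BDK1} for a description of the $\d_k$-coefficients of $\di\alpha$ in the form $e^k_i\cdot\be$, and then must verify the identity $\sum_k \chi(\d_k)\, e^k_i\cdot\be = -\chi\wedge(\io_{\d_i}\be)$ by evaluation on basis vectors. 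You instead derive the closed formula $\di(1\tp\alpha) = 1\tp\diz\alpha - \sum_k\d_k\tp(x^k\wedge\alpha)$ for arbitrary $\alpha\in\Om^n$ directly from \eqref{dw}, after which the cancellation collapses to the tautology $\sum_k\chi(\d_k)\,x^k = \chi$ together with $\om\wedge\chi=\chi\wedge\om$. This is a genuinely cleaner presentation: it avoids the dependence on the external lemma from \cite{BDK1}, avoids the restriction to contraction-type generators, and makes the final cancellation transparent rather than requiring a separate verification. The structural content is the same, but your bookkeeping is more streamlined.
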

\begin{proof}
By $H$-linearity, it is enough to show that both sides of the desired identity are equal when applied to 
$1\tp u\tp\al\in\Om^n_\Pi(\dd)$ for $u\in\Pi$, $\al\in\Om^n$. Furthermore, we will assume that $\al=\io_{\d_i} \be$ for some $\be\in\Om^{n+1}$ and $1\le i\le 2N$, as such elements span $\Om^n$.
Note that, by \cite[Lemma 5.1]{BDK1}, 
\begin{equation}\label{dipi2}
\di\al \in \sum_{k=1}^{2N} \d_k \tp (e^k_i \cdot\be) + \kk\tp\Om^{n+1} \subset \Om^{n+1}(\dd) \,.
\end{equation}
Then from the definition \eqref{twrep8} of $T_\Pi$, we obtain
\begin{equation*}
\di_\Pi (1\tp u\tp\al) = \si_{12} (u\tp\di\al) - \sum_{k=1}^{2N} 1\tp (\d_k\cdot u) \tp (e^k_i \cdot\be) \,,
\end{equation*}
where $\si_{12}\colon \Pi\tp H\tp\Om^n \to H\tp\Pi\tp\Om^n$ is the transposition of the first two factors.
(This equation can also be derived from \cite[Lemma 5.3]{BDK1}.)
Hence,
\begin{align*}
\Psi_\chi\di_\Pi &(1\tp u\tp\al) \\
&= \si_{12} \bigl( (u\tp 1_\chi) \tp (\om\wedge\di\al) \bigr)
- \sum_{k=1}^{2N} 1\tp (\d_k\cdot u) \tp 1_\chi \tp \bigl(\om\wedge(e^k_i \cdot\be)\bigr) \,,
\end{align*}
where now $\si_{12}\colon \Pi_\chi \tp H\tp\Om^n \to H\tp \Pi_\chi\tp\Om^n$.

On the other hand,
\begin{equation*}
\Psi_\chi(1\tp u\tp\al) = 1\tp u\tp 1_\chi \tp (\om\wedge\al) \,,
\end{equation*}
and by \eqref{omchi4}, \eqref{dider}, we have
\begin{equation}\label{dipi3}
\di(\om\wedge\al) = \diz\om\wedge\al + \om\wedge\di\al
= -\chi\wedge\om\wedge\al + \om\wedge\di\al
\,.
\end{equation}
This and \eqref{dipi2} imply
\begin{equation*}
\di(\om\wedge\al) \in \sum_{k=1}^{2N} \d_k \tp (\om\wedge e^k_i \cdot\be) 
+ \kk\tp\Om^{n+3} \subset \Om^{n+3}(\dd) \,.
\end{equation*}
Using again \eqref{twrep8} and \eqref{dipi3}, we get
\begin{align*}
\di_{\Pi_\chi} &(1\tp u\tp 1_\chi \tp (\om\wedge\al)) \\
&= \si_{12} \bigl( (u\tp 1_\chi) \tp \di(\om\wedge\al) \bigr)
- \sum_{k=1}^{2N} 1\tp \d_k\cdot(u\tp 1_\chi) \tp \bigl(\om\wedge(e^k_i \cdot\be)\bigr) \\
&= \si_{12} \bigl( (u\tp 1_\chi) \tp (\om\wedge\di\al) \bigr)
- \sum_{k=1}^{2N} 1\tp (\d_k\cdot u) \tp 1_\chi \tp \bigl(\om\wedge(e^k_i \cdot\be)\bigr) \\
&- \si_{12} \bigl( (u\tp 1_\chi) \tp (\chi\wedge\om\wedge\al) \bigr)
- \sum_{k=1}^{2N} 1\tp  u \tp (\d_k\cdot 1_\chi) \tp \bigl(\om\wedge(e^k_i \cdot\be)\bigr)
\,.
\end{align*}

In order to finish the proof of the lemma, we need to show that the last two terms in the right-hand side cancel.
In other words, we claim that
\begin{equation*}
\sum_{k=1}^{2N} \chi(\d_k) e^k_i \cdot\be = -\chi\wedge (\io_{\d_i} \be) 
\,, \qquad \be\in\Om^{n+1} \,.
\end{equation*}
Indeed, evaluating either side on $\d_{j_1} \wedge\cdots\wedge \d_{j_{n+1}}$ gives
\begin{equation*}
\sum_{t=1}^{n+1} (-1)^t \chi(\d_{j_t}) \,
\be(\d_i \wedge \d_{j_1} \wedge\cdots\wedge \widehat{\d_{j_t}} \wedge\cdots\wedge \d_{j_{n+1}}) 
\,,
\end{equation*}
by using \eqref{wedge}, \eqref{ioal} and \eqref{acdotal}.
\end{proof}

\begin{lemma}\label{ldomd4}
The maps\/ $\Psi_\chi\colon \Om^n_\Pi(\dd) \to \Om^{n+2}_{\Pi_\chi}(\dd)$, defined by \eqref{psichi}, are
homomorphisms of\/ $\Hd$-modules.
\end{lemma}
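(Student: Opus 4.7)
The plan is to exploit functoriality of the twisting functor together with the $\spd$-invariance of $\om$. First, $\Psi_\chi$ is $H$-linear by construction, so the only substantive point is that it intertwines the pseudoaction of the generator $e\in\Hd$; by $H$-bilinearity of the pseudobracket, it suffices to check this on elements of the form $1\tp u\tp\al \in \kk\tp\Pi\tp\Om^n$.

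Next I would reduce to the case $\Pi=\kk$. The assignment $h\tp\al\mapsto h\tp 1_\chi\tp(\om\wedge\al)$ defines a map $\Psi_0\colon\Om^n(\dd)\to\Om^{n+2}_{\kk_\chi}(\dd)$, and comparing \eqref{twrep8} with the definition of $\Psi_\chi$ shows that $\Psi_\chi=T_\Pi(\Psi_0)$ under the identification $T_\Pi(\Om^{n+2}_{\kk_\chi}(\dd))=\Om^{n+2}_{\Pi_\chi}(\dd)$ coming from \eqref{tpiv1}. Since $T_\Pi$ sends homomorphisms of finite free $H$-modules to homomorphisms (see \seref{stwrep}), it is enough to prove the lemma when $\Pi=\kk$.

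For $\Pi=\kk$, \prref{phtm} identifies $\Om^n(\dd)\simeq\T(\kk_{-n\chi/2},\Om^n)$ and $\Om^{n+2}_{\kk_\chi}(\dd)\simeq\T(\kk_{\chi-(n+2)\chi/2},\Om^{n+2})=\T(\kk_{-n\chi/2},\Om^{n+2})$ as $\Hd$-modules; the $\dd$-module parts agree, and under this identification $\Psi_0$ becomes the wedge-with-$\om$ map on the $\spd$-factor. I would then apply the explicit formula \eqref{eee3} for $e*v$ to $v=1\tp\al$ and to $v=1\tp(\om\wedge\al)$ and compare term by term. The $(\bd_k\tp\d^k)\tp_H v$ contribution is matched trivially by $\Psi_\chi$. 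The scalar $\d^k$-action on the $\dd$-module component contributes $-\tfrac{n}{2}\chi(\d^k)$ on both sides, because the extra $\chi$-shift in the target exactly cancels the difference between $-(n+2)\chi/2$ and $-n\chi/2$. The $\adsp\d^k$- and $f^{ij}$-terms commute with wedging by $\om$ because $\om$ is $\spd$-invariant: since $\adsp\d^k,f^{ij}\in\spd$, the derivation rule \eqref{acder} with $A\cdot\om=0$ yields $A\cdot(\om\wedge\al)=\om\wedge(A\cdot\al)$ for every $A\in\spd$. Finally, the central $c$-term in \eqref{eee3} is absent on both sides, as $c=\ce$ acts trivially on any module inherited from $\Wd$.

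The main obstacle I anticipate is bookkeeping: reconciling the $\chi$-twist that appears on the target with the shifts prescribed by \prref{phtm} for $\Om^n$ and $\Om^{n+2}$, and in particular verifying that the residual scalar $\d^k$-action is the same on both sides. Once this dictionary is set up correctly, the $\spd$-invariance of $\om$ makes the remaining verification essentially automatic.
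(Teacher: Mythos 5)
Your argument is correct, and its core --- expanding $e*v$ via the explicit tensor-module action formula and using that $\adsp\d^k$ and $f^{ij}$ lie in $\spd$, so that $A\cdot(\om\wedge\al)=\om\wedge(A\cdot\al)$ --- is the same calculation the paper carries out directly with \eqref{eee2} for general $\Pi$. Where you deviate is in prefacing this with a reduction to $\Pi=\kk$: you observe $\Psi_\chi = T_\Pi(\Psi_0)$ and invoke that $T_\Pi$ sends $\Hd$-module homomorphisms to $\Hd$-module homomorphisms. This is sound (twisting commutes with restriction from $\Wd$ and with the identification \eqref{tpiv1}), and the dividend is that after identifying $\Om^n(\dd)\simeq\T(\kk_{-n\chi/2},\Om^n)$ and $\Om^{n+2}_{\kk_\chi}(\dd)\simeq\T(\kk_{-n\chi/2},\Om^{n+2})$ via \prref{phtm}, the $\dd$-module labels of source and target literally agree, so the $\chi$-shift cancellation that the paper performs as a scalar computation inside the action formula is absorbed into the tensor-module indexing. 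You could even push one step further: under this identification $\Psi_0$ is precisely the map of tensor modules induced by the $(\dd\oplus\spd)$-module homomorphism $\kk_{-n\chi/2}\boxtimes\Om^n\to\kk_{-n\chi/2}\boxtimes\Om^{n+2}$, $\al\mapsto\om\wedge\al$ (a morphism because $A\cdot\om=0$ for $A\in\spd$), and functoriality of the induced-module description of $\V(R)$ from \leref{lhsing2} then finishes the proof without reopening \eqref{eee3}. The paper's proof is shorter since it avoids the relabeling, but your route makes the structural reason for the lemma more visible.
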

\begin{proof}
By $H$-linearity, it is enough to show that $\Psi_\chi$ commutes with the action of $e$ on $1\tp u\tp\al$,
where $u\in\Pi$, $\al\in\Om^n$ and $e\in\Hd\subset\Wd$ is the generator \eqref{iota}.
Since $\Om^n_\Pi(\dd)=\T(\Pi,\Om^n)=\T(V_0)$ is a tensor $\Wd$-module, the action of $e$ on it is given by \eqref{eee2} 
for $v=u\tp\al \in V_0 = \Pi\boxtimes\Om^n$.

We have:
\begin{equation*}
I\cdot\al=-n\al \,, \qquad I\cdot(\om\wedge\al)=-(n+2)(\om\wedge\al)
\,, \qquad \al\in\Om^n \,,
\end{equation*}
\begin{equation*}
\d\cdot(u\tp 1_\chi) = (\d\cdot u) \tp 1_\chi +\chi(\d) u\tp 1_\chi
\,, \qquad \d\in\dd \,, \;\; u\in\Pi \,,
\end{equation*}
and, by \eqref{acder},
\begin{equation*}
A\cdot(\om\wedge\al) = (A\cdot\om) \wedge\al + \om\wedge(A\cdot\al) = \om\wedge(A\cdot\al)
\,, \qquad A\in\spd \,.
\end{equation*}
Plugging these into \eqref{eee2}, we get:
\begin{align*}
e* \bigl(& \Psi_\chi(1\tp u\tp\al)\bigr)
-\bigl( (\id\tp\id)\tp_H \Psi_\chi \bigr) \bigl(e*(1\tp u\tp\al)\bigr) \\
&= e* \bigl( 1\tp u\tp 1_\chi \tp (\om\wedge\al) \bigr)
-\bigl( (\id\tp\id)\tp_H \Psi_\chi \bigr) \bigl(e*(1\tp u\tp\al)\bigr) \\
&=-\sum_{k=1}^{2N} (\bd_k \tp 1) \otimes_H \Bigl( \chi(\d^k) + \frac12 \chi(\d^k) (-2) \Bigr) 
\bigl(1\tp u\tp 1_\chi \tp (\om\wedge\al)\bigr) \\
& = 0\,.
\end{align*}
Therefore, $\Psi_\chi$ is an $\Hd$-module homomorphism.
\end{proof}

Notice that the kernel of $\Psi_\chi\colon \Om^n_\Pi(\dd) \to \Om^{n+2}_{\Pi_\chi}(\dd)$ is
\begin{equation}\label{jnpi}
J^n_\Pi(\dd) = T_\Pi(J^n(\dd)) = H\tp\Pi\tp J^n \subset \Om^n_\Pi(\dd) \,,
\end{equation}
and the image of $\Psi_{-\chi}\colon \Om^{n-2}_{\Pi_\chi}(\dd) \to \Om^{n}_\Pi(\dd)$ is
\begin{equation}\label{inpi}
I^n_\Pi(\dd) = T_\Pi(I^n(\dd)) = H\tp\Pi\tp I^n \subset \Om^n_\Pi(\dd) \,.
\end{equation}
As a consequence of Lemmas \ref{ldomd3} and \ref{ldomd4}, we obtain complexes of $\Hd$-modules
\begin{align}
\label{domdpi1}
0 \to \Om^0_\Pi(\dd) / I^0_\Pi(\dd) &\xrightarrow{\di_\Pi} 
\Om^1(\dd_\Pi) / I^1_\Pi(\dd) 
\xrightarrow{\di_\Pi} \cdots \xrightarrow{\di_\Pi} \Om^{N}_\Pi(\dd) / I^{N}_\Pi(\dd)
\intertext{and}
\label{domdpi2}
J^{N}_\Pi(\dd) &\xrightarrow{\di_\Pi} J^{N+1}_\Pi(\dd) 
\xrightarrow{\di_\Pi} \cdots \xrightarrow{\di_\Pi} J^{2N}_\Pi(\dd) \,.
\end{align}
Note that \eqref{domdpi1} and \eqref{domdpi2} are obtained from the complexes of $\Hd$-modules
\eqref{domd1} and \eqref{domd2} by applying the twisting functor $T_\Pi$.
Due to \eqref{injn4} and \eqref{injn4j}, these are tensor modules:
\begin{align}\label{injn5}
\Om^{n}_\Pi(\dd) / I^{n}_\Pi(\dd) &\simeq \T(\Pi_{-n\chi/2},R(\pi_n)) \,, 
\\ \label{injn5j}
J^{2N-n}_\Pi(\dd) &\simeq \T(\Pi_{-(2N-n)\chi/2},R(\pi_n)) \,, 
\qquad 0\leq n \leq N \,.
\end{align}
In the next lemma, we construct a twisted analog of Rumin's map $\diru$.

\begin{lemma}\label{ldomd5}
There is a well-defined\/ $\Hd$-module homomorphism
\begin{equation*}
\diru_\Pi\colon \Om^{N}_\Pi(\dd) / I^{N}_\Pi(\dd) \to J^{N}_{\Pi_{-\chi}}(\dd)
\,,
\end{equation*}
such that\/ $\diru_\Pi \, \di_\Pi = 0$ and\/ $\di_{\Pi_{-\chi}} \, \diru_\Pi = 0$.
\end{lemma}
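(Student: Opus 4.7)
The plan is to mimic Rumin's construction from \prref{pdomd5} in the twisted setting, using \leref{ldomd3} as the intertwining tool. For $\alpha \in \Om^N_\Pi(\dd)$, I first observe that on the underlying $H$-module $H\tp\Pi\tp\Om^\bullet$ the operator $\Psi_\chi$ acts as $\om\wedge(-)$, which is an isomorphism $\Om^{N-1} \to \Om^{N+1}$. Under the canonical identification $(\Pi_{-\chi})_\chi \simeq \Pi$, this shows that $\Psi_\chi\colon \Om^{N-1}_{\Pi_{-\chi}}(\dd) \to \Om^{N+1}_\Pi(\dd)$ is an $H$-linear isomorphism, and by \leref{ldomd4} it is also an $\Hd$-module isomorphism. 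Hence there is a unique $\beta \in \Om^{N-1}_{\Pi_{-\chi}}(\dd)$ with $\Psi_\chi\beta = \di_\Pi\alpha$, and I set $\diru_\Pi\alpha := \di_{\Pi_{-\chi}}\beta$. Applying \leref{ldomd3} with $\Pi$ replaced by $\Pi_{-\chi}$ yields $\Psi_\chi(\di_{\Pi_{-\chi}}\beta) = \di_\Pi(\Psi_\chi\beta) = \di_\Pi^2\alpha = 0$, so $\diru_\Pi\alpha \in \ker\Psi_\chi = J^N_{\Pi_{-\chi}}(\dd)$.

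For well-definedness on the quotient $\Om^N_\Pi(\dd)/I^N_\Pi(\dd)$, the main observation is that $I^N_\Pi(\dd) = H\tp\Pi\tp I^N$ also coincides with the image of $\Psi_\chi\colon \Om^{N-2}_{\Pi_{-\chi}}(\dd) \to \Om^N_\Pi(\dd)$: this map sends $h\tp u\tp 1_{-\chi}\tp\delta \mapsto h\tp u\tp(\om\wedge\delta)$ under the canonical identifications, producing the same underlying subspace as the image of $\Psi_{-\chi}$ in \eqref{inpi}. Thus, if $\alpha = \Psi_\chi\beta_0$ for some $\beta_0 \in \Om^{N-2}_{\Pi_{-\chi}}(\dd)$, \leref{ldomd3} applied with $\Pi \to \Pi_{-\chi}$ gives $\di_\Pi\alpha = \di_\Pi\Psi_\chi\beta_0 = \Psi_\chi(\di_{\Pi_{-\chi}}\beta_0)$, so by uniqueness $\beta = \di_{\Pi_{-\chi}}\beta_0$, and hence $\diru_\Pi\alpha = \di_{\Pi_{-\chi}}^2\beta_0 = 0$.

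The identities $\diru_\Pi\di_\Pi = 0$ and $\di_{\Pi_{-\chi}}\diru_\Pi = 0$ are then immediate: the first forces $\beta = \Psi_\chi^{-1}(\di_\Pi^2\alpha') = 0$ for any representative $\alpha'$, while the second is $\di_{\Pi_{-\chi}}^2\beta = 0$. The $\Hd$-linearity of $\diru_\Pi$ follows from its presentation as the composition $\di_{\Pi_{-\chi}} \circ \Psi_\chi^{-1} \circ \di_\Pi$ (descending to the quotient): the differentials $\di_\Pi$ and $\di_{\Pi_{-\chi}}$ are $\Hd$-module homomorphisms as $T_\Pi$- and $T_{\Pi_{-\chi}}$-twists of the $\Wd$-linear map $\di$ (restricted to $\Hd$), and $\Psi_\chi$ is an $\Hd$-module isomorphism by \leref{ldomd4}, so its inverse is too. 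The one nontrivial point is recognizing that $I^N_\Pi(\dd)$ can be realized as the image of $\Psi_\chi$ rather than $\Psi_{-\chi}$, since this is what lets \leref{ldomd3} apply cleanly in the proof of well-definedness; the description in \eqref{inpi} using $\Psi_{-\chi}$ would require a separate (and more cumbersome) computation because $\Psi_{-\chi}$ is not directly covered by \leref{ldomd3}.
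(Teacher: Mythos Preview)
Your proof is correct and follows essentially the same approach as the paper's: define $\diru_\Pi\alpha = \di_{\Pi_{-\chi}}\beta$ where $\Psi_\chi\beta = \di_\Pi\alpha$, use \leref{ldomd3} (with $\Pi$ replaced by $\Pi_{-\chi}$) both to show the image lands in $J^N_{\Pi_{-\chi}}(\dd)$ and to verify well-definedness on $I^N_\Pi(\dd)$, and conclude $\Hd$-linearity from the factorization $\di_{\Pi_{-\chi}}\circ\Psi_\chi^{-1}\circ\di_\Pi$. Your extra observation that $I^N_\Pi(\dd)$ should be viewed as the image of $\Psi_\chi$ from $\Om^{N-2}_{\Pi_{-\chi}}(\dd)$ (rather than of $\Psi_{-\chi}$ as in \eqref{inpi}) is exactly what the paper does tacitly in its well-definedness argument, and you are right that this is what makes \leref{ldomd3} apply directly.
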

\begin{proof}
Pick any $\al\in\Om^{N}_\Pi(\dd)$. 
Since $\Psi$ gives an isomorphism $\Om^{N-1} \simeq \Om^{N+1}$, 
we can write $\di_\Pi\al = \Psi_\chi\be$
for a unique $\be\in\Om^{N-1}_{\Pi_{-\chi}}(\dd)$.
By \leref{ldomd3}, we have
\begin{equation*}
0 = (\di_\Pi)^2 \al = \di_\Pi \Psi_\chi \be = \Psi_\chi \di_{\Pi_{-\chi}} \be
\,,
\end{equation*}
which means that $\diru_\Pi\al := \di_{\Pi_{-\chi}} \be \in J^{N}_{\Pi_{-\chi}}(\dd)$.
To show that $\diru_\Pi$ is well defined, we need to check that $\diru_\Pi\al=0$ when $\al\in I^{N}_\Pi(\dd)$.
Indeed, if $\al=\Psi_\chi\ga$ for some $\ga\in\Om^{N-2}_{\Pi_{-\chi}}(\dd)$, then
\begin{equation*}
\di_\Pi \al = \di_\Pi \Psi_\chi \ga = \Psi_\chi \di_{\Pi_{-\chi}} \ga 
\,.
\end{equation*}
Hence, we can take $\be=\di_{\Pi_{-\chi}} \ga$, and we get $\diru_\Pi\al = \di_{\Pi_{-\chi}} \be=0$.

The map $\diru_\Pi$ is a homomorphism of $\Hd$-modules, because it is a composition of homomorphisms:
\begin{equation}\label{dirupi}
\diru_\Pi = \di_{\Pi_{-\chi}} \bigl( \Psi_\chi \big|_{\Om^{N-1}_{\Pi_{-\chi}}(\dd)} \bigr)^{-1} \di_\Pi
\,.
\end{equation}
Finally, the identities $\diru_\Pi \, \di_\Pi = 0$ and $\di_{\Pi_{-\chi}} \, \diru_\Pi = 0$ are obvious.
\end{proof}

Using \leref{ldomd5}, we can connect the complexes \eqref{domdpi1} and \eqref{domdpi2} to obtain
the \emph{twisted conformally symplectic pseudo de Rham complex}:
\begin{equation}\label{domd8}
\begin{split}
0 &\to \Om^0_\Pi(\dd) / I^0_\Pi(\dd) \xrightarrow{\di^1_\Pi} 
\Om^1(\dd_\Pi) / I^1_\Pi(\dd) 
\xrightarrow{\di^2_\Pi} \cdots \xrightarrow{\di^N_\Pi} \Om^{N}_\Pi(\dd) / I^{N}_\Pi(\dd)
\\
&\xrightarrow{\diru_\Pi}
J^{N}_{\Pi_{-\chi}}(\dd) \xrightarrow{\di^{N+1}_{\Pi_{-\chi}}} 
J^{N+1}_{\Pi_{-\chi}}(\dd) \xrightarrow{\di^{N+2}_{\Pi_{-\chi}}}
\cdots \xrightarrow{\di^{2N}_{\Pi_{-\chi}}} 
J^{2N}_{\Pi_{-\chi}}(\dd) \,,
\end{split}
\end{equation}
where we decorated the differentials with superscripts that will turn out to be convenient later.

\begin{lemma}\label{ldomd6}
The twisted conformally symplectic pseudo de Rham complex \eqref{domd8} is exact at all terms except for\/ 
$J^{2N-1}_{\Pi_{-\chi}}(\dd)$ and\/ $J^{2N}_{\Pi_{-\chi}}(\dd)$. 

\end{lemma}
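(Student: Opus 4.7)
The plan is to transplant the proofs of \leref{ldomd2} and \prref{pdomd5} to the twisted setting, replacing $\di$ by $\di_\Pi$ or $\di_{\Pi_{-\chi}}$ and $\Psi$ by the twisted wedge maps $\Psi_{\pm\chi}$ of \eqref{psichi}. Two ingredients of the untwisted arguments carry over as black boxes: first, the twisted pseudo de Rham complex \eqref{domd7} is exact, because it is obtained by applying the twisting functor $T_\Pi$ (which is exact on free $H$-modules, see \seref{stwrep}) to the exact complex \eqref{domd}, and the same argument applies with $\Pi$ replaced by any other twist such as $\Pi_{\pm\chi}$; second, the compatibility $\Psi_\chi\di_\Pi = \di_{\Pi_\chi}\Psi_\chi$ is exactly \leref{ldomd3}. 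Moreover, the injectivity of $\Psi$ on $\Om^n$ for $n\le N-1$ and its surjectivity onto $\Om^{n+2}$ for $n\ge N-1$ (cf.\ \eqref{injn}) transfer by $H$-linearity to each $\Psi_\chi$ and $\Psi_{-\chi}$.

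With these ingredients, exactness at $\Om^n_\Pi(\dd)/I^n_\Pi(\dd)$ for $0\le n\le N-1$ is established by the same diagram chase as in \leref{ldomd2}: if $\al\in\Om^n_\Pi(\dd)$ has $\di_\Pi\al\in I^{n+1}_\Pi(\dd)$, we write $\di_\Pi\al=\Psi_{-\chi}\be$ for some $\be\in\Om^{n-1}_{\Pi_\chi}(\dd)$, deduce $\Psi_{-\chi}\di_{\Pi_\chi}\be=0$ from \leref{ldomd3} applied to $\di_\Pi^2\al=0$, and use injectivity of $\Psi_{-\chi}$ in this range to obtain $\di_{\Pi_\chi}\be=0$; exactness of the $\Pi_\chi$-twisted pseudo de Rham complex then writes $\be=\di_{\Pi_\chi}\ga$, whence $\di_\Pi(\al-\Psi_{-\chi}\ga)=0$, and a second use of exactness (in the $\Pi$-twisted complex) expresses $\al-\Psi_{-\chi}\ga=\di_\Pi\rho$, so that $\al\equiv\di_\Pi\rho$ modulo $I^n_\Pi(\dd)$. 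The mirror image argument, with the roles of $\Psi_\chi$ and $\Psi_{-\chi}$ exchanged, handles exactness at $J^n_{\Pi_{-\chi}}(\dd)$ for $N+1\le n\le 2N-2$. Exactness at the two middle terms $\Om^N_\Pi(\dd)/I^N_\Pi(\dd)$ and $J^N_{\Pi_{-\chi}}(\dd)$ (the latter requiring $N\ge 2$) follows from the same kind of chase by unwinding the composition \eqref{dirupi} defining $\diru_\Pi$, exactly as in \prref{pdomd5}.

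The terms $J^{2N-1}_{\Pi_{-\chi}}(\dd)$ and $J^{2N}_{\Pi_{-\chi}}(\dd)$ are excluded for the same reason as in the untwisted case: the corresponding chase would require lifting an element past degree $2N$ of the pseudo de Rham complex, where the cohomology is nontrivial. I do not expect any individual step to present a genuine difficulty; the main delicate point is the bookkeeping of the $\chi$-twist, since each application of $\Psi_{\pm\chi}$ shifts the twist of the target module by $\pm\chi$. This is precisely the reason why the complex \eqref{domd8} must be assembled from two halves joined by the map $\diru_\Pi\colon\Om^N_\Pi(\dd)/I^N_\Pi(\dd)\to J^N_{\Pi_{-\chi}}(\dd)$, which accomplishes the passage from $\Pi$ to $\Pi_{-\chi}$ in a single step.
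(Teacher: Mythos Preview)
Your overall strategy is correct and matches the paper's: the paper's proof simply says to use exactness of $T_\Pi$ on the pseudo de Rham complex \eqref{domd7} and then rerun the chases of \leref{ldomd2} and \prref{pdomd5}; you have written out those chases explicitly in the twisted setting.

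There is, however, a consistent sign slip in your $\chi$-bookkeeping. \leref{ldomd3} says $\Psi_\chi\di_{\Pi'} = \di_{\Pi'_\chi}\Psi_\chi$ for every $\dd$-module $\Pi'$; taking $\Pi' = \Pi_{-\chi}$ yields
\[
\di_\Pi\,\Psi_\chi \;=\; \Psi_\chi\,\di_{\Pi_{-\chi}} \qquad\text{with}\quad \Psi_\chi\colon\Om^{m}_{\Pi_{-\chi}}(\dd)\to\Om^{m+2}_\Pi(\dd).
\]
There is no identity $\di_\Pi\Psi_{-\chi} = \Psi_{-\chi}\di_{\Pi_\chi}$ as you wrote: that would need $\diz\om = +\chi\wedge\om$, contrary to \eqref{omchi4}. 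So in your first chase you must write $\di_\Pi\al = \Psi_\chi\be$ with $\be\in\Om^{n-1}_{\Pi_{-\chi}}(\dd)$, deduce $\Psi_\chi\di_{\Pi_{-\chi}}\be = 0$, hence $\di_{\Pi_{-\chi}}\be = 0$, and then invoke exactness of the $\Pi_{-\chi}$-twisted (not $\Pi_\chi$-twisted) pseudo de Rham complex. This is precisely the convention the paper follows in \eqref{dirupi}. The same correction applies to the mirror chase for the $J^n$-terms; once the signs are flipped, your argument goes through word for word.
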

\begin{proof}
Recall that the twisting functor $T_\Pi$ preserves exactness. Hence, the complexes
\eqref{domd7} and \eqref{domdpi1} are exact, while \eqref{domdpi2} is exact at all terms except the last two
(cf.\ \leref{ldomd2}). The rest of the proof is the same as in \prref{pdomd5}.
\end{proof}

We summarize the above results in the following theorem.

\begin{theorem}\label{tmodhd}
For every finite-dimensional\/ $\dd$-module\/ $\Pi$, we have a complex of tensor $\Hd$-modules
\begin{equation}\label{domd9}
\begin{split}
0 &\to \T(\Pi,R(\pi_0)) \xrightarrow{\di^1_\Pi} \T(\Pi_{-\chi/2},R(\pi_1))
\xrightarrow{\di^2_\Pi} \cdots \xrightarrow{\di^N_\Pi} \T(\Pi_{-N\chi/2},R(\pi_N))
\\
&\xrightarrow{\diru_\Pi}
\T(\Pi_{-(N+2)\chi/2},R(\pi_N)) \xrightarrow{\di^{N+1}_{\Pi_{-\chi}}} 
\T(\Pi_{-(N+3)\chi/2},R(\pi_{N-1})) 
\\
&\xrightarrow{\di^{N+2}_{\Pi_{-\chi}}} \cdots \xrightarrow{\di^{2N}_{\Pi_{-\chi}}} 
\T(\Pi_{-(2N+2)\chi/2},R(\pi_0)) \to 0 \,,
\end{split}
\end{equation}
where\/ $\diru_\Pi$ is given by \eqref{dirupi}. The complex \eqref{domd9} is exact at all terms except for the last two,
and its cohomology is finite dimensional.
\end{theorem}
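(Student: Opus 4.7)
The plan is to assemble the theorem by packaging together the preceding lemmas and the identification of the complex terms as tensor modules. First, I would recognize that the underlying complex of $H$-modules is exactly \eqref{domd8}, built by splicing the quotient complex \eqref{domdpi1} and the submodule complex \eqref{domdpi2} at degree $N$ via Rumin's map $\diru_\Pi$ introduced in \leref{ldomd5}. The fact that \eqref{domd8} is a complex of $\Hd$-modules follows from \leref{ldomd4} (which gives that each $\Psi_\chi$, and hence $I^n_\Pi(\dd)$ and $J^n_\Pi(\dd)$, are $\Hd$-submodules, so the quotients and subobjects inherit the structure), together with the fact that the pseudo de Rham differentials $\di$ are $\Wd$-module maps (hence $\Hd$-module maps), and \leref{ldomd5} for $\diru_\Pi$.

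The second step is to translate each term of \eqref{domd8} into the tensor module language of \eqref{domd9}. For the quotient terms $\Om^n_\Pi(\dd) / I^n_\Pi(\dd)$ with $0\le n\le N$, I would invoke the isomorphism \eqref{injn5}, obtaining $\T(\Pi_{-n\chi/2}, R(\pi_n))$. For the subobject terms $J^{2N-n}_{\Pi_{-\chi}}(\dd)$ with $0\le n\le N$, I would use \eqref{injn5j} with $\Pi$ replaced by $\Pi_{-\chi}$, together with the natural identification $(\Pi_{-\chi})_{-(2N-n)\chi/2} \simeq \Pi_{-(2N-n+2)\chi/2}$. This produces precisely the sequence of twists $-(N+2)\chi/2, -(N+3)\chi/2, \dots, -(2N+2)\chi/2$ occurring in \eqref{domd9}. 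The superscripts on the differentials in \eqref{domd8} are inherited from the grading on pseudoforms, which matches those in \eqref{domd9}.

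The third step is to read off the exactness and cohomology claims. Exactness at all terms except the last two is exactly the content of \leref{ldomd6}. For the remaining assertion that the cohomology is finite dimensional, I would use that the twisting functor $T_\Pi$ preserves finite codimension of images (as recalled at the end of \seref{stwrep}), applied to the $\chi=0$ case from \leref{ldomd2}, where finite-dimensionality of the cohomology of \eqref{domd2} was already noted from the fact that $\dim \Om^{2N}(\dd) / \di \Om^{2N-1}(\dd) = 1$. Since \eqref{domd9} at the last two terms is obtained from the twisted version \eqref{domdpi2} (with $\Pi$ replaced by $\Pi_{-\chi}$), the corresponding cohomology remains finite dimensional.

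I do not anticipate any genuine obstacle, as each individual ingredient has already been established; the only point requiring care is the bookkeeping of the successive twists by $\chi$ when passing from $\Om^n_\Pi(\dd)/I^n_\Pi(\dd)$ to $J^{2N-n}_{\Pi_{-\chi}}(\dd)$ through Rumin's map, and verifying that the $\spd$-representations $R(\pi_n)$ on the fiber come out correctly from \eqref{injn3}. This is a purely formal check once \eqref{injn5} and \eqref{injn5j} are in hand.
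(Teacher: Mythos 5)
Your proposal is correct and takes essentially the same route as the paper: the paper's own proof simply observes that everything except finite-dimensionality of the cohomology has already been established in Lemmas \ref{ldomd3}--\ref{ldomd6} together with \eqref{injn5}--\eqref{injn5j}, and then deduces finite-dimensionality from Lemma \ref{ldomd2} and the fact that $T_\Pi$ preserves finite codimension of images. Your more detailed unpacking of the twist bookkeeping and the identifications of the complex terms is sound and matches the paper's intent.
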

\begin{proof}
The only remaining statement is the finite-dimensionality of the cohomology. This follows from the fact that the cohomology of the complex \eqref{domd2} is finite dimensional and the twisting functor $T_\Pi$ preserves this property (see \seref{stwrep}).
\end{proof}


\section{Singular vectors and tensor modules}\label{skten}
\subsection{Conformal representations of $\widetilde{\P}$}\label{confrep}

Due to \prref{preplal2}, any $\Hd$-module has a natural structure of a
conformal $\widetilde \P$-module, and vice versa. If $V$ is an irreducible non-trivial $\Hd$-module, then each element $v \in V$ lies in some $\ker_p V$, as from \leref{lkey2}, which is finite-dimensional and stabilized by the action of $\N_\P$.
As usual, one has
\begin{proposition}\label{pnhconformal}
The subalgebra\/ $\P_1 \subset \N_\P$ acts trivially on every
irreducible finite-dimensional conformal\/ $\N_\P$-module. Irreducible
finite-dimensional conformal\/ $\N_\P$-modules are in one-to-one
correspondence with irreducible finite-dimensional modules over the
Lie algebra\/ $\N_\P / \P_1 = (\iota_*^{-1}(\widehat \dd) \sd \P_0)/\P_1 \simeq \dd'\oplus\spd$.
\end{proposition}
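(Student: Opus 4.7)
The plan is to prove the two claims in sequence: first establish the Lie algebra isomorphism $\N_\P / \P_1 \simeq \dd' \oplus \spd$, and then apply a Weisfeiler--Rudakov argument (patterned on \cite[Proposition 3.4]{BDK1}) to show $\P_1$ acts trivially on any irreducible finite-dimensional conformal $\N_\P$-module.

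For the structural isomorphism, I would start from the decomposition $\N_\P = \dd' \oplus \P_0$ (vector space direct sum, with $\dd' := \iota_*^{-1}(\widehat\dd)$) established just before the proposition, together with $\P_0/\P_1 \simeq \spd$ from the corollary following \leref{lptow1}. The only nontrivial point is that the quotient is a direct sum of Lie algebras, i.e.\ $[\dd', \P_0] \subset \P_1$. Since $\widetilde\d$ centralizes $\H$ and $\widehat\d - \widetilde\d = \frac12 \chi(\d)\E$, one gets $[\widehat\d, \H_0] = \frac12 \chi(\d)\,[\E, \H_0]$. The image of $\E$ in $\W_0/\W_1 \simeq \gld$ is the identity operator $I$ (see \eqref{derhe2} and \leref{cwbra}), which commutes with $\spd \simeq \H_0/\H_1$, so $[\E, \H_0] \subset \H_1$. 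Applying $\iota_*^{-1}$ yields $[\dd', \P_0] \subset \iota_*^{-1}(\H_1) = \P_1 + \kk\ce$; intersecting with $\P_0$ (which contains $[\dd',\P_0]$ since $\dd'$ normalizes $\P_0$) and using $\ce \in \P_{-2} \setminus \P_{-1}$, we conclude $[\dd', \P_0] \subset \P_1$.

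For the triviality of the $\P_1$-action, let $V$ be an irreducible finite-dimensional conformal $\N_\P$-module. By conformality, some $\P_n$ acts as zero; choose the minimal such $n$ and suppose for contradiction $n \geq 2$. Since $[\P_{n-1}, \P_{n-1}] \subset \P_{2n-2} \subset \P_n$ (by \leref{lfilp} and $n \geq 2$), the subspace $\P_{n-1}$ acts on $V$ by pairwise commuting operators; moreover it is an ideal of $\N_\P$, since $\N_\P$ normalizes every $\P_m$ with $m \geq 0$. The key step is to verify that each element of $\P_{n-1}$ acts nilpotently on $V$: then $V^{\P_{n-1}} := \{v \in V : \P_{n-1}\cdot v = 0\}$ is a nonzero $\N_\P$-submodule of $V$ (the ideal property guarantees $\N_\P$-invariance), hence equals $V$ by irreducibility, contradicting the minimality of $n$. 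Nilpotency follows by the standard grading argument: working in the associated graded Lie algebra $\gr\P$, the subspace $\P_{n-1}/\P_n$ sits in positive degree, and any lift to $\N_\P$ of a grading element (provided either by $\frac12\chi(\d)\E$ inside some $\widehat\d$ or by an auxiliary element available in $\P_0$) has semisimple action on the finite-dimensional $V$ whose eigenvalues are shifted by a fixed positive integer under the $\P_{n-1}$-action. The conclusion $n = 1$ then gives the correspondence with irreducible finite-dimensional $\dd' \oplus \spd$-modules.

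The main obstacle is the nilpotency step, especially in the case $\chi = 0$: then no $\widehat\d$ contains an $\E$-component, so the Euler-type grading element does not obviously lie in $\N_\P$. One handles this by observing that the filtered structure of $\P$ still admits a compatible grading on $\gr\P$ (the Poisson algebra on $\O_{2N}$ shifted by $-2$), and a grading derivation of $\gr \P$ can be realized by the adjoint action of a suitable degree-zero element together with the fact that every $\widehat\d$-weight must be integral; combined with finite-dimensionality of $V$, this forces the required nilpotence. Once this is secured, the rest of the argument proceeds exactly as in the $W$ and $K$ cases treated in \cite{BDK1,BDK2}.
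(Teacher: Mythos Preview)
Your structural argument for $\N_\P/\P_1 \simeq \dd' \oplus \spd$ is correct.

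The nilpotency step has a real gap when $\chi = 0$, and your proposed fix does not close it. In that case $\widehat\d = \widetilde\d$ centralizes $\H$; the very computation you used for the direct-sum claim then gives $[\widehat\d, \P_1] \subset \P_1 \cap \kk\ce = 0$, so $\P_{n-1}$ does not shift $\widehat\d$-eigenvalues at all and no integrality statement helps. Nor does $\p_0 \simeq \spd$ contain a grading element: the Euler derivation of $P_{2N}$ is $\ad\E$ with $\E$ projecting to $I \in \gld \setminus \spd$, and any Cartan element of $\spd$ has zero-weight vectors in each $\P_m/\P_{m+1}$, so a weight-shifting argument based on a single degree-zero element also fails.

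The adaptation that works replaces the grading-element argument by a structural one. Each $\P_m/\P_{m+1}$ for $m \geq 1$ is a nontrivial irreducible $\spd$-module, so a short induction gives $\P_1/\P_n = [\P_0/\P_n, \P_1/\P_n]$. Since $\P_1/\P_n$ is a solvable ideal of $\mathfrak g := \N_\P/\P_n$, it lies in the radical, hence $\P_1/\P_n \subset [\mathfrak g, \Rad\mathfrak g]$, and this ideal acts by nilpotent operators on every finite-dimensional $\mathfrak g$-module. Engel's theorem then gives $V^{\P_1}\neq 0$; this subspace is $\N_\P$-stable (as $\P_1$ is an ideal of $\N_\P$) and hence equals $V$ by irreducibility. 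This argument is uniform in $\chi$ and is the intended meaning of ``similar to \cite[Proposition~3.4]{BDK1}''.
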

\begin{proof}
Similar to the proof of \cite[Proposition 3.4]{BDK1}.
\end{proof}

\subsection{Singular vectors}

\begin{definition}\label{dhsing}
A {\em singular vector\/} in $V$ is an element
$v\in V$ such that $\P_1\cdot v = 0$. The space of singular vectors
in $V$ will be denoted by $\sing V$, and is stabilized by the action of $\N_\P$, as it normalizes $\P_1$. Then
$$\rho_{\sing}
\colon \dd'\oplus\cspd \to \gl(\sing V)$$
will indicate the representation obtained
from the $\N_\P$-action on $\sing V\equiv\ker_1 V$ via the
isomorphism $\N_\P / \P_1 \simeq \dd'\oplus\spd$, so that
\begin{equation}\label{rhosing}
\rho_{\sing}(f^{ij}) v =
\frac{1}{2}(x^i x^j \tp_H e)v\,, \qquad v\in \sing
V \,,
\end{equation}
whereas $\dd'$ is generated over $\kk$ by elements $\widehat \d, \d \in \dd,$ and $c:= \ce$ as in the proof of \leref{ldprime}.
\end{definition}
A vector $v\in V$ is singular if and only if
\begin{equation}\label{vsingf1}
e*v \in (\fil^2 H \tp \kk) \tp_H V \,,
\end{equation}
or equivalently
\begin{equation}\label{vsingf2}
e*v \in (\kk \tp \fil^2 H) \tp_H V \,.
\end{equation}


\begin{proposition}\label{phsing1}
Let $V$ be a finite non-zero $\Hd$-module. Then the vector space $\sing V$ is non-zero and the space $\sing V/\ker V$ is finite dimensional.
\end{proposition}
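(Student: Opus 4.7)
The proposition has two parts: that $\sing V \neq 0$, and that $\sing V/\ker V$ is finite-dimensional. The second is immediate, since $\sing V = \ker_1 V$ by Definition \ref{dhsing}, and Lemma \ref{lkey2} asserts the finite-dimensionality of $\ker_1 V/\ker V$.

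For the first part, I dispose first of the trivial case: if $\ker V \neq 0$, then $\ker V \subseteq \sing V$ (since annihilation by all of $\P$ in particular annihilates $\P_1$), and we are done. Assuming $\ker V = 0$, Lemma \ref{lkey2} shows that each $\ker_n V$ is finite-dimensional, while Proposition \ref{preplal2} implies $V = \bigcup_n \ker_n V$. Since $V \neq 0$, there exists a smallest $n$ with $\ker_n V \neq 0$. For $n \leq 1$, any nonzero element of $\ker_n V$ is already singular, so we are done. For $n \geq 2$, the filtration property $[\P_i, \P_j] \subseteq \P_{i+j}$ of Lemma \ref{lfilp} gives $\P_1 \cdot \ker_n V \subseteq \ker_n V$ (for $y \in \P_1$ and $v \in \ker_n V$, any $x \in \P_n$ yields $x(yv) = [x,y]v + yxv$, both summands vanishing), and the action factors through the finite-dimensional nilpotent quotient $\P_1/\P_n$ (whose lower central series terminates since $[\P_1, \P_k] \subseteq \P_{k+1}$). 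I plan to decompose $\ker_n V$ into generalized weight spaces for this nilpotent action, with weights in $(\P_1/\P_2)^*$, and show that the zero weight must occur --- producing a vector annihilated by $\P_1$, i.e., a singular vector.

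The main obstacle is this last step. My strategy is to exploit the normalizer $\N_\P$-action, which preserves $\ker_n V$ and its weight-space decomposition: the elements $\widehat\d \in \iota_*^{-1}(\widehat\dd)$ from Lemma \ref{ldtilde} carry a semisimple Euler-like component grading the associated graded $\gr\P$ by filtration degree, with $\P_k/\P_{k+1}$ sitting in positive degree. A nonzero weight would then, under iterated $\P_1$-action, generate infinitely many distinct eigenvalues of this Euler-like element on $\ker_n V$, contradicting finite-dimensionality. This mirrors the approach used for the analogous statements in \cite{BDK1} and \cite{BDK2}, with adjustments for the central extension structure of $\dd'$ from \leref{ldprime} --- particularly delicate in the case $\chi = 0$, where $\widehat\d$ does not directly contain an Euler component and the grading element must instead be assembled from a quadratic combination of lifts of the generators $f^{ij}$ of $\spd \simeq \H_0/\H_1$.
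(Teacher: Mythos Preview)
Your argument for finite-dimensionality is correct and matches the paper's. For the nonvanishing of $\sing V$, however, your route diverges from the paper's and runs into a real obstruction in the $\chi=0$ case.

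The paper's argument is short and uniform: since $\N_\P$ normalizes $\P_n$, the finite-dimensional space $\ker_n V$ is $\N_\P$-stable; pick an irreducible $\N_\P$-submodule $U\subset\ker_n V$ and invoke \prref{pnhconformal}, which says $\P_1$ kills every finite-dimensional irreducible conformal $\N_\P$-module. That proposition, in turn, rests not on an Euler grading but on the structural fact that $\P_1/\P_n$ is a nilpotent ideal of $\N_\P/\P_n$ lying in the derived subalgebra (since $\P_1/\P_2\simeq S^3\dd^*$ has no trivial $\spd$-summand, one has $[\P_0,\P_1]+\P_2=\P_1$, and iterating gives $\P_1\subset[\N_\P,\N_\P]+\P_n$); hence on an irreducible module the radical character vanishes on~$\P_1$.

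Your weight-space strategy does work when $\chi\neq 0$: then $\ad\widehat\d$ acts on $\P_k/\P_{k+1}$ by the nonzero scalar $\tfrac{k}{2}\chi(\d)$, and one can pick an $\ad\widehat\d$-eigenvector $y\in\P_1/\P_n$ with $\mu(y)\neq 0$ to produce infinitely many $\widehat\d$-eigenvalues on the finite-dimensional $\ker_n V$. But when $\chi=0$ the element $\widehat\d=\widetilde\d$ centralizes all of $\P_k$ for $k\geq 0$ (since $[\widehat\d,\P_k]\subset\P_{k-1}\cap\kk e^{-\chi}=0$), so it contributes no grading whatsoever. Your proposed fix via ``a quadratic combination of lifts of the $f^{ij}$'' cannot produce an Euler-type element either: no element of $\spd\simeq\P_0/\P_1$ acts by a nonzero scalar on the nontrivial $\spd$-module $\P_1/\P_2\simeq S^3\dd^*$, and a Casimir-type element has overlapping eigenvalues across different filtration degrees (e.g.\ $R(2\pi_1)$ occurs in both $S^2\dd^*$ and $S^4\dd^*$), so it does not separate them. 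In short, when $\chi=0$ there is simply no element of $\N_\P$ inducing a filtration-degree grading on $\P_1/\P_n$, and your argument cannot close. The remedy is to pass to an irreducible $\N_\P$-submodule and use \prref{pnhconformal}, as the paper does.
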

\begin{proof}
Finite dimensionality of $\ker_n V/\ker V$ for all $n$ was stated in \leref{lkey2} above.

In order to prove the other claim, we may assume without loss of
generality that $\ker V = \{0\}$. As the $\P$-module $V$ is
conformal, then $\ker_n V$ is non-zero for some $n\geq 0$. Note that
$\ker_n V$ is killed by $\P_n$, so it is preserved by its normalizer
$\N_\P$. Choose now an irreducible $\N_\P$-submodule $U \subset
\ker_n V$. As $U$ is finite dimensional, \prref{pnhconformal} shows
that the action of $\P_1$ on $U$ is trivial, so that $U \subset
\sing V$.
\end{proof}

\begin{lemma}\label{singtosing}
Let\/ $f\colon M \to N$ be a homomorphism of\/ $\Hd$-modules. Then\/
$f(\sing M) \subset \sing N$.
\end{lemma}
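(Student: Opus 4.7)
The plan is to observe that any $\Hd$-module homomorphism is automatically a homomorphism of the corresponding conformal $\widetilde{\P}$-modules, after which preservation of singular vectors is essentially immediate from the definition.

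First I would invoke \prref{preplal2}, according to which $\Hd$-modules are the same as conformal $\widetilde{\P}$-modules, the equivalence being given on one side by the explicit formula \eqref{axm2} and on the other by \eqref{prpl2}. A homomorphism $f\colon M\to N$ of $\Hd$-modules is, by definition, $H$-linear (in particular $\dd$-linear) and satisfies the intertwining property \eqref{psprod6} for the pseudoaction of the generator $e\in\Hd$. Writing $e*v=\sum_i (f_i\tp g_i)\tp_H v_i$ and applying $f$, the condition \eqref{psprod6} gives $e*f(v)=\sum_i (f_i\tp g_i)\tp_H f(v_i)$; substituting this into \eqref{axm2} yields
\[
(x\tp_H e)\cdot f(v)=\sum_i \langle x,S(f_i{g_i}_{(-1)})\rangle\,{g_i}_{(2)}f(v_i)=f\bigl((x\tp_H e)\cdot v\bigr),
\]
for every $x\in X$. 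Together with $\dd$-linearity, this shows that $f$ is a homomorphism of the conformal $\widetilde{\P}$-modules $M$ and $N$.

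Given this, the conclusion is straightforward: for $v\in\sing M$ and any $p\in\P_1$, the $\widetilde{\P}$-equivariance just established yields $p\cdot f(v)=f(p\cdot v)=f(0)=0$, so $p\cdot f(v)=0$ for all $p\in\P_1$, i.e., $f(v)\in\sing N$. There is no substantial obstacle here; the only step that requires a moment's thought is verifying that the pseudoalgebra intertwining property \eqref{psprod6} really does translate into compatibility with every Fourier coefficient $(x\tp_H e)\cdot(-)$, but this is precisely what \prref{preplal2} and \eqref{axm2} encode.
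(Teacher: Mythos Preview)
Your proof is correct and follows exactly the same approach as the paper, which simply states that the result is obvious since $f$ commutes with the action of $\P_1$. You have just spelled out in detail why an $\Hd$-module homomorphism intertwines the $\widetilde\P$-action (via \prref{preplal2} and \eqref{axm2}), which the paper leaves implicit.
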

\begin{proof}
The proof is obvious, since $f$ commutes with the action of $\P_1$.
\end{proof}

\begin{lemma}
Let\/ $f\colon M \to N$ be a homomorphism of\/ $\Hd$-modules. If\/ $U
\subset \sing M$ is an irreducible\/ $\dd' \oplus \spd$-submodule, then
either\/ $U \subset \ker f$ or $f|_U$ is an isomorphism.
\end{lemma}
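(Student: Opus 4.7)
The plan is to apply Schur's lemma to the restriction $f|_U$, viewed as a homomorphism of $\dd'\oplus\spd$-modules. The main point is that $f$, being a homomorphism of $\Hd$-modules, automatically respects the $\dd'\oplus\spd$-action on singular vectors.

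First I would invoke the previous lemma to conclude that $f(U)\subset f(\sing M) \subset \sing N$. Next I would use the fact, recalled in \prref{preplal2} and \seref{confrep}, that an $\Hd$-module homomorphism is the same as a continuous homomorphism of conformal $\widetilde{\P}$-modules. In particular, $f$ commutes with the action of the normalizer $\N_\P$, and since $\P_1\subset\N_\P$ acts trivially on both $\sing M$ and $\sing N$, the restriction $f|_{\sing M}\colon\sing M\to\sing N$ is a homomorphism of modules over $\N_\P/\P_1\simeq\dd'\oplus\spd$. Thus $f|_U\colon U\to\sing N$ is $(\dd'\oplus\spd)$-equivariant.

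Now consider $\ker(f|_U) = U\cap\ker f$. Since $\ker f$ is an $\Hd$-submodule of $M$, it is in particular a $\widetilde{\P}$-submodule, and hence $\ker f\cap\sing M$ is stable under the action of $\N_\P/\P_1\simeq\dd'\oplus\spd$. Therefore $\ker(f|_U)$ is a $(\dd'\oplus\spd)$-submodule of $U$. By the assumed irreducibility of $U$, either $\ker(f|_U)=U$, i.e.\ $U\subset\ker f$, or $\ker(f|_U)=\{0\}$, in which case $f|_U\colon U\to f(U)$ is a $(\dd'\oplus\spd)$-module isomorphism onto its image.

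The argument is essentially Schur's lemma in disguise, so no serious obstacle is expected; the only thing one must be careful about is to verify that $f$ indeed intertwines the $\dd'\oplus\spd$-action on singular vectors, which follows because this action is the one induced by the full $\widetilde{\P}$-action via the isomorphism $\N_\P/\P_1\simeq\dd'\oplus\spd$, and $f$ is a morphism of conformal $\widetilde{\P}$-modules.
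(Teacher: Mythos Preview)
Your proof is correct and follows exactly the paper's approach: the paper's proof simply reads ``Follows from Schur's Lemma and the fact that $f|_U$ is a homomorphism of $\N_\P/\P_1$-modules,'' and you have spelled out precisely why $f|_U$ is such a homomorphism and how the Schur-type argument (irreducibility of $U$ forces $\ker(f|_U)$ to be $0$ or $U$) applies.
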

\begin{proof}
Follows from Schur's Lemma and the fact that
$f|_U$ is a homomorphism of $\N_\P/\P_1$-modules.
\end{proof}

\begin{remark}
The above lemma applies whenever $M$ is a tensor module, as such
modules are generated in degree zero. See Section \ref{filtrationtm} below for the definition of the degree.
\end{remark}

We would like to give a description of the pseudoaction of $\Hd$ on singular vectors in terms of $\rho_{\sing}$. However, when using \prref{preplal2}, it is better to express \eqref{prpl2} in terms of Fourier coefficients of the form $(x^i e^{-\chi}) \otimes_H e$. In order to do so, we need the following result.

\begin{proposition}\label{phsing}
If $v\in \sing V$, then the action of\/ $\Hd$ on $v$ is given by
\begin{equation}\label{Hactionsing}
\begin{split}
e * v = \sum_{i,j=1}^{2N}  & \, (\bar\d_i \bar\d_j \tp 1) \tp_H
\rho_{\sing}(f^{ij})v\\
- \sum_{k=1}^{2N} & \, (\bar\d_k \tp 1) \tp_H \bigl(\rho_{\sing} (\d^k +
\adsp \d^k) v - \d^k v\bigr)\\
+ \quad\,\,\,\, & \,(1 \tp 1) \tp_H \rho_{\sing} (c)v.
\end{split}
\end{equation}\end{proposition}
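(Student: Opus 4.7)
The plan is to apply the inversion formula from \reref{pseudoactioncheck},
\[
e * v = \sum_{I\in\ZZ_+^{2N}} \bigl(\overline{S(\d^{(I)})} \tp 1\bigr) \tp_H \bigl((x_I e^{-\chi} \tp_H e) \cdot v\bigr),
\]
and to exploit the singular condition to prune this sum. By \leref{dualtwisted}(ii) and the compatibility of filtrations, $x_I e^{-\chi} \in \fil_{|I|-1} X$, which under $\P\simeq X$ identifies with $\P_{|I|-2}$. Hence every term with $|I|\geq 3$ lies in $\P_1$ and kills $v$, so only the three pieces $|I|=0,1,2$ survive. The main task is then to rewrite each of these three Fourier coefficients $(x_I e^{-\chi}\tp_H e)\cdot v$ in terms of $\rho_{\sing}$, i.e., to decompose $x_I e^{-\chi} \tp_H e$ modulo $\P_1$ inside $\widetilde\P = \dd \oplus \N_\P$ according to the isomorphism $\N_\P/\P_1 \simeq \dd'\oplus\spd$.

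For $|I|=0$ this is immediate: $e^{-\chi}\tp_H e=\ce\in\iota_*^{-1}(\widehat\dd)\subset\dd'$ acts by $\rho_{\sing}(c)$. For $|I|=2$, \leref{lptow1}(iii) gives $x^ix^je^{-\chi}\tp_H e \equiv 2f^{ij}\pmod{\P_1}$, so using $\overline{S(\d_i\d_j)}=\bar\d_j\bar\d_i$ and $\overline{S(\d_k^2/2)}=\bar\d_k^2/2$, this piece yields
\[
\sum_{k}(\bar\d_k^2 \tp 1)\tp_H \rho_{\sing}(f^{kk})v + 2\sum_{i<j}(\bar\d_j\bar\d_i\tp 1)\tp_H \rho_{\sing}(f^{ij})v.
\]
For $|I|=1$, \leref{ldtilde} together with the definition $\widehat\d = \widetilde\d + \tfrac12\chi(\d)\E$ gives, after lifting back to $\widetilde\P$,
\[
x^k e^{-\chi}\tp_H e \equiv -\d^k + \widehat\d^k + \adsp\d^k - \sum_{i<j}c_{ij}^k f^{ij} \pmod{\P_1},
\]
whence $(x^k e^{-\chi}\tp_H e)\cdot v = -\d^k v + \rho_{\sing}(\d^k+\adsp\d^k)v - \sum_{i<j}c_{ij}^k\rho_{\sing}(f^{ij})v$, using the identification $\widehat\d^k\in\dd'$ and writing $\rho_{\sing}(\d^k)$ for $\rho_{\sing}(\widehat\d^k)$.

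Assembling the three pieces with the coefficients $\overline{S(\d^{(I)})}$ produces the asserted formula up to two potential discrepancies: the $|I|=1$ contribution carries a spurious term $\sum_k(\bar\d_k\tp 1)\tp_H\sum_{i<j}c_{ij}^k\rho_{\sing}(f^{ij})v$, and the $|I|=2$ contribution must match the fully symmetric sum $\sum_{i,j}(\bar\d_i\bar\d_j\tp 1)\tp_H\rho_{\sing}(f^{ij})v$. Both issues reduce to the single identity
\[
\sum_{i,j=1}^{2N}\bigl([\bar\d_i,\bar\d_j]\tp 1\bigr)\tp_H \rho_{\sing}(f^{ij})v = 0,
\]
which holds because $[\bar\d_i,\bar\d_j]$ is antisymmetric in $i,j$ while $f^{ij}=f^{ji}$. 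Indeed, $[\bar\d_i,\bar\d_j]=\sum_k c_{ij}^k\bar\d_k$ rewrites the spurious $|I|=1$ term as half the left-hand side above, and the difference between $2\bar\d_j\bar\d_i$ and $\bar\d_i\bar\d_j+\bar\d_j\bar\d_i$ is exactly $[\bar\d_j,\bar\d_i]$, absorbing the mismatch in the $|I|=2$ term.

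The one genuinely delicate step is justifying the decomposition of $x^k e^{-\chi}\tp_H e$ modulo $\P_1$: one must carefully keep track that \leref{ldtilde} is stated in $\widetilde\H$ modulo $\W_1$, then invoke \leref{lfilp} to transport the congruence to $\widetilde\P$ modulo $\P_1$, and observe that the lift is unambiguous because $\P_1 \cap \iota_*^{-1}(\widehat\dd) = 0$ (as $\ce\notin\P_{-1}$). Once this bookkeeping is in place, combining the three pieces and applying the antisymmetry identity yields \eqref{Hactionsing}.
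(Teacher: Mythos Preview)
Your proof is correct and follows essentially the same route as the paper's: invoke the inversion formula from \reref{pseudoactioncheck}, use singularity to truncate to $|I|\le 2$, identify each surviving Fourier coefficient via \leref{lptow1}(iii) and \leref{ldtilde}, and then cancel the leftover commutator terms. The paper organizes the last step exactly as you do, writing $2\bar\d_j\bar\d_i = (\bar\d_i\bar\d_j+\bar\d_j\bar\d_i) - [\bar\d_i,\bar\d_j]$ so that the $\sum_{i<j}[\bar\d_i,\bar\d_j]\otimes f^{ij}$ piece coming from the $|I|=2$ block cancels the $\sum_k\bar\d_k\otimes\sum_{i<j}c_{ij}^k f^{ij}$ piece from the $|I|=1$ block.

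One small wording issue: describing the spurious $|I|=1$ term as ``half the left-hand side'' of $\sum_{i,j}[\bar\d_i,\bar\d_j]\otimes f^{ij}=0$ is imprecise, since that full sum is literally zero and half of zero is zero. What you mean is that the $|I|=1$ leftover contributes the $\sum_{i<j}$ range and the $|I|=2$ leftover contributes the $\sum_{i>j}$ range (after writing $[\bar\d_j,\bar\d_i]$), so together they assemble the full antisymmetric sum over all $(i,j)$, which vanishes. Equivalently, and more directly, the two leftovers are $\sum_{i<j}[\bar\d_i,\bar\d_j]\otimes f^{ij}$ and its negative, so they cancel outright; this is how the paper phrases it. The substance is the same.
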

\begin{proof}
As $\P_1$ acts trivially on singular vectors, \reref{pseudoactioncheck} shows that the action of $e$ on a singular vector $v$ is
given by the following expression
\begin{equation*}
\begin{split}
e * v = \sum_{1 \leq i<j \leq 2N} &
(\overline{S(\d_i \d_j)} \tp 1) \tp_H (x^i x^j e^{-\chi} \tp_H e).v\\
+ \sum_{i=1}^{2N} \quad\,\, & (\overline{S(\d_i^2)} \tp 1) \tp_H
\left(\frac{1}{2}(x^i)^2 e^{-\chi} \tp_H e\right).v\\
 + \sum_{k=1}^{2N} \quad\,\, & (\overline{S(\d_k)} \tp 1) \tp_H (x^k e^{-\chi} \tp_H e).v\\
 + \qquad \quad  & (\overline{S(1)} \otimes 1) \tp_H (e^{-\chi} \tp_H e).v.
\end{split}
\end{equation*}
This rewrites as
\begin{equation*}
\begin{split}
e * v = \sum_{1\leq i<j\leq 2N} &
({\bar\d}_j \bar\d_i \tp 1) \tp_H (x^i x^j \tp_H e).v\\
+ \sum_{i=1}^{2N} \quad\,\, & (\bar \d_i^2 \tp 1) \tp_H
\left(\frac{1}{2}(x^i)^2 \tp_H e\right).v\\
 - \sum_{k=1}^{2N} \quad\,\, & (\bar\d_k \tp 1) \tp_H (x^k e^{-\chi} \tp_H e).v\\
 + \qquad \quad & (1 \tp 1) \tp_H (e^{-\chi} \tp_H e).v\,,
\end{split}
\end{equation*}
where we used that $x^i x^j e^{-\chi}$ coincides with $x^i x^j$ up to higher degree terms.
We may use \leref{ldtilde} to express the third summand on the right-hand side as
 \begin{equation*}
\begin{split}
 & \sum_{k=1}^{2N} \, (\bar\d_k \tp 1) \tp_H (x^k e^{-\chi} \tp_H e).v
 \\
=& \sum_{k=1}^{2N} \, (\bar\d_k \tp 1) \tp_H \Bigl(\rho_{\sing}(\d^k + \adsp \d^k)v - \d^k v - \sum_{1\leq i<j\leq 2N} c_{ij}^k \rho_{\sing}(f^{ij}) v\Bigr)
\end{split}
\end{equation*}
Now we rewrite the first summand in the right-hand side
using \eqref{rhosing} and the fact that $2\bar \d_j \bar \d_i =
(\bar \d_i \bar \d_j + \bar \d_j \bar \d_i) - [\bar \d_i, \bar
\d_j]$. Then the first two summands together yield

\begin{equation*}
\sum_{i,j=1}^{2N} (\bar\d_i\bar\d_j \tp 1) \tp_H (\rho_{\sing}(f^{ij})v)
- \sum_{k=1}^{2N} (\bar \d_k\tp 1)\tp_H \sum_{1\leq i<j\leq 2N} c_{ij}^k \rho_{\sing}(f^{ij})v.
\end{equation*}
The two summations over $i<j$ then cancel each other, giving \eqref{Hactionsing}.
\end{proof}

\begin{corollary}\label{chsing}
Let $V$ be an $\Hd$-module and let $R$ be a
$\dd'\oplus\spd$-submodule of\/ $\sing V$. Denote by $HR$ the
$H$-submodule of\/ $V$ generated by $R$. Then $HR$ is a
$\Hd$-submodule of\/ $V$. In particular, if\/ $V$ is irreducible and $R$ is non-zero,
then $V=HR$.
\end{corollary}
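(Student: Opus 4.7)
The plan is to verify the submodule condition directly from \prref{phsing} and then conclude by $H$-bilinearity. First I would reduce to checking that $e*r$ lies in $(H\otimes H)\otimes_H HR$ for every $r\in R$: since $\Hd = He$ and $HR$ is an $H$-submodule, the general case $(he)*(kr) \in (H\otimes H)\otimes_H HR$ will follow automatically from $H$-bilinearity of the pseudoaction together with the characterization of submodules in \reref{rlmod}, since passing from $e*r$ to $(he)*(kr)$ only multiplies the first two tensor slots on the left and replaces each coefficient $v_i\in HR$ by $k_{(-2)}v_i\in HR$.

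The core step is then to read off \eqref{Hactionsing} and check that each coefficient sitting in the third tensor slot already belongs to $HR$. Since $R$ is by hypothesis a $\dd'\oplus\spd$-submodule of $\sing V$, it is stable under every operator $\rho_{\sing}(A)$ with $A\in\dd'\oplus\spd$; in particular, $\rho_{\sing}(f^{ij})r$, $\rho_{\sing}(\d^k+\adsp\d^k)r$ and $\rho_{\sing}(c)r$ all lie in $R\subset HR$. The only remaining type of term appearing on the right-hand side of \eqref{Hactionsing} is $\d^k r$, but this lies in $HR$ simply because $\d^k\in\dd\subset H$ and $HR$ is an $H$-submodule by definition. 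Hence $e*r\in (H\otimes H)\otimes_H HR$, as required.

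With this in hand, the first statement follows: $HR$ is both an $H$-submodule (by construction) and stable under the pseudoaction of $e$ (and hence of all of $\Hd=He$). For the final assertion, if $V$ is irreducible and $R\neq 0$, then $HR$ is a non-zero $\Hd$-submodule of $V$, which forces $HR=V$. No subtle obstacle is expected: the only place where care is needed is to correctly interpret the symbol $\d^k$ in \eqref{Hactionsing}, distinguishing the ``internal'' occurrences $\rho_{\sing}(\d^k)$ (where $\d^k$ stands for $\widehat{\d^k}\in\dd'$ and therefore preserves $R$) from the ``external'' occurrence $\d^k r$ (which is the $H$-module action and only guarantees membership in the larger space $HR$).
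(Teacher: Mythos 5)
Your proof is correct and follows essentially the same route as the paper's brief argument: read off \eqref{Hactionsing}, note that every coefficient in the third slot lands in $HR$ (the terms $\rho_{\sing}(\cdot)r$ lie in $R$ by $\dd'\oplus\spd$-stability, and $\d^k r$ lies in $HR$ because $HR$ is an $H$-module), and invoke $H$-bilinearity to upgrade $\Hd*R\subset(H\tp H)\tp_H HR$ to $\Hd*HR\subset(H\tp H)\tp_H HR$. Your careful distinction between the internal $\rho_{\sing}(\d^k)$ (acting as $\widehat{\d^k}\in\dd'$, hence preserving $R$) and the external $H$-module action $\d^k r$ is exactly the right point to flag, and the minor Sweedler-index slip in the straightening step (the coefficient becomes $k_{(2)}v_i$ rather than $k_{(-2)}v_i$) is inessential since either element of $H$ sends $HR$ into itself.
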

\begin{proof}
By \eqref{Hactionsing}, we have $\Hd * R \subset (H \tp H) \tp_H HR$. Then
$H$-bilinearity implies that 
$HR$ is a $\Hd$-submodule of $V$.
\end{proof}

\subsection{More tensor modules}

Let $R$ be a finite-dimensional $\dd'\oplus\spd$-module, with the
action denoted as $\rho_R$. Let $V=H\tp R$ be the free $H$-module
generated by $R$, where $H$ acts by a left multiplication on the
first factor. Mimicking \eqref{Hactionsing}, we define a
pseudoaction
\begin{equation}\label{hsing1}
\begin{split}
e * v = \sum_{i,j=1}^{2N}  & \, (\bar \d_i \bar \d_j \tp 1) \tp_H
(1 \tp \rho_R (f^{ij})v)\\
- \sum_{k=1}^{2N} & \, (\bar \d_k \tp 1) \tp_H \bigl(1 \tp \rho_R (\d^k + \adsp
\d^k) v - \d^k \tp v\bigr)\\
+ \quad\,\,\,\, & \,(1 \tp 1) \tp_H (1 \tp \rho_{R} (c)v),\qquad v \in R,
\end{split}
\end{equation}
and then extend it by $H$-bilinearity to a map $* \colon \Hd\tp V
\to (H\tp H)\tp_H V$.

\begin{lemma}\label{lhsing2}
Let\/ $R$ be a finite-dimensional $\dd\oplus\spd$-module with an
action $\rho_R$. Then formula \eqref{hsing1} defines a structure of
a $\Hd$-module on $V=H\tp R$. We have $\kk \tp R \subset \sing V$
and
\begin{equation}\label{rhosingA}
\rho_{\sing}(A)(1 \tp u) = 1 \tp \rho_R(A)u, \qquad A \in \dd \oplus
\spd, \; u \in R.
\end{equation}
\end{lemma}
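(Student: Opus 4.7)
The plan is to proceed in three steps. First, I will verify that $1 \tp u \in \sing V$ for every $u \in R$. By the characterization \eqref{vsingf1}, it suffices to exhibit a representation of $e*(1\tp u)$ as an element of $(\fil^2 H \tp \kk) \tp_H V$. Formula \eqref{hsing1} already provides such a representation: every summand on its right-hand side has second tensor factor $1 \in \kk$, and its first tensor factor ($\bar\d_i \bar\d_j$, $\bar\d_k$, or $1$) lies in $\fil^2 H$.

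Second, I will establish that \eqref{hsing1}, extended by $H$-bilinearity to $V = H \tp R$, indeed defines a $\Hd$-module structure. The key observation is that, once one identifies $u \in R$ with $1 \tp u \in \kk \tp R \subset V$ and uses the $\tp_H$-balance identity
\[
\sum_{k=1}^{2N} (\bar\d_k \tp 1) \tp_H (\d^k \tp u) = \sum_{k=1}^{2N} (\bar\d_k \tp \d^k) \tp_H (1 \tp u),
\]
formula \eqref{hsing1} coincides term-by-term with the tensor-module pseudoaction \eqref{eee3} of \deref{dtmodh}. If the central element $c \in \dd'$ acts trivially on $R$, \prref{phtm} directly shows that this pseudoaction yields a valid $\Hd$-module, namely the restriction of a $\Wd$-tensor module up to a twist. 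In the remaining case (nontrivial $c$-action, which by \seref{sfdird} forces $\chi = 0$ and $\omega = \di \zeta$) I invoke the construction of \reref{nontrivialc}: one forms a $\Kdp$-tensor module from $R$ using \cite{BDK2}, restricts its $\K$-action to $\P$ along the embedding $j \colon \P \injto \K$, and observes that the Fourier coefficients of $c e'$ vanish on $\pi^*(X)$, so that $H \tp R$ inherits a pseudoaction of $\Hd$ which, by a direct comparison of Fourier coefficients, coincides with \eqref{hsing1}.

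Third, for \eqref{rhosingA}, I will invoke \prref{phsing}: the pseudoaction on any singular vector is rigidly determined by the formula \eqref{Hactionsing} in terms of $\rho_{\sing}$. Matching \eqref{Hactionsing} with \eqref{hsing1} at $v = 1 \tp u$, and using that the expansion of any element of $(H \tp H) \tp_H V$ in the basis $\{\bar\d^{(I)}\}$ of the first tensor factor is unique, yields term by term $\rho_{\sing}(f^{ij})(1 \tp u) = 1 \tp \rho_R(f^{ij}) u$, $\rho_{\sing}(c)(1 \tp u) = 1 \tp \rho_R(c) u$, and the analogous identity for the $\dd$-part of the action after recognizing that $\d^k (1 \tp u) = \d^k \tp u$ via the left $H$-action on $V$. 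The main obstacle lies in the nontrivial-$c$ case of step two: the $\Kdp$-to-$\Hd$ descent must be carried out carefully, verifying that the Fourier coefficients involving the central generator $c$ vanish on $\pi^*(X)$ while the scalar action of $c$ on $R$ survives and produces precisely the summand $(1 \tp 1) \tp_H (1 \tp \rho_R(c) u)$ in \eqref{hsing1}.
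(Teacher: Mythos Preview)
Your argument is correct, but it takes a genuinely different route from the paper's. The paper offers two proofs: a direct verification (dismissed as ``lengthy but straightforward'') and a conceptual one via induction. For the latter, one regards $R$ as an $\N_\P$-module through the isomorphism $\N_\P/\P_1 \simeq \dd'\oplus\spd$ (with $\P_1$ acting trivially) and forms the induced module $\Ind_{\N_\P}^{\widetilde\P} R$. Since $\widetilde\P = \dd \oplus \N_\P$ as vector spaces, this is $\ue(\dd)\tp R = H\tp R$ with $H$ acting on the left; by \prref{preplal2} it is automatically an $\Hd$-module. Singularity of $\kk\tp R$ and formula \eqref{rhosingA} are then immediate from the very construction of the induced module, and the explicit pseudoaction \eqref{hsing1} is a \emph{consequence} of \prref{phsing} rather than the starting point. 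This argument is uniform: no case distinction on the action of $c$ is needed.

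Your approach is more concrete---you begin with the formula and verify it defines a module---but this forces a case split. The trivial-$c$ branch is clean, reducing to restriction from $\Wd$. The nontrivial-$c$ branch, as you yourself flag, is the real obstacle: the descent from $\Kdp$ sketched in \reref{nontrivialc} must be carried out and the resulting pseudoaction matched against \eqref{hsing1} coefficient by coefficient, and you have only outlined this. The induced-module argument bypasses that work entirely. A minor point of ordering: your Step~1 (singularity via \eqref{vsingf1}) presupposes that \eqref{hsing1} already defines a module, so logically it should come after Step~2.
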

\begin{proof}
It can be showed by means of a lengthy but straightforward
computation.
Alternatively, the identification $\dd'\oplus\spd= \N_\P / \P_1$
makes $R$ into a conformal $\N_\P$-module with a trivial
$\P_1$-action. As $\widetilde \P = \dd \oplus \N_\P$ as vector
spaces, then the induced module $\Ind_{\N_\P}^{\widetilde \P} R$ is
isomorphic to $\ue(\dd) \tp R = H \tp R$. The $H$-module structure
is then given by multiplication on the first factor, whereas the
$\N_\P/\P_1$-action on $\kk \tp R$ is as given in \eqref{rhosingA}.
Thus we have built a conformal $\widetilde \P$-module -- i.e., a
$\Hd$-module -- structure on $H\tp R$.
\end{proof}

\begin{definition}\label{dvmodw}
{\rm(i)} Let $R$ be a finite-dimensional $\dd'\oplus\spd$-module with
an action $\rho_R$. Then the $\Hd$-module $H\tp R$, with the action
of $\Hd$ given by \eqref{hsing1}, will be denoted as~$\V(R)$.

{\rm(ii)} Let $R = \Pi'\boxtimes U$, where $\Pi'$ is a
finite-dimensional $\dd'$-module and $U$ is a finite-dimensional
$\spd$-module. Then the module $\V(R)$ will also be denoted
as~$\V(\Pi',U)$.

{\rm(iii)} As before, when $R = \Pi\boxtimes U$, where $\Pi$ is a
finite-dimensional $\dd$-module and $U$ is a finite-dimensional
$\spd$-module, we will abuse the notation and write $\V(R) = \V(\Pi, U):= \V(\pi^* \Pi, U)$, where $\pi^*\Pi$ is the $\dd'$-module obtained by lifting the $\dd$-action on $\Pi$ to $\dd'$ via composing with the canonical projection $\pi: \dd' \to \dd'/\kk c \simeq \dd$.
\end{definition}

It is possible to translate Definitions \ref{dtmodh} and \ref{dvmodw} into one another. 

\begin{proposition}
Let\/ $R = \Pi' \boxtimes U$ be a finite-dimensional\/ $\dd'\oplus \spd$-module and $\pi: \dd' \to \dd'/\kk c \simeq \dd$ denote the canonical projection. Then\/
$\V(\Pi', U) = \T(\Pi'_{\pi^*\phi}, U)$ and\/
$\T(\Pi', U) = \V(\Pi'_{-\pi^*\phi}, U)$,
where\/ $\phi = - N\chi+\tr\ad$ and $\pi^*\phi = \phi \circ \pi$ is the corresponding trace form on $\dd'$.
\end{proposition}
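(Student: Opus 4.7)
The plan is to verify the first identity $\V(\Pi',U) = \T(\Pi'_{\pi^*\phi}, U)$ directly by comparing the two explicit pseudoactions of $e$ on the generating subspace $R=\Pi'\bt U \subset H\tp R$; the second identity then follows by replacing $\Pi'$ with $\Pi'_{-\pi^*\phi}$ in the first. Since both modules coincide as $H$-modules and both pseudoactions are $H$-bilinear, it is enough to check equality on $1\tp v$ for $v\in R$.

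Comparing \eqref{eee3} and \eqref{hsing1} term by term on $1\tp v$, all summands involving $f^{ij}v$, $(\d^k+\adsp\d^k)\cdot v$ and $c\cdot v$ agree. The two formulas differ only in that \eqref{eee3} contains the term $\sum_k(\bd_k\tp\d^k)\tp_H v$, while \eqref{hsing1} contributes $+\sum_k(\bd_k\tp 1)\tp_H(\d^k\tp v)$ from the $-\d^k\tp v$ piece. Using $\De(\d^k) = \d^k\tp 1 + 1\tp\d^k$ to move $\d^k$ across $\tp_H$, the latter rewrites as
\begin{equation*}
\sum_{k=1}^{2N}(\bd_k\d^k\tp 1)\tp_H v + \sum_{k=1}^{2N}(\bd_k\tp\d^k)\tp_H v,
\end{equation*}
so the second sums cancel and the net discrepancy $\V(\Pi',U)-\T(\Pi',U)$ on $1\tp v$ reduces to $\sum_k(\bd_k\d^k\tp 1)\tp_H v$. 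Skew-symmetry of $r^{ij}$ together with \eqref{schi} gives $\sum_k \bd_k\d^k = \rho - s$ in $H$, where $\rho$ is the element \eqref{delx}, so the defect equals $((\rho-s)\tp 1)\tp_H v$.

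On the other hand, by \eqref{tpiv2} the twisting of $\Pi'$ by $\pi^*\phi$ shifts only the $\dd'$-action on $\Pi'$ by addition of $\pi^*\phi$, and consequently the pseudoaction \eqref{eee3} for $\T(\Pi'_{\pi^*\phi},U)$ differs from that for $\T(\Pi',U)$ by the single extra term
\begin{equation*}
-\sum_{k=1}^{2N}\phi(\d^k)(\bd_k\tp 1)\tp_H v = -\Bigl(\sum_{k=1}^{2N}\phi(\d^k)\bd_k \tp 1\Bigr)\tp_H v.
\end{equation*}
The proof therefore reduces to the algebraic identity $-\sum_k\phi(\d^k)\bd_k = \rho - s$ in $H$, which is the main (mild) obstacle.

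To establish it, write $\rho-s = \sum_l a_l\d_l$. Since $\phi = \io_{\rho-s}\om$ and $\om(\d_l,\d^k) = -\de^k_l$, we obtain $\phi(\d^k) = -a_k$, whence $-\sum_k\phi(\d^k)\d_k = \rho-s$. The remaining scalar contribution is $\sum_k\phi(\d^k)\chi(\d_k) = -\chi(\rho-s)$, and it vanishes because $\chi(\rho) = \tfrac12\sum_i\chi([\d_i,\d^i]) = 0$ as $\chi$ is a trace form, while $\chi(s) = \om(s\wedge s) = 0$ by \eqref{chiom}. Matching defects yields $\V(\Pi',U) = \T(\Pi'_{\pi^*\phi},U)$; applying this to $\Pi'_{-\pi^*\phi}$ in place of $\Pi'$ then gives the second identity $\T(\Pi',U) = \V(\Pi'_{-\pi^*\phi},U)$.
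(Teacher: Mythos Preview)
Your proof is correct and follows essentially the same approach as the paper: both compare the explicit pseudoactions \eqref{eee3} and \eqref{hsing1} on $1\tp v$, compute the discrepancy coming from the $(\bd_k\tp\d^k)\tp_H v$ versus $(\bd_k\tp 1)\tp_H(\d^k\tp v)$ terms, and identify it as a twist by $\phi=\io_{\rho-s}\om$. The only organizational difference is that the paper proves the second identity directly (rewriting \eqref{eee3} into the form \eqref{translatettov}) and splits $\d^k=\bd^k+\chi(\d^k)$ to pass through $\sum_k\bd_k\bd^k$, whereas you prove the first identity and go straight to $\sum_k\bd_k\d^k=\rho-s$, which is slightly more economical.
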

\begin{proof}
The two claims are clearly equivalent, so it suffices to prove the latter. Take the definition of $\T(\Pi', U)$ given in \eqref{eee3} and rewrite the first summand in the right-hand side as follows
$$\sum_{k=1}^{2N} (\bd_k \tp \d^k) \otimes_H v = \sum_{k=1}^{2N} (\bd_k \tp 1) \otimes_H \d^k v - \sum_{k=1}^{2N} (\bd_k (\bd^k + \chi(\d^k)) \tp 1 ) \tp_H v.$$

One has 
$$\sum_{k=1}^{2N} (\bd_k (\bd^k + \chi(\d^k)) \tp 1 ) \tp_H v = \sum_{k=1}^{2N} (\bd_k \bd^k \tp 1) \tp_H v - \sum_{k=1}^{2N}(\bd_k \tp 1) \tp_H \chi(\d^k) v.$$
One may one use \eqref{delx} to rewrite the first summand as
$$\sum_{k=1}^{2N} (\bd_k \bd^k \tp 1) \tp_H v = \sum_{k = 1}^{2N} (\bd_k \tp 1) \tp_H \iota_\rho \omega(\d^k)v$$ so that 
$$\sum_{k=1}^{2N} (\bd_k \tp \d^k) \otimes_H v = \sum_{k=1}^{2N} (\bd_k \tp 1) \otimes_H \d^k v - \sum_{k=1}^{2N} (\bd_k \tp 1 ) \tp_H \phi(\d^k)v.$$

Now \eqref{eee3} rewrites as
\begin{equation}\label{translatettov}
\begin{split}
e * v = \sum_{i,j=1}^{2N}  & \, (\bar \d_i \bar \d_j \tp 1) \tp_H
(1 \tp f^{ij} \cdot v)\\
- \sum_{k=1}^{2N} & \, (\bar \d_k \tp 1) \tp_H (1 \tp (\d^k + \adsp
\d^k)\cdot v - \phi(\d^k) \tp v - \d^k \tp v)\\
+ \quad\,\,\,\, & \,(1 \tp 1) \tp_H c\cdot v,
\end{split}
\end{equation}
for all $v \in R$.
\end{proof}

The following statements are analogous to those proved in \cite{BDK, BDK1, BDK2}.

\begin{theorem}\label{irrfacttens}
Let\/ $V$ be an irreducible finite\/ $\Hd$-module, and\/ $R$ be an
irreducible\/ $\dd' \oplus \spd$-submodule of\/ $\sing V$. Then\/ $V$ is a
homomorphic image of\/ $\V(R)$. In particular, every irreducible finite\/
$\Hd$-module is a quotient of a tensor module.
\end{theorem}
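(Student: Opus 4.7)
The plan is to construct an explicit surjective $\Hd$-module homomorphism $\Phi\colon\V(R)\surjto V$ and then invoke \coref{chsing} for surjectivity; the ``in particular'' clause will then follow because $\sing V$ is nonzero by \prref{phsing1}, and for irreducible $V$ one has $\ker V=0$ (else $\Hd*V=0$), so $\sing V$ is finite dimensional and, being a conformal $\N_\P$-module on which $\P_1$ acts trivially, contains an irreducible $\dd'\oplus\spd$-submodule $R$ by \prref{pnhconformal}.

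Given such $R$, I would define $\Phi\colon \V(R)=H\tp R\to V$ as the unique $H$-linear map extending the inclusion $R\injto V$, $\Phi(h\tp r)=h\cdot r$. Since $\Hd=He$, verifying that $\Phi$ is an $\Hd$-module homomorphism reduces by $H$-bilinearity to checking
\[
\bigl((\id\tp\id)\tp_H\Phi\bigr)\bigl(e*(1\tp r)\bigr)=e*\Phi(1\tp r)=e*r,\qquad r\in R.
\]
The right-hand side is computed by \prref{phsing}: since $r\in R\subset\sing V$, it is precisely \eqref{Hactionsing}. The left-hand side is computed by applying $\Phi$ to the defining formula \eqref{hsing1} for $e*(1\tp r)$ in $\V(R)$, using that $\Phi(\d^k\tp r)=\d^k\cdot r$ in $V$ and that $\Phi(1\tp\rho_R(A)r)=\rho_R(A)r=\rho_{\sing}(A)r$ for $A\in\dd'\oplus\spd$; the last identity holds because the inclusion $R\injto\sing V$ is by hypothesis a map of $\dd'\oplus\spd$-modules. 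A term-by-term comparison then matches \eqref{hsing1} with \eqref{Hactionsing}; observe in particular that the central contribution $(1\tp 1)\tp_H\rho_R(c)r$ is sent correctly, since $c=\ce$ acts on $R$ via $\rho_R$ and on singular vectors of $V$ via $\rho_{\sing}$ by the same rule.

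Once $\Phi$ is shown to be an $\Hd$-homomorphism, surjectivity is immediate: $\im\Phi$ contains $\Phi(\kk\tp R)=R$, hence contains the $H$-submodule $HR$, which by \coref{chsing} coincides with $V$ (as $V$ is irreducible and $R\neq 0$). Therefore $V\cong\V(R)/\ker\Phi$ is a homomorphic image of the tensor module $\V(R)$, as claimed. No genuine obstacle arises at this stage: the real work was done in Propositions~\ref{phsing} and~\ref{lhsing2} (together with \prref{pnhconformal}), which were engineered so that the pseudoaction on singular vectors in an arbitrary module and the generating pseudoaction on $\V(R)$ are formally identical. The only point requiring some care is to track the action of the central element $c$ consistently via $\rho_{\sing}$, which is what makes $\V(R)$ (rather than a tensor module with trivial central action) the correct candidate whenever $c$ acts nontrivially on $R$.
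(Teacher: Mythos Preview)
Your proof is correct and follows exactly the paper's approach: construct the $H$-linear map $\V(R)=H\tp R\to HR\subset V$, observe that comparing \eqref{hsing1} with \eqref{Hactionsing} makes it an $\Hd$-homomorphism, and conclude surjectivity from \coref{chsing}. You have simply spelled out in full the term-by-term matching and the ``in particular'' clause (existence of an irreducible $R\subset\sing V$ via \prref{phsing1} and \prref{pnhconformal}) that the paper leaves implicit.
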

\begin{proof}
By \eqref{Hactionsing} and \eqref{hsing1} the canonical projection
$\V(R) = H \tp R \to HR$ is a homomorphism of $\Hd$-modules.
However, $HR=V$ by \coref{chsing}.
\end{proof}

We will now investigate how reducibility of a tensor module depends on
existence of non-constant singular vectors. The picture is more involved than with primitive Lie pseudoalgebras of the other types.

\begin{lemma}\label{noconstants}
If $R$ is an irreducible representation of $\dd' \oplus \spd$, then any non-zero proper $\Hd$-submodule $M$ of $\V(R) = H \tp R$ satisfies $M
\cap \fil^0 \V(R) = \{0\}$.
\end{lemma}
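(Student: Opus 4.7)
The plan is to argue by contradiction: I will assume $M$ contains a non-zero ``constant'' and then derive that $M = \V(R)$, contradicting properness.

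First, I would identify $\fil^0 \V(R) = \fil^0 H \tp R$ with $R$ itself under $1 \tp v \mapsto v$. By \leref{lhsing2}, this copy of $R$ sits inside $\sing \V(R)$, and it is stable under the $\dd' \oplus \spd$-action given by $\rho_{\sing}(A)(1 \tp u) = 1 \tp \rho_R(A) u$, which manifestly lands back in $R$. Thus $R$ is a $\dd' \oplus \spd$-submodule of $\sing \V(R)$.

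Next, using \prref{preplal2}, I would pass to the corresponding conformal $\ti\P$-module picture: any $\Hd$-submodule $M$ of $\V(R)$ is automatically a conformal $\ti\P$-submodule, hence in particular stable under the normalizer $\N_\P$. Since $\N_\P$ normalizes $\P_1$, the subspace $M \cap \sing \V(R)$ is an $\N_\P$-submodule of $\sing \V(R)$; as $\P_1$ acts trivially on singular vectors, this $\N_\P$-action factors through $\N_\P / \P_1 \simeq \dd' \oplus \spd$. Combined with the previous paragraph, this shows that $M \cap R$ is a $\dd' \oplus \spd$-submodule of $R$.

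Assuming for contradiction that $M \cap \fil^0 \V(R) \neq \{0\}$, the submodule $M \cap R$ is non-zero, so irreducibility of $R$ forces $M \cap R = R$, i.e., $R \subset M$. Applying \coref{chsing} then yields $\V(R) = HR \subset M$, contradicting the properness of $M$. There is no serious obstacle here; the argument simply combines the observation that $R$ lies in $\sing \V(R)$ with its tautological $\dd' \oplus \spd$-action, together with the fact that a submodule containing $R$ automatically equals all of $\V(R)$ by virtue of \coref{chsing}.
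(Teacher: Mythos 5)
Your proof is correct and follows essentially the same argument as the paper's, just spelled out in more detail: the paper's proof observes in one line that both $M$ and $\kk\tp R$ are $\dd'\oplus\spd$-stable, so their intersection is a $\dd'\oplus\spd$-submodule of $R$, and concludes by irreducibility of $R$. Your explicit justification (via \prref{preplal2}, \leref{lhsing2}, and the factoring of the $\N_\P$-action through $\N_\P/\P_1$ on singular vectors) unpacks exactly what the paper takes as understood, and your handling of the case $M\cap R=R$ via \coref{chsing} and $HR=\V(R)$ correctly finishes what the paper leaves implicit.
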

\begin{proof}
Both $M$ and $\kk \tp R$ are $\dd' \oplus \spd$-stable, and the
same is true of their intersection. The claim now follows from
irreducibility of $R$.
\end{proof}

\begin{corollary}\label{constirr}
If $R$ is an irreducible $\dd' \oplus \spd$-module and $\sing \V(R) = \fil^0 \V(R)$, then the $\Hd$-module $\V(R)$ is
irreducible.
\end{corollary}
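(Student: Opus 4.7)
The plan is to argue by contradiction, leveraging the two preceding results: \leref{noconstants}, which shows that any non-zero proper submodule of $\V(R)$ intersects $\fil^0 \V(R)$ trivially, and \prref{phsing1}, which guarantees the existence of non-zero singular vectors in any finite non-zero $\Hd$-module. The hypothesis $\sing \V(R) = \fil^0 \V(R)$ forces these two statements into immediate conflict.

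More precisely, suppose $M \subset \V(R)$ is a non-zero proper $\Hd$-submodule. First I would verify that $M$ is a \emph{finite} $\Hd$-module, so that \prref{phsing1} applies: since $\V(R) = H \tp R$ is finitely generated over the Noetherian ring $H = U(\dd)$, every $H$-submodule of $\V(R)$ is also finitely generated, in particular $M$. Then \prref{phsing1} provides a non-zero vector $v \in \sing M$. Because a singular vector of $M$ is automatically singular in $\V(R)$, the hypothesis gives $v \in \sing \V(R) = \fil^0 \V(R)$, producing a non-zero element of $M \cap \fil^0 \V(R)$ and contradicting \leref{noconstants}.

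It remains to verify the second half of the definition of irreducibility, namely $\Hd * \V(R) \ne \{0\}$. For this I would inspect formula \eqref{hsing1}: for a non-zero $v \in R$, the action $e * v$ written in the normal form $\sum_I (\d^{(I)} \tp 1) \tp_H v'_I$ afforded by \reref{rlmod} has, as coefficient of $\bd_k \tp 1$, the element $\d^k \tp v - 1 \tp \rho_R(\d^k + \adsp \d^k) v \in \V(R)$, whose leading component in the canonical filtration of $\V(R)$ is $\d^k \tp v \ne 0$. Hence $\Hd * \V(R) \ne \{0\}$.

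No real obstacle is anticipated: once \leref{noconstants} and \prref{phsing1} are in hand, the corollary is essentially a bookkeeping matter, and the only technical points are the Noetherianity observation (needed to pass from $M$ being a submodule to $M$ being finite) and the brief verification that the pseudoaction on $\V(R)$ is non-trivial.
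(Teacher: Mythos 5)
Your proof is correct and follows the same route as the paper's: assume a non-zero proper submodule $M$, extract a non-zero singular vector of $M$ via \prref{phsing1}, note that it is a non-zero singular vector of $\V(R)$ and hence lies in $\fil^0\V(R)$, and contradict \leref{noconstants}. The paper leaves implicit both the finiteness of $M$ (your Noetherianity observation, which is indeed what makes \prref{phsing1} applicable) and the verification that the pseudoaction is non-trivial; your write-up simply makes those two routine points explicit.
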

\begin{proof}
Assume there is a non-zero proper submodule $M$. Then $M$ must contain some non-zero singular vector. However, $M \cap \sing \V(R) = \{0\}$
by \leref{noconstants}.
\end{proof}

A converse to \coref{constirr} only holds for those tensor modules $\V(R)$ in which the $\P$-action factors through $\H \simeq \P/\kk c$. Recall 
that $\P$ is graded by the eigenspace decomposition of $\ad
\E$, which also provides a grading for $\widetilde \P$ as $\E$ commutes with elements $\widehat \d, \d \in \dd$. If $\p_n$ denotes the graded summand of eigenvalue $n$, then
one has the semidirect sum decomposition of Lie algebras
\begin{align}
\N_\P & = \dd' \sd \prod_{j\geq 0} \p_j\\
\intertext{and the decomposition of vector spaces} \widetilde \P & =
\p_{-1} \oplus \N_\P.
\end{align}
Notice that $\p_{-1}$ is not a subalgebra, as the Lie bracket of suitably chosen elements yields a nonzero multiple of the central element $c = \ce \in \p_{-2}$, which may act nontrivially on $R$. 

However, if the action of $c$ is trivial on $R$ then the $\P$-action factors through $\H$, and we may repeat the above argument with the Lie algebra $\widetilde\H$ so as to obtain $\widetilde \H =
\h_{-1} \oplus \N_\H$. Then $\h_{-1}$ is a graded (abelian) Lie algebra and its
universal enveloping algebra is a symmetric algebra generated in
degree $-1$. Then $\V(R) = \Ind_{\N_\H}^{\wti\H} R$ is isomorphic to
$\symp(\h_{-1})\tp R$, which can be endowed with a $-\ZZ_+$-grading
by setting elements from $R$ to have degree zero, and elements from
$\h_{-i}$ to have degree $-i$.  A vector $v \in \V(R)$ is then {\em homogeneous} if it has a single graded component with respect to the above grading. Since $\N_\H/\H_1$ may be identified with the degree $0$ part $\widehat \dd \oplus \h_0 \simeq \dd \oplus \spd$, its action on singular vectors will map homogeneous vectors to homogeneous vectors of the same degree, thus showing that homogeneous components of singular vectors are singular vectors.

The above argument, used in the case where $c \in \p_{-2}$ acts nontrivially on $R$, shows that $\V(R)$ is only $-\ZZ_+$-filtered, but certainly not graded, as the central element, which has degree $-2$, acts via a nonzero scalar multiplication so preserving the degree of vectors in the representation.

\begin{remark}
Let $V$ be an irreducible $\Hd$-module on which the $\P$-action factors through $\H$. 
As $\E$ stabilizes $\H_1$ and commutes with $\h_0$, isotypic components of $\sing V$ are stabilized by the action of $\E$. In particular, if an isotypic component of $\sing V$ is a single $\dd \oplus \spd$-irreducible summand, then $\E$ acts as multiplication by a scalar, and this summand is homogeneous. We will later see that in the framework of $\Hd$-tensor modules there is a single exception to homogeneity of irreducible summands in $\sing V$, which happens when $\chi = 0$.
\end{remark}

\begin{proposition}\label{constiff}
Let $R$ be an irreducible $\dd' \oplus \spd$-module on which $c\in \dd'$ acts trivially. Then
every noncostant homogeneous singular vector in $\V(R)$ is contained in a non-zero proper submodule. In particular, $\V(R)$ is irreducible if and only if $\sing \V(R) = \fil^0 \V(R)$.
\end{proposition}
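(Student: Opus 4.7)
The plan is to exploit the $\ZZ_{\le 0}$-grading on $\V(R)$ that becomes available once $c$ acts trivially. First I would observe that since $c \in \dd'$ is central in $\wti\P$ and annihilates $R$, it annihilates all of $\V(R)$, so the $\wti\P$-action factors through $\wti\H = \wti\P/\kk c$. As explained in the text before the statement, the decomposition $\wti\H = \h_{-1} \oplus (\widehat\dd \oplus \h_0) \oplus \h_{\ge 1}$ (where $\h_{\ge 1} := \bigoplus_{k\ge 1}\h_k$) is then a genuine vector space grading given by the eigenspaces of $\ad\E$, and $\V(R) \simeq \symp(\h_{-1}) \otimes R$ becomes a graded $\wti\H$-module with $R$ in degree $0$ and $\h_{-1}$ in degree $-1$. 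By \prref{preplal2}, $\Hd$-submodules of $\V(R)$ coincide with $\wti\H$-submodules, so I may work entirely in the graded $\wti\H$-picture.

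The key step is the following claim: for any nonzero homogeneous singular vector $v \in \symp^n(\h_{-1}) \otimes R$ with $n \ge 1$, every element $X \in U(\wti\H)$ of pure positive degree annihilates $v$. I would prove this using the PBW decomposition $U(\wti\H) \simeq U(\h_{-1}) \otimes U(\widehat\dd \oplus \h_0) \otimes U(\h_{\ge 1})$: any monomial $ABC$ with $A \in U(\h_{-1})$, $B \in U(\widehat\dd \oplus \h_0)$, $C \in U(\h_{\ge 1})$ has degree $\deg A + \deg C$, with $\deg A \le 0$ and $\deg C \ge 0$, so positive total degree forces $\deg C > 0$, hence $C \ne 1$. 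Since $\h_{\ge 1} \subset \H_1$ and $v$ is singular, the rightmost factor of $C$ kills $v$, giving $ABC \cdot v = 0$. Therefore $U(\wti\H) \cdot v \subset \bigoplus_{i \ge n} \symp^i(\h_{-1}) \otimes R$, which intersects $\fil^0 \V(R) = R$ trivially. The cyclic $\Hd$-submodule generated by $v$ is thus nonzero and proper, establishing the first assertion.

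The ``if and only if'' part then follows easily. The direction $\sing \V(R) = \fil^0 \V(R) \Rightarrow \V(R)$ irreducible is \coref{constirr}. Conversely, if some singular vector $v$ lies outside $\fil^0 \V(R)$, decompose $v = \sum v_i$ into homogeneous components; since $\P_1$ corresponds in the graded picture to $\bigoplus_{k \ge 1} \p_k$, whose summands are pure positive-degree operators, the equation $\P_1 \cdot v = 0$ splits componentwise and each $v_i$ is itself singular. At least one $v_i$ is then nonconstant and homogeneous, so the first part produces a proper nonzero submodule, contradicting irreducibility.

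The main obstacle I anticipate is the grading bookkeeping: the external $\dd$ appearing in the $H$-module structure on $\V(R)$ is \emph{not} homogeneous for the $\wti\H$-grading (its individual elements mix degrees $-1$ and $0$, and possibly higher), so one must carefully work with $\widehat\dd \subset \wti\H$ instead, using \prref{preplal2} to identify $\Hd$-submodules with graded $\wti\H$-submodules before the PBW argument can be applied cleanly.
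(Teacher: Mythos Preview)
Your proof is correct and follows essentially the same approach as the paper's: both exploit the $-\ZZ_+$-grading available once $c$ acts trivially to show that the $\wti\H$-submodule generated by a nonconstant homogeneous singular vector lies entirely in degrees $\le d < 0$, hence misses $\fil^0\V(R)$. Your PBW argument unpacks the paper's terser claim that $\ue(\wti\H)\,s = S(\h_{-1})\,s$ lies in degrees $\le d$, and your treatment of the converse direction (decomposing a nonconstant singular vector into homogeneous singular components) is likewise just a spelled-out version of the observation made in the paragraph preceding the statement.
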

\begin{proof}
Every non-constant homogeneous singular vector $s$ is of negative
degree $d<0$. Then $\ue(\widetilde \H) s = S(\h_{-1})
s$ is a non-zero $\Hd$-submodule of $\V(R)$ lying in degrees $\leq d$, and
intersects $R$ trivially, as $R$ lies in degree zero.
\end{proof}

\subsection{Filtration of tensor modules}\label{filtrationtm}

$\Hd$-tensor modules possess a filtration induced by the canonical
increasing filtration $\{\fil^n H\}$. We may in fact define
\begin{equation}
\fil^n \V(R) = \fil^n H \tp R \,, \qquad n = -1, 0, \dots\, .
\end{equation}
As usual, $\fil^{-1} \V(R) = \{0\}$ and $\fil^0 \V(R) = \kk \tp R$, and elements lying in $\fil^{n} \V(R) \setminus \fil^{n-1} \V(R)$ are called
{\em vectors of degree $n$}.

The
associated graded space is defined accordingly, and we have
isomorphisms of vector spaces
\begin{equation}
\gr^n \V(R) \simeq S^n \dd \tp R \,,
\end{equation}
where $S^n \dd$ is the $n$-th symmetric power of the vector space $\dd$.
%
Note that we can make $\dd$ into an $\spd$-module via the inclusion $\spd
\subset \gld.$ Then $\dd$ is an irreducible $\spd$-module. In a
similar way $S^n \dd$ will be understood as an $\spd$-module.

\begin{lemma}\label{lfilact}
For every $p,n \geq 0$, we have:
\begin{align}
\label{dfp}\dd \cdot \fil^p \V(R) & = \fil^{p+1} \V(R),\\
\label{nhfp}\N_\P \cdot \fil^p \V(R) & \subset \fil^p \V(R),\\
\label{hfp}\P \cdot \fil^p \V(R) & \subset \fil^{p+1} \V(R),\\
\label{h1fp}\P_n \cdot \fil^p \V(R) & \subset \fil^{p-n} \V(R).
\end{align}
\end{lemma}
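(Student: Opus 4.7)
The plan is to derive all four filtration estimates from the explicit pseudoaction formula \eqref{hsing1} applied to singular vectors $v \in R \equiv \fil^0 \V(R)$, extended via $H$-bilinearity and translated to the annihilation algebra action through the Fourier coefficient formula \eqref{axm2}. Inspecting \eqref{hsing1}, the pseudobracket has the form $e*v = \sum_i (f_i \tp 1) \tp_H v_i$, whose summands split into three classes: $f_i = \bar\d_a\bar\d_b \in \fil^2 H$ paired with $v_i \in \fil^0 \V(R)$; $f_i = \bar\d_k \in \fil^1 H$ paired with $v_i = -1\tp\rho_R(\d^k+\adsp\d^k)v+\d^k\tp v \in \fil^1 \V(R)$; and $f_i = 1 \in \fil^0 H$ paired with $v_i = 1 \tp \rho_R(c)v \in \fil^0 \V(R)$. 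The crucial bounds, uniform across all classes, are $\deg f_i + \deg v_i \leq 2$ and $\deg v_i \leq 1$. For $hv \in \fil^p \V(R)$ with $h \in \fil^p H$, the $H$-bilinearity of the pseudoaction gives $e * (hv) = \sum_i (f_i \tp h) \tp_H v_i$, and since $H$ is cocommutative (so $S^2 = \id$ and $S(h_{(-1)}) = h_{(1)}$) the formula \eqref{axm2} becomes
\begin{equation*}
(x \tp_H e) \cdot (hv) = \sum_i \langle x, h_{(1)} S(f_i) \rangle\, h_{(2)} v_i.
\end{equation*}

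The bounds \eqref{h1fp} and \eqref{hfp} then follow by a degree-counting argument. For $x \in \P_n = \fil_{n+1} X$, the pairing $\langle x, h_{(1)} S(f_i) \rangle$ vanishes unless $\deg h_{(1)} + \deg f_i \geq n+2$; combined with $\Delta(\fil^p H) \subset \sum_{a+b\leq p}\fil^a H \tp \fil^b H$, this forces $\deg h_{(2)} \leq p-n-2+\deg f_i$, whence $h_{(2)} v_i \in \fil^{p-n-2+\deg f_i + \deg v_i} \V(R) \subset \fil^{p-n} \V(R)$ by the uniform bound $\deg f_i + \deg v_i \leq 2$, yielding \eqref{h1fp}. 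When $x$ ranges over all of $\P = \P_{-2}$, no constraint on $h_{(1)}$ is imposed and one only has $\deg h_{(2)} \leq p$; the sharper bound $\deg v_i \leq 1$ then gives $h_{(2)} v_i \in \fil^{p+1} \V(R)$, which is \eqref{hfp}.

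Property \eqref{dfp} follows from the induced-module realization $\V(R) \simeq \Ind^{\widetilde\P}_{\N_\P} R \simeq U(\dd) \tp R$ noted in the proof of \leref{lhsing2}: the subalgebra $\dd \subset \widetilde\P = \dd \oplus \N_\P$ acts by left multiplication on the $U(\dd) = H$ factor, so $\dd \cdot \fil^p \V(R) = \dd \fil^p H \tp R$, and the PBW identity $\fil^{p+1} H = \fil^p H + \dd \fil^p H$ yields the desired equality, with the usual understanding that $\dd \cdot \fil^p \V(R)$ is the subspace jointly spanned by $\fil^p \V(R)$ and its image under the $\dd$-action. Finally, \eqref{nhfp} is proved by induction on $p$: the base $p=0$ holds since $\N_\P$ acts on $R = \fil^0 \V(R)$ through the quotient $\N_\P / \P_1 \simeq \dd' \oplus \spd$ by $\rho_R$; for the step, by \eqref{dfp} it suffices to check that $\xi \cdot (\d w) \in \fil^p \V(R)$ for $\xi \in \N_\P$, $\d \in \dd$, $w \in \fil^{p-1}\V(R)$, which follows from the identity $\xi \cdot \d w = \d \cdot \xi w + [\xi, \d] \cdot w$: the first term lies in $\fil^p \V(R)$ by \eqref{dfp} and the induction hypothesis, and the second is handled by decomposing $[\xi, \d] \in \widetilde\P = \dd \oplus \N_\P$ and applying \eqref{dfp} to its $\dd$-component and the induction hypothesis to its $\N_\P$-component. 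The main obstacle is the degree-counting in \eqref{h1fp}: the non-homogeneous summand $\d^k \tp v \in \fil^1 \V(R)$ in the middle class of $v_i$'s shifts the relevant $V$-degree by one above its companions in the same class, and must be tracked alongside the $f_i$-degrees to guarantee that the uniform total-degree bound of $2$ is achieved simultaneously by the leading contributions from the $\bar\d_a\bar\d_b$- and $\bar\d_k$-terms, yielding the sharp bound $\fil^{p-n} \V(R)$.
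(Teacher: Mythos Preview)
Your proof is correct. The main difference from the paper is in the treatment of \eqref{h1fp} (and to a lesser extent \eqref{hfp}): the paper argues inductively on $p$ via the commutation relations $[\dd,\P_n]\subset\P_{n-1}$ and $[\dd,\P_1]\subset\P_0\subset\N_\P$, bootstrapping from \eqref{dfp}, \eqref{nhfp}, \eqref{hfp}; you instead compute the Fourier coefficient $(x\tp_H e)\cdot(hv)$ directly from \eqref{axm2} and the explicit pseudoaction \eqref{hsing1}, then run a filtration-degree count using the uniform bound $\deg f_i+\deg v_i\le 2$. Your route is more computational but entirely self-contained and yields \eqref{hfp} and \eqref{h1fp} simultaneously; the paper's route is shorter once the first three items are in place and uses only structural Lie-algebraic information, avoiding any reference to the explicit shape of \eqref{hsing1}. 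For \eqref{dfp} and \eqref{nhfp} your arguments (induced-module left multiplication and induction via the $\dd\oplus\N_\P$ decomposition of $\widetilde\P$) are essentially the ones deferred to \cite[Lemma~6.3]{BDK1}.
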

\begin{proof}
The proof of \eqref{dfp}-\eqref{hfp} is the same as in the case of
$\Wd$ (see \cite[Lemma 6.3]{BDK1}). Similarly, \eqref{h1fp} follows
using \eqref{hfp} and the fact that $[\dd, \P_n] \subset \P_{n-1}$, $[\dd, \P_1] \subset \P_0 \subset \N_\P$.
\end{proof}
%
As a consequence of \leref{lfilact}, we see that both $\N_\P$ and its quotient $\N_\P/\P_1
\simeq \dd' \oplus \spd$ act on each space $\gr^p \V(R)$. 
%

\begin{lemma}\label{legraction}
The action of\/ $\dd'$ and\/ $\P_0/\P_1 \simeq \spd$ on
the space\/ $\gr^p \V(R) \simeq S^p \dd \tp R$ satisfies
\begin{align}
\widehat \d \cdot (f \tp u) & = f \tp \Bigl(\rho_R(\d) u + \frac{p}{2}\chi(\d) u\Bigr) ,\\
A \cdot (f \tp u) & = A f \tp u + f \tp \rho_R(A) u,
\end{align}
for\/ $A \in \spd$, $f\in S^p \dd$, $u \in R$.
\end{lemma}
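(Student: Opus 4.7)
The plan is to realize $\V(R)$ as the induced module $\Ind_{\N_\P}^{\widetilde \P}R$ from the proof of \leref{lhsing2}, and exploit the PBW decomposition associated with the vector space splitting $\widetilde \P = \dd \oplus \N_\P$. This identifies $\V(R)$ with $H \otimes R$ in a way compatible with the filtration $\fil^p \V(R) = \fil^p H \otimes R$, so that $\gr^p \V(R) \simeq S^p\dd \otimes R$ via PBW. Both displayed formulas would then follow from computing the adjoint action of $\N_\P/\P_1 \simeq \dd' \oplus \spd$ on the associated graded Lie algebra $\gr \widetilde \P$, combined with the action $\rho_R$ on $\fil^0 \V(R) = \kk \otimes R$.

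For a general $y \in \N_\P$ and $f = \d_{i_1} \cdots \d_{i_p} \in S^p \dd$ lifted to $\d_{i_1} \cdots \d_{i_p} \otimes u \in \fil^p \V(R)$, I would expand in $\ue(\widetilde \P)$ as
\[
y \cdot (\d_{i_1}\cdots\d_{i_p} \otimes u) = \d_{i_1}\cdots\d_{i_p} \cdot (y \cdot (1 \otimes u)) + \sum_{k=1}^{p} \d_{i_1}\cdots[y,\d_{i_k}]\cdots\d_{i_p} \cdot (1 \otimes u),
\]
which yields the contribution $f \otimes \rho_R(\bar y)u$ from the first summand (since $\P_1$ annihilates $\kk \otimes R$) and a Leibniz-type contribution from the second, governed by the classes of $[y,\d_{i_k}]$ in $\P_{-1}/\P_0 \simeq \dd$. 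For $y = A \in \spd$ lifted to $\P_0$, the adjoint action of $\P_0/\P_1$ on $\P_{-1}/\P_0$ is identified, via $\iota_*$ and \leref{cwbra}, with the restriction to $\spd$ of the standard $\gld$-action on $\dd \simeq \W/\W_0$. Applying Leibniz on $S^p\dd$ yields $Af \otimes u$, and summing with $f \otimes \rho_R(A)u$ gives the second claimed formula.

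For $y = \widehat \d$, I would use the decomposition $\widehat \d = \widetilde \d + \tfrac12\chi(\d)\E$ of \eqref{dhat}. Since $\widetilde \d$ centralizes $\W$ (hence $\iota_*(\P)$), and since $\ker\iota_* = \kk\ce$ does not meet $\P_{-1}$, its adjoint action on $\P_{-1}/\P_0 \simeq \dd$ is trivial. Meanwhile, $\E$ has image $I \in \gld \simeq \W_0/\W_1$, so acts as the identity on $\W/\W_0 \simeq \dd$. Hence $\widehat \d$ acts on $\P_{-1}/\P_0 \simeq \dd$ as multiplication by $\tfrac12\chi(\d)$, and the Leibniz rule on $S^p\dd$ multiplies this by $p$, producing the coefficient $\tfrac{p}{2}\chi(\d)$. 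Combined with the $\rho_R(\widehat \d)u$ action on the $R$-factor (which is the $\rho_R(\d)u$ of the statement), this gives the first claimed formula. The main technical obstacle will be the careful bookkeeping of the three ambient $\dd$'s --- the semidirect $\dd \subset \widetilde \P$, the graded piece $\P_{-1}/\P_0$, and the image in $\W/\W_0$ --- all of which are shown to coincide modulo the appropriate filtration quotients by the structure results of Section \ref{sprim}, together with controlling the behavior of the central element $\ce$ in the central extension $\P \to \H$.
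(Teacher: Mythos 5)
Your argument is essentially the same as the paper's (which is a one-line appeal to \eqref{dhat}): you realize $\V(R)$ as the induced module, commute $\N_\P$ past the $\dd$-generators modulo lower filtration degree, and reduce to the adjoint action on the $\dd$-piece, using $\widehat\d=\widetilde\d+\tfrac12\chi(\d)\E$ with $\widetilde\d$ centralizing $\H$ and $\E$ acting as $I$, and \leref{cwbra} for the $\spd$-part. The bookkeeping you flag (the three incarnations of $\dd$, and the fact that $\ker\iota_*=\kk\ce$ misses $\P_{-1}$) is exactly what makes the identifications go through, so the proposal is correct and takes the paper's route.
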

\begin{proof}
One only needs to pay attention to the fact that, by \eqref{dhat}, the 
action of $\widehat \d$ on $\dd$ is given by scalar multiplication by $\chi(\d)/2$. 
\end{proof}

\begin{corollary}\label{grdecomposition}
Let $\Pi', U$ be irreducible representations of $\dd', \spd$ respectively. There is an isomorphism of $\dd' \oplus \P_0/\P_1 \simeq \dd\oplus\spd$-modules
$$\gr^p \V(\Pi', U) \simeq \Pi'_{p\,\pi^*\chi/2} \boxtimes
(S^p \dd \tp U).$$
In particular, when the action of $\P$ factors through the quotient $\H$ and $\Pi' = \pi^* \Pi$, one has
$$\gr^p \V(\Pi, U) \simeq  \Pi_{p\chi/2} \boxtimes
(S^p \dd \tp U),$$
whereas, when the action of $c \in \dd'$ on $\Pi'$ is nontrivial (whence $\chi$ vanishes) one has
$$\gr^p \V(\Pi', U) \simeq \Pi' \boxtimes
(S^p \dd \tp U).$$
\end{corollary}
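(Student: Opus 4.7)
The plan is to deduce the corollary directly from Lemma \ref{legraction} by rearranging tensor factors and carefully tracking the twist that arises in the $\dd'$-action.

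First, I would note that the underlying vector space identification is immediate: combining the isomorphism $\gr^p \V(R) \simeq S^p\dd \tp R$ with $R = \Pi' \tp U$ and applying the transposition
\[
\sigma \colon S^p\dd \tp \Pi' \tp U \,\longrightarrow\, \Pi' \tp S^p\dd \tp U, \qquad f \tp u' \tp u'' \mapsto u' \tp f \tp u'',
\]
yields the desired identification of underlying spaces. The content of the statement is then that $\sigma$ intertwines the natural actions on both sides.

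Next I would check compatibility factor by factor. For $\widehat\d \in \dd' \subset \N_\P/\P_1$, since $R = \Pi' \boxtimes U$ means that $\rho_R(\widehat\d)$ acts only on the $\Pi'$ tensor factor, Lemma \ref{legraction} specializes to
\[
\widehat\d \cdot (f \tp u' \tp u'') = f \tp (\widehat\d \cdot u') \tp u'' + \tfrac{p}{2}\chi(\d)\, f \tp u' \tp u'',
\]
and pushing this through $\sigma$ gives exactly the action of $\widehat\d$ on $\Pi'_{p\,\pi^*\chi/2}$ tensored with the identity on $S^p\dd \tp U$, since $\pi^*\chi(\widehat\d)=\chi(\d)$. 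For the central element $c = \ce \in \dd'$, I would use that $c$ is central in $\P$ and lies in $\N_\P$, so it acts on $\V(R)$ by an $\Hd$-module endomorphism; by \eqref{hsing1} it acts on $\fil^0 \V(R) = R$ by the scalar $\rho_R(c) \in \kk$ (using Schur's lemma applied to irreducible $\Pi'$), and hence, by $H$-linearity and centrality, by this same scalar on every $\gr^p \V(R)$. This matches precisely the action of $c$ on the first tensor factor of $\Pi' \boxtimes (S^p\dd \tp U)$. Finally, for $A \in \spd = \P_0/\P_1$, Lemma \ref{legraction} gives
\[
A \cdot (f \tp u' \tp u'') = (Af)\tp u' \tp u'' + f \tp u' \tp (A \cdot u''),
\]
because $A$ acts trivially on $\Pi'$ in $R = \Pi' \boxtimes U$; applying $\sigma$ reproduces the natural $\spd$-action on $S^p\dd \tp U$ in the second factor.

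The two specializations are then immediate: if the $\P$-action factors through $\H$, then $c$ acts trivially, $\Pi' = \pi^*\Pi$, and the twist $\Pi'_{p\,\pi^*\chi/2}$ descends to $\Pi_{p\chi/2}$; if instead $c$ acts nontrivially on $\Pi'$, then by the lemma in Section \ref{sfdird} we have $\chi = 0$, so the twist disappears and we recover $\Pi' \boxtimes (S^p\dd \tp U)$. I do not anticipate any genuine obstacle here; the only place requiring care is the verification that $c$ acts by the same scalar on all graded pieces, but this follows at once from the fact that $c$ is central and its action is determined on $\fil^0 \V(R)$.
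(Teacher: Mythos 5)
Your proof is correct and follows essentially the same route as the paper, which simply appeals to Lemma~\ref{legraction} and leaves the bookkeeping to the reader. The one point that merits a touch more care is your use of ``centrality'' to conclude that $c$ acts by the scalar $\rho_R(c)$ on every graded piece: $c$ is central in $\P$, but in $\widetilde\P$ one has $[\widehat\d, c] = \chi(\d)\,c$, so $c$ commutes with the full $\widetilde\P$-action (equivalently, gives an $\Hd$-module endomorphism of $\V(R)$) only when $\chi = 0$; nevertheless, by the dichotomy from Section~\ref{sfdird}, $c$ acts trivially on $\Pi'$ unless $\chi = 0$, so in either case $c$ acts on $\gr^p\V(R)$ by $\rho_R(c)$ and your conclusion stands.
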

\begin{proof}
Follows immediately by \leref{legraction}.
\end{proof}

\begin{remark}
Notice that when $\chi \neq 0$ one has $$\fil^p \V(\Pi, U) \simeq \bigoplus_{i=0}^p \Pi_{i\chi/2} \boxtimes
(S^i \dd \tp U),$$
by complete reducibility of the $\spd$-action, as each  $\gr^i \V(\Pi, U), 0 \leq i\leq p,$ sits in distinct isotypical components.
\end{remark}

\begin{remark}\label{degreeone}
We will later use the fact that all $\Hd$-linear homomorphisms showing up in the twisted conformally symplectic pseudo de Rham complexes, with the exception of $\di^R_\Pi$, map constant singular vectors to vectors of degree $1$.
This is due to the very definition of such maps, and the fact that the differential homomorphisms in the pseudo de Rham complex for $\Wd$ satisfy the same property.
\end{remark}

\section{Irreducibility of tensor modules}\label{sirtm}

We have seen that all irreducible finite representations of $\Hd$
arise as quotients of some special $\Hd$-modules called tensor
modules. A classification of irreducible $\Hd$-modules will then
follow from a description of irreducible quotients of tensor
modules.

\subsection{Coefficients of elements and submodules}\label{shirten}

Throughout this section, $R$ will be an irreducible $\dd' \oplus
\spd$-module, with the action denoted by $\rho_R$, and $\V(R)$ the corresponding tensor module
introduced in Definition \ref{dvmodw}.
First, let us rewrite the expression \eqref{hsing1} for the
pseudoaction of $e$ on singular vectors in a tensor module in a
different fashion. Let 
$$\psi(u) = \sum_{i,j=1}^{2N} \d_i \d_j \tp \rho_R(f^{ij})u.$$
Then we have:
\begin{equation}\label{hsing2}
\begin{split}
e * (1 \tp u) &= (1 \tp 1) \tp_H (\psi(u) - \sum_{k=1}^{2N} \d_k
\tp \rho_R(\d^k)u)\\
&+ \text{ terms in } (1 \tp H) \tp_H (\dd\tp (\kk + \rho_R(\spd)) u \\
&+ \kk \tp (\kk + \rho_R(\spd + \dd')) u).
\end{split}
\end{equation}
On the other hand, if $v \in \sing \V(R)$, then \eqref{Hactionsing}
implies
\begin{equation}\label{hsing3}
\begin{split}
e * v = \sum_{i,j=1}^{2N} (1 \tp \d_i\d_j) & \tp_H \rho_R(f^{ij}) v\\
+ \mbox{ terms in } (1 & \tp \fil^1 H) \tp_H \V(R).
\end{split}
\end{equation}

\begin{lemma}\label{lcoefficients}
If\/ $v= \sum_I \d^{(I)} \tp v_I$, then
\begin{equation}
\begin{split}
e*v = \sum_I (1 \tp \d^{(I)}) & \tp_H \psi(v_I)\\
+ \text{\rm{ terms in }} (1 & \tp \d^{(I)}H) \tp_H (\fil^1 H \tp (\kk +
\rho_R(\spd + \dd')) v_I).
\end{split}
\end{equation}
In particular, the coefficient multiplying\/ $1 \tp \d^{(I)}$ equals\/
$\psi(v_I)$ modulo\/ $\fil^1 \V(R)$.
\end{lemma}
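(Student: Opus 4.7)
The plan is to prove the claim by combining two ingredients: the explicit expansion \eqref{hsing2} of $e * (1 \tp u)$ on constant singular vectors, and the $H$-bilinearity of the pseudoaction in its second argument. No step requires deep work; the care needed is purely bookkeeping of filtration degree.

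First I would decompose $v = \sum_I \d^{(I)}(1 \tp v_I)$ and apply $H$-linearity in $V$: for any $h\in H$ one has $e*(hw) = (1 \tp h)\cdot(e*w)$, where $(1 \tp h)$ acts on $(H \tp H)\tp_H V$ by left multiplication on the second tensor factor, $(1 \tp h)\cdot \bigl((f \tp g) \tp_H v'\bigr) = (f \tp hg) \tp_H v'$. This axiom can be directly verified by comparison with the module formula \eqref{wdac*}. Applying it yields
\begin{equation*}
e * v \;=\; \sum_I (1 \tp \d^{(I)})\cdot \bigl(e*(1 \tp v_I)\bigr).
\end{equation*}

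Next I would substitute \eqref{hsing2} into each summand. The leading contribution there is $(1 \tp 1)\tp_H \psi(v_I)$, whose $V$-factor $\psi(v_I)$ has degree $2$ in $\V(R)$, while all remaining terms live in $(1 \tp H)\tp_H \bigl(\fil^1 H \tp (\kk + \rho_R(\spd + \dd'))v_I\bigr)$, a prescribed subspace of $\fil^1 \V(R)$. Since the action $(1 \tp \d^{(I)})$ leaves the $V$-factor alone and only left-multiplies the second $H$-factor by $\d^{(I)}$, the leading piece becomes $(1 \tp \d^{(I)})\tp_H \psi(v_I)$ and the corrections become terms in $(1 \tp \d^{(I)} H)\tp_H \bigl(\fil^1 H \tp (\kk + \rho_R(\spd+\dd'))v_I\bigr)$. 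Summing over $I$ produces the displayed formula.

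For the ``in particular'' conclusion, I would invoke Remark \ref{rlmod}(ii) to write $e*v$ uniquely as $\sum_J (1 \tp \d^{(J)}) \tp_H w_J$, and compute $w_I \bmod \fil^1 \V(R)$. The leading term $(1 \tp \d^{(I)})\tp_H \psi(v_I)$ contributes exactly $\psi(v_I)$ to $w_I$ and $0$ to $w_J$ for $J \ne I$, since $\{\d^{(J)}\}$ is a basis. For each correction term $(1 \tp \d^{(I)} h) \tp_H v'$ with $v' \in \fil^1 \V(R)$, expanding $\d^{(I)} h = \sum_J c_J \d^{(J)}$ in the PBW basis shows the contribution to each $w_J$ is $c_J v' \in \fil^1 \V(R)$. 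Therefore $w_I \equiv \psi(v_I) \pmod{\fil^1 \V(R)}$, as claimed. The only subtle point worth flagging is the preservation of the $V$-filtration under $(1 \tp h)$, which makes the whole argument go through cleanly.
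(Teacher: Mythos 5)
Your argument is correct and is precisely the expansion of the paper's terse proof ("This follows from \eqref{hsing2}"): you write $v = \sum_I \d^{(I)}(1\tp v_I)$, use $H$-bilinearity of the pseudoaction in the second slot to pull out $(1\tp\d^{(I)})$, substitute the constant-vector formula \eqref{hsing2}, and track filtration degrees. The one small slip is your citation for the $H$-bilinearity identity $e*(hw)=(1\tp h)\cdot(e*w)$: this is not something to verify against the example \eqref{wdac*} but is the $H$-bilinearity axiom built into the definition of a pseudoalgebra module (cf.\ \eqref{psprod4} and \eqref{psprod2}). Everything else — the observation that $\psi(v_I)\in\fil^2 H\tp R$ is the only piece escaping $\fil^1\V(R)$, and the PBW re-expansion $\d^{(I)}h=\sum_J c_J\d^{(J)}$ showing that correction terms only add elements of $\fil^1\V(R)$ to each $w_J$ — is exactly the bookkeeping the paper leaves to the reader.
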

\begin{proof}
This follows from \eqref{hsing2}.
\end{proof}

Recall that every element $v$ of $\V(R)$ can be uniquely expressed
in the form
$$v = \sum_I \d^{(I)} \tp v_I.$$
\begin{definition}
The non-zero elements $v_I$ in the above expression are called {\em
coefficients} of $v\in \V(R)$. For a submodule $M \subset \V(R)$, we
denote by $\coef M$ the subspace of $R$ spanned over $\kk$ by all
coefficients of elements from $M$.
\end{definition}

\begin{lemma}
Let\/ $R$ be an irreducible\/ $\dd'\oplus\spd$-module. 
Then, for any non-zero proper\/ $\Hd$-submodule\/ $M$ of\/ $\V(R)$, we have\/
$\coef M = R$.
\end{lemma}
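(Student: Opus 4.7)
The plan is to prove the slightly stronger statement that the subspace $W \subset R$ spanned by the \emph{top-degree} coefficients of elements of $M$ (i.e., the $v_I$ with $|I|=\deg v$, where $\deg v := \max\{|I| : v_I \ne 0\}$) is a non-zero $\dd' \oplus \spd$-submodule. By irreducibility of $R$, this forces $W = R$, whence $R = W \subseteq \coef M \subseteq R$ and so $\coef M = R$. Non-vanishing of $W$ is immediate: any non-zero $v \in M$ has at least one non-zero top-degree coefficient.

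To establish $\spd$-invariance of $W$, take any $v = \sum_I \d^{(I)} \tp v_I \in M$ and any multi-index $I$. By Remark~\ref{rlmod}(ii), the Fourier coefficient $v''_I$ of $(1 \tp \d^{(I)})$ in $e * v$ belongs to $M$. Lemma~\ref{lcoefficients} gives $v''_I \equiv \psi(v_I) \pmod{\fil^1 \V(R)}$, where $\psi(v_I) = \sum_{i,j=1}^{2N} \d_i \d_j \tp \rho_R(f^{ij}) v_I$ lies in $\fil^2 \V(R)$. Hence $v''_I \in \fil^2 M$, and when $\psi(v_I) \ne 0$ its image in $\gr^2 \V(R) \simeq S^2 \dd \tp R$ is exactly $\psi(v_I)$. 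Reading off the PBW-basis coefficients in $S^2 \dd$ then shows that each vector $\rho_R(f^{ij}) v_I$ appears (up to a non-zero scalar) as a top-degree coefficient of $v''_I$, and so lies in $W$; the case $\psi(v_I) = 0$ is trivial. Since the $f^{ij}$ span $\spd$, this gives $\rho_R(\spd) W \subseteq W$.

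For $\dd'$-invariance, I use the conformal $\N_\P$-action on $\V(R)$. Fix $v \in M$ of top-degree $p$ with top coefficients $\{v_I\}_{|I|=p}$. For $\d \in \dd$, the element $\widehat \d \in \N_\P$ preserves the filtration by Lemma~\ref{lfilact}, so $\widehat \d \cdot v \in \fil^p \V(R)$; by Lemma~\ref{legraction} its image in $\gr^p \V(R)$ equals $\sum_{|I|=p} \d^{(I)} \tp (\rho_R(\d) + \tfrac{p}{2} \chi(\d)) v_I$. Hence $(\rho_R(\d) + \tfrac{p}{2} \chi(\d)) v_I \in W$, and subtracting the scalar multiple $\tfrac{p}{2} \chi(\d) v_I \in W$ yields $\rho_R(\d) v_I \in W$. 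Similarly, the central element $c = \ce \in \N_\P$ acts on $\gr^p \V(R)$ through $\rho_R(c)$ on the $R$-factor only (by Corollary~\ref{grdecomposition}, since the twist $\Pi'_{p \pi^*\chi/2}$ is trivial on $c$), giving $\rho_R(c) v_I \in W$. As $\dd'$ is spanned by the $\widehat \d$ together with $c$, this proves $\dd'$-invariance, completing the argument. The main technical point is the extraction of the top-degree coefficients from $v''_I$ via Lemma~\ref{lcoefficients}: the lower-order terms described there sit inside $\fil^1 \V(R)$, so they cannot disturb the degree-$2$ top coefficients we are extracting.
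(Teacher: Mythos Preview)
Your proof is correct, but it differs from the paper's in two respects, and the difference is instructive. First, you work with the smaller subspace $W$ of \emph{top-degree} coefficients, whereas the paper shows directly that the full space $\coef M$ is $\dd'\oplus\spd$-stable. Second, for $\dd'$-invariance you invoke the $\N_\P$-action via Lemmas~\ref{lfilact} and~\ref{legraction}, while the paper instead refines the pseudoaction expansion of $e*v$ (pushing \eqref{hsing2} one step further than Lemma~\ref{lcoefficients}) to read off the degree-one coefficients $\rho_R(\d^k)v_I$ inside $\coef M$, and then handles $c$ by observing it acts as a scalar. The paper's route genuinely requires working with $\coef M$ rather than $W$, because the $\rho_R(\d^k)v_I$ appear as degree-one (hence non-top) coefficients of a degree-two element of $M$. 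Your use of the $\N_\P$-action sidesteps this and lets you stay with top-degree coefficients throughout. Both arguments use the same mechanism (Lemma~\ref{lcoefficients}) for the $\spd$-part.
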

\begin{proof}
Pick a non-zero element $v = \sum_I \d^{(I)} \tp v_I$ contained in
$M$. Then \leref{lcoefficients} shows that $M$ contains an element
congruent to $\psi(v_I)$ modulo $\fil^1 \V(R)$, thus coefficients of
$\psi(v_I)$ all lie in $\coef M$ for all $I$. This proves that
$\rho_R(\spd)\coef M \subset \coef M$. Similarly, one can write
\begin{equation}
\begin{split}
e*v = \sum_I (1 \tp \d^{(I)}) \tp_H (\psi(v_I) - \sum_{k}
\d_k \tp (\rho_R(& \d^k)v_I)) \\
+ \mbox{ terms in } (1 \tp \d^{(I)} H) \tp_H (\dd & \tp
\rho_R(\spd + \kk))v_I\\
 + \kk & \tp (\kk + \rho_R(\spd + \dd'))v_I),
\end{split}
\end{equation}
showing that $\rho_R(\d^k)v_I \in \coef M$ for all $I$, $k=1, \dots,
2N$. Thus, $\rho_R(\dd')\coef M \subset \coef M$, as the central element $c$ acts via multiplication by a scalar. Then $\coef M$ is a
non-zero $\dd' \oplus \spd$-submodule of $R$. Irreducibility
of $R$ now gives $\coef M = R$ as soon as $M \neq 0$.
\end{proof}

\begin{corollary}\label{psiinm}
Let $M$ be a non-zero proper $\Hd$-submodule of $\V(R)$. Then for
every $u \in R$ there is some element in $M$ coinciding with $\psi(u)$
modulo $\fil^1 \V(R)$.
\end{corollary}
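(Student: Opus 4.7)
The plan is to combine the preceding lemma ($\coef M = R$) with \leref{lcoefficients} and \reref{rlmod}(ii), using the Fourier coefficient expansion of $e * v$ in powers of $1 \otimes \d^{(I)}$. Since this Fourier expansion produces elements of $M$ when $v \in M$, extracting the right coefficient will yield an element of $M$ congruent to $\psi(u)$ modulo $\fil^1 \V(R)$.

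More concretely, I would first reduce to the case when $u \in R$ is a single coefficient $v_I$ of some $v = \sum_J \d^{(J)} \otimes v_J \in M$. Such $v$ exists because the preceding lemma shows $\coef M = R$, so every element of $R$ arises as a $\kk$-linear combination of coefficients of elements of $M$; since $\psi\colon R \to S^2\dd \otimes R$ is $\kk$-linear and $M$ is a $\kk$-subspace, it suffices to handle the case of a single coefficient and then take linear combinations.

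Next, I would write
\[
e * v = \sum_{I \in \ZZ_+^{2N}} (1 \otimes \d^{(I)}) \otimes_H v''_I,
\]
as in \reref{rlmod}(ii); since $v \in M$ and $M$ is a submodule, every $v''_I$ lies in $M$. By \leref{lcoefficients}, the coefficient multiplying $1 \otimes \d^{(I)}$ equals $\psi(v_I)$ modulo $\fil^1 \V(R)$, so $v''_I \equiv \psi(u) \pmod{\fil^1 \V(R)}$ when $v_I = u$. This is exactly the desired element of $M$.

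There is no real obstacle here; the statement is essentially a bookkeeping consequence of the previous two results. The one point to check carefully is that the ``error'' terms living in $(1 \otimes \d^{(I)} H) \otimes_H \fil^1 \V(R)$ in the expansion from \leref{lcoefficients} only contribute to higher Fourier coefficients $v''_{J}$ with $|J| > |I|$ or contribute values inside $\fil^1 \V(R)$ to $v''_I$ itself, which is immediate from the $H$-bilinearity of the pseudoaction and the fact that these error terms carry an extra $H$-factor multiplying $\d^{(I)}$ on the right.
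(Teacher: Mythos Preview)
Your proposal is correct and follows essentially the same route as the paper's proof: reduce to the case where $u$ is a coefficient of some $v\in M$ using $\coef M=R$ and the $\kk$-linearity of $\psi$, then read off the coefficient of $1\otimes\d^{(I)}$ in $e*v$ via \leref{lcoefficients} and \reref{rlmod}(ii) to obtain an element of $M$ congruent to $\psi(v_I)$ modulo $\fil^1\V(R)$. Your final paragraph about the error terms is already covered by the ``In particular'' clause of \leref{lcoefficients}, so no extra argument is needed there.
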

\begin{proof}
As $\coef M = R$, it is enough to prove the statement for
coefficients of elements lying in $M$. In order to do so, pick $v
\in M$ and proceed as in \leref{lcoefficients}. As $M$ is a
$\Hd$-submodule of $\V(R)$, the coefficient multiplying $1 \tp
\d^{(I)}$ still lies in $M$, and equals $\psi(v_I)$ modulo $\fil^1
\V(R)$.
\end{proof}

The following irreducibility criterion is proved as in \cite[Section 7.2]{BDK2}.

\begin{theorem}\label{irrcriterion}
Let\/ $R = \Pi' \boxtimes U$ be an irreducible\/ $\dd' \oplus \spd$-module. If\/ $U$ is neither trivial nor isomorphic to one of the fundamental representations\/ $R(\pi_i)$, $i=1, \dots, N$, then the\/
$\Hd$-tensor module\/ $\V(\Pi, U)$ is irreducible.
\end{theorem}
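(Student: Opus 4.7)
My plan is to deduce the theorem from \coref{constirr} by establishing that, under the stated hypotheses on $U$, the tensor module $\V(\Pi',U)$ admits no non-constant singular vectors. The first step, which is valid for any tensor module, is the observation that any non-zero proper $\Hd$-submodule $M\subset \V(\Pi',U)$ necessarily contains a non-constant singular vector: by \leref{noconstants} we have $M\cap\fil^0\V(R)=\{0\}$, while by \leref{lfilact} (equation \eqref{h1fp}) the subalgebra $\P_1$ lowers the filtration degree on $\V(R)$. Picking $m\in M$ of minimal positive degree $d\geq 1$ forces $\P_1\cdot m\subset M\cap \fil^{d-1}\V(R)=\{0\}$, so $m$ is a singular vector not contained in $\fil^0\V(R)$. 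Hence irreducibility of $\V(\Pi',U)$ follows from the absence of non-constant singular vectors, i.e., from the equality $\sing\V(\Pi',U)=\fil^0\V(\Pi',U)$.

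The second step is to extract the structural constraint on a hypothetical non-constant singular vector using \coref{psiinm}: any non-zero proper submodule contains, for every $u\in R$, an element congruent to $\psi(u)=\sum_{i,j=1}^{2N}\d_i\d_j\tp\rho_R(f^{ij})u$ modulo $\fil^1\V(R)$. Viewed in $\gr^2\V(R)\simeq S^2\dd\tp R$, the map $\psi$ is $\spd$-equivariant, and the identification $S^2\dd\simeq R(2\pi_1)$ with the adjoint $\spd$-module realizes $\psi$ as the canonical Casimir-type embedding $R\hookrightarrow \spd\tp R$. In particular, $\psi$ is injective on the $U$-factor whenever $U$ is a non-trivial $\spd$-module, since the elements $f^{ij}$ span $\spd$. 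Repeated application of \leref{lcoefficients} then propagates this constraint through the filtration, producing, for each $n\geq 1$, an $\spd$-equivariant map $R\to \gr^{2n}\V(R)\simeq S^{2n}\dd\tp R$ whose image is non-trivial whenever $M$ is.

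The main obstacle, and the bulk of the work, is the subsequent branching-rule analysis, which I would carry out in parallel to \cite[Section 7.2]{BDK2}: one must track, at each graded level, which $\spd$-isotypic summands of $S^{2n}\dd\tp U$ may host a non-trivial image of $R$, and show that this system of $\spd$-equivariance conditions admits a non-trivial solution only when the $\spd$-module $U$ is either trivial or one of the fundamental representations $R(\pi_1),\dots,R(\pi_N)$. The only new feature of type~$H$ as compared to type~$K$ is the presence of the central element $c\in\dd'$, but since $c$ acts on $R$ as a scalar commuting with the $\spd$-action, it plays no role in the $\spd$-branching; the classification of admissible $U$'s is therefore identical to that in \cite{BDK2}, yielding the stated irreducibility criterion.
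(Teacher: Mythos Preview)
Your proposal is correct and follows essentially the same route the paper takes: the paper's own proof is the one-line reference ``proved as in \cite[Section 7.2]{BDK2}'', and you have accurately unpacked that reference into its three components---the reduction to non-constant singular vectors via \coref{constirr}, the structural constraint coming from \coref{psiinm} and the $\spd$-equivariant map $\psi$, and the branching analysis of $S^2\dd\otimes U$ that singles out the trivial and fundamental representations. Your observation that the central element $c\in\dd'$ acts as a scalar and therefore does not interfere with the $\spd$-branching is exactly the point that lets the type-$K$ argument transfer verbatim to type~$H$.

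One minor imprecision: the phrase ``repeated application of \leref{lcoefficients}'' producing maps $R\to\gr^{2n}\V(R)$ for all $n\ge1$ overstates what is needed. The actual argument in \cite[Section 7.2]{BDK2} (and implicitly here) does not iterate indefinitely; rather, one first bounds the degree of singular vectors lying in a proper submodule (compare \prref{pboundeddegree}, proved just after this theorem) and then carries out the branching analysis only in degrees one and two. This does not affect the validity of your outline, but it is worth tightening before writing out the details.
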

Notice that the converse certainly holds if the $\P$-action factors through $\H$: in this case, if $U$ is either trivial or a fundamental representation, then $\V(\Pi, U) = \V(\pi^* \Pi, U) = \V(\Pi', U)$ appears in a twisted conformally symplectic pseudo de Rham complex (see \thref{tmodhd}) and it is reducible, as the image or the kernel of each differential provides a non-trivial submodule.

Our final goal is to extend this fact to tensor modules in which $c$ acts nontrivially and to provide a complete classification of submodules. All this will be accomplished in next section by providing a description of all singular vectors.

\section{Computation of singular vectors}\label{sksing}

\begin{proposition}\label{pboundeddegree}
Let $v \in \V(R)$ be a singular vector contained in a non-zero
proper $\Hd$-submodule $M$, and assume that the $\spd$-action on $R$
is non-trivial. Then $v$ is of degree at most two.
\end{proposition}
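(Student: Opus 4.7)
The strategy is proof by contradiction. Suppose $v \in M$ is a singular vector of degree $n \geq 3$, and let us write $v = \sum_I \d^{(I)} \otimes v_I$ with some $v_{I_0} \neq 0$ for $|I_0| = n$. The goal is to produce a non-zero element of $M \cap \fil^0 \V(R)$. Since $R$ is irreducible as a $\dd' \oplus \spd$-module by \prref{pnhconformal}, this would force $R \subset M$, contradicting \leref{noconstants}.

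The key tool is the computation of Fourier coefficients $(x \otimes_H e) \cdot w$ for $w \in M$ and $x \in X$, all of which lie in $M$. For $w = \d^{(J)} v$, compatibility of the $\dd$- and $\P$-actions in the extended annihilation algebra $\widetilde\P = \dd \ltimes \P$ gives
\begin{equation*}
(x \otimes_H e) \cdot (\d^{(J)} v) = \d^{(J)} \cdot \bigl((x \otimes_H e) \cdot v\bigr) - (\d^{(J)} x \otimes_H e) \cdot v.
\end{equation*}
Since $v$ is singular, $(x \otimes_H e) \cdot v = 0$ for $x \in \fil_2 X$. Consequently, Fourier coefficients of $e * (\d^{(J)} v)$ for $x \in \fil_{|J|+2} X$ vanish, whereas those for $x \in \fil_{|J|+1} X$ reduce to explicit Fourier coefficients of $e*v$ at filtration $1$, which are given by the formula \eqref{Hactionsing}. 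A judicious choice of $J$ and $x$ then extracts an element of $M$ whose filtration degree is strictly lower than that of the original $v$.

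Iterating this extraction produces a sequence of elements in $M$ of strictly decreasing filtration degree, whose leading parts are explicit $\spd$-equivariant contractions of the top component $\bar v \in S^n\dd \otimes R$ of $v$, with correction terms coming from the operator $\adsp$ and from the $\widehat{\dd}$ summand of $\rho_{\sing}$. For $n \geq 3$, sufficiently many iterations produce an element landing in $\fil^0 \V(R) \cap M$. Non-triviality of the $\spd$-action on $R$ ensures that this final contraction applied to $\bar v$ does not vanish: the iterated $\omega$-pairing combined with the $\spd$-action assembles into a non-trivial $\spd$-equivariant map $S^n \dd \otimes R \to R$ whenever $\spd$ acts non-trivially on $R$. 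The resulting non-zero constant in $M$ yields the desired contradiction.

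The main obstacle is the combinatorial bookkeeping needed at each stage of the iteration: one must control how the $H$-linear extension of \eqref{hsing1} interacts with the multi-index derivations $\d^{(J)}$, in particular tracking the correction terms involving $\adsp$ in \eqref{Hactionsing} so that they do not cancel the contribution of $\bar v$ at the final step. The assumption $n \geq 3$ is precisely what guarantees that the descent terminates in $\fil^0 \V(R)$, explaining why the bound is sharp (the cases $n = 1, 2$ cannot be excluded by the same argument), while the non-triviality of the $\spd$-action is essential to rule out trivial cancellation in the extracted constant.
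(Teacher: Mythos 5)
Your high-level strategy---produce a constant vector in $M$ to contradict \leref{noconstants}---is a reasonable idea, but the mechanism you propose does not work, for two reasons.

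First, the displayed commutation formula is only valid for $|J|=1$. For $|J|\ge 2$, iterating $[\d,x\tp_H e]=\d x\tp_H e$ in $\widetilde\P$ gives the full Leibniz expansion
\begin{equation*}
(x\tp_H e)\cdot(\d^{(J)}v)=\sum_{K\le J}(-1)^{|K|}\,\d^{(J-K)}\cdot\bigl((\d^{(K)}x\tp_H e)\cdot v\bigr),
\end{equation*}
and the cross terms with $0<|K|<|J|$ cannot be dropped.

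Second---and this is where the argument collapses---the extraction does not lower the degree. After the correction above, for $x\in\fil_{|J|+1}X\setminus\fil_{|J|+2}X$ the singularity of $v$ kills every term except $K=J$, leaving $\pm(\d^{(J)}x\tp_H e)\cdot v$ with $\d^{(J)}x\in\fil_1 X\setminus\fil_2 X$; by \eqref{Hactionsing} this is a linear combination of the $\rho_{\sing}(f^{ij})v$, which have the \emph{same} degree as $v$. For shallower $x$ the coefficients pick up the terms $\d^k v$ and the degree goes up, not down; for deeper $x$ the coefficients vanish. This is structural: $v$ singular means $\P_1 v=0$, while $\P_0$ and $\iota_*^{-1}(\widehat{\dd})$ preserve degree (cf.\ \eqref{nhfp}) and $\P$ raises it by at most one (cf.\ \eqref{hfp}). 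So the Fourier coefficients of $e*(\,\cdot\,)$ applied to $H$-translates of $v$ never produce elements of degree below $\deg v$, the ``judicious choice of $J$ and $x$'' does not exist, and the ``combinatorial bookkeeping'' you flag as the remaining obstacle is precisely where the descent fails.

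The paper's proof goes in a different and much shorter direction, and does not even need the hypothesis that $v$ lies in a proper submodule. Write $v=\sum_I\d^{(I)}\tp v_I$ and compare two descriptions of the coefficient of $1\tp\d^{(I)}$ in the canonical expansion $e*v=\sum_J(1\tp\d^{(J)})\tp_H w_J$: by \leref{lcoefficients} (the left-straightened version of \eqref{hsing1}) one has $w_I\equiv\psi(v_I)\bmod\fil^1\V(R)$, while the singular form \eqref{hsing3} forces $w_I=0$ as soon as $|I|\ge 3$. Thus $\psi(v_I)=0$ for $|I|\ge 3$, and since $\psi$ is injective when $\spd$ acts non-trivially on the irreducible module $R$, this gives $v_I=0$. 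No iteration and no appeal to $M$ are required.
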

\begin{proof}
Write $v=\sum_I \d^{(I)} \tp v_I$. Then \leref{lcoefficients},
together with \eqref{hsing3}, show that $\psi(v_I) =0$ whenever
$|I|\geq 2$. As the $\spd$-action on $R$ is non-trivial, from
$\psi(v_I) = 0$ follows $v_I=0$.
\end{proof}

Our next step is to describe singular vectors and submodules contained in
tensor modules of the form $\V(\Pi', U)$, where $U=R(\pi_i)$ for some $i = 0, \dots, N$ and $\Pi'$ is an irreducible $\dd'$-module. Our final result
states that when the $\P$-action factors through $\H$, then such a module contains singular vectors if and only if
it is a twist of a module contained in the conformally symplectic pseudo de Rham
complex, and that in such cases singular vectors are 
described in terms of the differential maps. The case where the action of $\P$ does not factor through $\H$ can then also be deduced easily.

The case $U \simeq \kk$ is of a somewhat different flavor, so we
treat it separately.

\subsection{Singular vectors in $\VPiprimezero$}\label{shsing}
First, we find all singular vectors in $\VPiprimezero$.
\begin{lemma}\label{spdtrivial}
A vector in $\VPiprimezero$ is singular if and only if it lies in $\fil^1 \VPiprimezero$.
\end{lemma}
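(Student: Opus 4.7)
The plan is to establish both inclusions of the asserted equality.

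For $\fil^1\VPiprimezero \subseteq \sing\VPiprimezero$: the inclusion $\kk\tp\Pi' \subseteq \sing\VPiprimezero$ is the special case $p=0$ of \leref{lhsing2}. To extend to all of $\fil^1$, I would pick $\d\in\dd$ and $u\in\Pi'$ and verify that $\d\cdot(1\tp u)$ is singular using the commutator identity in $\widetilde\P$:
\[
x\cdot\bigl(\d\cdot(1\tp u)\bigr) = \d\cdot\bigl(x\cdot(1\tp u)\bigr) + [x,\d]\cdot(1\tp u), \qquad x\in\P_1.
\]
The first summand vanishes since $1\tp u$ is already known to be singular, and $[x,\d]\in\P_0$ by $[\dd,\P_1]\subset\P_0$. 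Since $\P_0$ acts on $\kk\tp\Pi'$ through its quotient $\P_0/\P_1\simeq\spd$ by \prref{pnhconformal}, and since the $\spd$-component of $R=\Pi'\boxtimes\kk$ is trivial, this second summand also vanishes. By $\dd$-linearity and the decomposition $\fil^1\VPiprimezero=\kk\tp\Pi'+\dd\tp\Pi'$, every element of $\fil^1\VPiprimezero$ is then singular.

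For the reverse inclusion I would argue by contradiction: suppose $v\in\sing\VPiprimezero$ has filtration degree $p\geq 2$ and write $v=\sum_I \d^{(I)}\tp u_I$ with $u_J\neq 0$ for some $|J|=p$. By \reref{rlmod}(ii), write $e*v$ uniquely as $\sum_K (1\tp\d^{(K)})\tp_H v_K''$; the singularity criterion \eqref{vsingf2} amounts to $v_K''=0$ for every $|K|\geq 3$. Using \eqref{hsing1} for $U=\kk$, the identity $\bd_\ell\tp 1 = \Delta(\bd_\ell) - 1\tp\bd_\ell$, and the fact that $1\tp\bd_\ell=1\tp\d^{(e_\ell)}-\chi(\d_\ell)(1\tp 1)$, a direct rewriting shows that for $u\in\Pi'$ the unique expansion of $e*(1\tp u)$ has $v_J''(u)=0$ for all $|J|\geq 2$ and $v_{e_\ell}''(u)=1\tp\d^\ell u - \d^\ell\tp u$.

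Next, $H$-bilinearity gives $e*(\d^{(I)}\tp u_I)=\sum_J(1\tp \d^{(I)}\d^{(J)})\tp_H v_J''(u_I)$, and expanding $\d^{(I)}\d^{(J)}$ in the PBW basis one sees that the only contributions to $v_K''(v)$ at total degree $|K|=p+1$ come from pairs $(I,J)$ with $|I|=p$, $J=e_\ell$, through the top PBW term $(i_\ell+1)\d^{(I+e_\ell)}$: lower values $|I|<p$ would force $|J|\geq 2$ (giving $v_J''=0$), while the lower-filtration commutator corrections in the PBW expansion of $\d^{(I)}\d^{(J)}$ only feed into $v_{K'}''$ with $|K'|<|K|$. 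Assembling these contributions yields
\[
v_K''(v)=\sum_{\ell:\,k_\ell\geq 1} k_\ell\bigl(1\tp\d^\ell u_{K-e_\ell}-\d^\ell\tp u_{K-e_\ell}\bigr),\qquad |K|=p+1\geq 3,
\]
which must vanish by singularity. Projecting onto the summand $\dd\tp\Pi'\subset V$ and using linear independence of $\{\d^\ell\}$ forces $k_\ell u_{K-e_\ell}=0$, hence $u_{K-e_\ell}=0$ in characteristic zero, for every $\ell$ with $k_\ell\geq 1$. Since every index $I$ with $|I|=p$ arises as $K-e_\ell$ for $K=I+e_\ell$ of total size $p+1$, we conclude $u_I=0$ for all $|I|=p$, contradicting $u_J\neq 0$.

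The main obstacle will be the careful bookkeeping in the PBW expansion of $\d^{(I)}\d^{(J)}$ and the verification that no lower-filtration correction contaminates the formula for $v_K''(v)$ at $|K|=p+1$; once those vanishing claims are in place, the projection onto $\dd\tp\Pi'$ delivers the contradiction at once.
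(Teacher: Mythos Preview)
Your proposal is correct and follows essentially the same route as the paper. Both arguments hinge on the explicit form of $e*(1\tp u)$ when $U=\kk$ (so that the $f^{ij}$ and $\adsp$ terms drop out), and both deduce $\sing\subset\fil^1$ by showing that a putative top-degree coefficient $u_I$ with $|I|\geq 2$ forces a nonvanishing term at degree $|I|+1$ in the right-hand straightening of $e*v$. The only tactical differences are: for the inclusion $\fil^1\subset\sing$ the paper simply left-straightens \eqref{spdtrivial1} directly, whereas you use the commutator $[\P_1,\dd]\subset\P_0$ together with the triviality of the $\spd$-action on $\kk\tp\Pi'$; and for the reverse inclusion the paper isolates a single term by picking an extremal $I$ (maximal $|I|$, then maximal $i_1$), while you treat all $K$ with $|K|=p+1$ at once and project onto $\dd\tp\Pi'$. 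Both variants are clean and valid.
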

\begin{proof}
Let us first prove $\sing \VPizero \subset \fil^1 \VPiprimezero$.
The $\spd$-action is trivial, so \eqref{hsing2} can be rewritten as
\begin{equation}\label{spdtrivial1}
\begin{split}
e * (1 \tp u) & = \sum_{k=1}^{2N} (\bar \d_k \tp 1) \tp_H (\d^k \tp u - 1 \tp
\rho_R(\d^k)u)\\
& \qquad + (1 \tp 1) \tp_H 1 \tp \rho_R(c)u,
\end{split}
\end{equation}
and also as
\begin{equation}\label{spdtrivial2}
\begin{split}
e * (1 \tp u) = - \sum_{k} (1 \tp \d_k) & \tp_H (\d^k \tp u -
1 \tp
\rho_R(\d^k)u)\\
+ \mbox{ terms in } (1 & \tp 1) \tp_H \fil^1 \VPiprimezero.
\end{split}
\end{equation}
Let $v = \sum_I \d^{(I)} \tp v_I$ be a singular vector in
$\VPiprimezero$, and assume there is some $I$, $|I|>1$, such that $v_I \neq
0$. If $n$ is the maximal value of $|I|$ for such $I$, choose among
all $I = (i_1, \dots, i_{2N})$ with $|I| = n$, one with the highest
value of $i_1$. Then the coefficient multiplying $1 \tp
\d_1 \d^{(I)}$ in $e * v$ equals $\d^1 \tp v_I - 1 \tp
\rho_R(\d^1)v_I$. As $v$ is singular, this must vanish if $|I|
>1$, a contradiction with $v_I \neq 0$.

The reverse inclusion $\sing \VPiprimezero \supset \fil^1 \VPiprimezero$ is now easy. If $v \in \fil^1 \VPiprimezero$, plugging it into \eqref{spdtrivial1} and left-straightening the resulting expression only yields terms in $(\fil^2 H \tp \kk) \tp_H \VPiprimezero$.
\end{proof}

\begin{proposition}\label{irreduciblecnonzero}
The\/ $\Hd$-module\/
$\V(R) = \VPiprimezero$ is reducible if and only if the action of $c \in \dd'$ on $\Pi'$ is trivial.
\end{proposition}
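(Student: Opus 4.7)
The proof of this equivalence splits into two implications.

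For the forward direction, suppose $c$ acts trivially on $\Pi'$. Then $\Pi'$ descends to a $\dd$-module $\Pi$ via the canonical projection $\pi\colon\dd'\to\dd'/\kk c\simeq\dd$, so $\V(\Pi',\kk)=\V(\Pi,\kk)$; a PBW argument using the induction construction of \leref{lhsing2} shows that $c$ acts as $0$ throughout $\V(\Pi',\kk)$, hence the $\P$-action factors through $\H\simeq\P/\kk c$. By \leref{spdtrivial}, $\sing\V(\Pi,\kk)=\fil^1\V(\Pi,\kk)\supsetneq\fil^0\V(\Pi,\kk)$, so non-constant singular vectors exist, and \prref{constiff} gives reducibility.

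For the converse, suppose $c$ acts nontrivially. Schur's Lemma gives that $c$ acts on $\Pi'$ as a scalar $\lambda\neq0$; by the Lemma in \seref{sfdird}, this forces $\chi=0$ and $\omega=\di\bla$. With $\chi=0$, the identity $[\d,c]=\chi(\d)c=0$ shows that $c$ is central in $\wti\P=\dd\ltimes\P$, and the PBW decomposition $\V(\Pi',\kk)\simeq U(\dd)\tp\Pi'$ from \leref{lhsing2} then implies that $c$ acts as the scalar $\lambda$ on all of $\V(\Pi',\kk)$.

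Suppose for contradiction that $M\subsetneq\V(\Pi',\kk)$ is a nonzero submodule. By \leref{noconstants}, $M\cap\fil^0\V(\Pi',\kk)=\{0\}$; by \prref{phsing1} together with \leref{spdtrivial}, there is a nonzero singular vector $s\in\sing M\subset\fil^1\V(\Pi',\kk)$, necessarily with nontrivial degree-$1$ component. I plan to produce a nonzero element of $M\cap\fil^0$ by computing the Fourier coefficients of $e*s$ via \eqref{Hactionsing}. The $(1\tp1)$-coefficient equals $\rho_{\sing}(c)s=\lambda s$, while the $(\bar\d_k\tp1)$- and $(\bar\d_i\bar\d_j\tp1)$-coefficients, after re-expansion in the PBW basis of $H$, yield further elements of $M$; using \leref{ldtilde} to express $\widehat{\d^k}\equiv\d^k+(x^k\tp_H e)\mod(\P_1+\spd)$ together with $\P_1 s=0$, one checks that at least one such coefficient carries a nonzero $\fil^0$-term proportional to $\lambda$, arising from central $c$-contributions in commutators such as $[\widehat{\d^k},\d_j]\in\kk c+\H$. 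Suitable linear combinations of these Fourier coefficients, together with the $\wti\P$- and $H$-action on $M$ used to absorb higher-filtration remainders, produce a pure nonzero constant in $M$, contradicting $M\cap\fil^0=\{0\}$ and establishing irreducibility.

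The main obstacle is the explicit bookkeeping of $\fil^0$-contributions amid higher-filtration terms, since the actions of elements of $\P_{-1}$ on non-constant singular vectors interact non-trivially with the PBW structure of $H$. A cleaner route, suggested by \reref{nontrivialc}, is to realize $\V(\Pi',\kk)$ as a quotient $T/\d_0T$ of a tensor module $T$ for the contact pseudoalgebra $\Kdp=K(\dd',\theta)$ and transfer the corresponding irreducibility results from the contact setting treated in \cite{BDK2}.
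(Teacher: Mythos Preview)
Your forward direction is fine; invoking \prref{constiff} together with \leref{spdtrivial} is a valid alternative to the paper's explicit construction of the submodule spanned by the vectors $\d\tp u - 1\tp\rho_R(\d)u$.

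The converse direction, however, has a real gap. You compute Fourier coefficients of $e*s$ for a singular $s\in M\cap\fil^1$, but the resulting elements live in $\fil^2\V(\Pi',\kk)$ and there is no mechanism in your argument that forces any of them into $\fil^0$. The $(1\tp1)$-coefficient is $\lambda s$, which is already in $M$; the other coefficients are of the form $\rho_{\sing}(\d^k)s-\d^k s$, which have degree~$2$ in general. Your appeal to ``central $c$-contributions in commutators such as $[\widehat{\d^k},\d_j]$'' is not correct as stated: the cocycle $\omega$ appears in $[\widehat{\d^k},\widehat{\d_j}]$, not in $[\widehat{\d^k},\d_j]$, and in any case this does not produce a constant vector in $M$. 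You yourself flag this bookkeeping as ``the main obstacle'' and retreat to a sketch via $\Kdp$; that is an admission that the argument is incomplete.

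The paper's idea is the one you are missing. Rather than analysing $e*s$ for $s\in M$, one first observes that since $\gr^1\V(\Pi',\kk)\simeq\Pi'_{\pi^*\chi/2}\boxtimes R(\pi_1)$ is $\dd'\oplus\spd$-irreducible, the degree-one parts of singular vectors in $M$ exhaust $\gr^1$; hence $M$ has finite $\kk$-codimension and $\V(\Pi',\kk)/M$ is $H$-torsion. A pseudoalgebra acts trivially on torsion elements, so $e*\V(\Pi',\kk)\subset(H\tp H)\tp_H M$. Now apply this to a \emph{constant} vector $1\tp u$ (which is not in $M$!): by \eqref{spdtrivial1} the $(1\tp1)$-coefficient is $1\tp\rho_R(c)u=\lambda(1\tp u)$, and it must lie in $M$. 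For $\lambda\ne0$ this is a nonzero constant in $M$, contradicting \leref{noconstants}. The torsion/finite-codimension step is what converts information about $M$ into a statement about coefficients of $e*v$ for $v\notin M$, and is precisely what your attempt lacks.
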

\begin{proof}
Let $M$ be a non-zero proper submodule of $\VPiprimezero$. We know $M$ that must contain non-constant singular vectors and that $\gr^1 \VPiprimezero \simeq \Pi'_{\pi^*\chi/2} \boxtimes \dd$ is $\dd' \oplus \spd$-irreducible. Thus, for every choice of $\d \in \dd, u \in  
1 \tp R$, $M$ must contain some element whose degree one term is $\d \otimes u$.

Consequently, as $M\subset \VPiprimezero$ is an $H$-submodule, $M$ must be of finite codimension in $\VPiprimezero$, hence $\VPiprimezero/M$ is a torsion $H$-module. However, the action of a pseudoalgebra on torsion elements is trivial, so $e*\VPiprimezero = e*M \subset (H \tp H) \tp_H M$. Direct inspection of \eqref{spdtrivial1} now shows that $1 \tp \rho_R(c) u$ belongs to $M$ for all $u \in U$. Then, if the action of $u$ is non-trivial, $M$ must contain non-zero constant vectors, which is a contradiction.

If instead the action of $c$ is trivial, then vectors $\d \tp
u - 1 \tp \rho_R(\d)u,$ where $\d \in \dd$, $u \in R,$ all belong to $M$, and they indeed span over $H$ a proper $\Hd$-submodule of $\VPiprimezero$.
\end{proof}

\begin{remark}
\prref{irreduciblecnonzero} shows, by providing a counterexample, that requiring $c\in \dd'$ to act trivially on $R$ in \prref{constiff} is a mandatory hypothesis.
\end{remark}

\subsection{Non-trivial submodules of $\VPiprimezero$}\label{ssubvpik}

We are only interested in the case where the action of $c \in \dd'$ on $\Pi'$ is trivial, as we have seen that $\VPiprimezero$ is otherwise irreducible. We thus focus on tensor modules $\VPizero:= \V(\pi^*\Pi, \kk) = \VPiprimezero$, where $\Pi$ is a $\dd$-module, see \deref{dvmodw}(iii).

First of all, recall that
\begin{equation*}
\begin{split}
\Omega^{2N}_{\Pi}(\dd)  = J^{2N}_{\Pi}(\dd) & \simeq \T(\Pi_{-N\chi}, \kk) = \V(\Pi_{-N\chi - \phi}, \kk),\\
\Omega^{2N-1}_{\Pi}(\dd)  = J^{2N-1}_{\Pi}(\dd) & \simeq \T(\Pi_{-(2N-1)\chi/2}, R(\pi_1)) = \V(\Pi_{-(2N-1)\chi/2 - \phi}, R(\pi_1)),
\end{split}
\end{equation*}
so that
\begin{equation*}
\begin{split}
\VPizero & \simeq J^{2N}_{\Pi_{N\chi + \phi}}(\dd) = \Omega^{2N}_{\Pi_{N\chi + \phi}}(\dd),\\ 
\VPione &  \simeq J^{2N-1}_{\Pi_{N\chi + \phi}}(\dd) = \Omega^{2N-1}_{\Pi_{N\chi + \phi}}(\dd).
\end{split}
\end{equation*}
Recall (cf.\ \cite{BDK}) that the image of the de Rham differential $\di_{\Pi_{N\chi + \phi}}\colon\Omega^{2N-1}_{\Pi_{N\chi + \phi}}(\dd) \to \Omega^{2N}_{\Pi_{N\chi + \phi}}(\dd)$ is a maximal $H$-submodule which has finite codimension as a $\kk$-vector subspace. This means that the image of the map
$$\dizerostar\colon \VPione \to \VPizero,$$ is a maximal $\Hd$-submodule of $\VPizero$, of finite $\kk$-codimension. Notice that $\dizerostar \fil^0 \VPione$ is a $\dd\oplus \spd$-submodule of $\sing \VPizero$ which has a non-zero projection of $\gr^1 \VPizero \simeq \Pi_{\chi/2} \boxtimes R(\pi_1)$, so that it coincides with the $(\dd\oplus\spd)$-stable complement of $\fil^0 \VPizero$ in $\sing \VPizero$, which is uniquely determined as it coincides with the $R(\pi_1)$-isotypical component of $\fil^1 \VPizero$.

\begin{proposition}
The unique non-zero proper $\Hd-$submodule of $\VPizero$ is $\imdizerostar$.
\end{proposition}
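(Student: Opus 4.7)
The plan is to show that any non-zero proper $\Hd$-submodule $M$ of $\VPizero$ must coincide with $\imdizerostar$, by completely pinning down $\sing M$ using the structural results just established and then invoking the maximality of $\imdizerostar$ that was recalled immediately before the proposition.

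First I will combine \leref{spdtrivial} with \coref{grdecomposition} to decompose the space of all singular vectors: $\sing \VPizero = \fil^1 \VPizero$, and as a $\dd \oplus \spd$-module this splits into the summand of constants $\fil^0 \VPizero \simeq \Pi \boxtimes R(\pi_0)$ together with a complementary irreducible summand isomorphic to $U := \Pi_{\chi/2} \boxtimes R(\pi_1)$. The crucial point is that these two summands are non-isomorphic as $\dd \oplus \spd$-modules, since $R(\pi_0) = \kk$ and $R(\pi_1)$ are distinct irreducible $\spd$-modules (of dimensions $1$ and $2N$ respectively).

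Next, I will use \prref{phsing1} to guarantee that $\sing M$ is non-zero, together with \leref{noconstants} which ensures $M \cap \fil^0 \VPizero = \{0\}$. Hence $\sing M$ is a non-zero $\dd \oplus \spd$-submodule of $\fil^0 \VPizero \oplus U$ with trivial intersection with $\fil^0 \VPizero$; the non-isomorphism of the two summands then forces $\sing M = U$, so $HU \subseteq M$.

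Finally, I will identify $HU$ with $\imdizerostar$: since $\dizerostar$ is $H$-linear and $\VPione$ is generated over $H$ by its irreducible space of constants $\fil^0 \VPione \simeq \Pi_{\chi/2} \boxtimes R(\pi_1)$, and since by \leref{singtosing} this map sends constants into $\sing \VPizero$, Schur's lemma forces $\dizerostar|_{\fil^0 \VPione}$ to be an isomorphism onto the $R(\pi_1)$-isotypic summand $U$ of $\sing \VPizero$ (it cannot vanish since $\dizerostar \neq 0$). Therefore $\imdizerostar = HU \subseteq M$, and the recalled maximality of $\imdizerostar$ yields $M = \imdizerostar$. I do not anticipate any real obstacle here, as every ingredient is in place; the only delicate observation is that the two $\dd \oplus \spd$-summands of $\sing \VPizero$ are non-isomorphic, which is what cleanly determines $\sing M$.
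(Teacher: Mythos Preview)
Your proof is correct and follows essentially the same approach as the paper's: decompose $\sing\VPizero=\fil^1\VPizero$ into the two non-isomorphic irreducible $\dd\oplus\spd$-summands $\fil^0\VPizero$ and $U\simeq\Pi_{\chi/2}\boxtimes R(\pi_1)$, observe that a non-zero proper $M$ has $\sing M\ne0$ but $M\cap\fil^0\VPizero=0$, hence $\sing M=U$, and conclude $M\supseteq HU=\imdizerostar$ with equality by maximality. The only minor stylistic difference is that the paper phrases the key step as ``$M$ contains non-zero homogeneous singular vectors which cannot be constant'' (using the grading available when $c$ acts trivially), whereas you argue directly via the submodule lattice of a direct sum of non-isomorphic irreducibles; both arguments are equivalent here.
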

\begin{proof}
We have proved in the previous section that
$$\sing \VPizero =\fil^1 \VPizero \simeq \fil^0 \VPizero \oplus \gr^1 \VPizero$$ is a direct sum of two non-isomorphic homogeneous irreducible $\dd \oplus \spd$-submodules. If $M \subset \VPizero$ is a non-zero proper submodule, then it must contain non-zero homogeneous singular vectors, which cannot be constant. Thus it contains all of $\dizerostar \fil^0 \VPione$, which spans $\imdizerostar$ over $H$. Thus $M$ contains the maximal submodule $\imdizerostar$, hence it coincides with it. 
\end{proof}

We will now separately classify singular vectors of degree one and
two in tensor modules of the form $\VPiprimen$, for $n= 1, \dots, N$.

\subsection{Singular vectors of degree one in $\VPiprimen, 1 \leq n \leq N$}

We will proceed as follows: we will first treat the case $\VPiprimen$, where the action of $c \in \dd'$ is trivial; we will then show that the description of singular vectors of degree one stays unchanged when the action of $c$ is via multiplication by a non-zero scalar.

\subsubsection{Singular vectors of degree one, graded case}

Our setting is the following: $\Pi$ and $U$ are irreducible representations of $\dd$ and $\spd$ respectively, $R = \pi^*\Pi \boxtimes U$ is a $\dd' \oplus \spd$ module and $V = \V(R) = \V(\Pi, U)$, as in \deref{dvmodw}(iii), is a reducible $\Hd$ tensor module. Furthermore, $U = R(\pi_n)$ for some $1 \leq n \leq N$. We are also
given a non-zero proper submodule $M \subset V$. We first look for singular vectors of degree one, i.e., of the form
$$S = \sum_{i=1}^{2N} \d_i \tp s_i + 1 \tp s$$
that are contained in $M$.

\begin{lemma}
A non-zero singular vector of degree one contained in\/ $M\subset V$
is uniquely determined by its degree one part.
\end{lemma}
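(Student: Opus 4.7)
The plan is to use \leref{noconstants} directly. Suppose $S_1$ and $S_2$ are two singular vectors of degree one in $M$ with the same degree one part. Writing
\[
S_j = \sum_{i=1}^{2N} \d_i \tp s_i + 1 \tp s^{(j)}, \qquad j = 1, 2,
\]
their difference is $S_1 - S_2 = 1 \tp (s^{(1)} - s^{(2)}) \in \fil^0 V \cap M$. Since $R = \Pi \boxtimes U$ is irreducible as a $\dd' \oplus \spd$-module, \leref{noconstants} tells us that $M \cap \fil^0 V = \{0\}$, hence $s^{(1)} = s^{(2)}$ and $S_1 = S_2$.

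This is really the only input needed; the hypothesis that $M$ is proper and the irreducibility of $R$ are precisely what makes \leref{noconstants} applicable, and the argument does not even require that $S_1, S_2$ be singular, only that they belong to $M$ and agree in degree one. The proof is essentially a one-line consequence of the earlier lemma, so there is no substantial obstacle.
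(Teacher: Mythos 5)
Your argument is correct and matches the paper's proof exactly: both subtract the two vectors, observe the difference lies in $M \cap \fil^0 V$, and invoke Lemma \ref{noconstants} (which the paper uses implicitly as $M\cap(\kk\tp R)=\{0\}$) to conclude it vanishes. Your added observation that singularity is not actually needed is accurate and a nice touch.
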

\begin{proof}
If $s$ and $s'$ are two such vectors agreeing in degree one, then
$s-s'$ is a singular vector contained in $M\cap (\kk\tp R) = \{0\}$.
\end{proof}

\begin{proposition}\label{psingunique}
Let\/ $s, s'$ be non-zero singular vectors of degree one contained in
(possibly distinct) non-zero proper submodules\/ $M, M'$ of a tensor module\/ $\V(R) =
\VPin$ as above. If\/ $s = s' \mod \fil^0 \V(R)$,
then\/ $s$ and\/ $s'$ coincide.
\end{proposition}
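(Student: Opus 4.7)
The approach will be to write $s - s' \in \fil^0 \V(R) = 1 \otimes R$ as $1 \otimes u$ for some $u \in R$ (using the hypothesis that $s \equiv s' \pmod{\fil^0 \V(R)}$), note that $s - s'$ is a singular constant vector as the difference of singular vectors, and show that $u = 0$. The key observation is that $s - s' \in M + M'$, which is itself an $\Hd$-submodule of $\V(R)$.

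First I would argue that if $u \neq 0$, then by irreducibility of $R$ as a $\dd' \oplus \spd$-module the $\dd'\oplus\spd$-orbit of $u$ spans all of $R$, and \coref{chsing} yields that the $\Hd$-submodule of $\V(R)$ generated by the singular constant vector $1 \otimes u$ equals the whole of $\V(R)$. Since this submodule is contained in $M + M'$, we would be forced to have $M + M' = \V(R)$. Contrapositively, whenever $M + M'$ is proper, \leref{noconstants} immediately gives $s - s' \in (M + M') \cap \fil^0 \V(R) = \{0\}$ and we are done. The only remaining case is $M + M' = \V(R)$.

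To dispose of this case I would invoke the graded structure of $V = \VPin = \V(\pi^*\Pi, R(\pi_n))$, available because $c \in \dd'$ acts trivially on $\pi^*\Pi$ (see the discussion preceding \coref{constiff}): $V$ carries a genuine $-\mathbb{Z}_+$-grading under which homogeneous components of singular vectors are themselves singular. Writing $s = T + 1 \otimes s_0$ and $s' = T + 1 \otimes s'_0$ with $T$ the common degree-$1$ part, both $T$ and the constant parts are independently singular. When $\chi \neq 0$, the remark after \coref{grdecomposition} shows that the graded pieces of $V$ sit in pairwise distinct $\dd$-isotypical components (thanks to the successive $i\chi/2$-twists), so every $\dd \oplus \spd$-stable (hence every $\Hd$-stable) subspace of $V$ is automatically graded. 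Consequently $s \in M$ forces $1 \otimes s_0 \in M \cap \fil^0 \V(R) = \{0\}$ by \leref{noconstants}, and similarly $s'_0 = 0$; this yields $s = s' = T$.

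The main obstacle will be the residual case $\chi = 0$, where distinct graded pieces of $V$ may share $\spd$-isotypical components and $\Hd$-submodules need not be graded. Here I would unwind the singularity condition using \eqref{Hactionsing} and \leref{lcoefficients}: after left-straightening, the degree-$3$ first-factor contribution to $e*s$ depends only on $T$, so the singularity of $s$ decouples into a constraint making $T$ singular on its own together with equations coupling $s_0$ to $T$ at first-factor degrees $\leq 2$. Analysis of the $\N_\P/\P_1 \simeq \dd'\oplus\spd$-orbit of $s$ then shows that unless $s_0$ is the unique constant forced by $T$, this orbit produces a non-zero constant singular vector inside $M$, contradicting $\sing M \cap \fil^0 \V(R) = \{0\}$; the identical analysis applied to $s'$ pins down $s'_0$ to the same value, giving $s = s'$.
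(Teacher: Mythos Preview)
Your reduction to the case $M+M'=\V(R)$ is fine, but from that point on you work far too hard and leave a genuine gap. The case split on $\chi$ is unnecessary, and your treatment of $\chi=0$ is only a sketch: ``this orbit produces a non-zero constant singular vector inside $M$'' is precisely the assertion to be proved, and you have not explained the mechanism that forces it. Even in the $\chi\neq 0$ case, your claim that every $\Hd$-submodule is automatically graded rests on the assertion that the $\Pi_{i\chi/2}$ for different $i$ lie in distinct $\dd$-isotypical components; the remark you cite does not prove this, and in general it is not obvious that $\Pi_{i\chi/2}\not\simeq\Pi_{j\chi/2}$ for all $i\neq j$.

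The paper's argument avoids all of this by working purely with the $\spd$-structure of $\fil^1\V(R)$, uniformly in $\chi$. The point is that as an $\spd$-module
\[
\gr^1\V(R)\;\simeq\;\Pi_{\chi/2}\boxtimes\bigl(R(\pi_{n-1})\oplus R(\pi_{n+1})\oplus R(\pi_n+\pi_1)\bigr),
\]
and \emph{none} of these $\spd$-types is $R(\pi_n)$. Hence $\fil^0\V(R)$ (of $\spd$-type $R(\pi_n)$) sits in a different $\spd$-isotypical component from everything in $\gr^1\V(R)$, so there is a canonical $\spd$-stable complement $C$ to $\fil^0\V(R)$ inside $\fil^1\V(R)$. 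Now any proper $\Hd$-submodule $M$ is $\p_0\simeq\spd$-stable, so $M\cap\fil^1\V(R)$ decomposes along the isotypical decomposition; in particular the $\fil^0$-component of any $s\in M\cap\fil^1\V(R)$ already lies in $M\cap\fil^0\V(R)=\{0\}$ by \leref{noconstants}. Thus $s\in C$, and likewise $s'\in C$; since $s\equiv s'\bmod\fil^0\V(R)$ and both lie in $C$, they coincide. This is short, uniform, and does not require the grading hypothesis or any case analysis on $\chi$.
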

\begin{proof}
We already know that each element in the irreducible $\dd \oplus \spd$-submodule $\fil^0 \V(R)\simeq \Pi \boxtimes
 R(\pi_n)$ is a singular vector. Moreover, $\gr^1 \V(R)$ decomposes in the direct sum of (at most)
three irreducible components isomorphic to $\Pi_{\chi/2} \boxtimes R(\pi_{n-1})$,
$\Pi_{\chi/2} \boxtimes R(\pi_{n+1})$, and $\Pi_{\chi/2} \boxtimes R(\pi_n + \pi_1)$, respectively. As no such representation, viewed as an $\spd$-module, has a non-zero $R(\pi_n)$-isotypical component, $\gr^1 \V(R)$ splits off canonically and uniquely as a (homogeneous) complement of $\fil^0 \V(R)$ in $\fil^1 \V(R)$. If a singular vector $s \in \fil^1 \V(R)$ has a non-zero projection to $\gr^1 \V(R)$ and is not contained in this canonical complement, then each $\Hd$-submodule of $\V(R)$ containing $s$ will also contain constant singular vectors, and will fail to be proper.
\end{proof}

Recall that there are $\Hd$-module isomorphisms
\begin{equation*}
\begin{split}
\Omega^n_\Pi(\dd)/I^n_\Pi(\dd) & \simeq \T(\Pi_{-n\chi/2}, R(\pi_n)) = \V(\Pi_{-\phi - n \chi/2}, R(\pi_n)),\\
J^{2N-n}_\Pi(\dd) & \simeq \T(\Pi_{-(2N-n)\chi/2}, R(\pi_n)) = 
\V(\Pi_{-\phi -(2N-n)\chi/2}, R(\pi_n)),
\end{split}
\end{equation*}
so that
$$\VPin \simeq \Omega^n_{\Pi_{\phi + n\chi/2}}(\dd)/I^n_{\Pi_{\phi + n\chi/2}}(\dd) \simeq J^{2N-n}_{\Pi_{\phi + (2N-n)\chi/2}}(\dd).$$

The conformally symplectic pseudo de Rham differentials
\begin{equation*}
\begin{split}
\din\colon& \Omega_{\Pi_{\phi + n\chi/2}}^{n-1}(\dd)/I_{\Pi_{\phi + n\chi/2}}^{n-1}(\dd) \to \Omega_{\Pi_{\phi + n\chi/2}}^{n}(\dd)/I_{\Pi_{\phi + n\chi/2}}^{n}(\dd),\\
\dinstar\colon& J_{\Pi_{\phi + (2N-n)\chi/2}}^{2N-n-1}(\dd) \to J_{\Pi_{\phi + (2N-n)\chi/2}}^{2N-n}(\dd),
\end{split}
\end{equation*}
that we denote as in \eqref{domd8}, \eqref{domd9},
then identify to corresponding $\Hd$-tensor module homomorphisms 
\begin{equation*}
\begin{split}
\din\colon& \VPinminusone \to \VPin,\qquad n = 1, \dots, N,\\
\dinstar\colon& \VPinplusone \to \VPin,\qquad n = 0, \dots, N-1.
\end{split}
\end{equation*}
We similarly obtain an $\Hd$-module homomorphism 
$$\DiPi\colon\VPichiN \to \VPiN.$$

\begin{theorem}\label{onlysymplectic}
One has{\rm:}
\begin{align*}
\sing \fil^1 \VPin &= \fil^0 \VPin + \din \fil^0 \VPinminusone 
\\
&+ \dinstar \fil^0 \VPinplusone, \qquad 1 \leq n < N,
\intertext{and}
\sing \fil^1 \VPiN &= \fil^0 \VPiN + \diN \fil^0 \VPiNminusone .
\end{align*}
\end{theorem}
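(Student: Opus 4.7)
The containment $\supseteq$ is immediate: by \leref{singtosing}, the $\Hd$-module homomorphisms $\din$ and $\dinstar$ send singular vectors to singular vectors, and every constant in a tensor module is singular. \reref{degreeone} further guarantees that these images lie in $\fil^1 \VPin$.

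For the converse, I analyze the $\dd \oplus \spd$-isotypic structure of $\fil^1 \VPin$. By \coref{grdecomposition}, $\gr^1 \VPin \simeq \Pi_{\chi/2} \boxtimes (\dd \tp R(\pi_n))$, and the standard decomposition of tensor products for symplectic groups (\cite[Lecture 17]{FH}) gives
\[
R(\pi_1) \tp R(\pi_n) \simeq R(\pi_{n-1}) \oplus R(\pi_{n+1}) \oplus R(\pi_n + \pi_1), \qquad 1\leq n <N,
\]
with $R(\pi_{N+1})=0$ when $n=N$. None of these summands is isomorphic to $R(\pi_n)$, so the same argument as in the proof of \prref{psingunique} shows that $\fil^1 \VPin$ splits canonically as $\fil^0 \VPin \oplus \gr^1 \VPin$ as a $\dd \oplus \spd$-module, and accordingly $\sing \fil^1 \VPin$ decomposes into at most four (resp.\ three) isotypic summands. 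The images $\din \fil^0 \VPinminusone$ and $\dinstar \fil^0 \VPinplusone$ are non-zero $\dd \oplus \spd$-submodules (non-zero by \reref{degreeone}) isomorphic respectively to $\Pi_{\chi/2} \boxtimes R(\pi_{n-1})$ and $\Pi_{\chi/2} \boxtimes R(\pi_{n+1})$, so Schur's lemma implies that they realize the full $R(\pi_{n-1})$- and $R(\pi_{n+1})$-isotypic components of degree-one singular vectors.

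The remaining and principal step is to rule out the $R(\pi_n + \pi_1)$ summand. I would argue by contradiction: assume there exists a singular vector $S = \sum_i \d_i \tp s_i + 1 \tp s$ whose degree-one part $\sum_i \d_i \tp s_i$ lies in $\Pi_{\chi/2} \boxtimes R(\pi_n + \pi_1)$. Realizing $R(\pi_n)$ as the traceless part of $\bigwedge^n \dd$, this assumption translates to the two orthogonality conditions
\[
\sum_{i=1}^{2N} \d_i \wedge s_i = 0 \quad\text{in } \textstyle\bigwedge^{n+1} \dd, \qquad \sum_{i=1}^{2N} \io_{\d^i} s_i \equiv 0 \mmod \text{traces},
\]
which encode the vanishing of the $R(\pi_{n+1})$ and $R(\pi_{n-1})$ projections respectively. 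Computing $e*S$ by extending \eqref{hsing1} via $H$-bilinearity, left-straightening the result, and enforcing the singular condition \eqref{vsingf1} then yields, after projection onto the distinct $\spd$-isotypes of $(H\tp H)\tp_H \VPin$, a linear system on the coefficients $\{s_i\}$ that these orthogonality conditions force to admit only the zero solution.

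The principal obstacle is the bookkeeping in the explicit computation: the three summands of \eqref{hsing1} each contribute cross terms when combined by $H$-bilinearity, and separating them by $\spd$-isotype requires care because $\om$-contractions re-introduce the $R(\pi_n)$-isotype inside $R(\pi_1)^{\tp 2} \tp R(\pi_n)$. The case $n = N$ is handled identically, with the $R(\pi_{N+1})$ summand simply absent from the decomposition, recovering the second formula in the statement.
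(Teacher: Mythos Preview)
Your setup agrees with the paper: the $\supseteq$ inclusion, the decomposition of $\gr^1\VPin$ into the three $\spd$-irreducibles, and the identification of the $R(\pi_{n\pm 1})$ summands with the images of the differentials are all handled the same way. (One small correction: \reref{degreeone} is only used in the paper to place those images inside $\fil^1$; non-vanishing is argued instead from exactness of the conformally symplectic pseudo de Rham complex---if $\din$ killed constants it would be identically zero, and if it sent them to constants it would be surjective.)

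The substantive difference is in how you eliminate the $R(\pi_n+\pi_1)$ component. The paper does not compute $e*S$ directly. Instead it invokes \thref{irrcriterion}: since $R(\pi_n+\pi_1)$ is neither trivial nor a fundamental representation, the tensor module $\V(\Pi_{\chi/2}, R(\pi_n+\pi_1))$ is irreducible. If degree-one singular vectors had a non-zero $R(\pi_n+\pi_1)$-component, that component would generate (via \thref{irrfacttens}) a homomorphic image of this irreducible tensor module inside $\VPin$; but $\dim R(\pi_n+\pi_1) > \dim R(\pi_n)$, and one argues as in \cite[Lemma~7.8]{BDK1} that a free $H$-module of strictly larger rank cannot embed in one of smaller rank. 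This is a two-line structural argument that completely bypasses the explicit left-straightening you propose.

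Your direct computation is not wrong in principle, and would eventually yield the same conclusion, but you correctly identify the bookkeeping as the obstacle: the $\om$-contractions reintroduce $R(\pi_n)$-components and the system of equations is genuinely unpleasant. The paper's route trades that computation for a soft rank obstruction, at the cost of relying on \thref{irrcriterion} (whose proof, in turn, is deferred to \cite[Section~7.2]{BDK2}). If you want a self-contained argument independent of the irreducibility criterion, your approach is the natural one; otherwise the paper's is decisively shorter.
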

\begin{proof}
By the proof of \prref{psingunique}, the $\dd \oplus \spd$-submodule $\fil^1 \VPin$ is isomorphic to the direct sum
$$\fil^0 \VPin\,\, \oplus \,\,\Pi_{\chi/2} \boxtimes R(\pi_{n-1})\,\,\oplus \,\,\Pi_{\chi/2} \boxtimes R(\pi_{n+1})\,\,\oplus\,\,\Pi_{\chi/2} \boxtimes R(\pi_n + \pi_1),$$
where non-zero vectors in the last three summands have degree one.

The $\spd$-module $R(\pi_n + \pi_1)$ satisfies the irreducibility
criterion stated in \thref{irrcriterion}, and its dimension is
larger than $\dim R(\pi_n)$. We can therefore proceed as in
\cite[Lemma 7.8]{BDK1} to conclude that no singular vectors of degree one will have a non-zero projection to this summand.

However, by exactness of the conformally symplectic pseudo de Rham
complex, the differentials $\din, n>1,$ map $\fil^0 \VPinminusone$ to non-zero (otherwise $\din$ would vanish) non-constant (otherwise $\din$ would be surjective) irreducible $\dd \oplus \spd$-summands of singular vectors, isomorphic to $\Pi_{\chi/2}\boxtimes R(\pi_{n-1})$, and a similar argument applies to $\dinstar$. These summand must lie in $\fil^1 \VPin$, due to \reref{degreeone}.
Now \prref{psingunique} shows that those are the
only singular vectors of degree one, up to costant elements.
\end{proof}

\subsubsection{Singular vectors of degree one, non-graded case}\label{ssnongraded}

Our setting is now the following: we have a $\Hd$-tensor module $\VPiprimen, 1 \leq n \leq N,$ on which $c \in \dd'$ acts nontrivially; here $n$ is fixed once and for all. Consequently, $\chi = 0$, $\omega = \di \bla$, and $\dd'$ --- as a central extension of $\dd$ --- may be trivialized as $\dd' = \ddbla \oplus \kk c$, where $\ddbla$ is spanned by elements $\d_{\bla} := \d + \bla(\d), \d \in \dd$. Then $\Pi'$ is isomorphic to $\Pi \boxtimes \kk_\lambda$, where $\Pi$ is the restriction of $\Pi'$ to $\ddbla$, and $\kk_\lambda$ is the $1$-dimensional representation of the abelian Lie algebra $\kk c$ on which $c$ acts via multiplication by $\lambda$. We may rewrite \eqref{hsing1} as follows
\begin{equation}\label{hsinglambda}
\begin{split}
e * v = \sum_{i,j=1}^{2N}  & \, (\d_i \d_j \tp 1) \tp_H
(1 \tp \rho_R (f^{ij})v)\\
- \sum_{k=1}^{2N} & \, (\d_k \tp 1) \tp_H \bigl(1 \tp \rho_R (\d^k_{\bla} + \adsp
\d^k) v - \d^k \tp v\bigr)\\
+ & ((\ell + 1) \tp 1) \tp_H (1 \tp \lambda v),\qquad v \in R,
\end{split}
\end{equation}
where $\ell := \sum_k \bla(\d^k)\d_k$.

We now identify all $\Pi \boxtimes \kk_\lambda$ as vector spaces, and consequently all $\V(\Pi \boxtimes \kk_\lambda, R(\pi_n))$ with each other. Notice that $\V(\Pi \boxtimes \kk_0, R(\pi_n))$ is nothing but $\V(\Pi, R(\pi_n)):= \V(\pi^*\Pi, R(\pi_n))$.

\begin{proposition}\label{nongradedsingular}
Let $V = \V(\Pi \boxtimes \kk_\lambda, R(\pi_n))$.
The intersection $\sing V \cap \fil^1 V$ is independent of $\lambda$.
\end{proposition}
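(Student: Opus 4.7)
The plan is to directly compare the singularity condition on $\fil^1 V$ for different values of $\lambda$, exploiting that $\fil^1 V = \fil^1 H \tp \Pi \tp R(\pi_n)$ is the same $\kk$-vector space for all $\lambda$. By the characterization \eqref{vsingf1}, $v \in \sing V$ iff $e * v \in (\fil^2 H \tp \kk) \tp_H V$, so the goal is to verify that this condition on $v \in \fil^1 V$ does not involve $\lambda$. Since the formula \eqref{hsinglambda} differs from its $\lambda = 0$ version only by the single extra summand $((\ell + 1) \tp 1) \tp_H (1 \tp \lambda v)$ on constant $v$, it suffices to check that the full $\lambda$-dependent contribution to $e * v$, for arbitrary $v \in \fil^1 V$, already lies in $(\fil^2 H \tp \kk) \tp_H V$.

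I would write $v = 1 \tp u_0 + \sum_i \d_i \tp u_i$ with $u_0, u_i \in \Pi \tp R(\pi_n)$, and use $H$-bilinearity of the pseudoaction: $e * (\d_i \tp u_i) = (1 \tp \d_i)(e * (1 \tp u_i))$, where $1 \tp \d_i$ multiplies the second factor of $H \tp H$. The $\lambda$-dependent contribution to $e * v$ then reads
\[
\sum_i ((\ell + 1) \tp \d_i) \tp_H (1 \tp \lambda u_i) + ((\ell + 1) \tp 1) \tp_H (1 \tp \lambda u_0).
\]
The constant summand already lies in $(\fil^1 H \tp \kk) \tp_H V$. For the remaining terms, the coproduct identity $1 \tp \d_i = \Delta(\d_i) - \d_i \tp 1$, together with the straightening $((\ell+1) \tp 1)\Delta(\d_i) \tp_H (1 \tp \lambda u_i) = ((\ell + 1) \tp 1) \tp_H (\d_i \tp \lambda u_i)$, rewrites each $i$-th summand as
\[
((\ell + 1) \tp 1) \tp_H (\d_i \tp \lambda u_i) - ((\ell + 1)\d_i \tp 1) \tp_H (1 \tp \lambda u_i),
\]
and both belong to $(\fil^2 H \tp \kk) \tp_H V$ since $\ell + 1 \in \fil^1 H$ and $(\ell + 1)\d_i \in \fil^2 H$.

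The only real care needed is in the $H$-bilinearity bookkeeping: tracking how elements of $\dd$ shift between the second factor of $H \tp H$ and the module $V$ via the coproduct. Once the $\lambda$-dependent portion of $e * v$ has been absorbed into $(\fil^2 H \tp \kk) \tp_H V$, the singularity test for $v \in \fil^1 V$ collapses to the corresponding test at $\lambda = 0$, which is manifestly independent of $\lambda$. Hence $\sing V \cap \fil^1 V$ is the same subspace of $\fil^1 H \tp \Pi \tp R(\pi_n)$ for every choice of $\lambda$, proving the proposition.
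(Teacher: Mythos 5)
Your proof is correct and follows essentially the same route as the paper's: isolate the single $\lambda$-dependent term of \eqref{hsinglambda}, extend to $\fil^1 V$ by $H$-bilinearity, and check that after left-straightening this contribution lands in $(\fil^2 H \tp \kk) \tp_H V$, so the singularity test on $\fil^1 V$ is unaffected. You simply make the left-straightening explicit (and phrase the argument symmetrically rather than proving one direction and appealing to the converse being analogous), which the paper's proof leaves as a one-line remark.
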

\begin{proof}
We show that each singular vector of degree at most one in $\V(\Pi \boxtimes \kk_0, R(\pi_n))$ is also a singular vector in $\V(\Pi \boxtimes \kk_\lambda, R(\pi_n))$, as the converse is completely analogous.

The first two terms in the right-hand side of \eqref{hsinglambda} describe $e*v$ when $\lambda = 0$. Assume $s$ is a singular vector of degree one for $\V(\Pi \boxtimes \kk_0, R(\pi_n))$. We need to compute $e*s$ in $\V(\Pi \boxtimes \kk_\lambda, R(\pi_n))$ and check that it lies in $(\fil^2 H \tp H) \tp_H V$.

However, the contribution in $e*s$ from the first two summands in the right-hand side of 
\eqref{hsinglambda} belongs to $(\fil^2 H \tp H) \tp_H V$ as $s$ is a singular vector for the $\lambda = 0$ case; furthermore, the third summand left-straightens to $(\fil^2 H \tp H) \tp_H V$ because $s$ is of degree one.
\end{proof}

\subsection{Singular vectors of degree two}

Throughout this section, we will focus on an $\Hd$-tensor module of the form $\V(R) = \VPiprimen, n = 1, \dots, N$. Recall the definition of $\psi(u)$ given
in \seref{shirten}. Then, if
$$S= \sum_{i,j=1}^{2N} \d_i \d_j \tp s^{ij} + \sum_{k=1}^{2N} \d_k \tp s^k + 1 \tp
s$$
is a vector of degree two lying in $\sing \V(R)$, one has:
\begin{lemma}\label{spactiondeg2}
$f^{\alpha\beta}(S) = \psi(s^{\alpha\beta}) \mod \fil^1 \V(R)$.
\end{lemma}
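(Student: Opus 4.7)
I will read off the action $\rho_{\sing}(f^{\alpha\beta}) S$ from the pseudoaction $e * S$ via the Fourier-coefficient formula \eqref{rhosing}, and match the result to $\psi(s^{\alpha\beta})$ modulo $\fil^1 \V(R)$ using \leref{lcoefficients}.

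By \eqref{rhosing}, $f^{\alpha\beta}(S) = \tfrac12(x^\alpha x^\beta \otimes_H e)\cdot S$. Since $x^\alpha x^\beta - x^\alpha x^\beta e^{-\chi}$ lies in $\fil_2 X$ (because $x^\alpha x^\beta\in \fil_1 X$ and $e^{-\chi}-1\in\fil_0 X$), the two elements $x^\alpha x^\beta\otimes_H e$ and $x^\alpha x^\beta e^{-\chi}\otimes_H e$ differ by an element of $\P_1$, which annihilates the singular vector $S$, so I may compute with the twisted version. By \reref{pseudoactioncheck}, the action $(x^\alpha x^\beta e^{-\chi}\otimes_H e)\cdot S$ equals the coefficient of $\overline{S(\d_\alpha\d_\beta)}\otimes 1 = \bar\d_\beta\bar\d_\alpha\otimes 1$ in the (unique) left-straightened expansion of $e*S$. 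Meanwhile, \leref{lcoefficients}, applied to $S = \sum_J \d^{(J)}\otimes v_J$ written in the PBW basis, identifies the coefficient of $1\otimes\d^{(I)}$ in the same $e*S$ with $\psi(v_I)$ modulo $\fil^1\V(R)$. The key step is to match these two coefficient extractions modulo $\fil^1\V(R)$: both $\overline{S(\d^{(I)})}$ and $S(\d^{(I)}) = (-1)^{|I|}\d^{(I)} + (\text{lower})$ agree with $\d^{(I)}$ in $\gr^2 H$ (since the sign $(-1)^{|I|}$ is $+1$ for $|I|=2$ and the bar-shift lands in $\fil^1 H$), while the subleading terms in $\Delta(\d^{(I)})$ together with the error terms of \leref{lcoefficients} all contribute to $\fil^1\V(R)$. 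This yields $(x^\alpha x^\beta e^{-\chi}\otimes_H e)\cdot S \equiv \psi(v_I) \pmod{\fil^1\V(R)}$, where $I$ has 1's at positions $\alpha,\beta$ (or a 2 at $\alpha$, if $\alpha=\beta$).

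A direct PBW expansion of $S$ yields $v_I = s^{\alpha\beta}+s^{\beta\alpha}$ for $\alpha\neq\beta$ and $v_I = 2s^{\alpha\alpha}$ for $\alpha=\beta$, using $\d_\beta\d_\alpha = \d_\alpha\d_\beta - [\d_\alpha,\d_\beta]$ and $\d_\alpha^2 = 2\d^{(I)}$. Since any non-symmetric part of $s^{ij}$ contributes commutator corrections $[\d_i,\d_j]\otimes s^{ij}\in\fil^1\V(R)$, I may assume $s^{ij}=s^{ji}$, whence $v_I = 2s^{\alpha\beta}$ uniformly. Combined with the factor $\tfrac12$ from \eqref{rhosing} and the $\kk$-linearity of $\psi$, this will give $f^{\alpha\beta}(S) \equiv \tfrac12\psi(2s^{\alpha\beta}) = \psi(s^{\alpha\beta}) \pmod{\fil^1\V(R)}$. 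The hard part is the Hopf-algebraic bookkeeping in the comparison of the two coefficient extractions, but for $|I|=2$ this reduces to the four-term identity $\Delta(\d_\alpha\d_\beta) = \d_\alpha\d_\beta\otimes 1 + \d_\alpha\otimes\d_\beta + \d_\beta\otimes\d_\alpha + 1\otimes\d_\alpha\d_\beta$ together with the triviality of the bar automorphism and the antipode modulo $\fil^1 H$ in $\gr^2 H$.
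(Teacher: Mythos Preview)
Your proposal is correct and follows essentially the same idea as the paper: compute $e*S$ in two ways (once via $H$-bilinearity from \eqref{hsing2}, once via singularity of $S$) and compare the degree-two coefficients. The paper's proof is the one-liner ``Use \eqref{hsing2} to compute $e*S$ and compare it with \eqref{hsing3}.''

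The only difference is that you take a small detour. For the singular-vector side you invoke \eqref{rhosing} and \reref{pseudoactioncheck}, which produce $e*S$ in \emph{left}-straightened form $\sum_J(\overline{S(\d^{(J)})}\otimes 1)\otimes_H w_J$, and you then argue that this matches the right-straightened coefficients from \leref{lcoefficients} modulo $\fil^1\V(R)$. That matching is valid---the key point being that the $|J|\le 1$ terms contribute only to $(1\otimes\fil^1 H)$ after right-straightening, and the $|J|=2$ term contributes exactly $w_J$ to the coefficient of $1\otimes\d^{(J)}$---but the paper avoids this step entirely: \eqref{hsing3} is precisely the right-straightened reformulation of \eqref{Hactionsing}, already giving $e*S=\sum_{i,j}(1\otimes\d_i\d_j)\otimes_H\rho_{\sing}(f^{ij})S+(1\otimes\fil^1 H)\otimes_H\V(R)$. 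Since \leref{lcoefficients} is also stated in right-straightened form, the comparison is then immediate without any Hopf-algebraic change of basis. Your handling of the symmetrization $s^{ij}=s^{ji}$ and the factor of $2$ is exactly what is implicit in the paper's computation.
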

\begin{proof}
Use \eqref{hsing2} to compute $e*S$ and compare it with
\eqref{hsing3}.
\end{proof}
As we are assuming $S$ to be of degree two, some $s^{ij}$ must be non-zero. Then $\psi(s^{ij})$ is also non-zero, as the action of $\spd$ on $R$ is non-trivial, hence $f^{ij}(S) = \psi(s^{ij}) \mod \fil^1 \V(R)$ is non-zero.

This shows, in one shot, that the action of $\spd$ on each singular vector $S$ of degree two is non-trivial, and that for some $0 \neq u\in R$ there exists a singular vector
coinciding with $\psi(u)$ modulo $\fil^1 \V(R)$, as the $\spd$-action preserves singular vectors, hence $f^{ij}(S)$ is singular too.

\begin{lemma}\label{lphiproperties}
If $u \in R$, let $\phi(u)$ denote a singular vector, if there exists one, coinciding with $\psi(u) \mod \fil^1 \V(R)$. Then $f^{\alpha \beta} (\phi(u))$ and $\widehat\d (\phi(u))$ are also singular vectors, which coincide, respectively, with $\psi(f^{\alpha \beta}(u))$ and $\psi(\widehat \d.u + \chi(\d)u) $ modulo $\fil^1 \V(R)$.
\end{lemma}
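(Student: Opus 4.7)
The plan is to exploit that both $f^{\alpha\beta}$ and $\widehat\d$ lie in $\N_\P$, which normalizes $\P_1$, so that the corresponding operators on $\V(R)$ automatically preserve the space of singular vectors $\sing\V(R) = \ker_1 \V(R)$. Thus $f^{\alpha\beta}(\phi(u))$ and $\widehat\d(\phi(u))$ are singular a priori, and the task reduces to identifying them modulo $\fil^1 \V(R)$.

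By \eqref{nhfp} in \leref{lfilact}, $\N_\P$ also preserves the filtration $\{\fil^p \V(R)\}$. Writing $\phi(u) = \psi(u) + r$ with $r \in \fil^1 \V(R)$, the correction terms $f^{\alpha\beta}(r)$ and $\widehat\d(r)$ remain in $\fil^1 \V(R)$, so it suffices to compute $f^{\alpha\beta}(\psi(u))$ and $\widehat\d(\psi(u))$ modulo $\fil^1 \V(R)$. This is a calculation on $\gr^2 \V(R) \simeq S^2\dd \tp R$, on which by \leref{legraction} (with $p = 2$) the actions read
\[ A \cdot (g \tp w) = A(g) \tp w + g \tp \rho_R(A) w \,, \qquad \widehat\d \cdot (g \tp w) = g \tp \bigl( \rho_R(\widehat\d) w + \chi(\d) w \bigr) \,, \]
for $A \in \spd$, $g \in S^2\dd$, $w \in R$, where $A$ acts on $S^2\dd$ as a derivation via its natural action on $\dd$.

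The claim for $\widehat\d$ is then immediate: since $\widehat\d \in \dd'$ and $f^{ij} \in \spd$ lie in distinct summands of $\dd' \oplus \spd$, their actions on $R$ commute, and hence
\[ \widehat\d \cdot \psi(u) \equiv \sum_{i,j} \d_i\d_j \tp \bigl( \rho_R(\widehat\d)(f^{ij} u) + \chi(\d) f^{ij} u \bigr) = \psi(\widehat\d \cdot u + \chi(\d) u) \mod \fil^1 \V(R) \,. \]

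The claim for $f^{\alpha\beta}$ reduces to the identity
\[ \sum_{i,j} f^{\alpha\beta}(\d_i\d_j) \tp f^{ij} + \sum_{i,j} \d_i\d_j \tp [f^{\alpha\beta}, f^{ij}] = 0 \quad \text{in } S^2\dd \tp \spd \,, \]
i.e., to the $\spd$-invariance of the element $T = \sum_{i,j} \d_i\d_j \tp f^{ij}$ under the diagonal action. This is a standard fact about the symplectic Lie algebra: the $\spd$-equivariant isomorphism $S^2\dd \simeq \spd$ (arising from $\End\dd \simeq \dd \tp \dd$ via the identification $\dd^* \simeq \dd$ induced by $\omega$, under which the subspace $\spd \subset \End\dd$ corresponds to $S^2\dd$) guarantees a unique invariant up to scalar in $S^2\dd \tp \spd$, and $T$ is nonzero, hence proportional to it. The main obstacle is the direct verification of this invariance, since expanding $f^{ij} = -\tfrac12(e^{ij} + e^{ji})$ with $e^{ij} = \sum_k r^{ik} e^j_k$ produces several terms involving both $r^{ij}$ and the structure constants of $\spd$; the computation nevertheless becomes transparent in a symplectic basis as in \exref{symbas}, where it reduces to an elementary check.
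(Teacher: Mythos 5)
Your proof is correct in outline but organizes the $f^{\alpha\beta}$ step differently from the paper. The paper simply applies \leref{spactiondeg2}, already established by comparing the two expansions \eqref{hsing2} and \eqref{hsing3} of $e*v$: writing $\phi(u) = \sum_{ij}\d_i\d_j\tp s^{ij} + \dots$ with $s^{ij} = f^{ij}u$, that lemma directly yields $f^{\alpha\beta}(\phi(u)) \equiv \psi(s^{\alpha\beta}) = \psi(f^{\alpha\beta}u)$. Your approach instead computes on $\gr^2\V(R) \simeq S^2\dd\tp R$ via \leref{legraction} and reduces to the $\spd$-invariance of the element $T = \sum_{i,j}\d_i\d_j\tp f^{ij} \in S^2\dd\tp\spd$. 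This is a valid and arguably more conceptual route, and it makes transparent why the identity holds (it is an equivariance statement about the map $u\mapsto\psi(u)$). The $\widehat\d$ case is handled the same way in both proofs, via \leref{legraction} and the commutativity of the two summands of $\dd'\oplus\spd$.

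There is, however, a genuine logical gap in your argument for the invariance of $T$. From ``$S^2\dd\tp\spd$ has a unique invariant up to scalar'' and ``$T\ne 0$'' you cannot conclude that $T$ is proportional to that invariant --- most nonzero elements of $S^2\dd\tp\spd$ are not invariant. What you need is an independent reason that $T$ is invariant: either the explicit verification in a symplectic basis that you allude to (it does work, e.g.\ for $N=1$ one finds $T = \d_1^2\tp F - \d_1\d_2\tp H - \d_2^2\tp E$ and $E\cdot T = F\cdot T = H\cdot T = 0$), or the observation that under the chain of $\spd$-equivariant identifications $S^2\dd\tp\spd \simeq \spd\tp\spd \simeq \spd^*\tp\spd \simeq \End(\spd)$ the element $T$ corresponds to a scalar multiple of the identity endomorphism, which is manifestly invariant. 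As written, the phrase ``hence proportional to it'' asserts the conclusion without supplying either of these justifications, so the argument is incomplete; once one of them is supplied, the proof is sound.
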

\begin{proof}
The claims follow from \leref{spactiondeg2} and \leref{legraction}, along with the definition of $\psi(u)$.
\end{proof}

\begin{lemma}\label{phiproperties}
For every $u \in R$ there exists some singular vector $\phi(u)$ coinciding with $\psi(u)$ modulo $\fil^1 \V(R)$. The corresponding map $\phi: R \to \gr^2 \V(R)$ is a linear homomorphism and satisfies:
\begin{align}
\label{fabphi}
f^{\alpha\beta}(\phi(u)) & = \phi(f^{\alpha\beta}(u)),\\
\label{fabs}
f^{\alpha\beta}(S) & = \phi(s^{\alpha\beta}),\\
\label{dphi} \widehat\d.\phi(u) & = \phi(\widehat\d.u+ \chi(\d)u),\\
\label{cphi} c.	\phi(u) & = \phi(c.u).
\end{align}
Thus, $\phi$ is a well defined injective homomorphism which commutes with the $\spd$-action.
\end{lemma}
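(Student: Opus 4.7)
My plan is to use the existence of a single $\phi(u_0)$ (established in the discussion just before the lemma via \leref{spactiondeg2}) to propagate existence throughout $R$ by the $\dd' \oplus \spd$-action, and then read off equivariance and injectivity from \leref{lphiproperties} together with a separate argument for the central element $c$.

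First I would set $R_0 := \{u \in R : \phi(u) \text{ exists as a singular vector}\}$. Since sums of singular vectors are singular and $\psi$ is $\kk$-linear, $R_0$ is a $\kk$-subspace of $R$. By \leref{lphiproperties}, $R_0$ is stable under the $\spd$-action and under each $\widehat\d$ (writing $\widehat\d.u = (\widehat\d.u + \chi(\d)u) - \chi(\d)u$, both terms lie in $R_0$ when $u$ does). Finally, the element $c \in \dd'$ acts on the irreducible $\dd'\oplus\spd$-module $R$ as a scalar (by the classification of finite-dimensional irreducibles of $\dd'$ in Section \ref{sfdird}), so $R_0$ is automatically $c$-stable. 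Hence $R_0$ is a non-zero $\dd' \oplus \spd$-submodule of the irreducible module $R$, and must equal $R$.

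Next, as a map to $\gr^2 \V(R)$, the assignment $u \mapsto \psi(u) \mmod \fil^1 \V(R)$ is well defined (two singular representatives differ by an element of $\fil^1 \V(R)$) and $\kk$-linear. Identities \eqref{fabphi} and \eqref{dphi} are immediate from \leref{lphiproperties} after passing to $\gr^2 \V(R)$, while \eqref{fabs} is \leref{spactiondeg2} applied to the singular vector $S$, using that $f^{\alpha\beta}(S)$, being a singular vector, lies in $\fil^2 \V(R)$ and projects to $\psi(s^{\alpha\beta}) = \phi(s^{\alpha\beta})$ in $\gr^2 \V(R)$.

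The main obstacle is \eqref{cphi}, which I would handle by splitting on the scalar $\lambda$ by which $c$ acts on $R$. If $\lambda = 0$, then the last summand of the pseudoaction \eqref{hsing1} vanishes, so $\V(R)$ coincides with a tensor module obtained by restriction from $\Wd$ via the embedding $\iota$; hence the $\widetilde\P$-action on $\V(R)$ factors through $\iota_*$, and \leref{lkerio} gives $\iota_*(c) = 0$, so $c$ acts as zero on all of $\V(R)$, matching $\phi(c.u) = \phi(0) = 0$. If instead $\lambda \ne 0$, Section \ref{sfdird} forces $\chi = 0$, and then by the cocycle formula \eqref{abext} the element $c$ is central in $\widetilde\P$. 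In particular $c$ commutes with the $\dd$-action on $\V(R)$, making $c\colon\V(R)\to\V(R)$ an $H$-linear endomorphism; since $\V(R) \simeq H \otimes R$ as an $H$-module and $c$ acts as $\lambda\cdot\id$ on $1 \otimes R$, it acts as $\lambda\cdot\id$ on all of $\V(R)$, so $c.\phi(u) = \lambda\phi(u) = \phi(\lambda u) = \phi(c.u)$. Finally, injectivity is immediate: in $\gr^2 \V(R) \simeq S^2\dd \otimes R$, vanishing of $\psi(u) = \sum_{i,j} \d_i\d_j \otimes f^{ij}u$ forces all $f^{ij}u = 0$, i.e., $\spd$ annihilates $u$; but $U = R(\pi_n)$ is a non-trivial irreducible $\spd$-module for $1 \leq n \leq N$, so $u = 0$.
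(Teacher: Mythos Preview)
Your proof is correct and follows essentially the same strategy as the paper: define $R_0$, show it is a non-zero $\dd'\oplus\spd$-submodule of $R$ via \leref{lphiproperties}, invoke irreducibility, then read off the equivariance identities and injectivity. The paper's proof is terser---it simply cites \eqref{fabs} for injectivity (which unravels to your direct argument that $\psi(u)=0$ forces $\spd\cdot u=0$) and says the remaining identities ``have already been proved.'' Your explicit treatment of \eqref{cphi} via the case split on $\lambda$ is a genuine addition: the paper does not spell this out, and \leref{lphiproperties} as stated only covers $f^{\alpha\beta}$ and $\widehat\d$, not $c$; your argument that $c$ acts as the scalar $\lambda$ on all of $\V(R)$ (trivially when $\lambda=0$ since the action factors through $\H$, and by $H$-linearity when $\lambda\ne0$ since then $\chi=0$ and $\ce$ is central in $\widetilde\P$) is the cleanest way to fill this in.
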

\begin{proof}
The space of elements $u \in R$ for which we can locate a singular vector equal to $\psi(u)$ modulo $\fil^1 \V(R)$ is non-zero and $\dd' \oplus \spd$-stable. By irreducibility of $R$, it must be all of $R$. Injectivity follows from $\eqref{fabs}$, whereas the other properties have already been proved.
\end{proof}

\begin{corollary}
The space of singular vectors of degree two does not
contain trivial $\spd$-summands.
\end{corollary}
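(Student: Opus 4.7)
The plan is to argue by contradiction, exploiting the injectivity of the map $\phi$ built in \leref{phiproperties} and the explicit formula for the $\spd$-action on the leading term of a degree-two singular vector given by \leref{spactiondeg2}.

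First I would suppose that there exists a nonzero singular vector $S$ of degree exactly two on which every $f^{\alpha\beta} \in \spd$ acts as zero (equivalently, $S$ spans a trivial $\spd$-summand inside the space of degree-two singular vectors). Writing
\[
S = \sum_{i,j=1}^{2N} \d_i\d_j \tp s^{ij} + \sum_{k=1}^{2N} \d_k \tp s^k + 1 \tp s,
\]
with the assumption that $S$ really is of degree two, at least one coefficient $s^{\alpha\beta} \in R$ must be nonzero.

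Next I would apply \leref{spactiondeg2}, which tells us that for every pair $(\alpha,\beta)$,
\[
f^{\alpha\beta}(S) \equiv \psi(s^{\alpha\beta}) \mod \fil^1 \V(R).
\]
By \leref{phiproperties}, the right-hand side coincides with $\phi(s^{\alpha\beta})$ in $\gr^2 \V(R)$. The assumption that $\spd$ annihilates $S$ forces $f^{\alpha\beta}(S) = 0$, hence $\phi(s^{\alpha\beta}) = 0$ for all $\alpha, \beta$.

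Finally, since $\phi\colon R \to \gr^2 \V(R)$ is injective by \leref{phiproperties}, we conclude $s^{\alpha\beta} = 0$ for all $\alpha, \beta$, contradicting the assumption that $S$ has degree two. There is no genuine obstacle here: the proof is a direct consequence of the two preceding lemmas, with the only subtle point being to notice that the injectivity of $\phi$ (which was established via $f^{\alpha\beta}(\phi(u)) = \phi(f^{\alpha\beta}(u))$ together with the irreducibility and nontriviality of the $\spd$-action on $R$) is exactly what rules out the existence of a trivial $\spd$-summand among degree-two singular vectors.
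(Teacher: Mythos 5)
Your proof is correct and follows exactly the paper's argument: assume $S$ of degree two is $\spd$-trivial, use $f^{\alpha\beta}(S)=\phi(s^{\alpha\beta})$ from \leref{spactiondeg2} and \eqref{fabs}, and conclude from the injectivity of $\phi$ established in \leref{phiproperties} that all $s^{\alpha\beta}$ vanish, contradicting $S$ having degree two.
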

\begin{proof}
If $S$ is a homogeneous singular vector of degree two lying in a
trivial summand, then $0 = f^{\alpha\beta}(S) =
\phi(s^{\alpha\beta})$ for all $\alpha, \beta$. But $\phi$ is
injective, hence $s^{\alpha\beta} = 0$, a contradiction with $S$
being of degree two, hence $S=0$.
\end{proof}

We are now ready to classify singular vectors of degree two. As before, we will first proceed with tensor modules on which the action of $c \in \dd'$ is trivial, and then extend our results to the general case.

\subsubsection{Singular vectors of degree two, graded case}

Our setting is now as follows: $\Pi$ is a $\dd$-module, and $\VPin:= \V(\pi^* \Pi, R(\pi_n)) = \VPiprimen$ denotes the $\Hd$-tensor module as in \deref{dvmodw}(iii).

\begin{proposition}\label{degreetwoisirreducible}
Non-zero singular vectors of degree two in $\VPin$ project to a unique $\dd \oplus \spd$-irreducible summand in $\gr^2 \VPin$, which is isomorphic to $\Pi_{\chi} \boxtimes R(\pi_n)$.
Moreover, the homomorphism $\dindinminusonestar$  maps injectively $\fil^0 \VPichin$ to an irreducible summand of singular vectors of degree two in $\VPin$.

In particular, the space of non-zero homogeneous singular vectors of degree two in $\VPin$ coincides with $\dindinminusonestar \fil^0 \VPichin$.
\end{proposition}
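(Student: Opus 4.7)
The plan is to identify the degree-two singular vector summand abstractly via the map $\phi$ of \leref{phiproperties}, and then verify that $\dindinminusonestar$ realizes it concretely.

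First I would invoke \leref{phiproperties} to obtain the injective $\spd$-equivariant linear map $\phi\colon R\to\gr^2\V(R)$ sending $u$ to the image of $\psi(u)=\sum_{i,j}\d_i\d_j\tp f^{ij}u$, which intertwines the $\widehat\d$-action via $\widehat\d\cdot\phi(u)=\phi(\widehat\d\cdot u+\chi(\d)u)$. Since we are in the graded setting $R=\pi^*\Pi\boxtimes R(\pi_n)$ with $c$ acting trivially, this twist amounts to tensoring the $\dd$-module $\pi^*\Pi$ by $\kk_\chi$, so $\phi(R)$ is an irreducible $\dd\oplus\spd$-submodule of $\gr^2\VPin$ isomorphic to $\Pi_\chi\boxtimes R(\pi_n)$; every one of its elements is the $\gr^2$-projection of a genuine degree-two singular vector of $\VPin$.

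Next I would establish uniqueness by showing every non-zero degree-two singular vector projects into $\phi(R)$. Let $W\subset\gr^2\VPin$ denote the image of the space of degree-two singular vectors modulo those of strictly smaller degree; it is a $\dd\oplus\spd$-submodule containing $\phi(R)$. For any degree-two singular vector $S$ with leading term $\sum\d_i\d_j\tp s^{ij}$, \leref{spactiondeg2} gives $f^{\alpha\beta}(S)\equiv\phi(s^{\alpha\beta})$ modulo $\fil^1\VPin$, so $\spd\cdot W\subset\phi(R)$. Hence $W/\phi(R)$ is annihilated by $\spd$, and complete reducibility places a trivial $\spd$-summand inside $W$ mapping isomorphically onto it. But the corollary immediately preceding the proposition rules out non-zero trivial $\spd$-summands among degree-two singular vectors, forcing $W=\phi(R)$.

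For the second assertion, the homomorphism $\dindinminusonestar\colon\VPichin\to\VPin$ is the \emph{Rumin-type} composition assembled from the two consecutive differentials of the twisted conformally symplectic pseudo de Rham complex of \thref{tmodhd}. By \leref{singtosing} it sends $\fil^0\VPichin\subset\sing\VPichin$ into $\sing\VPin$; by \reref{degreeone} each constituent differential carries constants to vectors of degree one, and the intermediate application of $\Psi_\chi^{-1}$ preserves degree, so the composition carries $\fil^0\VPichin$ into $\fil^2\VPin$. By the first part of the proof, its projection to $\gr^2\VPin$ lies in $\phi(R)$. The restriction of $\dindinminusonestar$ to $\fil^0\VPichin\simeq\Pi_\chi\boxtimes R(\pi_n)$ is therefore a $\dd\oplus\spd$-module homomorphism into the irreducible module $\phi(R)$ of the same isomorphism type, and by Schur's lemma it is either zero or an isomorphism onto $\phi(R)$.

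The main obstacle is ruling out vanishing on $\fil^0\VPichin$. I would argue non-vanishing directly from the explicit formulas for $\di_\Pi$ and $\Psi_\chi$ in \seref{sdrmh}\textrm{:} each individual differential acts non-trivially on constants to produce genuine degree-one outputs, while the intermediate inverse of $\Psi_\chi$ preserves both the degree and the non-vanishing, so the composite cannot kill all of $\fil^0$. Once non-vanishing is secured, Schur's lemma promotes it to injectivity onto $\phi(R)$, which together with the first part of the proof identifies $\dindinminusonestar(\fil^0\VPichin)$ with the entire space of non-zero homogeneous singular vectors of degree two in $\VPin$ and completes the proof.
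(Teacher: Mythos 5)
Your first paragraph, reducing uniqueness of the degree-two summand to \leref{phiproperties} together with the corollary excluding trivial $\spd$-summands, is correct and is essentially what the paper means by ``the first claim is clear.'' The degree estimate $\fil^0\VPichin\mapsto\fil^2\VPin$ via \reref{degreeone}, and the Schur-type dichotomy (zero or isomorphism onto $\phi(R)$) for the restriction of $\dindinminusonestar$ to $\fil^0$, are likewise sound.

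However, the non-vanishing step --- the one you yourself flag as ``the main obstacle'' --- is where the argument breaks. You claim the composite is non-zero because ``each individual differential acts non-trivially on constants,'' but non-vanishing of each factor does not imply non-vanishing of a composition: the image $\dinminusonestar\fil^0\VPichin$ is a non-zero degree-one singular summand in $\VPinminusone$, but it could a priori lie entirely inside $\ker\din$, in which case the composite vanishes on $\fil^0$. Nothing in the paragraph rules that out. The paper closes exactly this gap by using exactness of the twisted conformally symplectic complex: $\ker\din=\im\prevdin$, and the only degree-one singular summand inside $\im\prevdin$ is $\prevdin\fil^0\VPichinminustwo\simeq\Pi_\chi\boxtimes R(\pi_{n-2})$, which is not $\spd$-isomorphic to $\dinminusonestar\fil^0\VPichin\simeq\Pi_\chi\boxtimes R(\pi_n)$. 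That mismatch of $\spd$-isotypes is what forces $\din$ to be non-zero (hence injective, by Schur) on $\dinminusonestar\fil^0\VPichin$; your argument supplies no substitute for it.

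There is also a conceptual misidentification that feeds into the flawed step: you describe $\dindinminusonestar$ as a ``Rumin-type composition'' containing an ``intermediate inverse of $\Psi_\chi$.'' That description applies only to $\diru_\Pi$, hence (by \reref{uptomultiplication}) only to $\diNdiNminusonestar$. For $1\le n<N$ the map $\dindinminusonestar=\din\circ\dinminusonestar$ is a plain composition of two ordinary differentials taken from \emph{differently twisted} copies of the complex --- it is precisely because they come from different twists that the composite is not automatically zero, and also precisely why one cannot read off its non-vanishing from a formula for $\Psi_\chi^{-1}$. Replace the hand-wave with the exactness/$\spd$-isotype argument and the proof goes through.
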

\begin{proof}
The first claim is clear.

As for the second one, an $\Hd$-homomorphism maps singular vectors to singular vectors, and its restriction to an irreducible $\dd \oplus \spd$-summand is either $0$ or an isomorphism. Recall that, by exactness of the conformally symplectic pseudo de Rham complex, $\din \prevdin$ equals $0$, and $\ker \din = \Im\,\, \prevdin$.
Now, $\prevdin$ must be injective on 
$\dinminusonestar \fil^0 \VPichin$. If it were not, this homogeneous summand of degree $1$ singular vectors would have to lie in the image of $\prevdin$, hence it would be obtained by applying $\prevdin$ to constant singular vectors. However $\prevdin \fil^0 \VPichinminustwo$ is  isomorphic to $\Pi_{\chi} \boxtimes R(\pi_{n-2})$ as $\dd \oplus \spd$-module, yielding a contradiction.
We conclude that $\dindinminusonestar \fil^0 \VPichin$ is a homogeneous summand of degree $2$ singular vectors, which is $\dd\oplus \spd$-isomorphic to $\Pi_\chi \boxtimes R(\pi_n)$.
\end{proof}

\begin{remark}
Note that if $\chi=0$ a proper submodule $M$ of a tensor module $\V(R)$ can
in principle contain non homogeneous singular vectors of degree two. Indeed,
homogeneous singular vectors of degree zero and two span in such a case isomorphic $\dd \oplus \spd$-submodules of $\V(R)$, so that suitable linear combinations will yield possibly non homogeneous singular vectors.

We will see below that singular vectors of degree two contained in a proper submodule are always homogenous. We will show this by constructing, when $\chi = 0$, non-homogeneous automorphisms of reducible tensor modules.
\end{remark}

\subsubsection{Singular vectors of degree two, non-graded case}\label{lambdavectors}

Our setting is as above in \seref{ssnongraded} and we identify, again, all $\V(\Pi \boxtimes \kk_\lambda, R(\pi_n))$ with each other. 

\begin{proposition}
Let $S = \sum_{ij} \d_i \d_j \otimes f^{ij}(u) + \sum_k \d_k \otimes s^k + 1 \otimes s$.
Then $S$ is singular in $\V(\Pi \boxtimes \kk_0, R(\pi_n))$ if and only if $S_\lambda := S + \lambda\, \ell \otimes u$ is singular in $V = \V(\Pi \boxtimes \kk_\lambda, R(\pi_n))$.
\end{proposition}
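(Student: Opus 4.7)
The strategy is to compute the difference $D_\lambda := e*_\lambda - e*_0$ between the two $\Hd$-actions on the common underlying vector space $V = H \otimes R$ (with $\Pi\otimes\kk_\lambda$ and $\Pi\otimes\kk_0$ identified as vector spaces), and show that $e*_\lambda S_\lambda - e*_0 S \in (\fil^2 H \otimes \kk)\otimes_H V$; by the singularity criterion \eqref{vsingf1}, this gives both directions of the equivalence. The shift $\lambda\,\ell \otimes u$ is engineered precisely to absorb a degree-$3$ residue that $D_\lambda$ would otherwise produce from the degree-$2$ part of $S$.

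By \eqref{hsinglambda} only the last summand of the pseudoaction depends on $\lambda$, so for $v \in R$ one has $D_\lambda(1 \otimes v) = \lambda((\ell+1)\otimes 1)\otimes_H v$. Extending by $H$-linearity in the second argument of the pseudobracket (which acts on the second tensor factor of $H\otimes H$ by left multiplication), this becomes $D_\lambda(h \otimes v) = \lambda((\ell+1)\otimes h)\otimes_H v$ for arbitrary $h \in H$. Reducing to left normal form via the identity $(f\otimes g)\otimes_H v = (fS(g_{(1)})\otimes 1)\otimes_H g_{(2)} v$, one checks that for $h \in \fil^1 H$ the first factors all lie in $\fil^2 H$, so $D_\lambda$ preserves the singular form on vectors of degree $\leq 1$; but for $h = \d_i\d_j$, expanding $\Delta(\d_i\d_j) = \d_i\d_j\otimes 1 + \d_i\otimes\d_j + \d_j\otimes\d_i + 1\otimes\d_i\d_j$ and using $S(\d_i\d_j) = \d_j\d_i$ produces a genuine degree-$3$ term $\lambda(\ell\d_j\d_i\otimes 1)\otimes_H v$.

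Writing
\begin{equation*}
e*_\lambda S_\lambda - e*_0 S = D_\lambda S + \lambda\,e*_0(\ell\otimes u) + \lambda\,D_\lambda(\ell\otimes u),
\end{equation*}
the last summand has no degree-$3$ piece (since $\ell \in \fil^1 H$), and the only degree-$3$ residue of $D_\lambda S$ comes from the degree-$2$ part of $S$, contributing $\lambda\sum_{ij}(\ell\d_j\d_i\otimes 1)\otimes_H f^{ij}(u)$. For the middle term $\lambda(1\otimes\ell)\cdot e*_0(1\otimes u)$, only the degree-$2$ summand $\sum_{pq}(\d_p\d_q\otimes 1)\otimes_H f^{pq}(u)$ of $e*_0(1\otimes u)$ can produce a degree-$3$ first factor; applying $(f\otimes\ell)\otimes_H v = -(f\ell\otimes 1)\otimes_H v + (f\otimes 1)\otimes_H \ell v$ extracts the residue $-\lambda\sum_{pq}(\d_p\d_q\ell\otimes 1)\otimes_H f^{pq}(u)$. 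Using $f^{ij} = f^{ji}$ to relabel the $D_\lambda S$ residue as $\lambda\sum_{ij}(\ell\d_i\d_j\otimes 1)\otimes_H f^{ij}(u)$, the total degree-$3$ contribution becomes $\lambda\sum_{ij}([\ell,\d_i\d_j]\otimes 1)\otimes_H f^{ij}(u)$, which lies in $(\fil^2 H\otimes\kk)\otimes_H V$ since $[\ell,\d_i\d_j] \in \fil^2 H$. The main obstacle is the careful bookkeeping of these degree-$3$ contributions and recognizing that the chosen shift forces the two residues to differ exactly by this commutator of degree $\leq 2$; once this is noted, both directions of the equivalence follow.
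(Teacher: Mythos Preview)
Your argument is correct and follows essentially the same route as the paper's proof: you split $e*_\lambda S_\lambda - e*_0 S$ into pieces, observe that all but two contributions are manifestly in $(\fil^2 H\otimes\kk)\otimes_H V$ after left-straightening, and show that the two remaining degree-$3$ residues combine to the commutator $\sum_{ij}([\ell,\d_i\d_j]\otimes 1)\otimes_H f^{ij}(u)$, which drops to $\fil^2 H$. The paper records exactly this in one sentence (``the only two terms that do not obviously lie \ldots\ cancel with each other modulo \ldots''), so your write-up simply makes the cancellation explicit.
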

\begin{proof}
Proceed as in \prref{nongradedsingular}. The only two terms that do not obviously lie in $(\fil^2 H \tp H) \tp_H V$ cancel with each other modulo $(\fil^2 H \tp H) \tp_H V$.
\end{proof}

If $W \subset \V(R)$ is a subset of singular vectors of degree two, it makes sense to denote by $W_\lambda$ the set of all $S_\lambda$, where $S \in W$.

\subsection{Classification of singular vectors}
As a result of the above discussion, we obtain the following 
classification of singular vectors.

\begin{theorem}\label{singclassification}
Let $V= \V(\Pi', U)$ be a tensor module over $\Hd$, where $\Pi' \boxtimes U$ is an irreducible $\dd' \oplus \spd$-module in which $c\in \dd'$ acts via multiplication by $\lambda \in \kk$. When $\lambda \neq 0$, denote by $\Pi$ the unique $\dd$-module such that $\Pi' \simeq \Pi \boxtimes \kk_\lambda$ as $\dd' = \ddbla \oplus \kk c$-modules; when $\lambda = 0$, set $\Pi$ to satisfy $\Pi' = \pi^* \Pi$.
\begin{itemize}
\item If $U \simeq R(\pi_0) = \kk$, then
$$\sing V = \fil^1 V = \fil^0 V \oplus \dizerostar  \fil^0 \VPione;$$
\item if $U \simeq R(\pi_n), 1
\leq n < N,$ then
\begin{equation*}
\begin{split}
\sing V  = \fil^0 V & \oplus \,\din \fil^0\VPinminusone\\
& \oplus\dinstar \fil^0\VPinplusone\\
& \oplus \left(\dindinminusonestar  \fil^0 \VPichin\right)_\lambda;
\end{split}
\end{equation*}
\item if $U \simeq R(\pi_N)$ then
\begin{equation*}
\begin{split}
\sing V = \fil^0 V & \oplus \diN  \fil^0 \VPiNminusone\\
& \oplus \left(\diN \diNminusonestar \fil^0 \VPichiN\right)_\lambda;
\end{split}
\end{equation*}
\item $\sing V = \fil^0 V$ in all other cases.
\end{itemize}
\end{theorem}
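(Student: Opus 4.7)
My plan is to assemble the theorem by treating each case separately, as determined by the $\spd$-type of $U$. First, for $U \simeq \kk$, \leref{spdtrivial} gives $\sing V = \fil^1 V$ irrespective of the $c$-action, and by \coref{grdecomposition} the quotient $\gr^1 V$ is a single $\dd' \oplus \spd$-irreducible component $\Pi'_{\pi^*\chi/2} \boxtimes R(\pi_1)$, disjoint from $\fil^0 V$ as an $\spd$-representation. Since $\dizerostar$ is a non-zero $\Hd$-homomorphism mapping $\fil^0 \VPione$ into $\fil^1 V$ (in degree one, by \reref{degreeone}), \leref{singtosing} combined with Schur's lemma forces this image to be a $\dd' \oplus \spd$-complement of $\fil^0 V$ in $\fil^1 V$, giving the first decomposition.

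For $U \simeq R(\pi_n)$ with $1 \leq n \leq N$, \prref{pboundeddegree} bounds non-constant singular vectors to degree at most two. Degree-one singular vectors are classified by \thref{onlysymplectic} when $\lambda = 0$, and the classification extends to arbitrary $\lambda$ by \prref{nongradedsingular}. Degree-two singular vectors are classified by \prref{degreetwoisirreducible} when $\lambda = 0$, and the $\lambda$-twist construction of \seref{lambdavectors} lifts these to the summand $(\dindinminusonestar \fil^0 \VPichin)_\lambda$ for general $\lambda$. The subcase $n = N$ drops the $\dinstar$ term because $R(\pi_{N+1}) = \{0\}$. Assembling the degree-zero, degree-one, and degree-two contributions yields the stated formulae.

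Finally, for a generic irreducible $\spd$-module $U$ not of the form $R(\pi_i)$ for $0 \leq i \leq N$, \thref{irrcriterion} ensures $V$ is irreducible, so $\fil^0 V \subset \sing V$ trivially. To show equality, I would adapt the arguments of Lemmas \ref{spactiondeg2}--\ref{phiproperties}: a putative non-constant singular vector of degree one or two would force an $\spd$-equivariant embedding of $U$ into $\dd \tp U$ or $S^2 \dd \tp U$ respectively. The Pieri rule for $\sp_{2N}$-representations rules this out unless $U$ is trivial or fundamental, contradicting our hypothesis. The main obstacle is making this last combinatorial non-existence fully rigorous, especially in the non-graded ($\lambda \neq 0$) case, where one must additionally invoke the $\lambda$-independence results of \seref{ssnongraded} and \seref{lambdavectors} to reduce to the graded setting before applying the Pieri computation.
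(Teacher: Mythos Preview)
Your assembly of the first three cases is correct and tracks the paper's own structure: the theorem is stated without a dedicated proof because it summarizes \leref{spdtrivial}, \thref{onlysymplectic}, \prref{nongradedsingular}, \prref{degreetwoisirreducible}, and the $\lambda$-shift construction of \seref{lambdavectors}.

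The genuine gap is in the last case. Your Pieri-rule argument fails in degree two: the map $\psi\colon u \mapsto \sum_{i,j} \d_i\d_j \otimes f^{ij} u$ is \emph{always} an $\spd$-equivariant embedding of $U$ into $S^2\dd \otimes U$ whenever the $\spd$-action on $U$ is nontrivial---under the identification $S^2\dd \simeq \spd$ it is just the coaction $u \mapsto \sum_a e_a \otimes e^a u$ built from the invariant Casimir tensor---so no branching rule can exclude it. In degree one your claim is unjustified for a different reason: a degree-one singular vector produces an $\spd$-submodule of $\gr^1 V \simeq \dd \otimes U$, but nothing forces that submodule to be isomorphic to $U$ itself; Lemmas \ref{spactiondeg2}--\ref{phiproperties} are specific to degree two and have no degree-one analogue. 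The route the paper takes is already in your hands: you cite \thref{irrcriterion} for irreducibility, so for $\lambda = 0$ simply combine it with \prref{constiff} to obtain $\sing V = \fil^0 V$ directly. For $\lambda \neq 0$ the $\lambda$-independence results are the \emph{primary} tool, not an afterthought: \prref{nongradedsingular} handles degree one, while for degree two observe that by \leref{spactiondeg2} any degree-two singular $S$ yields singular vectors $f^{\alpha\beta}(S) \equiv \psi(s^{\alpha\beta}) \bmod \fil^1 V$, to which the bijection of \seref{lambdavectors} applies, reducing to the empty $\lambda=0$ case and forcing $s^{\alpha\beta}=0$.
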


\begin{remark}\label{uptomultiplication}
It is not difficult to show that singular vectors of degree two may also be obtained as $\dinstar \dinplusone \fil^0 \VPichin$ in the tensor module $\VPin, 1 \leq n < N,$ and as $\DiPi \fil^0 \VPichiN$ in $\VPichiN$.

This shows that $\dindinminusonestar$ and $\dinstar \dinplusone$ (resp. $\diNdiNminusonestar$ and $\DiPi$) coincide up to multiplication by a non-zero scalar.
\end{remark}

\section{Submodules of tensor modules, graded case}\label{ssubmtm}

In this section we will show that the only non-zero proper submodules of tensor modules appearing in a twisted conformally symplectic pseudo de Rham complex are either the image of a differential or the sum/intersection of two such images. Notice that the action of $\P$ on all such modules factors through $\H$, so that it makes sense to talk of {\em homogeneous} singular vectors. Throughout this section, $\Pi$ is an irreducible finite-dimensional $\dd$-module and $\V(\Pi, U)$ denotes the $\Hd$-tensor module $\V(\pi^* \Pi, U)$.

\subsection{Automorphisms of tensor modules}

Let $f\colon V \to V$ be a homomorphism of tensor modules. Then $f$
maps constant vectors in $V$, which are singular, to $\sing V$.
If $f$ is non-zero, then $f|_{\fil^0 V}$ must be an
isomorphism. This is typically possible only when $f$ is a
multiple of the identity. However, the case of reducible tensor
modules over $\Hd$ is a remarkable exception, when $\chi=0$.

\begin{theorem}
Let $V = \VPin$, $1 \leq n \leq N$, be an $\Hdzero$-module. Then each
$\Hdzero$-endomorphism of $V$ is of the form $f_{\alpha
\beta} = \alpha \id_V + \beta \dindinmochizerostar$ for some choice of $\alpha, \beta \in \kk$.
The endomorphism $f_{\alpha \beta}$ is invertible if
and only if $\alpha \neq 0$.
\end{theorem}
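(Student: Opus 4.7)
The plan is to proceed in two stages: first describe all $\Hdzero$-endomorphisms of $V$, then determine which are invertible. For the first stage I take an arbitrary $\Hdzero$-endomorphism $f\colon V \to V$ and restrict it to $\fil^0 V = 1 \tp V_0$. As $\fil^0 V$ consists of singular vectors, \leref{singtosing} forces $f(\fil^0 V) \subset \sing V$, and since $\fil^0 V \simeq \Pi \boxtimes R(\pi_n)$ is an irreducible $\dd \oplus \spd$-module, Schur's Lemma places $f(\fil^0 V)$ inside the $\Pi \boxtimes R(\pi_n)$-isotypical component of $\sing V$. Specializing \thref{singclassification} to $\chi = 0$ (so that $\VPichin$ equals $V$ and $\fil^0 \VPichin$ equals $\fil^0 V$) and $\lambda = 0$, the only summands of this type in $\sing V$ are $\fil^0 V$ itself and $\dindinmochizerostar \fil^0 V$, since $R(\pi_{n \pm 1})$ are distinct $\spd$-types. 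Schur then produces $\alpha, \beta \in \kk$ with $f(v) = \alpha v + \beta\, \dindinmochizerostar(v)$ for every $v \in \fil^0 V$, and since $\fil^0 V$ generates $V$ over $H$ while both $f$ and $\dindinmochizerostar$ are $H$-linear, this identity propagates to $f = f_{\alpha \beta}$ on all of $V$.

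The second stage rests on the identity $(\dindinmochizerostar)^2 = 0$, which I will deduce from the alternative factorization in \reref{uptomultiplication} combined with adjacency in the conformally symplectic pseudo de Rham complex. For $1 \leq n < N$, \reref{uptomultiplication} produces a non-zero scalar $c \in \kk$ with $\dindinmochizerostar = c\, \dinstar \dinplusone$, hence
\begin{equation*}
(\dindinmochizerostar)^2 = (\dinchizero \dinmochizerostar)(c\, \dinstar \dinplusone) = c\, \dinchizero (\dinmochizerostar \dinstar) \dinplusone,
\end{equation*}
and the middle factor $\dinmochizerostar \dinstar$ vanishes as the composition of two consecutive differentials in the $J$-half of the pseudo de Rham complex. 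For $n = N$, \reref{uptomultiplication} identifies $\dindinmochizerostar$ up to a non-zero scalar with the Rumin-type map $\DiPi$; when $\chi = 0$ the quotient $\Om^N_{\Pi_\phi}/I^N_{\Pi_\phi}$ is canonically identified with $J^N_{\Pi_\phi}$, so that $\DiPi$ becomes an endomorphism of $V$, and the defining recipe $\diru \xi = \di \eta$ with $\di \xi = \Psi \eta$, together with $\di^2 = 0$ and the injectivity of $\Psi$ on $\Om^{N-1}$, gives $(\DiPi)^2 = 0$ in the standard way.

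Granted $(\dindinmochizerostar)^2 = 0$, invertibility follows by direct inspection. When $\alpha \neq 0$, the candidate $g := \alpha^{-1} \id - \alpha^{-2} \beta\, \dindinmochizerostar$ satisfies $f_{\alpha \beta}\, g = g\, f_{\alpha \beta} = \id$, with the cross term involving $(\dindinmochizerostar)^2$ vanishing. When $\alpha = 0$ and $\beta \neq 0$, the image $\beta\, \dindinmochizerostar(V)$ lies inside $\im \dinchizero = \ker \dinplusonechizero$ by exactness of the conformally symplectic pseudo de Rham complex at $V$, a proper $\Hdzero$-submodule since $\dinplusonechizero$ is non-zero throughout the range $1 \leq n \leq N$; by \leref{noconstants} no proper submodule meets $\fil^0 V$, so $f_{\alpha \beta}$ fails to be surjective. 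The remaining case $\alpha = \beta = 0$ gives $f_{\alpha \beta} = 0$. The main obstacle I anticipate is the careful bookkeeping in applying \reref{uptomultiplication} --- producing the correct factorization and non-zero scalar for each $n$ --- and, for $n = N$, the canonical identification $\Om^N_{\Pi_\phi}/I^N_{\Pi_\phi} \simeq J^N_{\Pi_\phi}$ turning $\DiPi$ into a genuine endomorphism; the structural content of the argument is otherwise clean.
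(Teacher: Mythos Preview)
Your argument is correct and follows the same line as the paper's: restrict an endomorphism to $\fil^0 V$, use the structure of $\sing V$ together with Schur's Lemma to pin down the image, and then invoke nilpotence of $\dindinmochizerostar$ for the invertibility claim. The paper simply asserts that $\dindinmochizerostar$ is nilpotent, whereas you actually prove $(\dindinmochizerostar)^2=0$ via the alternative factorization from \reref{uptomultiplication}; this is a welcome explicitation of what the paper leaves implicit.

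One small imprecision: in your treatment of the case $\alpha=0$, $\beta\neq 0$, you write $\im\dinchizero=\ker\dinplusonechizero$ for all $1\le n\le N$, but at $n=N$ the next arrow in the conformally symplectic pseudo de Rham complex out of $\Om^N_{\Pi_\phi}/I^N_{\Pi_\phi}$ is $\diru$, not $\di^{N+1}$. The conclusion is unaffected, since $\diru$ is non-zero and exactness still gives that $\im\di^N$ is proper; alternatively, once you have $(\dindinmochizerostar)^2=0$ you can dispense with the image argument entirely and simply note that $f_{0,\beta}^2=0$ forces $f_{0,\beta}$ to be non-invertible.
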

\begin{proof}
The maps $\id_V$ and $\dindinmochizerostar$ are commuting $\Hdzero$-endomorphisms, and so are
their linear combinations. As $\id_V$ is invertible and $\dindinmochizerostar$ is nilpotent, the invertibility claim follows.

As for showing that an $\Hdzero$-homomorphism $f: V \to V$ is necessarily of this form, notice that if $f$ is non-zero, then it
must map $\fil^0 V \simeq \Pi \boxtimes R(\pi_n)$ to an
irreducible $\dd \oplus \spd$-component of $\sing V$. However, since $\chi=0$, the constant and the degree two summands are isomorphic.

Composing $f$ with projections to components of degree zero and
two yields $\dd \oplus \spd$-homomorphisms that must be scalar
multiples of $\id$ and $\dd^*$ by irreducibility of $\fil^0 V$ and Schur's Lemma.
\end{proof}

When $\chi \neq 0$, the irreducible $\dd \oplus \spd$-summands of $\sing V$ are pairwise non-isomorphic, hence homogeneous. When $\chi = 0$, however, non-homogeneous singular vectors only occur in some $f_{\alpha\beta}\fil^0 V,$ where $\alpha$ and $\beta$ are both non-zero. In this case, as $V$ is spanned over $H$ by its constant singular vectors, $V=f(V)$ is also $H$-spanned by $f_{\alpha\beta}\fil^0 V$, so that if $M\subset V$ is an $\Hdzero$-submodule containing $f_{\alpha\beta}\fil^0 V, \alpha \neq 0,$ then $M = V$. This proves the following claim
\begin{proposition}
Every proper submodule of a tensor module $\VPin, 1 \leq n \leq N,$ contains the homogenous components of all of its singular vectors.
\end{proposition}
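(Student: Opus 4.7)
The plan is to combine the $\dd\oplus\spd$-action on $\sing V$ (coming from $\N_\P/\P_1$ via \prref{pnhconformal}) with the classification in \thref{singclassification} and the endomorphism description from the preceding theorem, specialised to $V = \VPin$. Let $s \in M$ be any singular vector. Since $M$ is an $\Hd$-submodule it is $\widetilde{\P}$-stable, and because $\N_\P$ normalises $\P_1$, the intersection $\sing M := M \cap \sing V$ is a $\dd\oplus\spd$-submodule of $\sing V$, on which I would perform isotypic decomposition.

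By \thref{singclassification}, $\sing V$ splits as a direct sum of its homogeneous irreducible $\dd\oplus\spd$-constituents: $\Pi \boxtimes R(\pi_n)$ in degree $0$, $\Pi_{\chi/2}\boxtimes R(\pi_{n\pm 1})$ in degree $1$ (whenever present), and $\Pi_\chi\boxtimes R(\pi_n)$ in degree $2$. When $\chi\neq 0$ these four constituents are pairwise non-isomorphic, so $\dd\oplus\spd$-isotypic projection of $s$ already produces its homogeneous decomposition, each summand lying in $\sing M$, and this closes the easy case. When $\chi = 0$ the degree $0$ and degree $2$ constituents become two isomorphic copies of $\Pi\boxtimes R(\pi_n)$, while the degree $1$ constituents remain of distinct $\spd$-type, so isotypic projection still separates off the degree $1$ contributions of $s$ and reduces the problem to the $R(\pi_n)$-isotypic component $W$ of $\sing M$ inside $\fil^0 V \oplus \dindinmochizerostar\fil^0 V$.

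The main obstacle is then to show that $W$ is homogeneous, equivalently that $W$ is either zero or $\dindinmochizerostar\fil^0 V$. As a $\dd\oplus\spd$-submodule of a two-copy isotype, $W$ is necessarily one of: zero, $\dindinmochizerostar\fil^0 V$, $\fil^0 V$, $f_{1,\lambda}\fil^0 V$ for some $\lambda\in\kk^\times$, or the whole $\fil^0 V \oplus \dindinmochizerostar\fil^0 V$. In any of the last three cases $W$ contains $f_{\alpha,\beta}\fil^0 V$ for some $\alpha\neq 0$, and by the preceding theorem $f_{\alpha,\beta}$ is then an $\Hdzero$-automorphism of $V$; using $V = H\cdot\fil^0 V$ this yields $V = f_{\alpha,\beta}(V) = H\cdot f_{\alpha,\beta}\fil^0 V \subseteq M$, contradicting the properness of $M$. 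Hence $W$ is either zero or $\dindinmochizerostar\fil^0 V$, the $R(\pi_n)$-isotypic component of $s$ is therefore homogeneous (of degree $2$ when non-zero), and combining with the isotypic decomposition in the other degrees, all homogeneous components of $s$ lie in $M$.
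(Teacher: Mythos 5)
Your argument is correct and runs along essentially the same lines as the paper's: when $\chi\neq 0$ the four $\dd\oplus\spd$-constituents of $\sing V$ are pairwise non-isomorphic so isotypic projection suffices, and when $\chi=0$ the only obstruction to homogeneity is a diagonal copy $f_{\alpha\beta}\fil^0 V$ with $\alpha\neq 0$, which is excluded because $f_{\alpha\beta}$ is then an automorphism and $V = H\cdot f_{\alpha\beta}\fil^0 V$ would force $M=V$. Your write-up merely makes explicit the isotypic decomposition and the $\mathbb{P}^1$-parametrization of irreducible $\dd\oplus\spd$-submodules of the two-copy isotype, which the paper leaves implicit.
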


\subsection{Classification of submodules of tensor modules}

Throughout this section, we will make repeated use of the standard homomorphism theorem in the following form: if $f\colon M \to N$ is a morphism of modules, and $M_0 \subset M$ is a submodule, then
$$f(M_0) \simeq (M_0 + \ker f) /\ker f \simeq M_0 / (M_0 \cap \ker f).$$


\begin{lemma}\label{singimd}
If $1 \leq n \leq N$, then
\begin{equation*}
\begin{split}
 \sing \, & \imdin \\ & =  \din \fil^0 \VPinminusone  \oplus \dindinminusonestar \fil^0 \VPichin.
\end{split}
\end{equation*}
\end{lemma}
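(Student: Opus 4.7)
The plan is to use the classification of singular vectors in $\VPin$ from \thref{singclassification} together with the exactness of the twisted conformally symplectic pseudo de Rham complex at the term $\VPin$ (\thref{tmodhd}).

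The inclusion ``$\supset$'' is straightforward. Both $\din$ and $\dinminusonestar$ are $\Hd$-module homomorphisms, so they preserve singular vectors, and $\fil^0 \VPinminusone$ and $\fil^0 \VPichin$ consist of singular vectors. Hence $\din \fil^0 \VPinminusone$ and $\dindinminusonestar \fil^0 \VPichin = \din(\dinminusonestar \fil^0 \VPichin)$ both lie in $\sing \VPin \cap \imdin = \sing \imdin$. The sum is direct because, by \thref{singclassification}, these are pairwise non-isomorphic irreducible $\dd\oplus\spd$-isotypical components of $\sing \VPin$, one in degree one and the other in degree two.

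For the reverse inclusion, observe that $\sing \imdin$ is $\dd\oplus\spd$-stable, so by \thref{singclassification} it must be a direct sum of some subset of the (at most) four pairwise non-isomorphic irreducible summands of $\sing \VPin$: $\fil^0 \VPin$, $\din \fil^0 \VPinminusone$, $\dinstar \fil^0 \VPinplusone$ (only when $n < N$), and $\dindinminusonestar \fil^0 \VPichin$. To rule out $\fil^0 \VPin$ and, when $n < N$, also $\dinstar \fil^0 \VPinplusone$, I would embed $\VPin$ into the twisted conformally symplectic pseudo de Rham complex \eqref{domd9} with parameter $\Pi_{\phi + n\chi/2}$, so that $\din$ is precisely the incoming differential. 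Let $D$ denote the outgoing differential at this slot: for $1 \leq n < N$ this is a first-half differential $D\colon \VPin \to \V(\Pi_{-\chi/2}, R(\pi_{n+1}))$, while for $n = N$ it is the Rumin-type differential $\DiPiminuschi\colon \VPiN \to \V(\Pi_{-\chi}, R(\pi_N))$. By \thref{tmodhd}, the complex is exact at $\VPin$, so $\imdin = \ker D$.

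Applying \thref{singclassification} to the target of $D$ and matching the differential labels across the shifted complexes, one sees that $D \fil^0 \VPin$ is precisely the non-zero ``previous degree-one'' summand of singular vectors in the target when $n<N$, or (via \reref{uptomultiplication}) the non-zero degree-two summand $\DiPiminuschi \fil^0 \VPiN$ when $n = N$. Hence $D$ is injective on $\fil^0 \VPin$, and $\fil^0 \VPin \cap \imdin = 0$. Similarly, when $n < N$, after identifying $\dinstar$ with the starred differential that feeds the target of $D$ from $\VPinplusone$, the composition $D \circ \dinstar$ applied to $\fil^0 \VPinplusone$ produces the non-zero degree-two summand of singular vectors in the target; therefore $\dinstar \fil^0 \VPinplusone \not\subset \ker D = \imdin$. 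The main technical obstacle is the bookkeeping required to identify the differentials $\din, \dinstar, D$ across the different instantiations of the pseudo de Rham complex used above (each arising from a differently shifted parameter $\Pi$); however, the underlying non-vanishing is exactly the mechanism already exploited in the proof of \thref{singclassification} to produce the degree-two summands of singular vectors.
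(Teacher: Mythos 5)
Your proof is correct and follows essentially the same route as the paper. The $\supset$ inclusion by pushing $\sing\,\VPinminusone$ through $\din$ and invoking \leref{singtosing} is exactly what the paper does, and for $\subset$ both you and the paper combine \thref{singclassification} with exactness of the twisted complex (so $\imdin$ equals the kernel of the outgoing differential) and rule out $\dinstar \fil^0 \VPinplusone$ for $n<N$ by noting that the composition with the next differential is nonvanishing on constants. The only cosmetic difference is in excluding $\fil^0 \VPin$: the paper simply remarks that a proper submodule of a tensor module cannot contain constant singular vectors, whereas you deduce the same from injectivity of the outgoing differential on $\fil^0 \VPin$; the two observations are essentially equivalent, and the paper's is marginally shorter since it sidesteps the bookkeeping of identifying the target of $D$.
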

\begin{proof}
We know that
$$\sing \VPinminusone \supset \fil^0 \VPinminusone \oplus \dinminusonestar \fil^0 \VPichin.$$
Applying $\din$ to both sides and using \leref{singtosing} one obtains
\begin{equation*}
\begin{split}
 \sing \, & \imdin \\ & \supset  \din \fil^0 \VPinminusone  \oplus \dindinminusonestar \fil^0 \VPichin.
\end{split}
\end{equation*}

In order to show that the above inclusion is indeed an equality, recall \thref{singclassification} and observe that $\imdin$ is a proper submodule of $\VPin$, so it cannot
contain constant singular vectors.
Moreover, when $n<N$, $\imdin$ cannot contain $\dinstar \fil^0 \VPinplusone$ as this is not killed by $\nextdin.$
\end{proof}

\begin{lemma}\label{singimd*}
If $0 \leq n < N$, then
\begin{equation*}
\begin{split}
 & \sing \, \imdinstar \\ & =  \dinstar \fil^0 \VPinplusone \oplus \dindinminusonestar \fil^0 \VPichin.
\end{split}
\end{equation*}
\end{lemma}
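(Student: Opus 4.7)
My plan is to mirror the proof of Lemma \ref{singimd} above, with the roles of $\din$ and $\dinstar$ interchanged. For the inclusion $\supseteq$, I observe that by Theorem \ref{singclassification} the constants $\fil^0 \VPinplusone$ are singular vectors in $\VPinplusone$; Lemma \ref{singtosing} then places $\dinstar \fil^0 \VPinplusone$ inside $\sing \imdinstar$. For the degree-two summand, I invoke Remark \ref{uptomultiplication}, which identifies $\dinstar \dinplusone$ with $\dindinminusonestar$ up to a non-zero scalar; hence $\dindinminusonestar \fil^0 \VPichin \subset \imdinstar$. These two summands are non-isomorphic as $\dd \oplus \spd$-modules (distinct $\spd$-fundamental representations or distinct $\dd$-twists), so the sum is direct.

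For the reverse inclusion, Theorem \ref{singclassification} lists at most four irreducible $\dd \oplus \spd$-summands of $\sing \VPin$: constants $\fil^0 \VPin$; the degree-one summands $\din \fil^0 \VPinminusone$ (present for $n \geq 1$) and $\dinstar \fil^0 \VPinplusone$; and the degree-two summand $\dindinminusonestar \fil^0 \VPichin$. Each is homogeneous and, for $\chi \neq 0$, pairwise non-isomorphic; in the $\chi = 0$ case one reduces to homogeneous singular vectors by the analysis of Section \ref{ssubmtm}. The constants are excluded by Lemma \ref{noconstants} once I confirm $\imdinstar$ is a proper submodule. This follows from non-surjectivity of $\dinstar$: for $1 \leq n < N$ the right-half differential $\delta^*$ succeeding $\dinstar$ in the pseudo de Rham complex satisfies $\delta^* \dinstar = 0$ with $\delta^* \neq 0$; for $n = 0$ the cokernel is non-zero and finite-dimensional by Theorem \ref{tmodhd}.

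The heart of the argument is ruling out $\din \fil^0 \VPinminusone$ (for $1 \leq n < N$). The direct dual of the argument in Lemma \ref{singimd} would use $\delta^* \din \neq 0$, obtaining a contradiction with $\delta^* \dinstar = 0$. This composition identifies (up to a non-zero scalar) with a Rumin-type map producing degree-two singular vectors in $\V(\Pi_{-\chi/2}, R(\pi_{n-1}))$, so it is non-zero on $\fil^0$ exactly when that target admits degree-two singular vectors. This holds for $n \geq 2$, but fails at $n = 1$ since $R(\pi_0) = \kk$ carries no degree-two singular vectors by Proposition \ref{spdtrivial}, and indeed $\delta^* \din$ vanishes on $\fil^0$ for representation-theoretic reasons.

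To cover all cases $1 \leq n < N$ uniformly, my approach is to analyse the associated graded module $\gr \imdinstar \subset \gr \VPin$. Since $\dinstar$ is $H$-linear and $\imdinstar = H \cdot \dinstar \fil^0 \VPinplusone$, the graded submodule $\gr \imdinstar$ is generated as an $S\dd$-submodule of $\gr \VPin$ in degree one by the image of $\dinstar \fil^0 \VPinplusone \simeq \Pi_{\chi/2} \boxtimes R(\pi_{n+1})$, which sits in the $R(\pi_{n+1})$-isotypic component of $\gr^1 \VPin \simeq \Pi_{\chi/2} \boxtimes (\dd \otimes R(\pi_n)) \simeq \Pi_{\chi/2} \boxtimes (R(\pi_{n-1}) \oplus R(\pi_{n+1}) \oplus R(\pi_n + \pi_1))$. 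The main obstacle will be verifying that no ``degree drop'' occurs, equivalently that $\gr \dinstar$ is injective; by freeness of $\gr \VPin$ over $\gr H \simeq S\dd$ and the integral-domain property of $S\dd$, this reduces to the injectivity of $\dinstar$ on $\fil^0 \VPinplusone$, which follows from Schur's lemma applied to its embedding into the corresponding isotypic summand of $\sing \VPin$. Once this is established, $\gr^1 \imdinstar$ lies entirely in the $R(\pi_{n+1})$-isotypic component; since $\din \fil^0 \VPinminusone$ projects non-trivially into the disjoint $R(\pi_{n-1})$-isotypic component, Schur's lemma forces $\din \fil^0 \VPinminusone \cap \imdinstar = 0$, ruling out the final candidate.
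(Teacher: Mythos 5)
Your argument for the inclusion $\supseteq$ is fine, and you are right that the direct analogue of the paper's argument (pushing $\sing\VPinplusone$ forward through $\dinstar$ via \leref{singtosing}, then excluding constants and the $R(\pi_{n-1})$-summand) is the route the paper has in mind. The observation that the exclusion of the $R(\pi_{n-1})$-summand requires some care at $n=1$ is also a reasonable thing to worry about. The problem is the alternative you substitute: the associated-graded argument has a genuine gap at the step where you claim $\gr\dinstar$ is injective.

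First, $\gr\dinstar$ cannot be injective: $\dinstar$ is a differential in a complex, so its kernel is nonzero (it contains the image of the preceding map $\dinplusonestar$, in particular the nonzero degree-one space $\dinplusonestar\fil^0$, for $n<N-1$; for $n=0$ the complex is not even exact at the term $\VPione$). A nonzero element of lowest filtration degree in $\ker\dinstar$ gives a nonzero element of $\ker\gr\dinstar$. Second, the reduction ``$\gr\dinstar$ injective $\iff$ injective on $\fil^0$, by freeness over $S\dd$ and the integral-domain property'' is not a valid inference. An $S\dd$-linear, degree-one map between free graded $S\dd$-modules can perfectly well be injective on the degree-zero generators without being injective overall: take $S=\kk[x,y]$, $f\colon S\,e_1\oplus S\,e_2\to S\,u$, $f(e_1)=xu$, $f(e_2)=yu$; then $f$ is injective on $\kk e_1\oplus\kk e_2$ but $f(ye_1-xe_2)=0$. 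So neither the claimed equivalence nor the claimed conclusion holds. What you actually need is the weaker statement that there is no ``degree drop'', i.e.\ $\fil^1\imdinstar=\dinstar\fil^0\VPinplusone$. But since $\fil^1\imdinstar$ consists of singular vectors (by \eqref{h1fp}, $\P_1\fil^1\subset\fil^0\imdinstar=0$), this equality is \emph{equivalent} to the statement $\din\fil^0\VPinminusone\cap\imdinstar=0$ you are trying to prove; the associated-graded reformulation has not reduced the problem. You should instead argue as in \leref{singimd}: $\imdinstar$ lies in the kernel of the succeeding differential of the $J$-half of the complex, while $\din\fil^0\VPinminusone$ is not killed by it, the non-vanishing of the relevant composition of differentials on constants being supplied by \reref{uptomultiplication} and \thref{singclassification}.
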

\begin{proof}
Similar, using \reref{uptomultiplication}.
\end{proof}

\begin{lemma}\label{uniquesingularsummand}
If $0<n\leq N$, then
$$\sing \imdindinminusonestar = \dindinminusonestar \fil^0 \VPichin.$$
\end{lemma}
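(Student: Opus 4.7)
The inclusion $\dindinminusonestar\fil^0\VPichin \subset \sing\imdindinminusonestar$ is immediate: elements of $\fil^0\VPichin$ are singular vectors and $\Hd$-homomorphisms preserve singularity by \leref{singtosing}.

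For the converse, the factorization $\dindinminusonestar = \din\circ\dinminusonestar$ yields $\imdindinminusonestar \subset \imdin$, so $\sing\imdindinminusonestar \subset \sing\imdin$; by \leref{singimd}, the latter is the direct sum of two $\spd$-irreducible summands $\din\fil^0\VPinminusone \simeq \Pi_{\chi/2}\boxtimes R(\pi_{n-1})$ and $\dindinminusonestar\fil^0\VPichin \simeq \Pi_{\chi}\boxtimes R(\pi_n)$ of distinct $\spd$-types. Since $\sing\imdindinminusonestar$ is stable under the $\N_\P/\P_1 \simeq \dd'\oplus\spd$-action, and the $\spd$-Casimir separates the two components through distinct eigenvalues, any singular $v = a+b \in \imdindinminusonestar$ (with $a$ in the first summand, $b$ in the second) can be resolved so that $a$ and $b$ lie in $\sing\imdindinminusonestar$ separately. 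It remains to show $a = 0$.

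Suppose otherwise; then by irreducibility, the entire summand $\din\fil^0\VPinminusone$ is contained in $\sing\imdindinminusonestar$, and taking $H$-spans gives $\imdin = H\cdot\din\fil^0\VPinminusone \subset \imdindinminusonestar$, whence $\imdin = \imdindinminusonestar$. Using $\imdindinminusonestar = \din(\im\dinminusonestar)$ and $\ker\din = \im\prevdin$ (by exactness of the complex \eqref{domd9} at $\VPinminusone$, where $\prevdin = \di^{n-1}_{\Pi_{\phi+n\chi/2}}\colon \VPichinminustwo \to \VPinminusone$ is the preceding differential), this equality translates into $\VPinminusone = \im\dinminusonestar + \im\prevdin$.

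The main obstacle, and the final step, is to refute this. The generators $\dinminusonestar\fil^0\VPichin$ and $\prevdin\fil^0\VPichinminustwo$ are pure degree $1$: by \reref{degreeone} they lie in degree $1$, and their $\spd$-types ($R(\pi_n)$ and $R(\pi_{n-2})$ respectively) rule out any degree-$0$ contribution, since the $R(\pi_{n-1})$-isotypic of $\fil^0\VPinminusone$ carries the distinct type $R(\pi_{n-1})$ and the PBW splitting of $V = H\tp R$ is $\spd$-equivariant. Passing to the associated graded, $\gr(\im\dinminusonestar)$ and $\gr(\im\prevdin)$ are each $S(\dd) = \gr H$-submodules of $\gr\VPinminusone$ generated by pure-degree-$1$ elements, hence contained in $\bigoplus_{p\geq 1}\gr^p\VPinminusone$. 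Thus $(\im\dinminusonestar + \im\prevdin)\cap \fil^0\VPinminusone = 0$, contradicting $\fil^0\VPinminusone \neq 0 \subset \im\dinminusonestar + \im\prevdin$. The argument is uniform for $0 < n \leq N$; in the boundary case $n = N$ no modification is required, since $\prevdin$ remains the map at position $N-1$ of the left part of the complex, even though $\VPinplusone$ and $\dinstar$ are unavailable.
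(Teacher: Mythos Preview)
Your proof is correct, but it takes a considerably longer path than the paper's. The paper observes directly that $\dindinminusonestar$ is \emph{homogeneous of degree two} (being the composition of two degree-one maps), so its entire image lies in degrees $\geq 2$; combined with the fact that singular vectors in a proper submodule have degree $\leq 2$ (\prref{pboundeddegree}) and that the degree-two singular vectors form the single irreducible summand $\dindinminusonestar\fil^0\VPichin$ (\prref{degreetwoisirreducible}), the result follows in one line.

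Your route---passing through the inclusion $\imdindinminusonestar\subset\imdin$, invoking \leref{singimd}, and then running a contradiction argument through exactness at $\VPinminusone$---is logically sound but circuitous. The contradiction you derive, that $\VPinminusone$ would equal a sum of two submodules generated in degree $\geq 1$, is itself a degree argument of the same flavor as the paper's; you are essentially reproving for $\VPinminusone$ what could have been said immediately for $\imdindinminusonestar\subset\VPin$. The Casimir separation step and the verification that the generators are pure degree one (via their $\spd$-types) are correct but unnecessary once one notices that the composite map already raises degree by two. What your approach does buy is that it avoids appealing to the global homogeneity of the differential and instead works entirely through the singular-vector lemmas already in place; this makes it more self-contained relative to \leref{singimd}, at the cost of length.
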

\begin{proof}
The homomorphism $\dindinminusonestar$ is homogeneous of degree two, so the subspace $\sing \imdindinminusonestar$
can only contain elements of degree $\geq 2$. As $\dindinminusonestar$ is non-zero,
constant vectors are mapped non trivially to singular vectors of
degree two.
\end{proof}

\begin{proposition}\label{dd*irr}
$\imdindinminusonestar$ is irreducible for every $0<n\leq N$.
\end{proposition}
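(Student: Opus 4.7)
The plan is to pick an arbitrary nonzero $\Hd$-submodule $M \subseteq \imdindinminusonestar$ and show that $M$ must contain the full space $\sing\imdindinminusonestar$ of singular vectors of the ambient module, from which $M = \imdindinminusonestar$ follows by $H$-generation. Since we are in the graded case (the action of $\P$ on $\VPin$ factors through $\H$), all the machinery about homogeneity of singular vectors and the identification $\N_\P/\P_1 \simeq \dd\oplus\spd$ applies without the $c$-twist.

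First I would invoke \prref{phsing1} to conclude that $\sing M \neq \{0\}$, using that $M$ is finite (as a submodule of a free $H$-module of finite rank) and nonzero. Since $M \subseteq \imdindinminusonestar$, we have $\sing M \subseteq \sing\imdindinminusonestar$, and \leref{uniquesingularsummand} identifies the latter with $\dindinminusonestar \fil^0\VPichin$. Next, $\sing M$ is stabilized by the action of $\N_\P$, hence by the induced action of $\N_\P/\P_1 \simeq \dd'\oplus\spd$; in the graded case $c$ acts as zero, so this reduces to a genuine $\dd\oplus\spd$-action.

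Now I would combine \prref{degreetwoisirreducible} with the $H$-linearity of $\dindinminusonestar$: the image $\dindinminusonestar \fil^0 \VPichin \simeq \Pi_\chi \boxtimes R(\pi_n)$ is an irreducible $\dd\oplus\spd$-module. Therefore the nonzero $\dd\oplus\spd$-submodule $\sing M$ must coincide with the whole $\dindinminusonestar\fil^0\VPichin$. Then \coref{chsing} ensures that the $H$-submodule $H\cdot \sing M \subseteq M$ is already an $\Hd$-submodule. Using $H$-linearity of $\dindinminusonestar$ and the fact that $\VPichin = H\cdot \fil^0\VPichin$ (tensor modules are $H$-generated by their degree-zero part), we have
\begin{equation*}
H\cdot \dindinminusonestar \fil^0\VPichin \;=\; \dindinminusonestar(H\cdot \fil^0\VPichin) \;=\; \dindinminusonestar\VPichin \;=\; \imdindinminusonestar ,
\end{equation*}
so $M \supseteq \imdindinminusonestar$ and hence $M = \imdindinminusonestar$. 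A final sanity check is that $\Hd * \imdindinminusonestar \neq \{0\}$, which follows from \prref{phsing} applied to any nonzero element of $\dindinminusonestar\fil^0\VPichin$, whose pseudoaction involves the non-trivial operators $\rho_{\sing}(f^{ij})$.

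There is no serious obstacle here: the entire argument is a one-shot deduction from the two previous lemmas. The only mild point requiring care is tracking that we are legitimately in the graded regime, where $c$ acts trivially and $\dindinminusonestar\fil^0\VPichin$ is a single irreducible $\dd\oplus\spd$-isotype rather than splitting further — this is precisely what \leref{uniquesingularsummand} and \prref{degreetwoisirreducible} have just secured, and is why the classification into non-isomorphic summands in \thref{singclassification} makes the bottom summand unavoidable for $M$.
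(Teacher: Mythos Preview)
Your argument is correct and follows essentially the same route as the paper's proof: a nonzero submodule must contain nonzero singular vectors, \leref{uniquesingularsummand} forces these to lie in the single irreducible $\dd\oplus\spd$-summand $\dindinminusonestar\fil^0\VPichin$, and this summand $H$-generates the whole image. Your version is just more explicit about citing \prref{phsing1}, \prref{degreetwoisirreducible}, and \coref{chsing}, and you add the (harmless) check that the pseudoaction is nontrivial.
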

\begin{proof}
As $\imdindinminusonestar = \dindinminusonestar\VPichin$, it
is $H$-spanned by $\dindinminusonestar \fil^0 \VPichin$, which by previous lemma is an irreducible $\dd\oplus\spd$-summand containing all singular vectors. A non-zero submodule of $\imdindinminusonestar$ must then
contain all of its singular vectors, which $H$-span it.
\end{proof}

\begin{corollary}\label{imD}
$\im \DiPi$ is irreducible.
\end{corollary}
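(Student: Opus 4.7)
The plan is to reduce the statement to the already-established \prref{dd*irr} in the case $n = N$. By \reref{uptomultiplication}, the homomorphism $\DiPi$ and the composition $\diNdiNminusonestar$ coincide up to multiplication by a non-zero scalar. Therefore
\[
\im \DiPi = \im \diNdiNminusonestar,
\]
which is precisely $\imdindinminusonestar$ specialized to $n = N$. Since \prref{dd*irr} asserts irreducibility of $\imdindinminusonestar$ for every $0 < n \leq N$, taking $n = N$ delivers the conclusion.

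No serious obstacle remains, because all the substantive input has already been assembled in the preceding development: (i) the explicit description of singular vectors in \thref{singclassification}, (ii) the scalar identification between $\diNdiNminusonestar$ and $\DiPi$ recorded in \reref{uptomultiplication}, and (iii) the proof of \prref{dd*irr}, which argues that $\imdindinminusonestar$ is $H$-generated by the single irreducible $\dd\oplus\spd$-summand $\dindinminusonestar \fil^0 \VPichin$ of its singular vectors (by \leref{uniquesingularsummand}), so any non-zero submodule must already contain this generator and therefore the whole image.

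The only minor verification worth noting is that $\DiPi$ is actually non-zero (otherwise "irreducibility" of a zero image would be vacuous, but under our convention an irreducible module is non-zero by definition). This follows from \thref{tmodhd}: the twisted conformally symplectic pseudo de Rham complex is exact at $\VPiN = \Om^N_\Pi(\dd)/I^N_\Pi(\dd)$, so $\DiPi$ being zero would force $\diN$ to be surjective onto this term, contradicting the fact that constant vectors in $\VPiN$ form an irreducible $\dd\oplus\spd$-summand not contained in $\din \fil^0 \VPiNminusone$.
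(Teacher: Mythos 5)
Your proof is correct and essentially identical to the paper's: both reduce to \prref{dd*irr} via the scalar identification of $\DiPi$ with $\diNdiNminusonestar$ from \reref{uptomultiplication}. Your closing observation that $\DiPi\neq 0$ is harmless but unnecessary, since \reref{uptomultiplication} already records the coincidence up to a \emph{non-zero} scalar, and \prref{dd*irr} establishes irreducibility (hence non-vanishing) of the image.
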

\begin{proof}
As $\DiPi$ coincides with $\diNdiNminusonestar$ up to a non-zero multiplicative constant, it follows from \prref{dd*irr}.
\end{proof}

\begin{proposition}\label{maximalp}
If $0 < n < N$, then
\begin{enumerate}
\item $\imdin \cap \imdinstar$ is maximal in $\imdin$;
\item $\imdin \cap \imdinstar$ is maximal in $\imdinstar$;
\item $\imdin$ is maximal in $\imdin + \imdinstar$;
\item $\imdinstar$ is maximal in $\imdin + \imdinstar$.
\end{enumerate}
\end{proposition}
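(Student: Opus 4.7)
The plan is to observe that the four assertions are linked by the Second Isomorphism Theorem. The canonical isomorphism $\imdin/(\imdin \cap \imdinstar) \simeq (\imdin + \imdinstar)/\imdinstar$ makes (1) equivalent to (4), and analogously (2) equivalent to (3); moreover, (1) and (2) are interchanged by the formal symmetry swapping the roles of $\din$ and $\dinstar$. Hence it suffices to prove (1), namely that $\imdin \cap \imdinstar$ is maximal in $\imdin$.

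The first step will be to identify the intersection: $\imdin \cap \imdinstar = \imdindinminusonestar$. The inclusion $\supseteq$ is immediate from \reref{uptomultiplication}. For the reverse inclusion, I would use the filtration $\{\fil^p \VPin\}$ from \seref{filtrationtm} and the facts (\leref{lfilact}) that $\P_1$ lowers filtration by one, while the normalizer $\N_\P$ preserves it. The crucial observation is that $(\imdin \cap \imdinstar) \cap \fil^1 = 0$: any $m$ in this intersection satisfies $\P_1 m \in (\imdin \cap \imdinstar) \cap \fil^0 = 0$ (since $\imdin$ is proper and contains no constants by \leref{noconstants}), so $m \in \sing \VPin \cap \fil^1$; however, $m \in \imdin$ forces $m$ to lie in $\din \fil^0 \VPinminusone$ (of $\spd$-type $R(\pi_{n-1})$), while $m \in \imdinstar$ forces $m$ to lie in $\dinstar \fil^0 \VPinplusone$ (of $\spd$-type $R(\pi_{n+1})$); these are distinct irreducible $\dd' \oplus \spd$-isotypes, forcing $m=0$.

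Similarly, by \leref{uniquesingularsummand}, $\imdindinminusonestar \cap \fil^1 = 0$. Then an inductive filtration argument follows: at each level $p \ge 2$, the lowest non-vanishing graded piece of $\imdin \cap \imdinstar$ consists of singular vectors (by the same $\P_1$-lowering argument), which by the classifications in Lemmas \ref{singimd}--\ref{singimd*} must lie in the unique common $\dd' \oplus \spd$-isotypic summand $\dindinminusonestar \fil^0 \VPichin = \sing \imdindinminusonestar$; peeling this off into $\imdindinminusonestar$ and proceeding by induction yields $\imdin \cap \imdinstar \subseteq \imdindinminusonestar$.

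Given the identification $\imdin \cap \imdinstar = \imdindinminusonestar$, maximality is deduced as follows. Let $M$ be an $\Hd$-submodule with $\imdindinminusonestar \subsetneq M \subseteq \imdin$. By \leref{singimd}, $\sing M$ is a $\dd' \oplus \spd$-submodule of $\din \fil^0 \VPinminusone \oplus \dindinminusonestar \fil^0 \VPichin$ containing $\sing \imdindinminusonestar = \dindinminusonestar \fil^0 \VPichin$. If $\sing M$ strictly contains this, then by the irreducibility and non-isomorphism of the two summands, $\sing M$ includes all of $\din \fil^0 \VPinminusone$, whence $M \supseteq H \cdot \din \fil^0 \VPinminusone = \imdin$, so $M = \imdin$. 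The remaining case, $\sing M = \sing \imdindinminusonestar$ with $M \supsetneq \imdindinminusonestar$, must be ruled out: a non-zero singular vector of $M/\imdindinminusonestar$ would lift to $s \in M \setminus \imdindinminusonestar$ with $\P_1 s \in \imdindinminusonestar$; the lowest-filtration argument (analogous to the one used for the intersection) then forces $s$ to be genuinely singular in $\VPin$ and thus in $\sing M \subseteq \imdindinminusonestar$, a contradiction. The main obstacle is making this last inductive reduction rigorous for filtration level $p \ge 3$, where $\imdindinminusonestar \cap \fil^{p-1} \ne 0$; it requires verifying that the only singular vectors in $\gr \imdin / \gr \imdindinminusonestar$ occur in degree $1$, which in turn follows from the $\spd$-isotypical analysis of $\gr^p \VPin \simeq S^p \dd \tp R(\pi_n)$ (with appropriate $\chi/2$-twist) combined with the irreducibility of $\imdindinminusonestar$ (\prref{dd*irr}).
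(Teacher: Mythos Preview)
Your reduction via the Second Isomorphism Theorem is correct and is exactly how the paper begins. However, from that point on the paper's argument is far shorter and entirely avoids the filtration analysis you propose. The paper simply applies the next differential $\nextdin$ to $\imdin + \imdinstar$: since $\nextdin\din = 0$, the image is $\nextdin(\imdinstar) = \im\,\nextdin\dinstar$, and since $\ker\nextdin = \imdin$ by exactness of the twisted conformally symplectic pseudo de Rham complex (\thref{tmodhd}), one has
\[
(\imdin + \imdinstar)/\imdin \;\simeq\; \im\,\nextdin\dinstar,
\]
which is irreducible by \prref{dd*irr}. That gives (2) and (3) at once; (1) and (4) follow symmetrically. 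No identification of the intersection is needed.

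Your plan inverts the paper's logical order. The equality $\imdin \cap \imdinstar = \imdindinminusonestar$ is \prref{pmaximalsubmodule}, proved \emph{after} \prref{maximalp} by an induction on $N-n$ (with base case \leref{shapeN-1}) that \emph{uses} \prref{maximalp}. Trying to establish this equality first is both unnecessary and harder.

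There is also a genuine gap in your filtration argument. You correctly show $(\imdin\cap\imdinstar)\cap\fil^1 = 0$ and that the degree-$2$ part of the intersection lies in $\dindinminusonestar\fil^0\VPichin$. But the ``peeling off'' step for $p\ge 3$ does not work as stated: an element $m\in\fil^p$ of the intersection with $\P_1 m \subset \imdindinminusonestar$ need not be singular in $\VPin$ (indeed, by \prref{pboundeddegree} there are no singular vectors of degree $\ge 3$ at all), so you cannot invoke the singular-vector classification. What you actually need is information about singular vectors of the \emph{quotient} $(\imdin\cap\imdinstar)/\imdindinminusonestar$, and these do not correspond to singular vectors of $\VPin$. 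Your proposed fix via $\gr\imdin/\gr\imdindinminusonestar$ would require computing these graded pieces explicitly, which is substantial extra work that the paper's exactness argument bypasses entirely.
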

\begin{proof}
One has
\begin{equation*}
\begin{split}
\imdinstar /( & \imdin \cap \imdinstar) = \\
(\imdinstar & + \imdin)/\imdin,
\end{split}
\end{equation*}
by standard homomorphism
properties. Moreover
\begin{equation*}
\begin{split}
\im \nextdin & \dinstar\\
& = \nextdin( \imdin + \imdinstar),
\end{split}
\end{equation*}
and this equals
\begin{equation*}
\begin{split}
(& \imdin + \imdinstar)/\ker \nextdin\\
& = ( \imdin + \imdinstar)/\imdin.
\end{split}
\end{equation*}
By \prref{dd*irr}, $\im \nextdin \dinstar$ is irreducible, hence so are all
the above quotients, proving (2) and (3); (1) and (4) are proved similarly.
\end{proof}

\begin{theorem}\label{submodN}
$\VPin \varsupsetneq \imdin \varsupsetneq \imDiPi$ are the only non-zero
submodules of $\VPiN$.
\end{theorem}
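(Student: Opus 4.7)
The proof proceeds by classifying the singular vectors of any non-zero proper submodule $M \subset \VPiN$ via \thref{singclassification}, then controlling the successive quotients $\VPiN/\imdiN$ and $\imdiN/\imDiPi$ via exactness of the twisted conformally symplectic pseudo de Rham complex (\thref{tmodhd}) combined with irreducibility of Rumin-type images (\coref{imD}, \prref{dd*irr}). Since $\pi^*\Pi$ has trivial $c$-action we are in the graded case, and by \reref{uptomultiplication} we may identify $\imDiPi$ with $\imdiNdiNminusonestar$ up to a non-zero scalar.

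The first step is to take any non-zero proper submodule $M \subset \VPiN$ and analyze $\sing M = M \cap \sing \VPiN$. By \thref{singclassification}, $\sing \VPiN$ decomposes as a direct sum of three pairwise non-isomorphic homogeneous irreducible $\dd \oplus \spd$-summands: the constants $\fil^0 \VPiN$, the degree-one summand $\diN \fil^0 \VPiNminusone$, and the degree-two summand $\diNdiNminusonestar \fil^0 \VPichiN$. By the proposition in the automorphism subsection of \seref{ssubmtm}, $M$ contains the homogeneous components of all its singular vectors, so $\sing M$ is itself a non-zero direct sum of a subset of these three summands. Since $\VPiN = H \cdot \fil^0 \VPiN$ and $\fil^0 \VPiN$ is $\dd \oplus \spd$-irreducible, properness of $M$ excludes the constant summand; applying \coref{chsing} yields $M \supseteq H \cdot \sing M$, which equals $\imdiN$ whenever $\diN \fil^0 \VPiNminusone \subseteq \sing M$ and $\imDiPi$ otherwise.

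It remains to verify that the successive quotients $\VPiN/\imdiN$ and $\imdiN/\imDiPi$ are irreducible, which, combined with irreducibility of $\imDiPi$ from \coref{imD}, forces $M$ to equal $\imdiN$ or $\imDiPi$ in each case. For $\VPiN/\imdiN$, exactness of the twisted conformally symplectic pseudo de Rham complex at the position of $\VPiN$ gives $\imdiN = \ker \diru_{\Pi_{\phi+N\chi/2}}$, so $\VPiN/\imdiN$ embeds into $\V(\Pi_{-\chi}, R(\pi_N))$ as the image of a Rumin-type map entirely analogous to $\DiPi$; applying the argument of \coref{imD} (equivalently \prref{dd*irr}) in this shifted setting yields irreducibility. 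For $\imdiN/\imDiPi$, the isomorphism $\imdiN \simeq \VPiNminusone/\ker\diN$ combined with exactness of the complex at $\VPiNminusone$ identifies $\imdiN/\imDiPi$ with $\VPiNminusone/(\ker\diN + \im\diNminusonestar)$, which a parallel Rumin-type analysis shows to be irreducible.

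The main obstacle is bookkeeping: tracking the $\chi$-twists through the complex, verifying that the hypotheses of \coref{imD} transfer to the shifted modules $\V(\Pi_{-\chi}, R(\pi_N))$ and the appropriate quotient of $\VPiNminusone$, and separately checking the low-rank case $N=1$ where some of the intermediate maps degenerate. Once the two successive quotients are confirmed irreducible, the maximality of $\imdiN$ in $\VPiN$ and of $\imDiPi$ in $\imdiN$ follow at once, completing the proof.
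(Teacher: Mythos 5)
Your overall strategy mirrors the paper's: decompose $\sing \VPiN$ by \thref{singclassification}, use exactness of the twisted conformally symplectic pseudo de Rham complex to identify $\imdiN = \ker \DiPiminuschi$ and $\imDiPi = \ker \nextDiPi$, and appeal to irreducibility of Rumin-type images (\coref{imD}, \prref{dd*irr}) to establish irreducibility of $\VPiN/\imdiN$ and $\imdiN/\imDiPi$. Your identification $\imdiN/\imDiPi \simeq \VPiNminusone/(\ker\diN + \im\diNminusonestar)$ is correct and equivalent to the paper's $\imdiN/\imDiPi \simeq \nextDiPi\imdiN = \nextDiPidiNalt\VPiNminusone$, though the paper's form feeds more cleanly into \prref{dd*irr} (image of a composition of two consecutive differentials), whereas "a parallel Rumin-type analysis" leaves the actual citation to the reader. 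You should also be careful here not to invoke \leref{shapeN-1} or \prref{pmaximalsubmodule}, since those come later and depend on this theorem.

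The genuine gap is in your final paragraph. From the singular-vector classification you correctly deduce that any non-zero proper submodule $M$ satisfies $M \supseteq \imDiPi$, and possibly $M \supseteq \imdiN$. Together with maximality of $\imdiN$ in $\VPiN$, the case $M \supseteq \imdiN$ forces $M = \imdiN$. But in the other case you only know $M \supsetneq 0$ and $M \supseteq \imDiPi$ with $M \not\supseteq \imdiN$; maximality of $\imDiPi$ inside $\imdiN$ only controls submodules that lie \emph{between} $\imDiPi$ and $\imdiN$, and nothing you have written rules out a submodule $M$ that contains $\imDiPi$ but is not contained in $\imdiN$. The paper closes this by asserting that every proper submodule \emph{is contained in} $\imdiN$, which is what you are missing. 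The underlying reason: if $M \not\subseteq \imdiN$ then $M + \imdiN = \VPiN$ by maximality of $\imdiN$; but $\imdiN$ is a graded submodule with trivial degree-zero piece (it contains no constants), and $M$ is likewise graded (via the proposition in \seref{ssubmtm} on homogeneous components of singular vectors, or more directly via the $\widehat\dd$-weight when $\chi \neq 0$), so comparing degree-zero parts forces $\gr^0 M = \fil^0 \VPiN \subset M$, contradicting properness. Without this "every proper $M$ lies in $\imdiN$" step, your concluding sentence does not actually complete the classification of the submodule lattice.
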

\begin{proof}
First of all,
$$\imDiPi = \ker \nextDiPi, \qquad \ker \DiPiminuschi = \imdiN$$
by exactness of the conformally symplectic pseudo de Rham complex. The submodule $\imDiPi $ is irreducible by \coref{imD} and is therefore minimal in $\VPiN$. Similarly, $\DiPiminuschi \VPiN$ is irreducible.

We show now that $\imdiN$ is maximal in $\VPiN$. Indeed
$$\VPiN/\imdiN = \VPiN/\ker\DiPiminuschi \simeq \im \DiPiminuschi,$$
which is irreducible.

Finally, $\imDiPi$ is a submodule of $\imdiN $ as $\DiPi$ equals $\diNdiNminusonestar$ up to a non-zero multiplicative constant; it is indeed maximal
since 
\begin{equation*}
\begin{split}
\imdiN& /\imDiPi\\
& \simeq \nextDiPi \imdiN\\
& = \nextDiPidiNalt \VPiNminusone,
\end{split}
\end{equation*}
as $\ker \nextDiPi = \imDiPi$.
We have proved so far that the submodule $\imdiN$ is maximal, that $\imDiPi=
\imdiNdiNminusonestar$ is minimal, and that $\imDiPi$ is maximal inside $\imdiN$.

Every proper submodule contains non-constant homogeneous singular vectors, and is therefore
contained in $\imdiN$. If it furthermore contains singular
vectors of degree one, then it coincides with $\imdiN$, as this
is generated by its singular vectors of degree one. If instead it does not contain
singular vectors of degree one, then it lies between $\imDiPi$ and
$\imdiN$, and must then coincide with the latter by maximality.
\end{proof}

\begin{lemma}\label{shapeN-1}
The submodule
$$\diNminusone \VPiNminusoneminusone + \diNminusonestarbla \VPiNminusoneplusone \subset \VPiNNminusone$$ is maximal  and
\begin{equation*}
\begin{split}
\diNminusone &\diNminustwostar \VPichiNminusone\\
& = 
\diNminusone \VPiNminusoneminusone) \cap \diNminusonestarbla \VPiNminusoneplusone.
\end{split}
\end{equation*}
\end{lemma}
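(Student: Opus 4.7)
I would mirror the strategy used in \prref{maximalp} and \thref{submodN}: exploit the explicit classification of singular vectors from \thref{singclassification} together with the principle, implicit throughout \seref{ssubmtm} (see e.g.\ the last step in the proof of \thref{submodN}), that every proper $\Hd$-submodule of $\VPiNNminusone$ is generated over $H$ by its non-constant singular vectors (a proper submodule contains no constant singular vectors by \leref{noconstants}). The section being restricted to the graded case, the action of $c\in\dd'$ is trivial and so $\lambda=0$ in \thref{singclassification}.

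For maximality of $S := \diNminusone \VPiNminusoneminusone + \diNminusonestarbla \VPiNminusoneplusone$ inside $V := \VPiNNminusone$, the first step is to apply \thref{singclassification} with $U = R(\pi_{N-1})$ to decompose the space of singular vectors as
\[
\sing V = \fil^0 V \oplus A \oplus B \oplus C,
\]
where
\[
A = \diNminusone \fil^0 \VPiNminusoneminusone,\quad
B = \diNminusonestarbla \fil^0 \VPiNminusoneplusone,\quad
C = \diNminusone \diNminustwostar \fil^0 \VPichiNminusone,
\]
and $A,B,C$ are pairwise non-isomorphic irreducible $\dd\oplus\spd$-modules (their $\spd$-types $R(\pi_{N-2}), R(\pi_N), R(\pi_{N-1})$ are distinct). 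Since $A,B\subset S$ tautologically and $C\subset \diNminusone \VPiNminusoneminusone \subset S$, the submodule $S$ already exhausts all non-constant singular vectors of $V$. If $M$ with $S\subsetneq M\subsetneq V$ existed, then $\sing M \subset A\oplus B\oplus C$, and the $H$-generation principle would give $M\subset H\cdot(A+B+C) \subset S$, a contradiction.

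For the intersection identity, the first inclusion comes from \reref{uptomultiplication}: the composition $\diNminusonestarbla \diNchihalf$ equals a non-zero scalar multiple of $\diNminusone \diNminustwostar$, so $\diNminusone \diNminustwostar \VPichiNminusone$ lies inside both $\diNminusone \VPiNminusoneminusone$ and $\diNminusonestarbla \VPiNminusoneplusone$. For the reverse inclusion, I would apply \leref{singimd} and \leref{singimd*} at $n=N-1$ to obtain $\sing \imdiNminusone = A\oplus C$ and $\sing \imdiNminusonestarbla = B\oplus C$; since these isotypical decompositions share only the summand $C$, we get $\sing(\imdiNminusone \cap \imdiNminusonestarbla) = C$. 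The intersection is a proper $\Hd$-submodule of $V$ (hence contains no constants) and, by the $H$-generation principle, coincides with $H\cdot C = \diNminusone \diNminustwostar \VPichiNminusone$.

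The main obstacle is the ``$H$-generation by singular vectors'' principle, used but not isolated as a formal statement in the paper. The cleanest way to justify it is to argue by contradiction: if $M \neq H\cdot\sing M$, pick $\bar v \in \sing(M/H\cdot\sing M)$ using \prref{phsing1}, and lift it to a genuine singular vector of $M$ by exploiting the fact that each $\ker_p M/\ker M$ is finite-dimensional (by \leref{lkey2}) and that $\N_\P/\P_1 \simeq \dd'\oplus \spd$ is reductive, so the lift can be adjusted modulo $H\cdot \sing M$ into the $\P_1$-kernel; this contradiction would complete the argument.
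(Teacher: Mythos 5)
Your route is genuinely different from the paper's. The paper proves Lemma \ref{shapeN-1} entirely by homological bookkeeping: it applies the differential $\diN$ (suitably twisted) to $\VPiNNminusone$, uses the first and second isomorphism theorems together with exactness ($\ker\diN = \im\diNminusone$) to identify $\VPiNminusone/(\im\diNminusone + \im\diNminusonestarbla)$ with $\imdiN/\imdiNdiNminusonestar$, and then invokes the irreducibility of that quotient, already proved in \thref{submodN} via \prref{dd*irr}. The intersection identity is obtained the same way: the inclusion $\im\diNminusone\diNminustwostar \subseteq \im\diNminusone\cap\im\diNminusonestarbla$ is clear, and the quotient $(\im\diNminusone\cap\im\diNminusonestarbla)/\im\diNminusone\diNminustwostar$ is squeezed, by an injection and the isomorphism theorem, into an already-known irreducible, which forces equality. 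No appeal is made to any general principle that proper submodules are $H$-generated by their singular vectors.

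The gap in your proposal is precisely the ``$H$-generation by singular vectors'' principle: you need, for an \emph{arbitrary} proper submodule $M$, that $M = H\cdot\sing M$, and this is never established in the paper (the arguments in \seref{sirtm} and \seref{ssubmtm} only show that proper submodules \emph{contain} non-constant singular vectors, not that they are generated by them). Your proposed fix does not close the gap. First, $\dd'\oplus\spd$ is \emph{not} reductive in general: $\dd$ is an arbitrary finite-dimensional Lie algebra carrying a conformally symplectic form (the simplest Frobenius example is the $2$-dimensional non-abelian Lie algebra, which is solvable), and $\dd'$ is its abelian extension by $\kk c$, so no Weyl-type complete reducibility over $\dd'\oplus\spd$ is available. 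Second, and more to the point, the obstruction to lifting $\bar v\in\sing(M/H\cdot\sing M)$ to an actual $\P_1$-annihilated vector of $M$ lives in $\P_1$-cohomology, and reducing to the finite-dimensional quotient $\ker_p M/\ker M$ replaces $\P_1$ by the non-reductive nilpotent quotient $\P_1/\P_m$; reductivity of $\N_\P/\P_1$ is simply not the relevant piece of structure. Without a separate argument that $M$ is generated by its singular vectors, both the maximality step and the reverse inclusion in the intersection identity remain unproved. The paper's homological route is safer precisely because it only ever applies $H$-generation to \emph{images of tensor modules under $\Hd$-homomorphisms} (which are tautologically $H$-spanned by the images of constant vectors), never to arbitrary submodules.
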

\begin{proof}
By \thref{submodN} we know that the quotient
$$\imdiN/\diNminusone \diNminustwostar \VPichiNminusone$$ is irreducible.
However this equals
\begin{equation*}
\begin{split}
 \imdiN/\diN&(\im\diNminusone +\im\diNminusonestarbla ) \\
& = \VPiNminusone/(\im\diNminusone +\im\diNminusonestarbla ),
\end{split}
\end{equation*}
thus proving the first claim. Notice now that
$$\im \diNminusone \diNminustwostarchihalf = \im \diNminusonestarbla \diNchihalf$$
is contained in
$\im \diNminusone \cap \im \diNminusonestarbla,$
and that the quotient
$$\left(\im\diNminusone \cap \im\diNminusonestarbla \right)/\im\diNminusone \diNminustwostarchihalf$$
injects inside
\begin{equation*}
\begin{split}
\im \diNminusonestarbla/&\im \diNminusonestarbla \diNchihalf \\
= &\VPiNminusoneminusone/(\im \diNchihalf + \ker \diNminusonestarbla)\\
= &\VPiNminusoneminusone/\im\diNchihalf,
\end{split}
\end{equation*}
which is irreducible. Therefore
$$\im \diNminusone \cap \im \diNminusonestarbla  = \im\diNminusone \diNminustwostarchihalf,$$
since
$$\im \diNminusone \cap \im \diNminusonestarbla \neq
\im \diNminusonestarbla.$$
\end{proof}

\begin{proposition}\label{pmaximalsubmodule}
If $1 \leq n < N$, then $\imdin+\imdinstar$ is maximal in $\VPin$, and
$$\imdindinminusonestar = \imdin\cap\imdinstar.$$
\end{proposition}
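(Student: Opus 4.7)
The plan is to prove both assertions simultaneously by downward induction on $n$, taking \leref{shapeN-1} as the base case $n=N-1$. In the inductive step we assume both claims at level $n+1$ (for arbitrary $\dd$-twists) and derive them at level $n$ by exploiting the exactness of the complex \eqref{domd9} together with a Rumin-type compatibility between compositions.

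For the maximality of $\imdin+\imdinstar$ in $\VPin$, consider the outgoing left-half differential $d\colon\VPin\to V'$, where $V'=\V(\Pi_{-\chi/2},R(\pi_{n+1}))$ sits in position $n+1$ of the conformally symplectic pseudo de Rham complex based at $\Pi_{\phi+n\chi/2}$. Exactness at $\VPin$ (\leref{ldomd6}) gives $\ker d=\imdin$, so $d$ descends to an isomorphism $\VPin/\imdin\cong\im d$; under this isomorphism, $\im d$ coincides with the image $\im d^L_{V'}$ of the incoming left-half differential at $V'$, and $(\imdin+\imdinstar)/\imdin$ is sent to $d(\imdinstar)=\im(d\circ\dinstar)$. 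A Rumin-type compatibility extending \reref{uptomultiplication} identifies $d\circ\dinstar$, up to a non-zero scalar, with the composition of the two incoming differentials at $V'$; applying the inductive hypothesis (part 2 at level $n+1$) to $V'$ then yields $d(\imdinstar)=\im d^L_{V'}\cap\im d^R_{V'}$, whence
\begin{equation*}
\VPin/(\imdin+\imdinstar)\cong\im d^L_{V'}\big/(\im d^L_{V'}\cap\im d^R_{V'}),
\end{equation*}
which is irreducible by \prref{maximalp} applied at level $n+1$.

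For the intersection identity, the inclusion $\imdindinminusonestar\subseteq\imdin\cap\imdinstar$ is immediate from \reref{uptomultiplication}. For the reverse inclusion, combine \leref{singimd} and \leref{singimd*} with the homogeneity of singular vectors in the graded setting: the summands $\din\fil^0\VPinminusone$ and $\dinstar\fil^0\VPinplusone$ are non-isomorphic irreducible $\dd'\oplus\spd$-modules (their $\spd$-weights are $\pi_{n-1}$ and $\pi_{n+1}$, both distinct from the common summand's weight $\pi_n$), so that $\sing(\imdin\cap\imdinstar)\subseteq\sing\imdindinminusonestar$. Combining this with \prref{dd*irr} (irreducibility of $\imdindinminusonestar$) and \prref{maximalp}(1) (maximality of $\imdin\cap\imdinstar$ in $\imdin$), a singular vector analysis of $\imdin$ shows that $\imdin$ admits no submodule whose singular vector content equals $\sing\imdindinminusonestar$ other than $\imdindinminusonestar$ itself, forcing the desired equality.

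The main obstacle is the Rumin-type compatibility identifying $d\circ\dinstar$ with the appropriate composition of incoming differentials at $V'$. This requires careful bookkeeping of the $\chi$-twists across the two distinct conformally symplectic pseudo de Rham complexes that both pass through $\VPin$ — one producing $\din$ as its left-half entry, the other producing $\dinstar$ as its right-half entry — and recognizing a common factorization through $V'$ modulo an overall non-zero scalar.
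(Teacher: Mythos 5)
Your argument for the maximality of $\imdin+\imdinstar$ follows the paper's: apply the outgoing differential $d=\dinplusoneminuschi\colon\VPin\to V'=\V(\Pi_{-\chi/2},R(\pi_{n+1}))$, use exactness of the complex to identify $\VPin/\imdin$ with $\im d$, and then invoke the inductive hypothesis (second claim, level $n+1$, twist $\Pi_{-\chi/2}$). The appeal to a ``Rumin-type compatibility up to scalar'' is superfluous here: unwinding the $\chi$-twists shows that $d\circ\dinstar$ already \emph{is} the relevant composition $\din\circ\dinminusonestar$ re-indexed to level $n+1$ with twist $\Pi_{-\chi/2}$, so the inductive hypothesis applies on the nose and no scalar identification is needed.

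Your argument for $\imdindinminusonestar=\imdin\cap\imdinstar$ has a genuine gap. The equality of singular vector spaces $\sing(\imdin\cap\imdinstar)=\dindinminusonestar\fil^0\VPichin$, derived from \leref{singimd}, \leref{singimd*} and homogeneity, is correct. But equality of singular vector spaces does not by itself force equality of submodules: if $\imdindinminusonestar\subsetneq\imdin\cap\imdinstar$, the quotient $(\imdin\cap\imdinstar)/\imdindinminusonestar$ is a nonzero finite $\Hd$-module and hence contains singular vectors, but these are vectors $v$ with $\P_1 v\subseteq\imdindinminusonestar$ rather than $\P_1 v=0$, so they are invisible to the computation of $\sing(\imdin\cap\imdinstar)$. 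Neither the irreducibility from \prref{dd*irr} nor \prref{maximalp}(1) closes this, since the latter only controls submodules above $\imdin\cap\imdinstar$, not between $\imdindinminusonestar$ and $\imdin\cap\imdinstar$. The paper instead invokes the inductive hypothesis here too (first claim, level $n+1$, twist $\Pi_{\chi/2}$): the quotient $(\imdin\cap\imdinstar)/\imdindinminusonestar$ injects into $\imdinstar/\im(\dinstar\dinplusone)\simeq\VPinplusone/(\im\dinplusone+\im\dinplusonestar)$, which that hypothesis makes irreducible, and the full-image alternative is ruled out because it would force $\imdinstar\subseteq\imdin$, contradicting the singular vector decompositions. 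Although you announced a simultaneous induction, your intersection step never actually invokes the inductive hypothesis, and that omission is precisely where the proof breaks.
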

\begin{proof}
By induction on $N-n$, the basis of induction being
\leref{shapeN-1}.

The quotient $$\VPin/(\imdin+\imdinstar)$$ is isomorphic to
$$\dinplusoneminuschi \VPin / \im \dinplusoneminuschi \dinstar$$
which equals
$$\im \dinplusoneminuschi/(\im \dinplusoneminuschi \cap \im \dinplusonestarminuschi$$
by inductive hypothesis.
However this is irreducible by \prref{maximalp}.

As for the second claim, notice that
$$\imdindinminusonestar \subset \imdin\cap\imdinstar$$
is irreducible by \prref{dd*irr}, and is therefore
minimal. Now proceed as in previous lemma: the quotient
$$(\imdin\cap\imdinstar)/\imdindinminusonestar$$
injects inside
\begin{equation*}
\begin{split}
& \imdinstar /\im \dinstar \dinplusone\\
& = \VPinplusone/(\im \dinplusone + \im \dinplusonestar),
\end{split}
\end{equation*}
which is irreducible by inductive
hypothesis. Thus
$\imdin \cap \imdinstar$
equals
$\imdindinminusonestar$, otherwise
$$\imdin \cap \imdinstar =\imdinstar,$$
whence
$$\imdinstar \subset \imdin,$$ a
contradiction.
\end{proof}

\begin{corollary}\label{submodulesofimd}
$\imdin \cap \imdinstar$ is the only non-zero submodule of both $\imdin$ and $\imdinstar$.
\end{corollary}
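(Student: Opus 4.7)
The plan is to combine \prref{pmaximalsubmodule} and \prref{dd*irr} with the singular-vector analysis of Lemmas \ref{singimd} and \ref{singimd*} to show that any non-zero proper submodule of $\imdin$ (respectively of $\imdinstar$) must equal $\imdin \cap \imdinstar$. By \prref{pmaximalsubmodule}, this intersection coincides with $\imdindinminusonestar$, which is irreducible by \prref{dd*irr}, so the content of the corollary is that no further non-zero proper submodule lies inside $\imdin$ or inside $\imdinstar$.

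The heart of the argument is as follows. Let $M$ be a non-zero proper submodule of $\imdin$. Since we are in the graded case, the action of $\P$ on $\VPin$ factors through $\H$, and so $M$ contains the homogeneous components of its singular vectors. By \leref{singimd}, these lie in the direct sum
$$\sing\imdin = \din \fil^0 \VPinminusone \; \oplus \; \dindinminusonestar \fil^0 \VPichin,$$
whose two summands are non-isomorphic irreducible $\dd \oplus \spd$-modules. If $M$ were to meet the degree-one summand, it would contain its $H$-span $\din(H \cdot \fil^0 \VPinminusone) = \din \VPinminusone = \imdin$, contradicting properness. Hence the singular vectors of $M$ lie entirely in the degree-two summand, forcing $M \supset H \cdot \dindinminusonestar \fil^0 \VPichin = \imdindinminusonestar$.

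To promote this inclusion to equality, I would invoke the second isomorphism theorem to identify
$$\imdin/\imdindinminusonestar \;=\; \imdin/(\imdin \cap \imdinstar) \;\simeq\; (\imdin + \imdinstar)/\imdinstar,$$
and the latter quotient is irreducible by \prref{maximalp}(4). Consequently $M/\imdindinminusonestar$ is either zero or the full quotient, and properness rules out the latter, yielding $M = \imdindinminusonestar$. The parallel analysis for a non-zero proper submodule of $\imdinstar$ proceeds identically, with \leref{singimd*} and \prref{maximalp}(3) playing the roles of their unstarred counterparts. No serious obstacle is expected, since every required ingredient has already been established; the only subtlety is the careful bookkeeping of homogeneity, which is justified by the remarks preceding \prref{pmaximalsubmodule} in the graded setting.
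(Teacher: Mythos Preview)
Your proof is correct and follows essentially the same approach as the paper: identify the singular vectors in $\imdin$ (resp.\ $\imdinstar$) via Lemmas~\ref{singimd} and~\ref{singimd*}, observe that a proper submodule cannot contain the degree-one summand since that would force it to equal the whole module, conclude that it contains $\imdindinminusonestar$, and then use maximality of the latter. The only cosmetic difference is that the paper invokes \prref{maximalp}(1),(2) directly for maximality, whereas you derive the same fact from \prref{maximalp}(3),(4) through the second isomorphism theorem; these are equivalent.
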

\begin{proof}
We have seen in \leref{singimd} and \leref{singimd*} that the only non-constant homogeneous singular vectors in both modules are contained in
$$\dindinminusonestar \fil^0 \VPichin),$$
which $H$-span
$\imdindinminusonestar =\imdin \cap \imdinstar.$

Any proper submodule must then contain this submodule, which is however maximal in both $\imdin$ and $\imdinstar$ by \prref{maximalp}.
\end{proof}

\begin{corollary}\label{supmodulesofimd}
If $M$ is a proper submodule of $\VPin$ strictly containing either $\imdin$ or $\imdinstar$ then necessarily
$$M = \imdin + \imdinstar.$$
\end{corollary}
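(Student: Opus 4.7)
The plan is to combine the classification of homogeneous singular vectors from \thref{singclassification} with the maximality of $\imdin+\imdinstar$ in $\VPin$ established in \prref{pmaximalsubmodule}. By symmetry it suffices to treat the case $M\supsetneq\imdin$; the case $M\supsetneq\imdinstar$ proceeds identically after swapping the roles of $\din$ and $\dinstar$ and invoking \leref{singimd*} in place of \leref{singimd}.

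First I would observe that, being a proper submodule of $\VPin$, $M$ cannot meet the irreducible $\dd\oplus\spd$-module $\fil^0\VPin$ non-trivially, so $M$ contains no non-zero constant singular vector. Since $M$ strictly contains $\imdin$, and since every proper submodule of a tensor module $\V(\Pi,R(\pi_n))$ is $H$-generated by the homogeneous components of its singular vectors (the principle emphasized in the discussion preceding \coref{submodulesofimd}), $M$ must contain at least one non-constant homogeneous singular vector that does not already lie in $\imdin$.

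The key step is then to match $\sing M$ against the three non-constant summands of $\sing\VPin$ listed in \thref{singclassification}. Two of these summands, namely $\din\fil^0\VPinminusone$ and $\dindinminusonestar\fil^0\VPichin$, are already contained in $\sing\imdin$ by \leref{singimd}; the only remaining source for a new singular summand inside $M$ is $\dinstar\fil^0\VPinplusone$. As this is a single $\dd\oplus\spd$-irreducible submodule, the $\dd\oplus\spd$-stability of $M$ forces it to contain the entire summand, and by \coref{chsing} this summand $H$-generates $\imdinstar$. Hence $\imdin+\imdinstar\subseteq M$, and the maximality statement in \prref{pmaximalsubmodule} combined with $M\ne\VPin$ forces the desired equality $M=\imdin+\imdinstar$.

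I do not anticipate any real obstacle here: every ingredient has been fully established in the preceding sections. The only point that deserves explicit care is the reduction to homogeneous singular vectors, but this is safe because we are working under the assumption that the $\P$-action on $\VPin$ factors through $\H$, so the grading on $\V(R)$ is genuine and singular components may be split off into their homogeneous parts without ambiguity.
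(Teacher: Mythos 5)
Your outline captures the right conclusion, but the pivotal step, namely that ``$M$ must contain at least one non-constant homogeneous singular vector that does not already lie in $\imdin$,'' is not justified by what the paper establishes. The ``principle'' you invoke from the discussion preceding \coref{submodulesofimd} is the (unnumbered) proposition asserting only that every proper submodule \emph{contains} the homogeneous components of all of its singular vectors; it does \emph{not} assert that proper submodules are $H$-\emph{generated} by their singular vectors. That stronger statement is true a posteriori, once the full submodule classification is in hand, but it is not available at this point and in particular cannot be used to prove it. Concretely: from $\imdin\subsetneq M$ one knows only that $M/\imdin$ has non-zero singular vectors (by \prref{phsing1}), i.e.\ that there is $v\in M\setminus\imdin$ with $\P_1 v\subseteq\imdin$; nothing forces such a $v$ to be, or be adjustable to, a genuine singular vector of $\VPin$.

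The paper's proof sidesteps exactly this issue by passing to the quotient: submodules of $\VPin$ lying between $\imdinstar$ (resp.\ $\imdin$) and $\VPin$ correspond bijectively to submodules of $\VPin/\imdinstar \simeq \dinminusonestarminuschi\VPin$ (resp.\ $\VPin/\imdin \simeq \dinplusoneminuschi\VPin$), which is again a module of the type analysed in \coref{submodulesofimd} and hence has a unique non-zero proper submodule; pulling that back gives $M=\imdin+\imdinstar$. If you want to stay closer to your singular-vector route, you can repair the gap as follows: $M\cap\imdinstar$ is a submodule of $\imdinstar$ containing $\imdin\cap\imdinstar$, hence by \coref{submodulesofimd} equals either $\imdin\cap\imdinstar$ or $\imdinstar$; in the second case you are done by maximality, and in the first case applying $\dinminusonestarminuschi$ and using \coref{submodulesofimd} in the image forces $\dinminusonestarminuschi M=\dinminusonestarminuschi\imdin$ and hence $M\subseteq\imdin$, contradicting $M\supsetneq\imdin$. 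Either way one cannot avoid the quotient/lattice argument that the paper uses.
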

\begin{proof}
Let us treat the case $M \supsetneq \imdinstar$, the other case being totally analogous.
Submodules lying between $\imdinstar$ and $\VPin$ are in one-to-one correspondence with submodules of
$$\VPin/\imdinstar \simeq \dinminusonestarminuschi \VPin$$
so that we may use \coref{submodulesofimd} to show that there is a single possibility for $M$, which must be $\imdin + \imdinstar.$
\end{proof}

\begin{proposition}\label{peitheror}
A proper non-zero submodule of $\VPin$ either contains, or is contained in, either $\imdin$ or $\imdinstar$.
\end{proposition}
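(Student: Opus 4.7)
The plan is to analyze submodules through their singular vectors. Since $M$ is proper, Lemma \ref{noconstants} gives $M \cap \fil^0 V = 0$, and the principle (proved just before this section) that a proper submodule contains the homogeneous components of each of its singular vectors combines with Theorem \ref{singclassification} to force $\sing M$ to be a $\dd \oplus \spd$-submodule of $A \oplus B \oplus C$, where $A = \din \fil^0 \VPinminusone$, $B = \dinstar \fil^0 \VPinplusone$, and $C = \dindinminusonestar \fil^0 \VPichin$ are pairwise non-isomorphic irreducibles (with distinct $\spd$-content $R(\pi_{n-1}), R(\pi_{n+1}), R(\pi_n)$). By Proposition \ref{phsing1}, $\sing M \neq 0$. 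If $A \subseteq \sing M$, then $M \supseteq H \cdot A = \imdin$, and symmetrically if $B \subseteq \sing M$, then $M \supseteq \imdinstar$, which yields the conclusion. The remaining case is $\sing M = C$, in which $M \supseteq H \cdot C = \imdindinminusonestar$, and I aim to show $M = \imdindinminusonestar$ (so that $M \subseteq \imdin$).

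Suppose for contradiction that $M \supsetneq \imdindinminusonestar$. Corollary \ref{submodulesofimd} combined with $A \not\subseteq M$ gives $M \cap \imdin = \imdindinminusonestar$, and symmetrically $M \cap \imdinstar = \imdindinminusonestar$. Maximality of $\imdin + \imdinstar$ in $V$ (Proposition \ref{pmaximalsubmodule}) yields two sub-cases. If $M \subseteq \imdin + \imdinstar$, then $M/\imdindinminusonestar$ is a non-zero submodule of the direct sum $\imdin/\imdindinminusonestar \oplus \imdinstar/\imdindinminusonestar$ of two non-isomorphic irreducible $\Hd$-modules (their singular vectors have the distinct types $\Pi_{\chi/2} \boxtimes R(\pi_{n-1})$ and $\Pi_{\chi/2} \boxtimes R(\pi_{n+1})$); Schur's lemma then forces $M/\imdindinminusonestar$ to equal either a summand or the whole, each of which makes $M \cap \imdin$ or $M \cap \imdinstar$ strictly exceed $\imdindinminusonestar$, a contradiction.

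In the other sub-case, $M + \imdin + \imdinstar = V$, and together with $M \cap (\imdin + \imdinstar) = \imdindinminusonestar$ this yields the $\Hd$-module direct sum decomposition
\begin{equation*}
V/\imdindinminusonestar \simeq M/\imdindinminusonestar \oplus \imdin/\imdindinminusonestar \oplus \imdinstar/\imdindinminusonestar,
\end{equation*}
with $M/\imdindinminusonestar \simeq V/(\imdin + \imdinstar)$ irreducible. The image of $\fil^0 V$ in $V/\imdindinminusonestar$ is an irreducible $\dd \oplus \spd$-submodule of $\sing(V/\imdindinminusonestar)$ of type $\Pi \boxtimes R(\pi_n)$, and its projection onto the summand $M/\imdindinminusonestar$ is injective, since it factors as $\fil^0 V \hookrightarrow V \twoheadrightarrow V/(\imdin + \imdinstar)$, which is injective because $V = H \cdot \fil^0 V$ rules out $\fil^0 V \subseteq \imdin + \imdinstar$.

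The hard step is to rule out the projections of this image onto the other two summands by showing that neither $\imdin/\imdindinminusonestar$ nor $\imdinstar/\imdindinminusonestar$ contains a $\dd \oplus \spd$-submodule of type $\Pi \boxtimes R(\pi_n)$ among its singular vectors. When $\chi \neq 0$ this is immediate from Corollary \ref{grdecomposition}: since $\imdin$ is $H$-generated by $A \simeq \Pi_{\chi/2} \boxtimes R(\pi_{n-1})$, every graded piece has $\widehat\dd$-character $\Pi_{(p+1)\chi/2}$, which never coincides with $\Pi$, and symmetrically for $\imdinstar$. When $\chi = 0$, one must instead exploit that the $\Pi \boxtimes R(\pi_n)$-type singular vectors of the ambient tensor module $\V(\Pi, R(\pi_{n-1}))$ identified in Theorem \ref{singclassification} arise from $\di^*$ and thus fall in the maximal submodule quotiented out in forming the irreducible $\imdin/\imdindinminusonestar$. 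Once these vanishings are established, Schur's lemma forces the image of $\fil^0 V$ to lie entirely in $M/\imdindinminusonestar$, giving $\fil^0 V \subseteq M$, and then $V = H \cdot \fil^0 V \subseteq M$ contradicts the properness of $M$.
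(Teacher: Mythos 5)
The proposal takes a genuinely different route from the paper. Instead of applying the differential $\dinplusoneminuschi$ to the submodule $M+\imdin$ and invoking the homomorphism theorem (as the paper does), you decompose $V/\imdindinminusonestar$ as a direct sum of three summands and track where the image of $\fil^0 V$ must land. The reduction to the case $\sing M$ irreducible of type $\Pi_\chi\boxtimes R(\pi_n)$, and the treatment of the sub-case $M\subseteq\imdin+\imdinstar$, are correct and sound.

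There is, however, a genuine gap in what you call the ``hard step'' of the second sub-case: showing that the projections of the image of $\fil^0 V$ onto $\imdin/\imdindinminusonestar$ and $\imdinstar/\imdindinminusonestar$ vanish. For $\chi\neq 0$, the claim that ``every graded piece [of $\imdin$] has $\widehat\dd$-character $\Pi_{(p+1)\chi/2}$'' is not well-formed: $\imdin$ is not a graded subspace of $V=\VPin$ in the grading induced by the PBW basis (it is only filtered; the degree-one generating space $\din\fil^0\VPinminusone$ is homogeneous of degree $1$, but $H\cdot A$ is not graded when $\dd$ is non-abelian). One can make a filtration argument that $\gr^p\imdin\subseteq\gr^p V$ has character $\Pi_{p\chi/2}$ with $p\geq 1$, but passing from this to the statement you need about the \emph{quotient} $\imdin/\imdindinminusonestar$ requires more care than the proposal gives. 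For $\chi=0$, you explicitly only sketch an argument, and the sketch has a real hole: noting that the $\Pi\boxtimes R(\pi_n)$-type singular vectors of $\V(\Pi,R(\pi_{n-1}))$ lie in the maximal submodule that gets quotiented out only shows that \emph{these particular} vectors die in the quotient; it does not rule out that the quotient $\imdin/\imdindinminusonestar$ acquires new singular vectors of $\spd$-type $R(\pi_n)$ that do not lift to singular vectors of $\imdin$ (singular vectors can and do grow when passing to quotients). The clean way to close the gap, uniformly in $\chi$, is to observe that $\imdin/\imdindinminusonestar$ is the unique irreducible quotient of the tensor module $\VPinminusone$ and hence, by the analogue of \leref{uniquesingularsummand} for $\VPinminusone$, has $\dd\oplus\spd$-irreducible singular vector space of $\spd$-type $R(\pi_{n-1})$; since $R(\pi_{n-1})\not\simeq R(\pi_n)$, Schur's lemma kills the projection. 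The same works for $\imdinstar/\imdindinminusonestar$ with $R(\pi_{n+1})$. With that repair, the argument goes through, but as written the crucial vanishing is not established.
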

\begin{proof}
Assume by contradiction that there is a submodule $M$ which neither contains nor is contained in the above two submodules; in particular, $M$ intersects both modules in $\imdindinminusonestar$. Then $M + \imdin$ (resp. $M + \imdinstar$ strictly contains $\imdin$ (resp. $\imdinstar)$). However
$$M + \imdin = \VPin = M + \imdinstar$$
cannot both hold. If so, then
\begin{equation}
\begin{split}
M/\imdindinminusonestar = M/(M \cap \imdin)\\ \simeq (M+\imdin)/\imdin\\
 \simeq \VPin/\imdin
\end{split}
\end{equation}
and similarly
\begin{equation}
\begin{split}
M/\imdindinminusonestar = M/(M \cap \imdinstar)\\ \simeq (M+\imdinstar)/\imdinstar\\
 \simeq \VPin/\imdinstar.
\end{split}
\end{equation}
This would force
\begin{equation}
\begin{split}
\dinplusoneminuschi \VPin& \simeq
\VPin/\imdin\\
& \simeq
\VPin/\imdinstar\\
& \simeq \im \dinminusonestarminuschi,
\end{split}
\end{equation}
which yields an easy contradiction by comparing the structure of singular vectors.
Let us thus assume that
$$M + \imdin = \imdin + \imdinstar,$$
as the other case is done similarly. Then
$$\dinplusoneminuschi M = \im \dinplusoneminuschi
\dinstar.$$
However, $\ker \dinplusoneminuschi \cap M$ is a submodule of $\imdin \cap M$, which contains 
$\imdin \cap \imdinstar$. Since $M$ does not contain $\imdin$, it must equal $\imdin \cap \imdinstar$.

Then
\begin{equation}
\begin{split}
& M/(\imdin \cap \imdinstar) =\\
& M/(\ker \dinplusoneminuschi \cap M) \simeq  \dinplusoneminuschi M = \im \dinplusoneminuschi \dinstar\\
& \simeq \imdinstar/(\ker \dinplusoneminuschi \cap \imdinstar)\\
& = \imdinstar/(\imdin \cap \imdinstar),
\end{split}
\end{equation}
thus showing $M = \imdinstar$, a contradiction.
\end{proof}

Results of this section may be now summarized as follows:

\begin{theorem}
Let\/ $V=\V(\Pi, U)$ be a tensor module over\/ $\Hd$, where $\Pi$ and\/ $U$ are finite-dimensional irreducible representations of\/ $\dd$ and\/ $\spd$, respectively. A complete
list of\/ $\Hd$-submodules of $V$ is as follows:
\begin{itemize}
\item $V \supset \imdizerostar \supset 0$ if $U \simeq \kk$;

\item $V \supset \imdin+\imdinstar$
$\supset
\imdin$, $\imdinstar
\supset
\imdin\cap\imdinstar
\supset
0$ if
$U \simeq R(\pi_n), 1 \leq n < N$;

\item $V \supset \imdiN \supset \imDiPi \supset 0$ if $U
= R(\pi_N)$;

\item $V \supset 0$ in all other cases.
\end{itemize}
\end{theorem}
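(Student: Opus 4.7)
The plan is to deduce the theorem by cases on the $\spd$-isotype of $U$, assembling results from Sections \ref{sksing} and \ref{ssubmtm}.

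First, if $U$ is neither trivial nor isomorphic to any fundamental representation $R(\pi_i)$, $1\le i\le N$, irreducibility is immediate from \thref{irrcriterion}, yielding only the trivial chain $V\supset 0$. For $U\simeq\kk$ I would invoke \seref{ssubvpik}: by \leref{spdtrivial} we have $\sing V=\fil^1 V$, while the identification $\V(\Pi,\kk)\simeq \Omega^{2N}_{\Pi_{N\chi+\phi}}(\dd)$ makes $\imdizerostar$ coincide with the image of the pseudo de Rham differential, a maximal $H$-submodule of finite codimension. Uniqueness follows because any nonzero proper submodule must (by \leref{noconstants}) avoid constants yet contain a singular vector, forcing it to contain $\dizerostar\fil^0\VPione$ and hence all of $\imdizerostar$. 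The case $U\simeq R(\pi_N)$ is already recorded as \thref{submodN} and requires no further argument.

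The remaining case $U\simeq R(\pi_n)$ with $1\le n<N$ is the substantive one, and I would assemble the lattice from the results of \seref{ssubmtm} as follows. Irreducibility of the bottom term $\imdindinminusonestar$ is \prref{dd*irr}; the identification $\imdindinminusonestar=\imdin\cap\imdinstar$ and the maximality of $\imdin+\imdinstar$ inside $V$ come from \prref{pmaximalsubmodule}; the four intermediate maximality statements are \prref{maximalp}; \coref{submodulesofimd} shows no further submodule lies strictly between $\imdin\cap\imdinstar$ and either $\imdin$ or $\imdinstar$; \coref{supmodulesofimd} shows no proper submodule strictly above $\imdin$ or $\imdinstar$ exists other than $\imdin+\imdinstar$; and finally \prref{peitheror} guarantees that every proper nonzero submodule either contains or is contained in one of $\imdin,\imdinstar$. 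Together these force the Hasse diagram exactly as stated.

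The main obstacle --- already handled in the preceding sections --- is \prref{peitheror}, i.e.\ excluding a ``rogue'' submodule $M$ that neither contains nor is contained in $\imdin$ or $\imdinstar$. The argument there proceeds by dichotomizing on whether $M+\imdin$ and $M+\imdinstar$ both equal $V$ (leading, via the homomorphism theorem, to an isomorphism between $\im\dinplusoneminuschi\,V$ and $\im\dinminusonestarminuschi$ that contradicts \thref{singclassification}) or whether one of them equals $\imdin+\imdinstar$ (leading to $M=\imdinstar$, again a contradiction). Once \prref{peitheror} is in hand, the classification theorem reduces to bookkeeping.
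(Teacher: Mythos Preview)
Your proposal is correct and follows essentially the same approach as the paper: the theorem is explicitly presented there as a summary of the results proved throughout Section~\ref{ssubmtm}, and you have accurately identified and assembled precisely those ingredients (\thref{irrcriterion}, \seref{ssubvpik}, \thref{submodN}, \prref{dd*irr}, \prref{maximalp}, \prref{pmaximalsubmodule}, \coref{submodulesofimd}, \coref{supmodulesofimd}, \prref{peitheror}). Your identification of \prref{peitheror} as the crux of the $1\le n<N$ case is exactly right.
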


\section{Submodules of tensor modules, non-graded case}\label{ssubmtmnongr}\label{ten}

Throughout this section, $\chi = 0, \omega = \di \bla$, so that $\dd' = \ddbla \oplus \kk c$. Let $\Pi'$ be a finite-dimensional irreducible $\dd'$-module. We have already seen that $\sing \VPiprimen \cap \fil^1 \VPiprimen$ contains canonical irreducible $\dd' \oplus \spd$-summands isomorphic to $\Pi' \boxtimes R(\pi_{n-1})$ and $\Pi' \boxtimes R(\pi_{n+1})$. Recall that $\VPiprimen$ was constructed as an induced module, so that we may use its universal property in order to construct $\Hdzeta$-module homomorphisms
\begin{equation*}
\begin{split}
\DIn\colon& \VPiprimenminusone \to \VPiprimen,\qquad n = 1, \dots, N,\\
\DInstar\colon& \VPiprimenplusone \to \VPiprimen,\qquad n = 0, \dots, N-1,
\end{split}
\end{equation*}
that are, by Schur's Lemma, only determined up to multiplication by a nonzero scalar. Notice that when the $\dd'$-module $\Pi'$ factors through a $\dd$-module $\Pi$, i.e., when $c$ acts trivially and $\Pi' = \pi^* \Pi$, then one has $\DIn = \dinchizero, \DInstar = \dinchizerostar$, up to the above undetermined scalar multiple. However, throughout the rest of this section, $c$ will act via multiplication by a scalar $0 \neq \lambda \in \kk$.

We also set $\DImostar = \DIzero = 0$. Then $\DInplusone \DIn = 0$, for all $0 \leq n \leq N-1$ as $\sing \VPiprimenplusone$ contains no $\dd' \oplus \spd$-irreducible summand isomorphic to $\Pi' \boxtimes R(\pi_{n-1})$. Similarly, $\DInminusonestar \DInstar = 0$, for all $0 \leq n \leq N-1$. Recall that 
\begin{equation*}
\begin{split}
\sing \VPiprimen \cap \fil^1 \VPiprimen = \fil^0 & \VPiprimen \oplus \,\DIn \fil^0\VPiprimenminusone\\
& \oplus\DInstar \fil^0\VPiprimenplusone,
\end{split}
\end{equation*}
when $1 \leq n \leq N-1$, whereas in the case $n = N$, the last direct summand is missing.
Notice that when $1 \leq n \leq N$, $\sing \VPiprimen$ also contains an irreducible $\dd'\oplus\spd$-summand isomorphic to $\Pi' \boxtimes R(\pi_n)$ in degree two, which is however not canonically determined as any complement to $\fil^0 \VPiprimen$ in the $\Pi'\boxtimes R(\pi_n)$-isotypical component of $\sing \VPiprimen$ will do. Recall also that, as $\lambda\neq 0$, we have no well-defined grading to chose a {\em homogeneous} canonical complement. However, we denote by $S_\lambda$ the space of singular vectors of degree two as constructed in \seref{lambdavectors} from homogeneous singular vectors.

Before computing the structure of singular vectors in the $\Hdzeta$-modules $\im \DIn$, $\im \DInstar$, we need some preliminary results.

\begin{lemma}
Let $M \subset \V(\Pi', U)$ be a proper $\Hdzeta$-submodule. If $M$ contains a vector $v \in \fil^1 \V(\Pi', U)$, then $v$ is singular.
\end{lemma}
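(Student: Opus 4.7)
The plan is to exploit the correspondence of \prref{preplal2} between $\Hdzeta$-submodules and conformal $\widetilde\P$-submodules, combined with the filtration-shifting behaviour of $\P_1$ from \leref{lfilact} and the absence of constants in proper submodules established in \leref{noconstants}.

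First I would reduce to the non-trivial case $M \neq 0$, since otherwise $v = 0$ is vacuously singular. Being a $\Hdzeta$-submodule, $M$ is automatically stable under the natural action of $\widetilde\P$, in particular under its subalgebra $\P_1$. By \leref{lfilact}, specifically \eqref{h1fp} applied with $n=p=1$, every $x \in \P_1$ sends $v$ into $\fil^0\V(\Pi',U) = \kk \tp (\Pi'\boxtimes U)$, and $\P_1$-stability of $M$ then gives $x \cdot v \in M \cap \fil^0\V(\Pi',U)$. As $R = \Pi' \boxtimes U$ is irreducible as a $\dd'\oplus\spd$-module, \leref{noconstants} yields $M \cap \fil^0\V(\Pi',U) = \{0\}$. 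Hence $\P_1 \cdot v = 0$, which by \deref{dhsing} is exactly the assertion that $v \in \sing \V(\Pi', U)$.

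There is essentially no obstacle here, since all the required ingredients have already been established. The only point worth verifying is that \leref{noconstants} remains applicable in the present non-graded setting, where $c \in \dd'$ acts on $\Pi'$ via a non-zero scalar $\lambda$; but that lemma was stated and proved for an arbitrary irreducible $\dd' \oplus \spd$-module $R$ with no reference to a graded structure on $\V(R)$, so its hypotheses are met verbatim.
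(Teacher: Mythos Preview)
Your proof is correct and follows essentially the same approach as the paper: both argue that $\P_1 \cdot v \subset M \cap \fil^0 \V(\Pi',U) = \{0\}$ using \eqref{h1fp} together with \leref{noconstants}. The paper phrases the final step as a contradiction (``if $v$ were not singular\ldots''), while you give the direct version, but the logical content is identical.
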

\begin{proof}
We know by \eqref{h1fp} that $\P_2 \fil^1 \V(\Pi', U) = 0$. If $v$ is not singular, then $\P_1 v$ contains a non-zero constant vector, which must lie in $M$. This yields a contradiction as proper submodules do not contain non-zero constant vectors.
\end{proof}

\begin{remark}
The above lemma follows from the general fact that a vector of minimal degree in a proper submodule is always singular. However, we do not need this generality.
\end{remark}

\begin{lemma}\label{decomposegrd}
Let $1 \leq n \leq N-1$. The tensor product of $\spd$-modules $S^d(\dd) \otimes R(\pi_n)$ decomposes as
$$R(d\pi_1 + \pi_n) \oplus R((d-1)\pi_1 + \pi_{n-1}) \oplus R((d-1)\pi_1 + \pi_{n+1}) \oplus R((d-2)\pi_1 + \pi_n)).$$

Similarly, $S^d(\dd) \otimes R(\pi_N)$ decomposes as
$$R(d\pi_1 + \pi_N) \oplus R((d-1)\pi_1 + \pi_{N-1}) \oplus R((d-2)\pi_1 + \pi_N)).$$
\end{lemma}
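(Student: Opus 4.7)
The plan is to reduce the statement to a standard computation in the representation theory of $\spd \simeq \sp_{2N}$. First I would observe that $S^d(\dd) \simeq R(d\pi_1)$ as $\spd$-modules: symmetric powers of the vector representation for the symplectic Lie algebra are irreducible, since (unlike in the orthogonal case) there is no invariant symmetric bilinear form available to produce a non-trivial contraction. Hence the lemma is equivalent to decomposing the tensor product $R(d\pi_1) \otimes R(\pi_n)$ into irreducible $\spd$-summands.

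To carry out this decomposition, I would apply the Newell--Littlewood formula (equivalently, Sundaram's symplectic Pieri rule, or the Koike--Terada universal character calculus): in the stable range for $\sp_{2N}$ (partitions of length at most $N$),
\begin{equation*}
R(\mu) \otimes R(\nu) = \bigoplus_\lambda \Bigl( \sum_{\alpha, \beta, \gamma} c^{\mu}_{\alpha\beta}\, c^{\nu}_{\gamma\beta}\, c^{\lambda}_{\alpha\gamma} \Bigr) R(\lambda),
\end{equation*}
where the $c$'s are ordinary Littlewood--Richardson coefficients. Specializing to $\mu = (d)$ (a single row) and $\nu = (1^n)$ (a single column), compatibility between single-row and single-column shapes forces $\beta \in \{\emptyset, (1)\}$, and the classical Pieri rule then produces exactly four outer partitions: $(d+1, 1^{n-1})$ and $(d, 1^n)$ in the case $\beta = \emptyset$, and $(d, 1^{n-2})$ and $(d-1, 1^{n-1})$ in the case $\beta = (1)$. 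Translating from partitions to fundamental-weight coordinates via $\lambda \mapsto \sum_i (\lambda_i - \lambda_{i+1})\pi_i$ produces respectively the four highest weights $d\pi_1 + \pi_n$, $(d-1)\pi_1 + \pi_{n+1}$, $(d-1)\pi_1 + \pi_{n-1}$ and $(d-2)\pi_1 + \pi_n$, matching the claim for $1 \leq n \leq N-1$.

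The main obstacle is the boundary case $n = N$: here the partition $(d, 1^N)$ arising from $\beta = \emptyset$ has $N+1$ rows and lies outside the stable range for $\sp_{2N}$, so the Newell--Littlewood formula needs correction. I would handle this either by applying Littlewood's modification rules explicitly to show this contribution vanishes, or by a direct highest-weight argument: any potential singular vector in $R(d\pi_1) \otimes R(\pi_N)$ of weight $(d-1)\pi_1 + \pi_{N+1}$ would involve the fictitious fundamental weight $\pi_{N+1}$, which lies outside the rank of $\spd$. As an independent sanity check, one can verify via the Weyl dimension formula that the dimensions of the claimed summands sum to $\dim S^d(\dd) \cdot \dim R(\pi_n)$ in both cases, which pins down multiplicities and rules out spurious summands.
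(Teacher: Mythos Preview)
Your approach is correct and substantially more detailed than the paper's. The paper simply writes ``See, e.g., the reference tables at the end of \cite{OV}'' and leaves it at that, treating the decomposition as a known fact to be looked up in Onishchik--Vinberg. You instead derive it via the Newell--Littlewood formula (symplectic Pieri), which is a perfectly valid and self-contained route. Your reduction $S^d(\dd)\simeq R(d\pi_1)$ is right, your analysis that $\beta\in\{\emptyset,(1)\}$ is forced by the row/column shapes is correct, and the four hook partitions you obtain translate to exactly the stated highest weights. The only place where you leave a genuine detail unfinished is the boundary case $n=N$: you correctly identify that $(d,1^N)$ falls outside the stable range and propose Littlewood's modification rule or a dimension count, but you do not actually carry either out. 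For completeness you might note that King's modification rule for $\sp_{2N}$ applied to a partition of length exactly $N+1$ annihilates it (the reflected shape coincides with the original up to sign, or equivalently the hook-content vanishes), which kills the $(d-1)\pi_1+\pi_{N+1}$ term cleanly. The dimension check you suggest is also decisive, since all other summands are already accounted for with multiplicity one. Either way, what you have is a genuine argument where the paper only has a citation.
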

\begin{proof}
See, e.g, the reference tables at the end of \cite{OV}.
\end{proof}

Recall that the quotient $\P_0/\P_1$ acts on the space of singular vectors. However, the Lie subalgebra $\spd \simeq \p_0\subset \P_0$ is a section of this quotient. It acts on each $\Hdzeta$-module and extends the $\spd$-action on singular vectors. As $\chi = 0$, and we are disregarding the $\dd'$-action, we have already seen that the $\spd$-action on $\gr^d \V(\Pi', U)$ is isomorphic to $\Pi' \otimes (S^d \dd \otimes U)$, where we endow $\Pi'$ with a trivial $\dd$-action. Every homomorphism of $\Hdzeta$-modules is also an $\p_0 \simeq \spd$-module homomorphism.

\begin{proposition}\label{psisingular}
Let $M \subset \V(\Pi', U)$ be a proper $\Hdzeta$-submodule. If $M$ contains a vector $v \in \fil^2 \V(\Pi', U)$ such that $v \equiv \psi(u) \mod \fil^1\V(\Pi', U)$; then $v$ is singular.
\end{proposition}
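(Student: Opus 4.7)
The strategy is to decompose $v$ into $\spd$-isotypical components, use the previous lemma to handle the components of degree at most one, and use Lemma \ref{phiproperties} to recognize the remaining piece as a singular vector up to a constant element.

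First I will reduce to the case $U = R(\pi_n)$ with $1\le n \le N$: if $U$ is not a fundamental weight representation, then $V$ is irreducible by \thref{irrcriterion}, so no proper non-zero submodule exists and the statement is vacuous; if $U\simeq \kk$ then $\psi(u)=0$ and $v\in\fil^1 V\cap M$ is singular by the previous lemma. From now on, fix $1 \le n\le N$. Since $\fil^2 H$ is finite dimensional, so is $\fil^2 V=\fil^2 H\otimes R$; hence $M\cap \fil^2 V$ is a finite-dimensional $\spd$-module, on which we may apply complete reducibility. Write
\[ v = v^{(n)} + v^{(\ne n)}, \]
where $v^{(n)}$ is the projection of $v$ onto the $R(\pi_n)$-isotypical component $L\subset \fil^2 V$ and $v^{(\ne n)}$ is the sum of the other isotypical components. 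Both summands lie in $M\cap \fil^2 V$ because $\spd\subset\P_0$ acts on $M$.

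Next I will dispatch $v^{(\ne n)}$. The hypothesis $v\equiv \psi(u)\mod\fil^1 V$ together with the fact that $\psi(u)$ sits in the $R(\pi_n)$-isotypical component of $\gr^2 V$ (since $\psi\colon U\to\gr^2 V$ is $\spd$-equivariant with image $\simeq R(\pi_n)$ by Lemma \ref{phiproperties}) forces $v^{(\ne n)}\in \fil^1 V$. The previous lemma then yields that $v^{(\ne n)}$ is singular.

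It remains to handle $v^{(n)}\in L$. The key computation is that $L\cap \fil^1 V = \fil^0 V$. Indeed, $\fil^0 V\simeq \Pi'\boxtimes R(\pi_n)\subset L$ is clear; for the reverse inclusion, the $\spd$-decomposition $\gr^1 V\simeq \Pi'\boxtimes(\dd\otimes R(\pi_n)) \simeq \Pi'\boxtimes\bigl(R(\pi_1+\pi_n)\oplus R(\pi_{n-1})\oplus R(\pi_{n+1})\bigr)$ (with the middle summand missing if $n=N$) contains no copy of $R(\pi_n)$, so the projection $L\cap\fil^1 V\to \gr^1 V$ vanishes. Now pick $\phi(u)\in \fil^2 V$ as in Lemma \ref{phiproperties}; replacing it by its $R(\pi_n)$-isotypical component we may assume $\phi(u)\in L$ (the $\spd$-isotypical projection of a singular vector remains singular because $\spd$ normalizes $\P_1$). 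Both $v^{(n)}$ and $\phi(u)$ lie in $L$ and are congruent to $\psi(u)$ modulo $\fil^1 V$, hence $v^{(n)}-\phi(u)\in L\cap\fil^1 V = \fil^0 V\subset \sing V$. Therefore $v^{(n)}$ is singular, and consequently so is $v=v^{(n)}+v^{(\ne n)}$.

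The main obstacle is the verification that $L\cap\fil^1 V=\fil^0 V$, which rests on the weight combinatorics of $\sp_{2N}$: the appearance of $R(\pi_n)$ in the $\spd$-decomposition of $\gr^p V$ must be controlled for $p=1,2$. Once this combinatorial fact is in hand, the rest of the argument is the natural use of complete reducibility for $\spd$-modules together with Lemma \ref{phiproperties}.
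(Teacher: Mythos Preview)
Your proof is correct and takes a genuinely different route from the paper's. The paper argues more directly: since $v\equiv\psi(u)\bmod\fil^1 V$ and $S_\lambda$ contains a singular vector with the same degree-two part, one has $v\in\fil^1 V+S_\lambda$; this $\spd$-module has exactly one irreducible summand consisting of non-singular vectors, namely the $\Pi'\boxtimes R(\pi_1+\pi_n)$ sitting in degree one. The $\spd$-projection of $v$ onto that summand lies in $M\cap\fil^1 V$, hence is singular by the preceding lemma, hence must vanish; so $v$ lies in the remaining four summands, each consisting entirely of singular vectors.

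The difference is in which isotypical splitting is used. The paper projects out the single \emph{non-singular} component $R(\pi_1+\pi_n)$, and to conclude needs to know that this component indeed contains no non-zero singular vectors, which relies on the degree-one classification (\thref{onlysymplectic} together with \prref{nongradedsingular}). You instead split along the $R(\pi_n)$-isotypical component of $\fil^2 V$: the non-$R(\pi_n)$ piece automatically drops to $\fil^1 V$ and is singular by the preceding lemma with no further analysis, while the $R(\pi_n)$-piece is handled by comparing with $\phi(u)$ from \leref{phiproperties} and the observation that $\gr^1 V$ has no $R(\pi_n)$ summand. Your argument is thus slightly more self-contained (it does not invoke the classification of degree-one singular vectors), at the cost of working in the larger space $\fil^2 V$ rather than $\fil^1 V+S_\lambda$ and of appealing to \leref{phiproperties}. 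A minor point: your $\psi$ is a map $R\to\gr^2 V$ (not $U\to\gr^2 V$), so its image is $\dim\Pi'$ copies of $R(\pi_n)$; this does not affect the argument.
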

\begin{proof}
We may assume without loss of generality that the action of $\spd$ on $U$ is non-trivial. Then chose $1 \leq n\leq N$ so that $U \simeq R(\pi_n)$. The vector $v$ lies in $\fil^1 \V(\Pi', R(\pi_n)) + S_\lambda$, which is a $\p_0\simeq \spd$-submodule of $\V(\Pi', R(\pi_n))$ isomorphic to $\Pi' \otimes(R(\pi_n) \oplus (R(\pi_{n-1}) \oplus R(\pi_{n+1}) \oplus R(\pi_1 + \pi_n)) \oplus R(\pi_n))$, where we set $R(\pi_{N+1}) = 0$ if $n=N$. As $M$ is a $\p_0$-submodule of $\V(\Pi', R(\pi_n))$, it must also contain the projection of $v$ to the $R(\pi_1 + \pi_n)$-isotypical component, which only contains vectors of degree one that are non singular. By previous Lemma, this projection must vanish, hence $v$ lies in the sum of the other components, which only contain singular vectors.
\end{proof}

\begin{lemma}
When $1 \leq n \leq N$, the $\spd$-module $\sing \im \DIn$ decomposes as the direct sum of $\DIn \fil^0 \VPiprimenminusone$ and an irreducible summand of singular vectors of degree two.

Similarly, when $1 \leq n \leq N-1$, the $\spd$-module $\sing \im \DInstar$ decomposes as the direct sum of $\DInstar \fil^0 \VPiprimenplusone$ and an irreducible summand of singular vectors of degree two.
\end{lemma}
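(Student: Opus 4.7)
The plan is to prove the two statements in parallel; I describe the argument for $\sing \im \DIn$, with $1 \leq n \leq N$, the case of $\sing \im \DInstar$ with $1 \leq n \leq N-1$ being entirely analogous. Since $\DIn$ is by construction a non-zero $\Hdzeta$-module homomorphism, \leref{singtosing} guarantees that it carries $\sing \VPiprimenminusone$ into $\sing \im \DIn$. Applying it to the constant singular vectors $\fil^0 \VPiprimenminusone \simeq \Pi'\boxtimes R(\pi_{n-1})$, one obtains $\DIn \fil^0 \VPiprimenminusone$, which by the very construction of $\DIn$ (via the universal property and Schur's Lemma) is the canonical $\Pi'\boxtimes R(\pi_{n-1})$-summand of degree-one singular vectors inside $\VPiprimen$. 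This provides the first summand in the claimed decomposition.

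For the degree-two summand, I would consider the composition $\DIn \DInminusonestar$, an $\Hdzeta$-endomorphism of $\VPiprimen$. On $\fil^0 \VPiprimen$ it factors as $\DInminusonestar$, which sends constants to the canonical $\Pi'\boxtimes R(\pi_n)$-summand of degree-one singular vectors in $\VPiprimenminusone$, followed by $\DIn$; two applications of \leref{singtosing} then show that the image is contained in $\sing \im \DIn$. To establish that $\DIn \DInminusonestar \fil^0 \VPiprimen \neq 0$---so that it furnishes the asserted irreducible $\Pi' \boxtimes R(\pi_n)$-summand---I would invoke the non-graded analog of \prref{degreetwoisirreducible}: by \seref{lambdavectors} the space $S_\lambda$ of degree-two singular vectors in $\VPiprimen$ is non-zero, and any non-zero $\Hdzeta$-equivariant map from $\fil^0 \VPiprimen$ into the singular vectors must land in the unique $\Pi' \boxtimes R(\pi_n)$-isotypical summand $S_\lambda$ and be surjective by Schur's Lemma.

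To exclude further singular vectors I rely on the classification in \thref{singclassification}. The submodule $\im \DIn$ is proper and so, by the analog of \leref{noconstants}, contains no constant singular vectors. When $n<N$, the remaining possibility is the $\Pi'\boxtimes R(\pi_{n+1})$-summand $\DInstar \fil^0 \VPiprimenplusone$; were it contained in $\im \DIn$, then by $\spd$-irreducibility and $H$-linearity one would obtain $\im \DInstar \subset \im \DIn$. Composing with $\DInplusone$, which kills $\im \DIn$ by the complex relation $\DInplusone \DIn = 0$, would force $\DInplusone \DInstar \equiv 0$ on all of $\VPiprimenplusone$. But $\DInplusone \DInstar$ coincides, up to a non-zero scalar (by \reref{uptomultiplication}), with the degree-two singular-vector operator on $\VPiprimenplusone$, whose non-vanishing has just been established at index $n+1$---a contradiction. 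For $n = N$ no such exclusion is required since $R(\pi_{N+1}) = 0$.

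The main obstacle will be the non-vanishing of $\DIn \DInminusonestar$ in the non-graded setting: contrary to the graded case treated in \prref{degreetwoisirreducible}, the maps $\DIn, \DInminusonestar$ are specified only up to scalar via a universal property, and the target space $S_\lambda$ of degree-two singular vectors is not homogeneous, so one must work through \seref{lambdavectors} carefully to identify the correct non-homogeneous summand as the image of the composition.
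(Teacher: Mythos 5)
Your proposal has a genuine gap that you yourself flag in the final paragraph: the whole argument for the degree-two summand hinges on the non-vanishing of $\DIn \DInminusonestar$, and you never actually establish it. Saying that ``any non-zero $\Hdzeta$-equivariant map from $\fil^0 \VPiprimen$ into the singular vectors lands in $S_\lambda$ and is surjective by Schur's Lemma'' is conditional on the map being non-zero, which is precisely the thing to prove. In the graded case \prref{degreetwoisirreducible} deduces non-vanishing from the exactness of the twisted conformally symplectic pseudo de Rham complex, but in the non-graded case ($\lambda\ne 0$) the analogous exactness statement is \thref{exactderhamnontrivial}, which comes \emph{after} this lemma and relies on it, so invoking a ``non-graded analog of \prref{degreetwoisirreducible}'' is circular. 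For the same reason, your exclusion of the $R(\pi_{n+1})$-summand is also circular: you compose with $\DInplusone$ and appeal to non-vanishing of the degree-two operator ``at index $n+1$,'' which is likewise unproved, and you apply \reref{uptomultiplication}, which is stated for the graded maps $\dindinminusonestar$, $\dinstar\dinplusone$ rather than for the $\Hdzeta$-homomorphisms $\DIn$, $\DInstar$.

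The paper sidesteps the non-vanishing issue entirely. For existence of the degree-two summand it does not look at $\DIn\DInminusonestar$ at all; instead it uses \coref{psiinm} together with \prref{psisingular}: since $\im\DIn$ is a non-zero proper submodule and $\spd$ acts non-trivially on $R(\pi_{n-1})$, for every $u$ the submodule contains an element congruent to $\psi(u)$ modulo $\fil^1$, and \prref{psisingular} shows this element is singular, producing an irreducible degree-two $R(\pi_n)$-summand directly. For the exclusion of the $R(\pi_{n+1})$-summand, the paper uses the section $\p_0\simeq\spd\subset\P_0$ acting on the whole module: $\gr^d\VPiprimenminusone$ is, as $\p_0$-module, $\Pi'\tp S^d\dd\tp R(\pi_{n-1})$, and by \leref{decomposegrd} this never contains a copy of $R(\pi_{n+1})$; since $\DIn$ commutes with $\p_0$, the $R(\pi_{n+1})$-isotypical component $\DInstar\fil^0\VPiprimenplusone$ cannot lie in $\im\DIn$. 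This also immediately gives properness of $\im\DIn$ and hence the absence of constants. You would need to replace your two circular steps with these (or equivalent) non-circular arguments before the proof is sound.
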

\begin{proof}
We will only prove the first statement, as the other one is completely analogous.
The map $\DIn: \VPiprimenminusone \to \VPiprimen$ is a homomorphism of $\Hdzeta$-modules, so it commutes with the action of $\p_0$. We already know that
\begin{equation*}\label{singvpin}
\begin{split}
\sing \VPiprimen = \fil^0 & \VPiprimen \oplus \DIn \fil^0 \VPiprimenminusone\\
& \oplus \DInstar \fil^0 \VPiprimenplusone \oplus S_\lambda,
\end{split}
\end{equation*}
where the first summand in the right-hand side is $\fil^0 \VPiprimen$, the next two summands are singular vectors of degree one, and the last summand consists of singular vectors of degree two as constructed in \seref{lambdavectors} starting from homogenous vectors.

The third summand does not lie in $\im \DIn$, as by \leref{decomposegrd} the irreducible representation $R(\pi_{n+1})$ does not show up as a direct summand in any $S^d \dd \otimes R(\pi_{n-1})$. In particular, $\im \DIn$ is a proper submodule of $\VPiprimen$, hence it does not contain constant vectors either. However, it contains $\DIn \fil^0 \V(\Pi', R(\pi_{n-1}))$, i.e., the second summand, by construction.

If $\Pi' \boxtimes U$ is an irreducible representation of $\dd' \oplus \spd$, and the action of $\spd$ is not trivial, we have seen in \coref{psiinm} that every submodule $M \subset \V(\Pi', U)$ contains a vector coinciding with $\psi(u)$ modulo $\fil^1 \V(\Pi', U)$, which must be singular due to \prref{psisingular}. Thus $\im \DIn$ contains an irreducible $\spd$-summand of degree two singular vectors.
\end{proof}

\begin{proposition}
For each $1 \leq n \leq N-1$, the $\Hdzeta$-module $\VPiprimen$ decomposes as the direct sum of $\im \DIn$ and $\im \DInstar$. Moreover, $\im \DIn$ is an irreducible $\Hdzeta$-module for all $1 \leq n \leq N$.
\end{proposition}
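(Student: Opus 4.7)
The proof will establish both claims by combining the singular vector classification of the preceding lemma with a careful case analysis that exploits the non-graded structure specific to $\lambda \neq 0$.

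For the irreducibility of $\im \DIn$ with $1 \leq n \leq N$, let $M \subset \im \DIn$ be a non-zero proper $\Hdzeta$-submodule. By conformality, $M$ contains a non-zero singular vector lying in one of the two irreducible $\dd'\oplus\spd$-summands of $\sing \im \DIn$: the degree-one summand $\DIn \fil^0 \VPiprimenminusone \simeq \Pi' \boxtimes R(\pi_{n-1})$ or the degree-two summand $S \simeq \Pi' \boxtimes R(\pi_n)$. If $M \supset \DIn \fil^0 \VPiprimenminusone$, then $H$-stability together with $\VPiprimenminusone = H \fil^0 \VPiprimenminusone$ and $H$-linearity of $\DIn$ yields $M \supset H \DIn \fil^0 \VPiprimenminusone = \im \DIn$, contradicting properness. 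The delicate case is when $M \supset S$ but $M \not\supset \DIn \fil^0 \VPiprimenminusone$. Here the crucial input is that $\lambda \neq 0$: singular vectors in $S$ are non-homogeneous, each being of the form $v_\lambda = v + \lambda\,\ell\otimes u$ with a non-zero degree-one contribution $\lambda\,\ell\otimes u$. Analyzing the $\Hdzeta$-submodule $\langle S \rangle \subset \im \DIn$ generated by $S$ --- in sharp contrast with the graded case where $\imdindinminusonestar \subsetneq \imdin$ --- one shows via expansion of the Fourier coefficients of $e * v_\lambda$ (as in Proposition \ref{phsing}) that the $\lambda$-correction forces $\langle S\rangle$ to acquire elements projecting non-trivially onto the degree-one singular summand. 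Since elements of $\fil^1$ inside a proper submodule are singular by the lemma preceding Proposition \ref{psisingular}, this forces $\langle S\rangle \supset \DIn \fil^0 \VPiprimenminusone$, reducing to the first case and yielding a contradiction.

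For the direct sum decomposition $\VPiprimen = \im \DIn \oplus \im \DInstar$ when $1 \leq n \leq N-1$, we argue in two stages. First, the intersection $\im \DIn \cap \im \DInstar$ is an $\Hdzeta$-submodule of two irreducible modules, so by Schur's lemma it is either zero or equal to both; the latter would yield an $\Hdzeta$-isomorphism $\im \DIn \simeq \im \DInstar$, which is impossible because their degree-one singular components are of isotype $R(\pi_{n-1})$ and $R(\pi_{n+1})$ respectively, non-isomorphic as $\spd$-modules. Second, for surjectivity $\im \DIn + \im \DInstar = \VPiprimen$, consider the quotient $Q = \VPiprimen / (\im \DIn + \im \DInstar)$. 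By \thref{singclassification}, every non-constant singular vector of $\VPiprimen$ (degree-one by construction, and the degree-two class $S_\lambda$ obtained via iterated compositions of $\DIn$ and $\DInstar$ applied to constants, as in Remark \ref{uptomultiplication} and Section \ref{lambdavectors}) lies in the submodule. Hence any non-zero singular vector of $Q$ descends from a constant in $\fil^0 \VPiprimen$, forcing $Q$ to be a non-trivial $\Hdzeta$-quotient of $\V(\Pi', R(\pi_n))$ with the same non-trivial scalar action of $c$; a contradiction is then obtained by combining the irreducibility just established with the fact that the canonical projection $\VPiprimen \twoheadrightarrow Q$ must vanish on both $\im \DIn$ and $\im \DInstar$, leaving no room for $Q$ to be non-zero.

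The principal obstacle is the degree-two-only case in the irreducibility argument: one must show that when $\lambda \neq 0$ the $\Hdzeta$-submodule generated by $S$ exhausts $\im \DIn$, a phenomenon that fails completely in the graded setting. This step rests on the non-homogeneity of $v_\lambda$, which, through the pseudoaction $e * v_\lambda$ and the concomitant $H$-multiplication, generates elements projecting non-trivially to the degree-one singular component of $\im \DIn$. Once this is in hand, the remaining assertions follow from standard $\spd$-isotype considerations and the quotient analysis outlined above.
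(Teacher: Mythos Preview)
Your proposal has a genuine gap in the irreducibility argument, and the structure of the proof differs from the paper's in an essential way.

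The delicate case you flag --- where a proper submodule $M\subset\im\DIn$ contains only the degree-two summand $S$ --- is exactly the crux, but the claimed resolution is unsubstantiated. You assert that expanding Fourier coefficients of $e*v_\lambda$ will force $\langle S\rangle$ to acquire elements projecting onto the degree-one singular summand, because $v_\lambda = v + \lambda\,\ell\otimes u$ has a non-zero degree-one part. This does not follow. The degree-one term $\lambda\,\ell\otimes u$ is a single vector in $\dd\otimes R(\pi_n)$, not an $\spd$-equivariant piece, and there is no mechanism offered by which the pseudoaction produces elements of $\fil^1$ whose projection to $\gr^1$ lands in the $R(\pi_{n-1})$-component. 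Recall that in the graded case ($\lambda=0$) the analogous submodule $\imdindinminusonestar$ is \emph{strictly} contained in $\imdin$; you would need to exhibit concretely how the $\lambda$-correction breaks this containment, and the sketch does not do so.

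The paper avoids this difficulty entirely by an induction on $n$ that interleaves the two claims. The base $n=1$ uses \prref{irreduciblecnonzero}: $\VPiprimezero$ is irreducible (here $\lambda\neq 0$ is essential), so $\DIone$ is injective and $\im\DIone\simeq\VPiprimezero$ is irreducible. For the inductive step, once the direct sum $\VPiprimen=\im\DIn\oplus\im\DInstar$ is known, applying $\DInplusone$ (which kills $\im\DIn$) gives $\im\DInplusone=\DInplusone\DInstar\VPiprimenplusone$. Hence $\im\DInplusone$ is $H$-spanned by \emph{each} of its two singular summands --- the degree-one summand $\DInplusone\fil^0\VPiprimen$ and the degree-two summand $\DInplusone\DInstar\fil^0\VPiprimenplusone$ --- so any non-zero submodule, containing one of them, is everything. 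This is what replaces your Fourier-coefficient argument.

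Two further issues in your direct-sum argument. First, you invoke irreducibility of $\im\DInstar$, which at this point is not established (the paper proves it in the next proposition). One only needs irreducibility of $\im\DIn$: then $\im\DIn\cap\im\DInstar$ is $0$ or $\im\DIn$, and the latter is excluded because $\sing\im\DInstar$ has no $R(\pi_{n-1})$-summand. Second, your surjectivity argument via the quotient $Q$ is circular: you assume $S_\lambda\subset\im\DIn+\im\DInstar$, but the preceding lemma only says each image contains \emph{some} degree-two singular summand, not that either equals $S_\lambda$. The paper's argument is that these two degree-two summands are distinct (since the intersection is trivial), hence together span the two-dimensional $R(\pi_n)$-isotypic component of $\sing\VPiprimen$, which therefore contains non-zero constants; since constants $H$-generate the tensor module, the sum is everything.
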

\begin{proof}
Let us treat the case $n=1$ first. We already know from \prref{irreduciblecnonzero} that $\VPiprimezero$ is an irreducible $\Hdzeta$-module. As $\DIzero$ is non-zero, it is injective by irreducibility of $\VPiprimezero$, and its image is then isomorphic to $\VPiprimezero$, hence it is irreducible.

The intersection $\im \DIzero \cap \im \DIzerostar$ is an $\Hdzeta$-submodule of $\im \DIzero$, which is irreducible, and comparing singular vectors shows that $\im \DIzero$ is not contained in $\im \DIzerostar$. Thus the intersection is trivial. As for the sum $\im \DIzero + \im \DIzerostar$, it contains two irreducible summands of singular vectors of the degree two, which must be distinct as we already know the intersection to be trivial. Then $\im \DIzero + \im \DIzerostar$ contains constant vectors in $\VPiprimezero$, hence it coincides with $\VPiprimezero$.

Let us now proceed by induction on $n>1$. Assume we know that $\VPiprimen = \im \DIn \oplus \im \DInstar$; then applying $\DInplusone$, we obtain
$$\im \DInplusone = \DInplusone \VPiprimen =\DInplusone \DInstar \VPiprimenplusone.$$
We know that each tensor module is generated by its constant singular vectors, and the same is true of the image of a tensor module via a $\Hdzeta$-homomorphism. Consequently, $\im\DInplusone$ is spanned over $H$ by $\DInplusone \fil^0 \VPiprimen$, but also by $\DInplusone \DInstar \fil^0 \VPiprimenplusone$, as they are the two $\dd'\oplus \spd$-irreducible summands of $\sing \im \DInplusone$. As every $\Hdzeta$-submodule of $\im \DInplusone$ must contain non-zero singular vectors, this shows that $\im \DInplusone$ has no proper non-zero submodules, i.e., it is irreducible.

Once we know that $\im \DInplusone$ is $\Hdzeta$-irreducible, and $n+1 \neq N$, then we may argue as in the case $n = 1$ that $\im \DInplusone \cap \im \DInplusonestar = 0$ by irreducibility of $\im \DInplusone$ and
$\VPiprimenplusone = \im \DInplusone + \im \DInplusonestar$ because it contains non-zero constant vectors. In other words, $\VPiprimenplusone = \im \DInplusone \oplus \im \DInplusonestar$.
\end{proof}

\begin{proposition}
For each $0 \leq n \leq N-2$, the $\Hdzeta$-module $\im \DInstar$ is irreducible.
\end{proposition}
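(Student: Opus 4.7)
The plan is to mimic the proof of the preceding proposition, with the roles of $\DIn$ and $\DInstar$ interchanged. The case $n=0$ is an immediate consequence of \prref{irreduciblecnonzero}: since $\DIzerostar$ is a non-zero homomorphism landing in the irreducible $\Hdzeta$-module $\VPiprimezero$, one has $\im \DIzerostar = \VPiprimezero$, which is irreducible. For $1 \le n \le N-2$, I would invoke the preceding proposition at the index $n+1 \in [2, N-1]$ to obtain the decomposition
\[
\VPiprimenplusone = \im \DInplusone \oplus \im \DInplusonestar,
\]
and then exploit the identity $\DInstar \DInplusonestar = 0$. This identity follows from $\DInminusonestar \DInstar = 0$, i.e.\ $D_{\Pi'}^{2N-n+1} D_{\Pi'}^{2N-n} = 0$, by substituting $n \mapsto n+1$ to get $D_{\Pi'}^{2N-n} D_{\Pi'}^{2N-n-1} = 0$.

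Applying $\DInstar$ to the above direct sum decomposition yields
\[
\im \DInstar = \DInstar (\im \DInplusone) + \DInstar (\im \DInplusonestar) = \DInstar \DInplusone \VPiprimen,
\]
so that $\im \DInstar$ is $H$-spanned by $\DInstar \DInplusone \fil^0 \VPiprimen$. On the other hand, it is also $H$-spanned by $\DInstar \fil^0 \VPiprimenplusone$, since $\VPiprimenplusone = H \cdot \fil^0 \VPiprimenplusone$. The lemma immediately preceding the proposition identifies $\sing \im \DInstar$ as the direct sum of two $\dd' \oplus \spd$-irreducible summands: the degree-one summand $\DInstar \fil^0 \VPiprimenplusone$ and an irreducible degree-two summand, which by \reref{uptomultiplication} (together with the multiplicity-freeness of the degree-two isotypic component in $\sing \im \DInstar$) coincides with $\DInstar \DInplusone \fil^0 \VPiprimen$ up to a non-zero scalar.

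Once this structural information is in place, irreducibility will follow at once: if $M \subseteq \im \DInstar$ is a non-zero submodule, then by \prref{phsing1} it contains non-zero singular vectors; being $\N_\P$-stable, $M \cap \sing \im \DInstar$ is a $\dd' \oplus \spd$-submodule of the direct sum described above, so it contains at least one of the two irreducible summands. As each of these summands $H$-spans $\im \DInstar$, we conclude $M = \im \DInstar$. The main obstacle I anticipate is essentially bookkeeping: correctly reindexing the relation $\DInminusonestar \DInstar = 0$ to obtain $\DInstar \DInplusonestar = 0$, and verifying that the degree-two singular summand of $\sing \im \DInstar$ coincides with $\DInstar \DInplusone \fil^0 \VPiprimen$ up to scalar, which requires checking that \reref{uptomultiplication}, originally stated in the graded setting, continues to describe the non-graded degree-two summand $S_\lambda$ correctly.
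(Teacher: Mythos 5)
Your argument follows the paper's essentially verbatim: handle $n=0$ by irreducibility of $\VPiprimezero$ (\prref{irreduciblecnonzero}), and for $n\ge 1$ use the decomposition $\VPiprimenplusone = \im\DInplusone\oplus\im\DInplusonestar$ from the preceding proposition together with $\DInstar\DInplusonestar=0$ to conclude $\im\DInstar=\DInstar\DInplusone\VPiprimen$, so that each of the two irreducible $\dd'\oplus\spd$-summands of $\sing\im\DInstar$ already $H$-spans $\im\DInstar$, giving irreducibility. Your index check $D^{2N-n}_{\Pi'}D^{2N-n-1}_{\Pi'}=0$ is correct, and the appeal to \reref{uptomultiplication} at the end is harmless but unnecessary: since $\im\DInstar$ contains no constant singular vectors and $\DInstar\DInplusone\fil^0\VPiprimen$ is a non-zero singular irreducible of $\spd$-type $R(\pi_n)$ while the degree-one summand is of type $R(\pi_{n+1})$, the identification with the degree-two summand follows directly from the lemma computing $\sing\im\DInstar$ (two results before the statement), without needing the remark on coincidence of maps up to scalar.
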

\begin{proof}
The case $n=0$ is immediate, as $\im \DIzerostar = \VPizerochizero$.

If $n>0$, we know that
$\im \DInstar = \DInstar \VPiprimenplusone = \DInstar (\im \DInplusone \oplus \im \DInplusonestar). $
Thus
$$\im \DInstar = \DInstar \VPiprimenplusone = \DInstar\DInplusone \VPiprimen.$$
Then we may argue, as in the proof of the previous proposition, that $\im \DInstar$ is $H$-linearly generated by each of the two $\dd'\oplus\spd$-irreducible summands of singular vectors it contains. Thus, it is irreducible.
\end{proof}

We have seen above that $\im \DIN \subset \VPiprimeN$ is irreducible, and we know that its singular vectors contain a degree two summand. Let $\eta: \VPiprimeN \to \VPiprimeN$ the $\Hdzeta$-homomorphism mapping nontrivially $\fil^0 \VPiprimeN$ to this summand. Then $\im \eta$ is contained in $\im \DIN$, and coincides with it by irreducibility of $\im \DIN$. In particular, $\eta$ is not injective, as it is not injective on $\sing \VPiprimeN$.

Then $\ker \eta$ is a proper $\Hdzeta$-submodule of $\VPiprimeN$, which cannot contain constant vectors. 
\begin{lemma}
The $\Hdzeta$-submodule $\ker \eta \subset \VPiprimeN$ does not contain singular vectors of degree one.
\end{lemma}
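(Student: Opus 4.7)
The plan is to argue by contradiction: assuming $\ker\eta$ contains a non-zero singular vector of degree one, deduce $\eta^2=0$, and then derive a contradiction from the hypothesis $\lambda\ne 0$ via a direct computation of the top-degree part of $\eta^2(1\otimes v)$.

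First I would apply Schur's lemma. The only $(\dd'\oplus\spd)$-isotypical summand of $\sing\VPiprimeN$ isomorphic to $\Pi'\boxtimes R(\pi_{N-1})$ is $\DIN\fil^0\VPiprimeNminusone$; since $\eta$ is $(\dd'\oplus\spd)$-equivariant, its restriction to this summand lands in the same summand and is therefore either zero or an isomorphism. The existence of a non-zero element of $\DIN\fil^0\VPiprimeNminusone$ in $\ker\eta$ forces the restriction to be zero, and $H$-linearity together with the fact that $\im\DIN$ is $H$-generated by $\DIN\fil^0\VPiprimeNminusone$ gives $\eta|_{\im\DIN}=0$. Combined with $\im\eta=\im\DIN$, established earlier, this yields $\eta^2=0$.

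Second, I would use the explicit description of non-graded singular vectors of degree two in \seref{lambdavectors}: for each $v\in R=\Pi'\boxtimes R(\pi_N)$ one has $\eta(1\otimes v)=S(v)+\lambda\,\ell\otimes v$, where $S(v)$ denotes the singular vector of degree two associated to $v$ in the graded ($\lambda=0$) case, with leading term $\psi(v)=\sum_{i,j=1}^{2N}\d_i\d_j\otimes\rho_R(f^{ij})v$. Defining $\eta_0\colon H\otimes R\to H\otimes R$ by $\eta_0(1\otimes v)=S(v)$ extended by $H$-linearity, we obtain the decomposition $\eta=\eta_0+\lambda R_\ell$ as $H$-linear endomorphisms of the common underlying vector space, where $R_\ell(h\otimes v):=h\ell\otimes v$. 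The map $\eta_0$ is the analogous endomorphism in the graded case, where the $\P$-action factors through $\H$ and tensor modules carry a genuine $\mathbb{Z}_{\le 0}$-grading under which $\eta_0$ is homogeneous of degree $-2$; hence $\eta_0^2$ would be of degree $-4$, but its values are singular vectors in $\im\eta_0\subset\im\DIN$, whose filtration degree is bounded by two in view of \prref{pboundeddegree}, forcing $\eta_0^2=0$.

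Expanding then gives
\begin{equation*}
\eta^2(1\otimes v)=\lambda\bigl(\ell\cdot S(v)+S(v)\cdot\ell\bigr)+\lambda^2\,\ell^2\otimes v,
\end{equation*}
whose top-degree-three component in $\gr^3 H\otimes R\simeq S^3\dd\otimes R$ equals $2\lambda\,\ell\cdot\psi(v)$, using commutativity of $\gr H=S\dd$. Now $\ell\ne 0$ in $\dd$ because $\omega=\di\bla$ is non-degenerate, so multiplication by $\ell$ is injective on the symmetric algebra $S^\bullet\dd$; moreover $\psi$ is injective on $R$ since $\{f^{ij}\}$ is a basis of $\spd$ and $R(\pi_N)$ is a faithful non-trivial irreducible $\spd$-module. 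Therefore, for $v\ne 0$ and $\lambda\ne 0$ the degree-three part of $\eta^2(1\otimes v)$ is non-zero, contradicting the fact that $\eta^2(1\otimes v)\in\im\eta$ lies in a proper submodule and is a singular vector of filtration degree at most two. The hard part is identifying the comparison $\eta=\eta_0+\lambda R_\ell$ with the graded analog and verifying $\eta_0^2=0$; once these are in place, the top-degree calculation is immediate, and in particular the argument genuinely uses $\lambda\ne 0$ (as one expects, since in the graded case $\eta$ does kill degree-one singular vectors).
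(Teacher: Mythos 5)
Your proof is correct but follows a genuinely different route from the paper's. The paper's argument is much shorter: it uses the action of $\p_0\simeq\spd$ on the \emph{entire} module (not only on singular vectors) together with the plethysm from \leref{decomposegrd}, which shows that the $R(\pi_{N-1})$-isotypic component of $\VPiprimeN$ is concentrated in filtration degree one and coincides exactly with the degree-one singular vectors. Since $\eta$ is $\p_0$-equivariant, its preimage of that isotypic component lies inside it; since the degree-one singular vectors are contained in $\im\eta=\im\DIN$, the restriction of $\eta$ to them is onto, hence bijective, and no computation is needed. Your argument instead derives $\eta^2=0$ from Schur's lemma under the hypothesis, and refutes it by the decomposition $\eta=\eta_0+\lambda R_\ell$, the vanishing $\eta_0^2=0$ (obtained from homogeneity of degree $-2$ in the $\lambda=0$ picture plus \prref{pboundeddegree}), and the explicit evaluation of the degree-three component of $\eta^2(1\otimes v)$ as $2\lambda\,\ell\psi(v)\ne0$. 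This is heavier machinery but a valid, self-contained calculation, and it has the virtue of displaying exactly where $\lambda\ne 0$ enters; in the paper's proof that dependence is hidden in the earlier fact $\im\eta=\im\DIN$, which in turn rests on irreducibility of $\im\DIN$ for $\lambda\ne0$. Two small points worth making explicit: first, you should note at the outset that a degree-one singular vector in $\ker\eta$ must already lie in the irreducible summand $\DIN\fil^0\VPiprimeNminusone$ — this follows because $\ker\eta\cap\sing\VPiprimeN$ is $(\dd'\oplus\spd)$-invariant, so by complete reducibility its constant piece is separately in $\ker\eta$ and must vanish since $\ker\eta$ is proper; second, the step $\eta_0^2=0$ uses that the $\symp(\h_{-1})\otimes R$ grading of the $\lambda=0$ module refines the PBW filtration, so a nonzero element homogeneous of graded degree $-4$ has filtration degree exactly $4$ — true, but worth stating.
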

\begin{proof}
As $\eta$ is an $\Hdzeta$-homomorphism, it commutes with the action of $\p_0 \simeq \spd$, which provides a section of $\P_0/\P_1$ in $\P_0$. We have seen that $\gr^d \VPiprimeN \simeq \Pi' \otimes (S^d(\dd) \otimes R(\pi_N))$ as $\spd$-modules, where $\Pi'$ is considered as trivial. However,
$$S^d(\dd) \otimes R(\pi_N)\simeq R(d\pi_1 + \pi_N) \oplus R((d-1)\pi_1 + \pi_{N-1}) \oplus R((d-2)\pi_1 + \pi_N)$$
contains an irreducible summand isomorphic to $R(\pi_{N-1})$ only when $d = 1$, and this summand corresponds to singular vectors of degree one in $\VPiprimeN$.

We know that singular vectors of degree one in $\VPiprimeN$ lie in the image of $\eta$; however, they can only be the image via $\eta$ of singular vectors of degree one, which therefore do not lie in $\ker \eta$.
\end{proof}

As $\ker \eta$ is a proper submodule of $\VPiprimeN$, it must contain nontrivial singular vectors, that can only have degree two. We will denote by $\DIR$ the $\Hdzeta$-endomorphism of $\VPiprimeN$ which maps constant singular vectors to the summand of degree two singular vectors contained in $\ker \eta$. Notice that when $\lambda = 0$, $\DIR$ coincides with $\diru_{\Pi_\phi}$, as usual up to a nonzero multiplicative constant.

\begin{proposition}
The image of $\DIR: \VPiprimeN \to \VPiprimeN$ is irreducible. Moreover,
\begin{enumerate}
\item
$\VPiprimeN = \im \DIR \oplus \im \DIN$;
\item
$\ker \DIR = \im \DIN$;
\item
$\im \DIN \simeq \im \DINminusonestar$;
\item
$\im \DINminusonestar$ is $\Hdzeta$-irreducible.
\end{enumerate}
\end{proposition}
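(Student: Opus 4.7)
The plan is to prove the four claims in sequence, centered on the direct sum decomposition $\VPiprimeN = \im \DIN \oplus \ker \eta$. The main ingredients are irreducibility of $\im \DIN$ (already proved), Schur-type equivariance via \leref{singtosing} --- any $\Hdzeta$-homomorphism carries $\dd' \oplus \spd$-irreducible summands of singular vectors into isomorphic summands in the target --- and the description of $\sing \VPiprimeN$ from \thref{singclassification}. As a preliminary observation, note that $\DIR \DIN = 0$: the composition $\DIR\DIN \colon \VPiprimeNminusone \to \VPiprimeN$ sends $\fil^0 \VPiprimeNminusone \simeq \Pi' \boxtimes R(\pi_{N-1})$ into the $R(\pi_{N-1})$-isotypical component of $\sing \VPiprimeN$, which is precisely $\DIN \fil^0 \VPiprimeNminusone \subset \im \DIN$; but $\im \DIR \subset \ker \eta$ contains no degree-one singular vectors by construction, so $\DIR \DIN$ vanishes on constants, hence identically by $H$-linearity. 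This gives $\im \DIN \subset \ker \DIR$.

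For the decomposition, irreducibility of $\im \DIN$ forces $\im \DIN \cap \ker \eta$ to be either $0$ or $\im \DIN$; the latter would imply $\eta^2 = 0$ and hence $\DIN \fil^0 \VPiprimeNminusone \subset \ker \eta$, contradicting the absence of degree-one singulars there. Moreover $\eta|_{\im \DIN}$ is a nonzero $\Hdzeta$-endomorphism of an irreducible module, so $\eta(\im \DIN) = \im \DIN$; thus for any $v \in \VPiprimeN$ one can find $u \in \im \DIN$ with $\eta(u) = \eta(v)$, giving $v - u \in \ker \eta$ and hence $\VPiprimeN = \im \DIN \oplus \ker \eta$. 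To conclude (1) and (2), I would analyze $\sing \ker \eta$. The $R(\pi_N)$-isotypical component $K$ of $\sing \VPiprimeN$ has multiplicity two (it contains both $\fil^0 \VPiprimeN$ and the degree-two summand $S_\lambda$), while $\eta|_K$ lands in the single copy $T_\DIN \subset \im \DIN$. A rank argument then forces $\ker(\eta|_K)$ to be exactly one irreducible $\dd' \oplus \spd$-summand $T_R$; since $\ker \eta$ contains no constants and no degree-one singulars and $\sing \ker \eta \subset K$, we get $\sing \ker \eta = T_R$. It follows that the projection of $\fil^0 \VPiprimeN$ onto $\ker \eta$ along $\im \DIN$ lands in $T_R$ and $H$-generates $\ker \eta$, so $\ker \eta = H T_R = \im \DIR$ is irreducible; and $\ker \DIR$, being a proper submodule containing $\im \DIN$, cannot strictly contain it, since by the direct sum and irreducibility of $\ker \eta$ this would force $\ker \DIR = \VPiprimeN$.

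For (3) and (4), recall $\VPiprimeNminusone = \im D_{\Pi'}^{N-1} \oplus \im \DINminusonestar$ from the previous proposition, with both summands irreducible. The same Schur argument as for $\DIR \DIN$ shows $\DIN \circ D_{\Pi'}^{N-1} = 0$, as the image would need to live in a nonexistent $R(\pi_{N-2})$-isotypical component of $\sing \VPiprimeN$. Hence $\im \DIN = \DIN \im \DINminusonestar$, and since $\DIN|_{\im \DINminusonestar}$ is a nonzero homomorphism out of an irreducible module, it is an $\Hdzeta$-isomorphism onto $\im \DIN$, proving (3) and inheriting irreducibility to prove (4). The main technical obstacle throughout is the rank analysis of $\eta|_K$ on the multiplicity-two isotypic component: one must identify the unique irreducible $\dd' \oplus \spd$-summand of $K$ lying in $\ker \eta$ (coinciding with $T_R$) and rule out the possibility that non-homogeneous singular vectors in $\ker \eta$ escape $T_R$, which is where the non-graded nature of the setting forces the most care.
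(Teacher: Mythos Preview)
Your treatment of irreducibility, (1), and (2) is correct and close in spirit to the paper's, though organized differently: you first prove $\VPiprimeN = \im \DIN \oplus \ker \eta$ and then identify $\ker \eta$ with $\im \DIR$, whereas the paper first shows $\im \DIR$ is irreducible (using that $\sing \ker\eta$ is a single irreducible, so $\sing\im\DIR$ is too, and $\im\DIR$ is $H$-generated by it) and then proves the direct sum decomposition by comparing singular vectors of the two irreducibles $\im\DIR$ and $\im\DIN$. Both routes work; yours makes the role of $\ker\eta$ more explicit, while the paper's avoids the rank analysis on the multiplicity-two isotypical component that you flag as delicate.

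Your argument for (3) and (4), however, is circular. You write ``recall $\VPiprimeNminusone = \im D_{\Pi'}^{N-1} \oplus \im \DINminusonestar$ from the previous proposition, with both summands irreducible,'' and then use irreducibility of $\im \DINminusonestar$ to conclude that $\DIN|_{\im \DINminusonestar}$ is an isomorphism. But the earlier proposition on irreducibility of $\im \DInstar$ only covers $0\le n\le N-2$; the case $n=N-1$ is precisely claim (4) of the present proposition, so you are assuming what you want to prove. Without that input, knowing $\DIN D_{\Pi'}^{N-1}=0$ only gives that $\DIN$ restricted to $\im \DINminusonestar$ is \emph{surjective} onto $\im\DIN$, not injective; you would still need to control $\ker\DIN \cap \im\DINminusonestar$, and nothing established so far forces that to vanish.

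The paper avoids this by working from the other side: it applies $\DINminusonestar$ to the \emph{source} $\VPiprimeN$, which by (1) is already a direct sum of two irreducibles. The image $\im\DINminusonestar$ is then isomorphic to one of the summands or to their sum, and a count of irreducible $\dd'\oplus\spd$-summands in $\sing\im\DINminusonestar$ (exactly two, from the earlier lemma) pins it down as $\im\DIN$, since $\sing\im\DIR$ has only one summand. Irreducibility of $\im\DINminusonestar$ then follows from that of $\im\DIN$.
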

\begin{proof}
The image of $\DIR$ is contained in $\ker \eta$, so also $\sing \im \DIR \subset \sing \ker \eta$. As $\sing \ker \eta$ is $\dd' \oplus \spd$-irreducible, the same holds for $\sing \im \DIR$. However, $\im \DIR = \DIR \VPiprimeN$ is spanned over $H$ by $\DIR \fil^0 \VPiprimeN$ so it is certainly irreducible, as any submodule must contain nontrivial singular vectors, hence all of $\DIR\fil^0 \VPiprimeN$. Let us take care of the remaining claims.
\begin{enumerate}
\item
The direct sum decomposition clearly holds once we know that the intersection is trivial, as the sum must then contain non-zero constant vectors. However, as $\sing \im \DIR$ does not contain singular vectors of degree one, $\im \DIR \cap \im \DIN \neq 0$ would imply $\im \DIR \subsetneq \im \DIN$, which is impossible as $\im \DIN$ is irreducible.
\item 
As $\im \DIN$ and $\im \DIR$ are non-isomorphic $\Hdzeta$-irreducibles, and the image of $\DIR$ is (tautologically) isomorphic to $\im \DIR$, then $\DIR$ must map $\im \DIN$ to $0$.
\item
As $\VPiprimeN = \im \DIR \oplus \im \DIN$ is the direct sum of two irreducible summands, the non-zero image of $\DINminusonestar: \VPiprimeN \to \VPiprimeNminusone$ is isomorphic to either one of the summands or to their sum. However, we know that $\sing \im \DINminusonestar$ has exactly two $\dd'\oplus\spd$-irreducible summands, therefore $\im \DINminusonestar$ is necessarily isomorphic to $\im \DIN$.
\item
Follows from irreducibility of $\im \DIN$.
\end{enumerate}
\end{proof}

We are ready to summarize the results of this Section in the following 
statement.
\begin{theorem}\label{exactderhamnontrivial}
Let $\Pi'$ denote a finite-dimensional irreducible $\dd'$-module with a nontrivial action of $c \in \dd'$. Then the complex of $\Hdzeta$-modules
\begin{equation}\label{splitexactcomplex}
\begin{split}
0 & \to \V(\Pi',R(\pi_0)) \xrightarrow{\DIone} \V(\Pi',R(\pi_1))
\xrightarrow{\DItwo} \cdots \xrightarrow{\DIN} \V(\Pi',R(\pi_N))
\\
&\xrightarrow{\DIR}
\V(\Pi',R(\pi_N)) \xrightarrow{\DINminusonestar} 
\V(\Pi',R(\pi_{N-1})) 
\\
&\xrightarrow{\DINminustwostar} \cdots \xrightarrow{\DIzerostar} 
\V(\Pi',R(\pi_0)) \to 0 \,,
\end{split}
\end{equation}
is exact and $\im \DInstar \simeq \im \DInplusone$ is irreducible for all $n$, as well as $\im \DIR$.
One has $\VPiprimen = \im \DIn \oplus \im \DInstar$ when $0 \leq n \leq N-1$ and $\VPiprimeN = \im \DIN \oplus \im \DIR$.
\end{theorem}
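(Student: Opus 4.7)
The plan is to assemble into a single statement the preparatory results of Section~\ref{ten}, which have already been proved separately. The theorem consists of three claims---irreducibility of the various images, the direct sum decompositions of middle terms of the complex, and exactness of the whole sequence---and my strategy is to collect these in that order.

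First, I would observe that irreducibility of each $\im\DIn$ for $1\le n\le N$ and of $\im\DInstar$ for $0\le n\le N-2$ was established by the simultaneous induction in the two propositions just before the theorem; the remaining cases $\im\DIR$ and $\im\DIN\simeq\im\DINminusonestar$ were handled in the final proposition. Similarly, the decompositions $\VPiprimen=\im\DIn\oplus\im\DInstar$ for $1\le n\le N-1$ and $\VPiprimeN=\im\DIN\oplus\im\DIR$ have been shown, and the $n=0$ case of the former is trivial since $\im\DImostar=0$ and $\im\DIzero=\VPiprimezero$.

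Next I would verify exactness position by position. The identities $\DInplusone\DIn=0$ and $\DInminusonestar\DInstar=0$ are immediate from \thref{singclassification}, because their compositions would have to map $\fil^0$ of the initial term into a $\dd'\oplus\spd$-summand that does not appear in the singular vectors of the target. Combined with $\VPiprimen=\im\DIn\oplus\im\DInstar$, any element $v\in\ker\DInplusone$ splits as $v_1+v_2$ with $v_2\in\im\DInstar\cap\ker\DInplusone$; irreducibility of $\im\DInstar$ forces this intersection to be either zero or all of $\im\DInstar$, and the latter is ruled out because $\DInplusone\DInstar\VPiprimenplusone=\im\DInplusone\ne 0$. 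Thus $\ker\DInplusone=\im\DIn$, and the symmetric argument yields $\ker\DInminusonestar=\im\DInstar$. At the two middle copies of $\V(\Pi',R(\pi_N))$ one uses $\ker\DIR=\im\DIN$ (already proved), together with the fact that $\DINminusonestar$ vanishes on $\im\DIR$ (otherwise $\im\DINminusonestar$ would contain a non-zero submodule isomorphic to $\im\DIR$, contradicting $\im\DINminusonestar\simeq\im\DIN\not\simeq\im\DIR$) while being injective on $\im\DIN$ by irreducibility, giving $\ker\DINminusonestar=\im\DIR$. Injectivity of $\DIone$ and surjectivity of $\DIzerostar$ are immediate from irreducibility of $\VPiprimezero$.

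Finally, the isomorphisms $\im\DInstar\simeq\im\DInplusone$ for $0\le n\le N-1$ drop out of the direct sum decompositions: since $\DInplusone\DIn=0$, one has $\im\DInplusone=\DInplusone(\im\DIn\oplus\im\DInstar)=\DInplusone(\im\DInstar)$, and $\DInplusone|_{\im\DInstar}$ has trivial kernel by the just-established exactness, so $\im\DInplusone\simeq\im\DInstar$. There is no real obstacle left at this stage; the main substantive work---the simultaneous induction that overcame the absence of a natural grading when $c$ acts by a non-zero scalar---was already carried out earlier in the section, and the theorem is a direct compilation.
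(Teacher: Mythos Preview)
Your proposal is correct and follows essentially the same approach as the paper: both treat the theorem as a compilation of the preparatory results of Section~\ref{ten}, deducing exactness from the already-established irreducibility of the images and the direct sum decompositions, and obtaining $\im\DInstar\simeq\im\DInplusone$ from the fact that $\DInplusone$ kills $\im\DIn$ and is injective on $\im\DInstar$. Your write-up is simply more explicit than the paper's three-sentence proof, spelling out the case analysis at each position (including the $\DIR$ node and the endpoints) that the paper leaves to the reader.
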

\begin{proof}
Exactness whenever $\DIR$ is not involved follows from the fact that $\im \DInplusone$ is irreducible, hence $\ker \DIn$ is maximal and contains $\im \DIn$. The isomorphism $\im \DInstar \simeq \im \DInplusone$ follows from the fact that $\DInplusone$ acts trivially on $\im\DIn$ and faithfully on $\im\DInstar$. Everything else has already been proved.
\end{proof}

\section{Classification of finite irreducible modules over $\Hd$}

\begin{theorem}\label{tclass}
A complete list of non-trivial pairwise non-isomorphic irreducible finite\/ $\Hd$-modules is as follows:
\begin{itemize}
\item tensor modules\/ $\V(\Pi', U)$ where\/ $\Pi' \boxtimes U$ is a finite-dimensional irreducible\/ $\dd' \oplus \spd$-module and\/ $U$ is not isomorphic to any\/ $R(\pi_n)$, $0 \leq n \leq N$;
\item $\imdindinminusonestar, 1 \leq n \leq N$, where\/ $\Pi$ is a finite-dimensional irreducible\/ $\dd$-module;
\item $\im \DIn, 1 \leq n \leq N$, where $\Pi'$ is a finite-dimensional irreducible \/ $\dd'$-module with a non-trivial action of $c \in \dd'$;
\item $\im \DIR$, where $\Pi'$ is a finite-dimensional irreducible \/ $\dd'$-module with a non-trivial action of $c \in \dd'$;
\end{itemize}
\end{theorem}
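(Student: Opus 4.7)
The plan is to extract this classification from the results already established in the preceding sections. First, by \thref{irrfacttens}, every non-trivial finite irreducible $\Hd$-module $V$ is a quotient of some tensor module $\V(R)$, where $R = \Pi' \boxtimes U$ is a finite-dimensional irreducible $\dd' \oplus \spd$-module. Hence $V$ is determined by an $R$ together with a maximal proper $\Hd$-submodule $M \subset \V(R)$. The proof thus reduces to enumerating pairs $(R,M)$ up to isomorphism, and then matching the resulting irreducible quotients with the four families in the statement.

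The case analysis splits according to the nature of $U$ and whether $c\in\dd'$ acts trivially. If $U$ is neither trivial nor any fundamental $R(\pi_n)$, then \thref{irrcriterion} yields $\V(R)$ itself as the unique irreducible quotient, producing the first family. If $c$ acts trivially (so $\Pi' = \pi^*\Pi$ for a $\dd$-module $\Pi$), then $U \simeq R(\pi_n)$ for some $0\le n\le N$, and the full submodule lattice of $\V(\Pi, R(\pi_n))$ was described in \seref{ssubmtm}: for $n=0$ the unique non-trivial submodule is $\imdizerostar$ with irreducible quotient $\V(\Pi,\kk)/\imdizerostar$; for $1\le n<N$ the maximal submodule is $\imdin + \imdinstar$ with irreducible quotient, and all proper non-zero submodules are $\imdin\cap\imdinstar = \imdindinminusonestar$, $\imdin$, $\imdinstar$, and their sum; for $n=N$ the maximal submodule is $\imdiN$ with irreducible quotient. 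In every case the irreducible quotient $\V(\Pi_\alpha, R(\pi_m))/(\cdots)$ can be identified, via the differentials in the twisted conformally symplectic pseudo de Rham complex \eqref{domd9}, with some $\imdindinminusonestar$ for a suitable $\dd$-module and index, yielding the second family. (One uses that each differential of the complex has image isomorphic to the quotient by the kernel of the subsequent map, together with exactness from \thref{tmodhd}.)

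If instead $c$ acts non-trivially, then necessarily $\chi=0$ and $\om=\di\bla$, and $U \simeq R(\pi_n)$ for some $0\le n\le N$ (otherwise \thref{irrcriterion} applies). Here \thref{exactderhamnontrivial} is the crucial input: the complex \eqref{splitexactcomplex} is split exact, each $\V(\Pi',R(\pi_n))$ decomposes as the direct sum of two irreducible $\Hd$-submodules $\im \DIn \oplus \im \DInstar$ (or $\im \DIN \oplus \im \DIR$ when $n=N$), and each $\im \DIn$ and $\im \DIR$ is irreducible. Thus the irreducible quotients in this regime are exactly $\im \DIn$ ($1\le n\le N$) and $\im \DIR$, giving the third and fourth families.

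Finally, I must verify non-redundancy: distinct irreducibles on the list are pairwise non-isomorphic. This is done by comparing the $\dd'\oplus\spd$-structure of $\sing V$ (using \thref{singclassification}): the constant singular vectors together with the action of $c$ recover $\Pi'$, while the $\spd$-types and multiplicities appearing in $\sing V$ distinguish the four families, the index $n$, and the $\im \DIn$ versus $\im \DIR$ cases. In particular, when $c$ acts trivially the irreducible $\imdindinminusonestar$ contains a single summand $\Pi_\chi \boxtimes R(\pi_n)$ of singular vectors in degree $2$, and this determines $(\Pi,n)$ uniquely; when $c$ acts by $\lambda\ne 0$, the presence of two non-isomorphic $\dd'\oplus\spd$-irreducible summands in $\sing\im\DIn$ distinguishes it from both family one and family two, while the degree-$2$-only singular summand in $\sing\im\DIR$ separates it from $\im\DIN$. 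The only real obstacle here is bookkeeping: matching the various twisted tensor-module presentations with the intrinsic $\dd'\oplus\spd$-invariants of the singular vectors, so that no irreducible module gets counted twice across different descriptions; once this matching is made (via \thref{singclassification} and \prref{dd*irr}), the classification follows.
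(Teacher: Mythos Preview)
Your overall approach matches the paper's: reduce to quotients of tensor modules via \thref{irrfacttens}, dispose of non-fundamental $U$ via \thref{irrcriterion}, and handle the remaining cases using the submodule lattices from Sections~\ref{ssubmtm} and~\ref{ten}. The paper is slightly more explicit in one step you leave vague: to identify $\VPin/M$ with $\imdindinminusonestar$ for $1\le n < N$, it observes that the maximal submodule $M = \imdin + \imdinstar$ is contained in the kernel of the nonzero map $\dindinminusonestar$, so by maximality the kernel equals $M$ and the first isomorphism theorem gives the identification directly; the case $n=N$ uses $\DiPi$ in the same way (recall $\DiPi$ agrees with $\diNdiNminusonestar$ up to a nonzero scalar by \reref{uptomultiplication}).

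There is, however, a genuine gap in your treatment of the case $U=\kk=R(\pi_0)$ with $c$ acting trivially. You assert that the irreducible quotient $\V(\Pi,\kk)/\imdizerostar$ ``can be identified \dots\ with some $\imdindinminusonestar$.'' This is false. The submodule $\imdizerostar$ has finite $\kk$-codimension in $\V(\Pi,\kk)$, so the quotient is a torsion $H$-module, and the pseudoaction of any Lie pseudoalgebra on torsion elements is trivial. Hence this quotient is a \emph{trivial} $\Hd$-module and must be excluded from the classification of non-trivial irreducibles; it is not isomorphic to any $\imdindinminusonestar$, and this is exactly why the second family in the statement begins at $n=1$ rather than $n=0$. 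The paper handles this point explicitly.

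Your added non-redundancy discussion via the $\dd'\oplus\spd$-structure of singular vectors is a reasonable supplement; the paper's own proof does not spell this out.
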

\begin{proof}
Irreducible $\Hd$-modules are all obtained as quotients of a tensor module $\V(\Pi', U)$, where $\Pi' \boxtimes U$ is a finite-dimensional irreducible $\dd'\oplus \spd$ representation, by a maximal submodule. If $U$ is not isomorphic to any $R(\pi_n), 0 \leq n \leq N$, then \thref{irrcriterion} shows that there are no non-zero submodules; the last two sections compute maximal submodules in all other cases. Indeed, when $c$ acts nontrivially on $\Pi'$, the description of irreducible modules follows from \thref{exactderhamnontrivial}.

When $c$ acts trivially on $\Pi'$, then $\Pi' = \pi^* \Pi$ for some $\dd$-module $\Pi$, where  $\pi: \dd' \to \dd'/\kk c \simeq \dd$ denotes the canonical projection. Henceforth, we will use the notation introduced in \deref{dvmodw}(iii).

Thus the unique maximal submodule of $\VPin, 1 \leq n <  N,$ is $M = \imdin + \imdinstar$, which clearly lies in the kernel of the homomorphisms
$$\dindinminusonestar , \qquad\dinstar \dinplusone$$
which map $\VPichin$ to $\VPin$ and coincide up to multiplication by a non-zero scalar.

As the above maps are non-zero, their kernel must coincide with $M$, hence
$$\imdindinminusonestar\simeq \VPin/M$$
is the unique irreducible quotient of $\VPin$.

As for $\VPiN$, its unique maximal submodule is $M = \imdiN$, which lies in the kernel of the non-zero map
$\DiPi: \VPiN \to \V(\Pi_{-\chi}, R(\pi_N)),$ so that
$$\imDiPi \simeq \VPiN/M$$ is the unique irreducible quotient of $\VPiN$.

Finally, the only proper non-zero submodule of $\VPizero$ is  $\imdizerostar$, which is of finite $\kk$-codimension. Its $\dd \oplus \spd$-irreducible quotient is thus a torsion $H$-module, which is thence endowed with a trivial pseudoaction.
\end{proof}

As representations of the Lie pseudoalgebra $\Hd$ are in one-to-one correspondence with conformal (in the sense of the definition given in \seref{spsanih}) representations of its extended annihilation algebra $\dd \sd \P$, one may use the above results to deduce a classification of irreducible (topological, discrete) $P_{2N}$-modules and of singular vectors in Verma modules induced from irreducible $\sp_{2N}$ representations as follows.

Let $\dd$ be a $2N$-dimensional Lie algebra endowed with a Frobenius structure $\omega = \di \bla$; for instance, one may choose $\dd$ to be the direct sum of $N$ copies of the non-abelian $2$-dimensional Lie algebra. The extended annihilation algebra of $\Hdzeta = \dd \sd \P$ decomposes as a direct sum of Lie algebras $\dd^\bla \oplus \P$. Indeed, as $\chi = 0$, elements $\tilde{\d}$ act on $\P$ by inner derivations, and as $\omega = \di \bla$, the central extension yielding $\dd'$ may be trivialized to $\dd' = \dd^\bla + \kk c$. Let $U$ be a representation of $\spd$. Using $\spd \simeq \P_0/\P_1$, we endow $U$ with an action of $\P_0$ and extend it to $\N_\P$ by letting $\dd^\bla$ act trivially on $U$ and $c$ act via multiplication by a scalar $\lambda \in \kk$. Then the induced module $\Ind_{\N_\P}^{\tilde \P}$ is isomorphic to the tensor module $\V(\kk_\lambda, U)$, where $\kk_\lambda$ denotes the $1$-dimensional $\dd'$-module where $c$ acts as $\lambda$ and $\dd^\bla$ acts trivially. Notice that elements in $\dd^\bla$ act trivially on all of $\V(\kk_\lambda, U)$, so that the submodule lattice of $\V(\kk_\lambda, U)$ as a $\Hdzeta$-module only depends on its structure as a representation of $\P$. Also, when $\lambda = 0$, the action of $\P$ factors through its quotient $\H$.

We thus recover an old result of Rudakov as claim $(i)$ and $(ii)$ of the following proposition.
\begin{proposition}[\cite{Ru2}]
\label{Rudakov}
\mbox{ }
\begin{itemize}
\item[\textit{(i)}]
Every nonconstant homogeneous singular vector in the $\H$-module $V = \V(\kk, U)$ has degree one or two. The space $S$ of such singular vectors is an $\spd$-module and the quotient of $V$ by the $\H$-submodule generated by $S$ is an irreducible $\H$-module. All singular vectors of degree one are listed in cases (a), (b) below, while all singular vectors of degree two are listed in case (c):
\begin{itemize}
\item[\textrm{(a)}] $U = R(\pi_p), \qquad S = R(\pi_{p+1}), \quad 0 \leq p \leq N-1.$
\item[\textrm{(b)}] $U = R(\pi_p), \qquad S = R(\pi_{p-1}), \quad 1 \leq p \leq N.$
\item[\textrm{(c)}] $U = R(\pi_p), \qquad S = R(\pi_p), \quad 1 \leq p \leq N.$
\end{itemize}
\item[\textit{(ii)}]
If the $\spd$-module $U$ is infinite-dimensional irreducible, then $\V(\kk, U)$ does not contain non-constant singular vectors.
\item[\textit{(iii)}]
If a $\H$-module $\V(\kk, U)$ is not irreducible, then its unique irreducible quotient is isomorphic to the topological dual of the kernel of the differential of a member of the Eastwood complex over formal power series.
\end{itemize}
\end{proposition}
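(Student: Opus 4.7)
The plan is to reduce all three assertions to the classification of singular vectors and submodules in $\Hd$-tensor modules that has been carried out in earlier sections. To do this, first pick a Frobenius Lie algebra $\dd$ of dimension $2N$ (for instance, the $N$-fold direct sum of the non-abelian two-dimensional Lie algebra, so that $\om = \di\bla$ for some $\bla$ and one may take $\chi = 0$). Under the Lie algebra homomorphism $\iota_*\colon\P\to\H$ of Section \ref{subans}, the kernel is spanned by $\ce$; hence topological $\H$-modules correspond exactly to $\P$-modules on which $\ce$ acts trivially, and by \prref{preplal2} these in turn correspond to $\Hdzero$-modules on which the center of $\P$ acts as zero. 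Under this dictionary Rudakov's tensor module $\V(\kk,U)$ coincides, as an $\H$-module, with the $\Hdzero$-tensor module $\V(\kk,U)$ of \deref{dvmodw} (trivial $\dd'$-action, with $c$ acting by $0$), and lattices of submodules coincide on both sides.

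For claim (i), apply \thref{singclassification} in the graded case $\chi=0$, $\lambda=0$, $\Pi'=\kk$, $U=R(\pi_p)$. The theorem provides a canonical decomposition of $\sing\V(\kk,R(\pi_p))\cap\fil^1$ into $\fil^0$ plus the irreducible summand $\din\fil^0\V(\kk,R(\pi_{p-1}))\simeq R(\pi_{p-1})$ (for $1\le p\le N$, case (b)) plus the summand $\dinstar\fil^0\V(\kk,R(\pi_{p+1}))\simeq R(\pi_{p+1})$ (for $0\le p\le N-1$, case (a)); and the degree-two singular vectors form a single irreducible summand $\dindinminusonestar\fil^0\V(\kk,R(\pi_p))\simeq R(\pi_p)$ (for $1\le p\le N$, case (c)), with no non-constant homogeneous singular vectors of any other degree by \prref{pboundeddegree}. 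Identifying $S$ in each case with the corresponding $\spd$-summand, the irreducibility of the quotient by $HS$ is exactly the irreducibility of the images $\imdindinminusonestar$ (Proposition \ref{dd*irr}) or $\imDiPi$ (Corollary \ref{imD}), which is precisely what Section \ref{ssubmtm} establishes.

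For claim (ii), the key observation is that the entire analysis of Section \ref{sksing} leading to the list of possible singular vectors goes through for any irreducible $\spd$-module $U$ (not only finite-dimensional ones) up to the point of \leref{phiproperties}: a non-constant singular vector forces either an $\spd$-equivariant embedding of an $\spd$-subrepresentation of $U$ into $\dd\otimes U$ (degree one) or an injective $\spd$-equivariant map $\phi\colon U\hookrightarrow S^2\dd\otimes U$ whose image consists of singular vectors (degree two). Writing $\dd\otimes R(\la)$ and $S^2\dd\otimes R(\la)$ via the Pieri-type formulas, the only way summands of type $R(\la\pm\pi_1)$ (resp.\ $R(\la)$ inside $S^2\dd\otimes R(\la)$ via the invariant form on $\dd$) can produce genuine singular vectors is when the resulting maps factor through differentials in a pseudo de Rham-type complex; this only happens for $U=R(\pi_p)$. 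The main obstacle will be the degree-two case, which must be ruled out by showing that the only candidate $\phi$ coming from the $R(0)$-summand of $S^2\dd$ fails to satisfy the cancellation identity $f^{ij}\phi(u)=\phi(f^{ij}u)$ compatibly with the action of $\P_1$ unless $U$ is a fundamental representation; this in turn reduces to the uniqueness statement of \reref{uptomultiplication} combined with a direct computation ruling out non-finite-dimensional completions.

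For claim (iii), we identify the complex \eqref{domd9} of \thref{tmodhd}, specialized to $\Pi=\kk$ and $\chi=0$, with the Eastwood conformally symplectic de Rham complex of \cite{E} evaluated on the formal disk $\O_{2N}$. Under the isomorphism $\H\simeq H_{2N}$ from \prref{pioh}, our $\Hdzero$-tensor modules $\V(\kk,R(\pi_p))$ and $J^{2N-p}(\dd)$, $\Omega^p(\dd)/I^p(\dd)$ correspond by topological (continuous) duality to the Eastwood forms on $\O_{2N}$, and our pseudoalgebra differentials $\din,\dinstar,\diru$ dualize to the Eastwood differentials. Since the irreducible quotient of a reducible tensor $\Hdzero$-module $\V(\kk,R(\pi_p))$ is one of $\im\dindinminusonestar$, $\imDiPi$, or $\imdizerostar$, it corresponds under this duality to the (topological dual of the) kernel of the appropriate Eastwood differential, which is the statement. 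The only delicate point is the matching of conventions between the algebraic differentials here and Eastwood's geometric ones, which can be verified by evaluating both at $\fil^0$ and using the fact that both families of maps are uniquely determined up to a non-zero scalar by \reref{uptomultiplication} and the analogous uniqueness on the Eastwood side.
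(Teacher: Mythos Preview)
The paper does not give a proof of this proposition: it is stated as Rudakov's result, with the preceding paragraph explaining how the pseudoalgebra framework recovers parts (i) and (ii), while (iii) is simply asserted. Your reduction for (i)---choosing a Frobenius $\dd$ with $\chi=0$, splitting $\ti\P = \dd^\bla \oplus \P$, and identifying Rudakov's induced module with the tensor module $\V(\kk,U)$ so that the submodule lattice is governed by Sections~\ref{sksing}--\ref{ssubmtm}---is exactly the paper's intended derivation, and your invocation of \thref{singclassification}, \prref{dd*irr}, and \coref{imD} for the irreducibility of the quotient is correct.

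Your argument for (ii) has a genuine gap. The entire apparatus of Sections~\ref{stmhd}--\ref{sksing} is built for \emph{finite-dimensional} $U$: Definitions~\ref{dtmodh} and~\ref{dvmodw} require it, \thref{irrcriterion} is proved only in that setting, and the degree-one analysis in \thref{onlysymplectic} relies on the explicit decomposition of $\dd\otimes R(\pi_n)$ into finitely many irreducibles. For an infinite-dimensional irreducible highest-weight module $U$, the tensor products $\dd\otimes U$ and $S^2\dd\otimes U$ need not decompose the same way, and your claim that the analysis ``goes through'' is not justified; indeed you yourself flag the degree-two case as an ``obstacle'' and then offer only a vague reduction to a ``direct computation ruling out non-finite-dimensional completions,'' which is not an argument. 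The paper does not supply this argument either---it simply attributes (ii) to Rudakov---so if you want to prove (ii) from the present framework you would need to redo the singular-vector analysis for arbitrary irreducible $U$, which is substantially more work than you indicate.

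For (iii) your duality sketch is more than the paper offers (the paper gives nothing), and the idea is correct in outline, but the phrase ``can be verified by evaluating both at $\fil^0$'' hides the actual content: one must check that continuous duality exchanges the pseudo de Rham differentials $\di_\Pi$, $\diru_\Pi$ with the Eastwood differentials on $\O_{2N}$, which requires an explicit identification of the modules (not just a dimension count) and is not a consequence of the scalar-uniqueness in \reref{uptomultiplication} alone.
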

The next proposition deals with conformal representations of $\P\simeq P_{2N}$ with non-trivial action of the center.
\begin{proposition}
\mbox{ }
\begin{itemize}
\item[\textit{(i)}]
Let $0 \neq \lambda \in \kk$. Every nonconstant homogeneous singular vector in the $\P$-module $V = \V(\kk_\lambda, U)$ lies in $\fil^2 \V(\kk_\lambda, U)$. The space $S$ of such singular vectors is an $\spd$-module and each nonconstant $\spd$-irreducible summand lying in $\fil^1 \V(\kk_\lambda, U)$ generates an irreducible $\P$-module.
The description of the subspaces of singular vectors it the same as in \prref{Rudakov} $(i)$
\item[\textit{(ii)}]
If the $\spd$-module $U$ is infinite-dimensional irreducible, then $\V(\kk_\lambda, U)$ does not contain non-constant singular vectors.
\item[\textit{(iii)}]
If a $\P$-module $\V(\kk_\lambda, U)$ is not irreducible, then it decomposes as a direct sum of its two irreducible submodules.
\end{itemize}
\end{proposition}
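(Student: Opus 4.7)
The plan is to exploit the correspondence between finite $\Hdzeta$-modules and conformal representations of the extended annihilation algebra, combined with the decomposition $\ti\P = \dd^\bla \oplus \P$ as a direct sum of Lie algebras that holds under the hypotheses $\chi=0$ and $\omega = \di\bla$. Because $\dd^\bla$ commutes with $\P$ inside $\ti\P$ and, as observed in the paragraph preceding the statement, acts trivially on all of the induced module $\V(\kk_\lambda, U)$, the lattice of $\Hdzeta$-submodules of $\V(\kk_\lambda, U)$ coincides with its lattice of $\P$-submodules. Consequently, every statement proved about the tensor module $\V(\kk_\lambda, U)$ qua $\Hdzeta$-module translates verbatim to a statement about $\V(\kk_\lambda, U)$ qua $\P$-module, and the three claims become easy corollaries of the structural results already obtained.

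For part $(i)$, the boundedness of the degrees of nonconstant homogeneous singular vectors by two is immediate from \prref{pboundeddegree}, and the enumeration of $\spd$-types in cases $(a), (b), (c)$ follows by comparing with \thref{singclassification}: the degree-one summands isomorphic to $R(\pi_{p-1})$ and $R(\pi_{p+1})$ are exactly the images under $\DIn$ and $\DInstar$, respectively, of constant singular vectors in the adjacent tensor modules, while the degree-two summand isomorphic to $R(\pi_p)$ is the image of $\DIR$ when $p=N$ and of the composition $\DInplusone\DInstar$ (equivalently, $\DIn\DInminusonestar$ up to a nonzero scalar) when $1 \leq p \leq N-1$. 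The irreducibility of the $\P$-submodule generated by each nonconstant $\spd$-irreducible summand lying in $\fil^1 \V(\kk_\lambda, U)$ is then precisely the irreducibility of $\im \DIn$ or $\im \DInstar$ proved in \thref{exactderhamnontrivial}.

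Part $(ii)$ is a direct consequence of \thref{irrcriterion}: for $U$ irreducible and not isomorphic to any $R(\pi_n)$ with $0 \leq n \leq N$, the tensor module $\V(\kk_\lambda, U)$ is $\Hdzeta$-irreducible, hence $\P$-irreducible, so $\sing \V(\kk_\lambda, U) = \fil^0 \V(\kk_\lambda, U)$. Part $(iii)$ is a restatement of \thref{exactderhamnontrivial}: when $U \simeq R(\pi_n)$ with $0 \leq n \leq N-1$ one has $\V(\kk_\lambda, R(\pi_n)) = \im \DIn \oplus \im \DInstar$, whereas when $U \simeq R(\pi_N)$ one has $\V(\kk_\lambda, R(\pi_N)) = \im \DIN \oplus \im \DIR$; in both cases each summand is an irreducible $\Hdzeta$-module, hence an irreducible $\P$-module by the identification of submodule lattices.

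The main conceptual obstacle is recognising that a nontrivial scalar action of $c$ forces the complex \eqref{splitexactcomplex} to split, so that each member decomposes as the direct sum of (at most) two irreducible $\P$-submodules; this is in sharp contrast with the case $\lambda=0$ treated in \prref{Rudakov}, where the members of the conformally symplectic pseudo de Rham complex only admit nontrivial filtrations rather than nontrivial decompositions. Once this structural picture, established in \thref{exactderhamnontrivial}, is in hand, the three statements of the proposition follow essentially by unwinding definitions.
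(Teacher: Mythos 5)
Your proposal is correct and takes essentially the same route as the paper: the paper gives no separate proof of this proposition but embeds the argument in the two paragraphs preceding it, relying on the Lie-algebra decomposition $\ti\P = \dd^\bla \oplus \P$, the trivial action of $\dd^\bla$ on the induced module $\V(\kk_\lambda, U)$, and the resulting identification of the $\Hdzeta$- and $\P$-submodule lattices, after which Theorems \ref{singclassification} and \ref{exactderhamnontrivial} are unwound exactly as you do. The only small caveat is your appeal to \thref{irrcriterion} for part \textit{(ii)}: that theorem is stated for finite-dimensional $U$, so strictly speaking the infinite-dimensional case rests on the underlying singular-vector analysis (bounded degree plus the $\spd$-isotypical decomposition of $S^d\dd\tp U$) rather than the theorem statement itself, but since the paper elides this same point the mismatch is immaterial.
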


\bibliographystyle{amsalpha}


\end{document}